\newcommand{\R}{\mathbb{R}}
\newcommand{\N}{\mathbb{N}}
\newcommand{\Div}{\mathrm{div}}
\newcommand{\rot}{\mathrm{rot}}
\newcommand{\pt}{\partial}
\newcommand{\ep}{\varepsilon}
\newcommand{\vphi}{\varphi}
\newcommand{\Rot}{\mathrm{Rot}}
\newtheorem{proposition}{Proposition}[section]
\newtheorem{theorem}{Theorem}[section]
\newtheorem{lemma}{Lemma}[section]
\newtheorem{corollary}[theorem]{Corollary}
\theoremstyle{definition}
\newtheorem{remark}{Remark}[section]
\newtheorem{definition}{Definition}[section]
\newtheorem*{assumption}{Assumption}
\title{Helmholtz-Weyl decomposition on a time dependent domain for
 time periodic Navier-Stokes flows with large flux}
\author{Haru Kanno\thanks{Environment and Information Sciences, Yokohama National University, Yokohama, Japan} \and
    Takahiro Okabe\thanks{Graduate School of Engineering Science, Osaka University, Toyonaka, Japan}\\
    \and
    Erika Ushikoshi\footnotemark[1]\,\,\footnotemark[2]
}
\date{}
\begin{document}
\maketitle
\begin{abstract}
        We consider the Helmholtz-Weyl decomposition on a time dependent bounded domain $\Omega(t)$ in $\R^3$.
        Especially, 
         we investigate the domain dependence of each component in the decomposition, 
         namely, the harmonic vector fields (i.e., $\Div$ and $\rot$ free vectors),
        vector potentials, and scalar potentials 
        equipped with suitable boundary conditions, when $\Omega(t)$ moves along to $t\in\R$.
        As an application, we construct a time periodic solution of the incompressible Navier-Stokes equations
        for some large boundary data with non-zero fluxes.
\end{abstract}
\noindent\textbf{Key words:} Helmholtz-Weyl decomposition, time dependent domain,
time periodic solutions
\noindent\textbf{MSC(2020):} 35Q30, 76D05

\section{Introduction}
Let $\Omega \subset \R^3$ be a bounded domain with the smooth boundary. 
We consider the Helmholtz-Weyl decomposition for a solenoidal vector field $b$ on $\Omega$, i.e., 
$\Div\, b=0$ in $\Omega$, such as
\begin{equation}\label{eq;HW dec}
	b = h + \rot\, w \quad \text{in } \Omega,
\end{equation}
where $h$ is a harmonic vector field, $\Div\, h=0$, $\rot\, h=0$ in $\Omega$ 
with the boundary condition $h \times \nu =0$ on $\pt \Omega$,  
and $w$ satisfies $\Div\, w=0$ in $\Omega$ with $w\cdot \nu=0$ on $\pt \Omega$. 
Here, $\nu$ denotes the unit outward normal vector to $\pt \Omega$. 
Such a decomposition for smooth vectors was proved by Weyl \cite{Weyl},
and Bendali, Dominguez and Gallic \cite{BDG} 
expanded it in the Sobolev space $H^s(\Omega)$, $s\geq 0$.
Recently, Kozono and Yanagisawa \cite{Kozono Yanagisawa IUMJ} 
derived \eqref{eq;HW dec} for $L^r$-vector fields, $1<r<\infty$. 
Furthermore, a lot of relevant studies have been developed by Foia\c{s} and Temam \cite{Foias Temam}, 
Yoshida and Giga \cite{Yoshida Giga}, Griesinger \cite{Griesinger} 
and Bolik and von Wahl \cite{Bolik Wahl}, and so on.

In this paper, introducing  a time dependent domain $\Omega(t)$ for $t\in \R$, 
we consider the Helmholtz-Weyl decomposition \eqref{eq;HW dec} on each $\Omega(t)$, i.e., 
\begin{equation}\label{eq;HW dec(t)}
    b(t) = h(t) + \rot\, w(t) \quad \text{in } \Omega(t).
\end{equation}
For \eqref{eq;HW dec(t)}, our main interest is to clarify the domain dependence of 
the harmonic vector field $h(t)$ and the vector potential $w(t)$ 
when $\Omega(t)$ moves 
along to time $t \in \R$.
Especially, time continuity and time differentiability for $h(t)$ and $w(t)$ 
in suitable function spaces are concerned.

The Helmholtz-Weyl decomposition has been developed and applied 
in various fields of the mathematical and numerical analysis 
for the electromagnetism, magnetohydrodynamics and also fluid mechanics.  
In particular, it is suggested that \eqref{eq;HW dec} plays an effective role in the solvability 
of the boundary value problem of the incompressible Navier-Stokes equations. 
Indeed, using \eqref{eq;HW dec} Kozono and Yanagisawa \cite{Kozono Yanagisawa MathZ} 
constructed the stationary solution for large boundary data. 
The second author applied it to the time periodic problem of the Navier-Stokes equations.
From this viewpoint, to investigate the domain dependence of \eqref{eq;HW dec(t)} 
makes us expect to apply it to such problems on a time dependent domain $\Omega(t)$. 

For our aim, we focus on the fact that the harmonic vector field $h(t)$ and 
the vector potential $w(t)$ is determined by the following (elliptic) problems
\begin{equation}\label{eq;h and w equations}
    \begin{cases}
        \rot\, h(t) = 0 &\text{in } \Omega(t),\\
        \Div\, h(t) =0  &\text{in } \Omega(t),\\
        h(t) \times \nu(t)=0 &\text{on } \pt\Omega(t),
    \end{cases}
    \quad\text{and}\quad
    \begin{cases}
        -\Delta w(t) = \rot\, b(t) &\text{in } \Omega(t), \\
        \rot\, w(t) \times \nu(t) = b(t) \times \nu(t)
        &\text{on } \pt \Omega(t),
        \\
        w(t) \cdot \nu(t) = 0 & \text{on } \pt \Omega(t),
    \end{cases}
\end{equation}
where $\nu(t)$ is the outward unit normal vector to $\pt\Omega(t)$.
Therefore,
our problem is essentially reduced to  an analysis of 
the domain dependence of the solutions of the (elliptic) equations
with the suitable boundary conditions.

There are many results on the domain perturbation problem for the elliptic equations.
See, Hadamard \cite{hada}, Garabedian and Schiffer \cite{gar-schi}, Peetre \cite{Peetre}, for instance.
Especially, Fujiwara and Ozawa \cite{Fujiwara Ozawa} gave us a strategy to investigate the domain dependence of the solutions
to the strictly elliptic equation with the boundary condition of the Dirichlet type in the
sense of Agmon, Douglis and Nirenberg \cite{ADN}, 
where the Hadamard variational formula of the Green function was established.
    In \cite{Fujiwara Ozawa}, they transformed the problem on $\Omega(t)$ into 
    one on some fixed domain $\widetilde{\Omega}$, and derived
    the uniform a priori estimate of the transformed problem, 
which
plays an essential role in a quantitative analysis for 
the variation of the solutions along to the domain perturbation.
Then, Kozono and the third author \cite{Kozono Ushikoshi} applied their method to the Stokes equations.

Concerning to \eqref{eq;h and w equations}, 
it is known that the space of harmonic vector fields $h(t)$ has finite dimension
and is spanned by the gradient vectors of solutions of the 
following Laplace equations with Dirichlet boundary conditions:
\begin{equation}\label{eq;IntroHarmonic}
    \begin{cases}
        \Delta q_k(t) = 0 & \text{in } \Omega(t), \\
        q_k(t)|_{\Gamma_\ell(t)}=\delta_{k,\ell} &\text{for } \ell=0,\dots,K,
    \end{cases}
\end{equation}
for $k=1,\dots,K$,
provided $\pt \Omega(t)$ consists of mutually disjoint $C^\infty$ closed surfaces 
$\Gamma_0(t),\dots,\Gamma_K(t)$
which satisfy $\Gamma_1(t),\dots,\Gamma_K(t)$ lie inside of $\Gamma_0(t)$
and provided $\Omega(t)\setminus \bigcup_{\ell=1}^L \Sigma_\ell(t)$
is simply connected, where mutually disjoint $C^\infty$ surfaces $\Sigma_1(t),\dots,\Sigma_L(t)$ 
transversal to $\pt\Omega(t)$, see \cite[Appendix A]{Kozono Yanagisawa IUMJ}.
Therefore, in order to consider the time dependence of $h(t)$, 
it is essential to investigate the feature of $q_k(t)$.
For the analysis for $q_{k}(t)$, the method as in \cite{Fujiwara Ozawa}
 and \cite{Kozono Ushikoshi} can be applicable, since \eqref{eq;IntroHarmonic} is the Dirichlet problem.
 Transforming \eqref{eq;IntroHarmonic} to one on a fixed domain $\widetilde{\Omega}$,
 we can investigate the time dependence of the transformed function $\widetilde{q}_k(t)$ on $\widetilde{\Omega}$
 by the associated a priori estimate.

    On the other hand, the analysis of the vector potential $w(t)$  
    is rather nontrivial.
    In fact, though \eqref{eq;h and w equations} for $w(t)$ forms the strictly elliptic problem in the sense of \cite{ADN}, we can not make use of the above approach for the harmonic vector field $h(t)$. 
    The difficulty comes from the fact that the lower order term of the solutions 
    remains in the a priori estimate, which is an essential difference from the above Dirichlet problem.
    This fact is inevitable since the rotation operator under the nonslip boundary condition has
    the nontrivial kernel, especially when the domain is multi-connected.
    Indeed, Foia\c{s} and Temam \cite{Foias Temam} showed the null space is 
finite dimensional and spanned by the gradient vectors of the
solutions of the Laplace equations with the Neumann boundary conditions
(with jump conditions),
giving the explicit basis.
Thus, we need to find out a quite new approach which allows us a quantitative analysis,
e.g., time continuous and differentiability for
the transformed vector potential $\widetilde{w}(t)$ on $\widetilde{\Omega}$.

To overcome this difficulty, we focus on the domain dependence 
of the constants $C(\Omega)>0$
in the estimate of the Helmholtz-Weyl decomposition such as
\begin{equation*}
    \| w \|_{H^2(\Omega)} \leq C(\Omega) \| b \|_{H^1(\Omega)}
    \quad \text{where } h+\rot\,w = b
    \quad \text{for all } b \in H^1(\Omega).
\end{equation*}
    Indeed, we establish $\sup\limits_{0\leq t \leq T} C\bigl(\Omega(t)\bigr)<\infty$ for $T>0$.
    Due to  uniform boundedness of the constant
    we obtain that $\|\widetilde{w}(t)\|_{H^2(\widetilde{\Omega})}$ is bounded on $[0,T]$.
    Then, the compactness of the Sobolev embedding 
    gives us a method to manage  the lower order term 
    in the a priori estimate.

    However, to derive the time differentiability of $\widetilde{w}(t)$,
    the estimate $\sup\limits_{0\leq t \leq T} C\bigl(\Omega(t)\bigr)<\infty$ is not
    enough to control the difference quotient via the a priori estimate.
    Therefore, we have to introduce a new different way, by considering more qualitative features of the vector potentials.
    Indeed, we need to investigate the time variation of the component of 
    the null space of the rotation operator with non-slip boundary condition. 
    However, it seems not to be possible to deal with directly the components of the null space,
    due to the lack of the quantitative information of the basis which was obtained by Foia\c{s} and Temam \cite{Foias Temam}.
    To avoid such a difficulty, we find that 
    the fluxes (of the cross-sections of $\Omega(t)$)
\begin{equation*}
    \int_{\Sigma_\ell(t)} w(t)\cdot \nu_\ell(t) \,dS=0,
    \quad \text{for } \ell=1,\dots,L,
\end{equation*}
are conserved under the coordinate transformation 
by the diffeomorphism.
This means that the orthogonality of the vector potential to the null space of the rotation
is preserved under the coordinate transform.
Due to this orthogonality, restricting our vector potentials within the orthogonal compliment
of the null space,
we can dominate the Sobolev norm of the vector potential by the estimate 
as in Bolik and von Wahl \cite{Bolik Wahl},
and \cite{Kozono Yanagisawa IUMJ} without an effect of topology of the domain.

From the above observation, we succeed to obtain the domain dependence 
of the Helmholtz-Weyl decomposition on a time dependent domain.
Moreover, we can expand such a property for solenoidal vector fields to that for general vector fields.
\medskip

    As an application of the decomposition, we consider the following 
    time periodic problem of 
the incompressible
Navier-Stokes equations on a time dependent domain $\Omega(t)\subset\R^3$
which is bounded with smooth boundary $\pt \Omega(t)$ for $t\in\R$:
\begin{equation}\tag{N-S}
    \left\{
    \begin{split}
        &\pt_t v -\Delta v + v\cdot \nabla v + \nabla \pi=f
        \quad \text{in } \bigcup_{t\in \R} \Omega (t) \times \{t\}, 
        \\
        & \Div\, v = 0 
        \quad \text{in } \bigcup_{t\in \R} \Omega (t) \times \{t\},
        \\
        &v=\beta
        \quad \text{on } \bigcup_{t\in \R}\pt \Omega(t)\times \{t\}.
    \end{split}
    \right.
\end{equation}
Here, $v=v(x,t)=\bigl(v^1(x,t),v^2(x,t),v^3(x,t)\bigr)$ and $\pi(x,t)$ 
denote the unknown velocity and pressure of the fluid at $(x,t) 
\in \bigcup_{t\in\R} \Omega(t)\times \{t\}$, respectively.
While $f=f(x,t)=\bigl(f^1(x,t),f^2(x,t),f^3(x,t)\bigr)$ is the given external force
and 
$\beta =\beta(x,t)=\bigl(\beta^1(x,t),\beta^2(x,t),\beta^3(x,t)\bigr)$ 
is the given boundary data.

In this paper, let $Q_\infty:=\bigcup_{t\in\R}\Omega (t) \times \{t\}$ 
be a noncylindrical space-time domain. 
Then, we assume that there exists a cylindrical domain 
$\widetilde{Q}_\infty=\widetilde{\Omega}\times \R$ and a level-preserving 
$C^\infty$ diffeomorphism $\Phi: \overline{Q}_\infty \to \overline{\widetilde{Q}}_\infty$.
For more detail, see Section \ref{subsec;notations} below.

    To deal with (N-S), introducing a suitable solenoidal extension $b(t)$ 
satisfying $\Div\, b(t)=0$ in $\Omega(t)$ and $b(t)|_{\pt\Omega(t)}=\beta(t)$ on $\pt \Omega(t)$,
we reduce (N-S) to one with the homogeneous boundary condition.
Then, we transform the problem on the noncylindrical domain $Q_\infty$ to 
that on the cylindrical domain $\widetilde{Q}_\infty$.

    The problem on the time dependent domain has been developed by
    Ladyzhenskaya \cite{Ladyzhenskaya},
    Fujita and Sauer \cite{Fujita Sauer} for initial-boundary value problem, 
    and by Morimoto \cite{Morimoto},
    Inoue and Wakimoto \cite{Inoue Wakimoto}, Miyakawa and Teramoto \cite{Miyakawa Teramoto}
    and Salvi \cite{Salvi} for time periodic problem, and so on.

    As for the time periodic problem and also the stationary problem under the nonhomogeneous boundary conditions,
there has been a difficulty for the analysis of the convection term with respect to the extension $b$,
especially, when a domain is  multi-connected. 
To get around this difficulty, the Helmholtz-Weyl decomposition \eqref{eq;HW dec} 
plays an effective role for the case that $\Omega$ is a fixed domain.
Indeed, \cite{Kozono Yanagisawa MathZ} and \cite{Okabe JEE} constructed a stationary solution
and a time periodic solution of (N-S), respectively, for
some large boundary data with non-zero fluxes.
On the other hand, for the case of time dependent domain $\Omega(t)$,
Miyakawa and Teramoto \cite{Miyakawa Teramoto}
constructed a time periodic weak solution
with the period $T>0$, 
provided $\beta$ is small enough.
So, we intend to adopt the Helmholtz-Weyl decomposition \eqref{eq;HW dec(t)}
into the time periodic problem on the time dependent domain $\Omega(t)$.
By the virtue of the time dependence of the decomposition \eqref{eq;HW dec(t)},
we succeed to estimate the convection term and to construct a time periodic solution for some large boundary data.
Furthermore, 
we note that the advantage is that our existence theorem is formulated only by the given data on $\Omega(t)$,
without any information on the fixed domain $\widetilde{\Omega}$ and the diffeomorphism.
\medskip

This paper is organized as follows.
In the Section 2, we state our main results on the Helmholtz decomposition and the existence of the time periodic solutions of (N-S).
Section 3 devoted to the preliminaries for the diffeomorphism and the estimate of the convection term.
In  Section 4, the time dependence of the Helmholtz-Weyl decomposition is concerned.
Especially,
we shall give a proof of Theorem \ref{thm;time depend HW dec div} in Section \ref{subsec;rot} 
and Corollary \ref{cor;time depend HW dec gen} in Section \ref{subsec;cor}.
In Section 5, we construct a time periodic solution of the (transformed) Navier-Stokes equations.
%
%
%
%
\section{Main results}
%
This section consists of three parts.
Firstly, we state an assumption on domains and prepare several notations,
and then we consider the formulation of the problem on (N-S) and a precise definition of weak solutions of (N-S) 
in Section \ref{subsec;notations}.
In Section \ref{subsec;HW}, we state our main results on the domain dependence of the Helmholtz-Weyl decomposition.
In the final subsection, we state the existence of time periodic solution of (N-S) on 
a noncylindrical domain. 
%
\subsection{Notations and setting}\label{subsec;notations}
%
\subsubsection{Assumption on the domain}
To begin with, we impose the following assumption on $\{\Omega(t)\}_{t\in\R}$.
\begin{assumption} 
    $\bigl\{\Omega(t)\bigr\}_{t\in\R}$ is a family of bounded domains in $\R^3$ with smooth boundary $\pt\Omega(t)$
    and satisfies the followings.
    \begin{itemize}
        \item[(i)] For the noncylindrical domain $Q_\infty=\bigcup\limits_{t\in\R} \Omega(t)\times\{t\}$, there exist a cylindrical domain $\widetilde{Q}_\infty=\widetilde{\Omega}\times \R$ and a level-preserving 
        $C^\infty$ diffeomorphism $\Phi: \overline{Q}_\infty \to \overline{\widetilde{Q}}_\infty$;
        \begin{equation*}
        (y,s)=\Phi(x,t):=\bigl( \phi(x,t),t\bigr)
        :=\bigl(\phi^1(x,t),\phi^2(x,t),\phi^3(x,t),t\bigr)
        \end{equation*}
    such that
    \begin{equation} \label{eq;J}
    \det \left( \frac{\pt \phi^i}{\pt x^j}(x,t)\right)
    \equiv J(t)^{-1} >0 
    \quad \text{for } (x,t) \in \overline{Q}_\infty.
    \end{equation}
    \item[(ii)]
    There exists $K \in \N$ such that for all $t\in\R$, $\pt \Omega(t)$ consists of
    $K+1$ connected components $\Gamma_0(t),\dots,\Gamma_K(t)$ of 
     $C^\infty$ closed surfaces which satisfy that
$\Gamma_1(t),\dots,\Gamma_K(t)$ lie inside of $\Gamma_0(t)$,
$\Gamma_k(t) \cap \Gamma_{k^\prime}(t)= \emptyset$ for $k\neq k^\prime$, 
and that
\begin{subequations}\label{eq;KL}
\begin{equation}\label{eq;K}
    \pt \Omega(t)= \bigcup_{k=0}^K \Gamma_{k}(t).
\end{equation} 
Furthermore, there exists $L\in\N$ such that 
\begin{equation}\label{eq;L}
\text{$\dot{\Omega}(t):=\Omega(t)\setminus \Sigma(t)$ is simply connected for $t\in\R$, where
$\Sigma(t):=\displaystyle \bigcup_{\ell=1}^L \Sigma_\ell(t)$,}
\end{equation}
\end{subequations}
and $\Sigma_1(t),\dots,\Sigma_L(t)$ are $C^\infty$ surfaces transversal to
$\pt\Omega(t)$ with $\Sigma_\ell(t)\cap \Sigma_{\ell^\prime}(t)=\emptyset$ 
for $\ell \neq \ell^\prime$, for all $t\in\R$.
Here, we may assume that $\phi\bigl(\Sigma(t),t\bigr)=\phi\bigl(\Sigma(t^\prime),t^\prime\bigr)$
for all $t,t^\prime\in\R$,
without loss of generality.
    \end{itemize}
\end{assumption}
\begin{remark}
        As is mentioned in Miyakawa and Teramoto \cite[Theorem 4.3]{Miyakawa Teramoto},
        the condition \eqref{eq;J} is of no restriction.
\end{remark}
In this paper, we express the inverse of $\Phi$ as
    \begin{equation}\label{eq;diffeo inverse}
        \Phi^{-1}(y,s)=\bigl(\phi^{-1}(y,s),s\bigr) := \bigl(\phi^{-1}_1(y,s),\phi_2^{-1}(y,s),
        \phi_3^{-1}(y,s),s\bigr)
        \quad \text{for } (y,s) \in \widetilde{Q}_\infty.
    \end{equation}
Moreover, for $T>0$ we put 
\begin{equation}\label{eq;QT}
    Q_T:=\bigcup\limits_{0<t<T}\Omega(t)\times\{t\}
    \quad\text{and}\quad \widetilde{Q}_T:=\widetilde{\Omega}\times (0,T).
\end{equation}

Here, it should be noted that since $\phi(\cdot,t) : \Omega(t) \ni x \mapsto y \in \widetilde{\Omega}$ 
is a diffeomorphism, there is a one-to-one relation
between  vector fields $u=(u^1,u^2,u^3)$ on $\Omega(t)$
and $\widetilde{u}=(\widetilde{u}^1,\widetilde{u}^2,\widetilde{u}^3)$ on $\widetilde{\Omega}$ 
such that, for $i=1,2,3$, 
\begin{equation}\label{eq;identify}
    \widetilde{u}^i = \sum_{\ell} \frac{\pt y^i}{\pt x^\ell} u^\ell 
    \quad{\text{equivalently,}}\quad
    u^i = \sum_{k} \frac{\pt x^i}{\pt y^k} \widetilde{u}^k.
\end{equation}
Hence, we can identify the vector fields on $\Omega(t)$ with that on $\widetilde{\Omega}$
in the sense of the above relation.
So, we may write just $\widetilde{u}$
when $u$ on $\Omega(t)$ and $\phi(\cdot,t)$ are obvious from the context.
In the present paper, we often use a notation of a vector (or function) with wide-tilde, like $\widetilde{u}$, 
in order to express an arbitrary vector field (or function) on $\widetilde{\Omega}$ as well.

\subsubsection{Notations}
We shall next introduce function spaces and notations. 
Let $\Omega$ be a subset in $\R^3$, or $\R^4$. 
Then, $C_{0,\sigma}^\infty(\Omega)$ denotes the set of 
all $C^\infty$-solenoidal vectors $\psi$ with compact support in $\Omega$, i.e., $\Div\, \psi=0$ in $\Omega$.
Let $L^r_\sigma(\Omega)$ be the closure of $C_{0,\sigma}^\infty(\Omega)$ with respect to 
the $L^r$-norm, $1<r<\infty$. 
Furthermore, $(\cdot,\cdot)$ is the duality pairing between $L^r(\Omega)$ and $L^{r^\prime}(\Omega)$,
where $\frac{1}{r}+\frac{1}{r^\prime}=1$. $H^1_{0,\sigma}(\Omega)$ is the closure of 
$C_{0,\sigma}^\infty(\Omega)$ in the $H^1$-norm. $L^r(\Omega)$ and $H^s(\Omega)$
stand for the usual  (vector valued) Lebesgue space and the $L^2$-Sobolev space for $s\in \R$, respectively.
When $X$ is a Banach space, $\|\cdot\|_{X}$ denotes the norm on $X$. 
Furthermore, for $1<r<\infty$ and $m\in\N$, $C^m(I;X)$, $L^r(I;X)$ and $H^m(I;X)$ denote the set of 
$X$-valued $C^m$-functions, $L^r$-functions and $H^m$-functions over the interval $I\subset \R$, 
respectively.

In the present paper, we often emphasize  variables for operators, for example, 
$\Delta_x$, $\nabla_x$, $\rot_x$, and so on.

%
\subsubsection{Formulation of problem}\label{subsec;formulation}
%
To investigate the existence of  time periodic solutions of (N-S), firstly,
we shall reduce (N-S) to the  problem with the homogeneous boundary condition.

 Let us consider $\beta(t)\in H^{\frac{1}{2}}\bigl(\pt \Omega(t)\bigr)$ satisfies the general flux condition 
\begin{equation*}\tag{G.F.C.}
    \int_{\pt \Omega(t)}\beta(t)\cdot\nu(t)\,dS=0\quad \text{for }t\in \R,
\end{equation*}
where $\nu(t)$ denotes the outward unit normal vector to $\pt\Omega(t)$.
By the Bogovski\u{\i} theorem, we can take some extension $b(t) \in H^1\bigl(\Omega(t)\bigr)$ of $\beta(t)$ 
such that
$\Div\, b(t)=0$ in $\Omega(t)$ for each $ t\in \R$. See, Bogovski\u{\i} \cite{Bogovskii}, Borchers and Sohr \cite{Borchers Sohr}.
Moreover, to reduce the equations, we also consider $b \in H^1(Q_\infty)$ for a while.

Then, putting $u=v-b$, we obtain the following initial-boundary value problem:
\begin{equation*}\tag{N-S$^\prime$}
    \begin{cases}
        \pt_t u -\Delta u + b\cdot \nabla u + u\cdot\nabla b + u\cdot \nabla u +\nabla \pi=F 
        &\text{in } \bigcup\limits_{0<t<\infty} \Omega(t)\times \{t\},
        \\
        \Div\, u =0 & \text{in } \bigcup\limits_{0<t<\infty} \Omega(t)\times \{t\},
        \smallskip\\
        u=0 & \text{on }  \bigcup\limits_{0<t<\infty} \pt \Omega(t)\times\{t\},
        \\
        u(\cdot,0)=a &\text{in }  \Omega(0),
    \end{cases}
\end{equation*}
where, formally, $F=f -\pt_t b + \Delta b - b\cdot \nabla b$.

Next, we reduce our problem to that on the cylindrical domain $\widetilde{\Omega}\times(0,\infty)$
via a one-to-one transformation of a vector field from $\Omega(t)$ to $\widetilde{\Omega}$,
preserving the divergence of the vector field, see Proposition \ref{prop;IW div} below.

We introduce a vector field $\widetilde{u}$ on $\bigcup\limits_{0<t<\infty} \Omega(t)\times\{t\}$ 
are defined by, for $i=1,2,3$, 
\begin{equation}\label{eq;transform u}
    \widetilde{u}^i(y,s) :=  \sum_{\ell} 
    \frac{\pt y^i}{\pt x^\ell}u^\ell(\phi^{-1}(y,s),s):=
    \sum_{\ell}
    \frac{\pt \phi^i}{\pt x^\ell}\bigl(\phi^{-1}(y,s),s\bigr)
    u^\ell \bigl(\phi^{-1}(y,s),s\bigr), 
\end{equation}
for $(y,s)\in \widetilde{\Omega}\times(0,\infty)$, 
and a scalar function $\widetilde{\pi}$ on $\widetilde{\Omega} \times (0,\infty)$ 
defined by
\begin{equation}\label{eq;transform pi}
    \widetilde{\pi}(y,s) := \pi\bigl(\phi^{-1}(y,s),s\bigr),
\end{equation}
for $(y,s)\in \widetilde{\Omega}\times(0,\infty)$,
 when the vector field $u$ and the function $\pi$ on $\bigcup\limits_{0<t<\infty}\Omega(t)\times \{t\}$ 
 are given.

 Now, according to Inoue and Wakimoto \cite{Inoue Wakimoto} and Miyakawa and Teramoto \cite{Miyakawa Teramoto}, 
 we obtain the transformed equations on $\widetilde{\Omega}\times (0,\infty)$ 
 of the initial-boundary value problem of (N-S$^\prime$):
\begin{equation*}\tag{N-S$^*$}
    \begin{cases}
        \pt_s \widetilde{u} -L\widetilde{u}+ M\widetilde{u} + N[\widetilde{b},\widetilde{u}]
        +N[\widetilde{u},\widetilde{b}] + N [\widetilde{u},\widetilde{u}]
        +\nabla_g \widetilde{\pi} =\widetilde{F},
        & \text{in } \widetilde{\Omega}\times (0,\infty),
        \\
        \Div_y\, \widetilde{u}=0 & \text{in } \widetilde{\Omega}\times (0,\infty),
        \\
        \widetilde{u}= 0 & \text{on }\pt \widetilde{\Omega}\times(0,\infty),
        \\
        \widetilde{u}(\cdot,0)=\widetilde{a} & \text{in }\widetilde{\Omega},
    \end{cases}
\end{equation*}
where, formally, $\widetilde{F}= \widetilde{f} - \pt_s \widetilde{b} -M\widetilde{b} 
+ L\widetilde{b} -N[\widetilde{b},\widetilde{b}]$.
Here for $i=1,2,3$,
\begin{align*}
    [L\widetilde{v}\,]^i&:=[L(s)\widetilde{v}\,]^i:=\sum_{k}g^{jk}\nabla_j\nabla_k \widetilde{v}^i,
    \\
    &\Biggl(= \sum_{k,\ell} \frac{\pt }{\pt y^\ell} \biggl(
        g^{k\ell}\frac{\pt \widetilde{v}^i}{\pt y^k}
    \biggr)
    + 2\sum_{j,k,\ell} g^{k\ell} \Gamma^i_{jk} \frac{\pt \widetilde{v}^j}{\pt y^\ell}
    + \sum_{j,k,\ell} \biggl(
        \frac{\pt}{ \pt y^k}\bigl( g^{k\ell} \Gamma^i_{j\ell}\bigr)
        +\sum_{n} g^{k\ell} \Gamma^n_{j\ell}\Gamma^i_{kn}
    \biggr)\widetilde{v}^j
    \Biggr),
    \\
[M\widetilde{v}\,]^i&:=\sum_{j} 
\frac{\pt y^j}{\pt t}\nabla_j\widetilde{v}^i 
+
\sum_{j,k} \frac{\pt y^i}{\pt x^k}
\frac{\pt^2 x^k}{\pt y^j \pt s} \tilde{v}^j,
\\
N[\widetilde{u},\widetilde{v}\,]^i
&:=\sum_{j} \widetilde{u}^j \nabla_j \widetilde{v}^i,
\\
[\nabla_g \widetilde{\pi}\,]^i &:= \sum_{j} g^{ij} \frac{\pt \widetilde{\pi}}{\pt y^j},
\end{align*}
and for $i,j,k=1,2,3$,
\begin{align*}
    &g^{ij}:=\sum_{k} \frac{\pt y^i}{\pt x^k}\frac{\pt y^j}{\pt x^k},\qquad
    g_{ij}:=\sum_{k} \frac{\pt x^k}{\pt y^i}\frac{\pt x^k}{\pt y^j}, 
    \\
    &\nabla_j \widetilde{v}^{i} := \frac{\pt \widetilde{v}^i}{\pt y^j}
    + \sum_{k} \Gamma^{i}_{jk} \widetilde{v}^k,
    \qquad
    \nabla_k\nabla_j \widetilde{v}^i:=
    \sum_{k} \frac{\pt}{\pt y^k} \bigl(\nabla_j \widetilde{v}^i\bigr)
    +
    \sum_{k,\ell} \bigl(\Gamma^i_{k\ell}\nabla_j\widetilde{v}^\ell
    - \Gamma^\ell_{kj}\nabla_\ell \widetilde{v}^i
    \bigr),
    \\
    &\Gamma^{k}_{ij}:=\frac{1}{2}\sum_{\ell} g^{k\ell}\biggl(
        \frac{\pt g_{i\ell}}{\pt y^j} + \frac{\pt g_{j\ell}}{\pt y^i}
        -\frac{\pt g_{ij}}{\pt y^\ell}\biggr)
        = \sum_{\ell} \frac{\pt y^k}{\pt x^\ell} 
        \frac{\pt^2 x^\ell}{\pt y^i \pt y^j}.
\end{align*}
Here, $\bigl(g^{ij}\bigr)$ is the Riemann metric, $\nabla_j$, for $j=1,2,3$, is the covariant differentiation with 
respect to the Riemann connection induced from the metric $\bigl(g_{ij}\bigr)$. Moreover, 
we note that $\bigl(g^{ij}\bigr)^{-1}=\bigl(g_{ij}\bigr)$.

Hereafter, in this paper, since we often consider the case that the value of the variable $s$ is equal to $t$, 
we identify $s$ with $t$, except for the expression of  derivatives with respect to $s$.
Namely, we use $\pt_s \widetilde{u}(s)|_{s=t} =\pt_s\widetilde{u}(t)$ 
distinguishing the variable $s$ from the value $t$,
since
the transformed vector of $\pt_t u$ corresponds to $\pt_s \widetilde{u}+M\widetilde{u}$ by the 
diffeomorphism. 

According to the transformation of vector fields, we introduce the following inner product 
on $L^2(\widetilde{\Omega})$ 
for each $t\in \R$
defined by 
\begin{equation}\label{eq;inner product L2}
    \langle \widetilde{u}, \widetilde{v} \rangle_t :=
    \int_{\widetilde{\Omega}} \sum_{i,j} g_{ij}(y,t) \widetilde{u}^i(y) \widetilde{v}^j(y) J(t)\,dy
    \quad \text{for all } \widetilde{u}, \widetilde{v} \in L^2(\widetilde{\Omega}).
\end{equation}
Furthermore, for each $t\in \R$, we adopt the following inner product 
on $H^1_{0}(\widetilde{\Omega})$ defined by
\begin{equation}\label{eq;inner product H1}
    \langle \nabla_g \widetilde{u}, \nabla_g \widetilde{v} \rangle_t
    :=
    \int_{\widetilde{\Omega}} 
    \sum_{i,j,k,\ell} g_{ij}(y,t)g^{k\ell} (y,t) \nabla_k \widetilde{u}^{i} \nabla_\ell \widetilde{v}^j 
    J(t)\,dy  
    \quad \text{for all } \widetilde{u}, \widetilde{v} \in H^1_0(\widetilde{\Omega}).
\end{equation}
Here, for each $t\in\R$ we note that
\begin{align}\label{eq;equivalence innerproduct L2}
    \langle \widetilde{u}, \widetilde{v} \rangle_t&=
    \int_{\Omega(t)} \sum_{i} u^i(x)v^i(x)\,dx=(u,v),
    \\
    \label{eq;equivalence innerproduct H1}
    \langle \nabla_g \widetilde{u}, \nabla_g \widetilde{v} \rangle_t&=
    \int_{\Omega(t)} \sum_{i,j} \frac{\pt u^i}{\pt x^j}(x)\frac{\pt v^i}{\pt x^j}(x) \,dx
    =(\nabla u, \nabla v),
\end{align}
where vector fields $\widetilde{u}$, $\widetilde{v}$ on $\widetilde{\Omega}$ and 
$u$, $v$ on $\Omega(t)$ are as in \eqref{eq;identify}, respectively.\medskip

Then, we state the definition of weak solutions of the initial-boundary value problem (N-S$^\prime$).
\begin{definition}[Weak solution]
    Let $T>0$. Let $a \in L_\sigma^2\bigl(\Omega(0)\bigr)$, $f \in L^2(Q_T)$
    and let $b$ be a vector field on $Q_T$ which satisfies $\Div\, b(t)=0$ in $\Omega(t)$ for all $t\in [0,T]$ and $\widetilde{b} \in C^1\bigl([0,T];H^1(\widetilde{\Omega})\bigr)$.
    A measurable (vector valued) function $u$ on $Q_T$ is called a weak solution of (N-S$^\prime$), 
    if, under the transform \eqref{eq;transform u},
    \begin{itemize}
        \item[(i)] $\widetilde{u} \in L^\infty \bigl(0,T;L^2_\sigma(\widetilde{\Omega})\bigr)
        \cap L^2\bigl(0,T;H^1_{0,\sigma}(\widetilde{\Omega})\bigr)$,
        \item[(ii)] the relation 
        \begin{multline*}
            \int_0^T \Bigl(
            -\bigl\langle \widetilde{u}(\tau), \pt_s \widetilde{\Psi}(\tau)\bigr\rangle_\tau
            +\bigl\langle \nabla_g\widetilde{u}(\tau), \nabla_g \widetilde{\Psi}(\tau)\bigr\rangle_\tau
            \\+\bigl\langle N\bigl[\widetilde{b}(\tau), \widetilde{u}(\tau)\bigr], \widetilde{\Psi}(\tau)\bigr\rangle_\tau
            +\bigl\langle N\bigl[\widetilde{u}(\tau), \widetilde{b}(\tau)\bigr], \widetilde{\Psi}(\tau)\bigr\rangle_\tau
            +\bigl\langle N\bigl[\widetilde{u}(\tau), \widetilde{u}(\tau)\bigr], \widetilde{\Psi}(\tau)\bigr\rangle_\tau
            \Bigr)\,d\tau
            \\
            =\bigl\langle \widetilde{a}, \widetilde{\Psi}(0) \bigr\rangle_0
            + \int_0^T \bigl\langle \widetilde{F}(\tau),\widetilde{\Psi}(\tau)\bigr\rangle_\tau \,d\tau,
        \end{multline*}
        holds for all $\widetilde{\Psi} \in C^1_0\bigl([0,T);H^1_{0,\sigma}(\widetilde{\Omega})\bigr)$,
        where 
        \begin{equation*}\bigl\langle\widetilde{F}(t), \widetilde{\Psi}(t) \bigr\rangle_t:=
        \bigl\langle \widetilde{f}(t)- \pt_s \widetilde{b}(t)-M\widetilde{b}(t) 
        -N\bigl[\widetilde{b}(t),\widetilde{b}(t)\bigr],\widetilde{\Psi}(t)\bigr\rangle_t 
        -\bigl\langle \nabla_g \widetilde{b}(t),\nabla_g\widetilde{\Psi}(t)\bigr\rangle_t.
        \end{equation*}
    \end{itemize}
\end{definition}
\begin{remark}\begin{itemize}
    \item[(a)] The assumption $\widetilde{b}\in C^1\bigl([0,T];H^1(\widetilde{\Omega})\bigr)$ 
    implies $b \in H^{1}(Q_T)$. Hence, the above definition make sense also on $Q_T$.
    \item[(b)]
    $\widetilde{b} \in H^1\bigl(0,T;H^1(\widetilde{\Omega})\bigr)$ is enough for the above definition.
\end{itemize} 
\end{remark}
%
\subsection{Domain dependence of the Helmholtz-Weyl decomposition}\label{subsec;HW}
%
In this section, we state our main results on the domain dependence of the Helmholtz-Weyl decompositions.

Let us introduce the Helmholtz-Weyl decomposition on a bounded domain $\Omega$, 
according to Kozono and Yanagisawa \cite{Kozono Yanagisawa IUMJ}.
For this purpose, we introduce the following function spaces,
\begin{align*}
    V_{\mathrm{har}}(\Omega)
    &:= \bigl\{h \in C^\infty(\overline{\Omega})\,;\, 
    \Div\,h=\rot\,h=0 \text{ in }\Omega,\;\; h\times \nu =0 \text{ on } \pt \Omega\bigr\},
    \\
    X_{\mathrm{har}}(\Omega) &:=\bigl\{ u \in C^\infty(\overline{\Omega})\,;\, \Div\, u= \rot\, u =0
    \text{ in }\Omega, \;\; u\cdot \nu=0 \text{ on } \pt\Omega \bigr\},
    \\
    X_\sigma^2(\Omega) &:=\bigl\{ u \in L^2(\Omega)\,;\, \Div\,u=0 \text{ in } \Omega, \;\;
    \rot\, u \in L^2(\Omega), \;\; u\cdot \nu =0 \text{ on } \pt\Omega\bigr\},
    \\
    Z^2_\sigma(\Omega) &:=\bigl\{ 
        u \in X^2_\sigma(\Omega)\,;\, u \perp X_{\textrm{har}}(\Omega)\bigr\},
\end{align*}
for $\Omega \subset \R^3$ which is a bounded domain with the smooth boundary, where 
$\nu$ denotes the outward unit normal vector to $\pt \Omega$ and we consider 
$u\cdot \nu=0$ in weak sense, i.e., 
\begin{equation*}
    \langle u \cdot \nu , \psi \rangle_{\pt \Omega}
    := (u, \nabla \psi ) + (\Div\, u, \psi) 
    \quad \text{for all } \psi \in C^\infty(\overline{\Omega}). 
\end{equation*}

Kozono and Yanagisawa \cite{Kozono Yanagisawa IUMJ}
established the Helmholtz-Weyl decomposition for $L^r$-vector fields, for $1<r<\infty$.
In our situation, since we deal with only $H^1$-framework, we prepare the following proposition,
excerpting  the $H^1$-part from \cite[Theorem 2.1 and 2.4]{Kozono Yanagisawa IUMJ}. 
\begin{proposition}[{\cite[Theorem 2.1 and 2.4]{Kozono Yanagisawa IUMJ}}] \label{prop;KY}
    \begin{itemize}
        \item[(i)] For every $b\in H^1(\Omega)$ which satisfies $\Div\, b=0$ in $\Omega$, then 
        there uniquely exist a harmonic vector field $h\in V_{\mathrm{har}}(\Omega)$
        and a vector potential $w \in Z^2_\sigma(\Omega)\cap H^2(\Omega)$ such that
        \begin{equation}\label{eq;KY dec}
            h + \rot \, w =b \quad \text{in } \Omega,
        \end{equation}
        {with the estimate\,}
        \begin{equation*}
            \| h \|_{H^1(\Omega)} + \|w\|_{H^2(\Omega)} \leq C \| b \|_{H^1(\Omega)},
        \end{equation*}
        where the constant $C>0$ depends on $\Omega$, but is independent of $b$.
        \item[(ii)] For every $f \in H^1(\Omega)$, there uniquely exist
        a harmonic vector field $h\in V_{\mathrm{har}}(\Omega)$,
        a vector potential $w \in Z^2_\sigma(\Omega)\cap H^2(\Omega)$ 
        and a scalar potential $p \in H^2(\Omega)\cap H^1_0(\Omega)$ such that
        \begin{equation*}
            h + \rot \, w + \nabla p=f  \quad \text{in } \Omega,
        \end{equation*}
        {with the estimate\,}
        \begin{equation*}
            \| h \|_{H^1(\Omega)} + \|w\|_{H^2(\Omega)} + \|p\|_{H^2(\Omega)} \leq C \| f \|_{H^1(\Omega)},
        \end{equation*}
        where the constant $C>0$  depends on $\Omega$, but is independent of $f$.
    \end{itemize}
    
\end{proposition}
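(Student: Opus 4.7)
The plan is a two-step strategy: prove (i) by combining an $L^2$-projection onto the finite-dimensional space $V_{\mathrm{har}}(\Omega)$ with elliptic theory for the vector potential, and then reduce (ii) to (i) by first subtracting a gradient part.

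For (i), I start from the fact that $V_{\mathrm{har}}(\Omega)$ is a finite-dimensional subspace of $L^2(\Omega)$, spanned by the gradient vectors of the harmonic potentials $q_k$ from \eqref{eq;IntroHarmonic}; hence the $L^2$-orthogonal projection $h := P_{\mathrm{har}} b$ is well-defined and, by norm equivalence, satisfies $\|h\|_{H^1(\Omega)} \leq C\|b\|_{L^2(\Omega)}$. Next, I construct the vector potential as the unique solution $w \in Z^2_\sigma(\Omega) \cap H^2(\Omega)$ of the strictly elliptic (in the sense of \cite{ADN}) boundary value problem
\begin{equation*}
-\Delta w = \rot\, b, \quad \Div\, w = 0 \text{ in } \Omega, \quad w \cdot \nu = 0, \quad \rot\, w \times \nu = b \times \nu \text{ on } \pt\Omega,
\end{equation*}
uniqueness within $Z^2_\sigma$ being guaranteed by the orthogonality $w \perp X_{\mathrm{har}}(\Omega)$ which fixes the ADN kernel. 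The identity $b = h + \rot\, w$ then follows from a double orthogonality argument: the remainder $v := b - h - \rot\, w$ satisfies $\Div\, v = \rot\, v = 0$ in $\Omega$ and $v \times \nu = 0$ on $\pt\Omega$, so $v \in V_{\mathrm{har}}(\Omega)$; on the other hand $v \perp V_{\mathrm{har}}$ by construction of $h$ together with the integration-by-parts identity $(\rot\, w, h) = (w, \rot\, h) + \int_{\pt\Omega}(w \times h)\cdot \nu\,dS = 0$, which uses $\rot\, h = 0$ and $h \parallel \nu$ on $\pt\Omega$. The same two identities yield uniqueness of the decomposition.

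For the a priori estimate, ADN theory gives
\begin{equation*}
\|w\|_{H^2(\Omega)} \leq C\bigl(\|\rot\, b\|_{L^2(\Omega)} + \|b \times \nu\|_{H^{1/2}(\pt\Omega)} + \|w\|_{L^2(\Omega)}\bigr),
\end{equation*}
and the lower-order term is absorbed via a Poincar\'e-type inequality $\|w\|_{L^2} \leq C(\|\rot\, w\|_{L^2} + \|\Div\, w\|_{L^2})$ valid on $Z^2_\sigma(\Omega)$, obtained by the standard compactness argument after factoring out the finite-dimensional kernel $X_{\mathrm{har}}(\Omega)$. For (ii), I reduce to (i) by solving the Dirichlet problem $\Delta p = \Div\, f$ in $\Omega$ with $p|_{\pt\Omega} = 0$ to obtain $p \in H^2(\Omega) \cap H^1_0(\Omega)$ with $\|p\|_{H^2(\Omega)} \leq C\|f\|_{H^1(\Omega)}$; since $f - \nabla p$ is solenoidal in $H^1$, part (i) produces the remaining $h$ and $w$. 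Uniqueness of $(h,w,p)$ is then immediate from the pairwise $L^2$-orthogonality of $V_{\mathrm{har}}$, $\rot(Z^2_\sigma)$ and $\nabla(H^2(\Omega) \cap H^1_0(\Omega))$, which follows directly from the respective boundary conditions.

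The main obstacle is the Poincar\'e-type inequality on $Z^2_\sigma(\Omega)$: absorbing the $\|w\|_{L^2}$ term in the ADN a priori estimate requires a quantitative bound whose constant depends on $\Omega$ through the finite-dimensional null space $X_{\mathrm{har}}(\Omega)$, whose dimension and explicit structure are dictated by the topology of $\Omega$ (the cross-sections $\Sigma_\ell$ from \eqref{eq;L}). This is precisely the domain-dependent constant that the paper must subsequently bound uniformly in $t$ when transferring the decomposition to the moving family $\Omega(t)$.
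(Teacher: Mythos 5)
The paper does not prove Proposition~\ref{prop;KY} --- it is quoted verbatim from Kozono and Yanagisawa \cite{Kozono Yanagisawa IUMJ}, so there is no in-text proof to compare against; the comparison is with the cited source. Your architecture (project $b$ onto the finite-dimensional $V_{\mathrm{har}}(\Omega)$ to define $h$, construct $w$ from the elliptic system \eqref{eq;h and w equations}, verify $b = h + \rot\, w$ by a double orthogonality argument, absorb the lower-order term in the ADN estimate by a Poincar\'e-type inequality on $Z^2_\sigma(\Omega)$, and reduce (ii) to (i) by solving a homogeneous Dirichlet problem for the scalar potential) follows the same route as the cited result, and the orthogonality identities, the absorption argument, and the reduction from (ii) to (i) are all correct.

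The one genuine soft spot is the \emph{existence} of $w$. You posit a unique $w\in Z^2_\sigma(\Omega)\cap H^2(\Omega)$ solving the ADN system $-\Delta w = \rot\, b$, $\Div\, w = 0$ in $\Omega$, $w\cdot\nu = 0$, $\rot\, w\times\nu = b\times\nu$ on $\pt\Omega$, with uniqueness "guaranteed by the orthogonality." Uniqueness modulo $X_{\mathrm{har}}(\Omega)$ is indeed clear from that orthogonality, but existence is not free: the system has nontrivial kernel $X_{\mathrm{har}}(\Omega)$, so the Fredholm alternative for ADN systems requires the data $(\rot\, b, 0; b\times\nu, 0)$ to satisfy compatibility (orthogonality to the cokernel of the adjoint problem), which you do not verify. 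The cleaner route --- essentially what Kozono--Yanagisawa do --- is variational: on $X^2_\sigma(\Omega)\cap H^1(\Omega)$ modulo the finite-dimensional $X_{\mathrm{har}}(\Omega)$, the bilinear form $a(w,\psi) = (\rot\, w,\rot\, \psi)$ is coercive by the very Poincar\'e-type inequality you already invoke, so Lax--Milgram produces a unique $w\in Z^2_\sigma(\Omega)$ with $a(w,\psi) = (b,\rot\,\psi)$ for all admissible $\psi$; elliptic regularity then lifts $w$ to $H^2(\Omega)$, and the Euler--Lagrange conditions recover the strong form of the boundary value problem together with $\Div\, w = 0$. Once existence is secured this way, the remainder of your proof goes through as written.
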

\begin{remark}
    In the decomposition, the uniqueness and regularity of the vector potential $w$ within $Z^2_\sigma(\Omega)$
    immediately follows from \cite[Theorem 2.4]{Kozono Yanagisawa IUMJ}, since $w \perp X_{\mathrm{har}}(\Omega)$.
\end{remark}

Here, the following theorem is on the domain dependence, i.e., time dependence of the Helmholtz-Weyl decomposition.
\begin{theorem}\label{thm;time depend HW dec div}
    Let $\bigl\{\Omega(t)\bigr\}_{t\in\R}$ be as in Assumption.
    Let $T>0$ and let $b(t) \in H^1\bigl(\Omega(t)\bigr)$ 
    satisfy $\Div\,b(t)=0$ in $\Omega(t)$ for $t\in [0,T]$.
    Suppose the Helmholtz-Weyl decomposition of $b(t)$ in $\Omega(t)$, i.e.,
    \begin{equation*}
        h(t) + \rot\, w(t) = b(t)\quad \text{in } \Omega(t),
    \end{equation*}
    where $h(t)\in V_{\mathrm{har}}\bigl(\Omega(t)\bigr)$ and
    $w(t) \in Z_\sigma^2\bigl(\Omega(t)\bigr) \cap H^2\bigl(\Omega(t)\bigr)$
    are uniquely determined, for all $t\in [0,T]$.
    \begin{itemize}
        \item[(i)] If the transformed vector field $\widetilde{b}(t)$ on $\widetilde{\Omega}$
        from $b(t)$ on $\Omega(t)$, defined by \eqref{eq;transform u} satisfies
        \begin{equation*}
            \widetilde{b} \in C\bigl([0,T];H^1(\widetilde{\Omega})\bigr),
        \end{equation*}
        then the transformed vector fields $\widetilde{h}(t)$
        and
        $\widetilde{w}(t)$ on $\widetilde{\Omega}$ from $h(t)$ and $w(t)$ on $\Omega(t)$, respectively, 
        satisfy
        \begin{equation*}
            \widetilde{h} \in C\bigl([0,T];H^1(\widetilde{\Omega})\bigr)
            \quad\text{and}\quad
            \widetilde{w} \in C\bigl([0,T];H^2(\widetilde{\Omega})\bigr).
        \end{equation*}
        \item[(ii)] Furthermore, if $\widetilde{b} \in C^1\bigl([0,T];H^1(\widetilde{\Omega})\bigr)$
        then
        \begin{equation*}
            \widetilde{h} \in C^1\bigl([0,T];H^1(\widetilde{\Omega})\bigr)
            \quad\text{and}\quad
            \widetilde{w} \in C^1\bigl([0,T];H^2(\widetilde{\Omega})\bigr).
        \end{equation*}
    \end{itemize}
\end{theorem}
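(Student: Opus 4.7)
The plan is to treat the harmonic component $\widetilde{h}(t)$ and the vector potential $\widetilde{w}(t)$ separately. Since $V_{\mathrm{har}}\bigl(\Omega(t)\bigr)$ is finite dimensional and spanned by the gradients $\nabla q_k(t)$ of the Dirichlet potentials in \eqref{eq;IntroHarmonic}, the analysis of $\widetilde{h}(t)$ reduces to applying the Fujiwara--Ozawa / Kozono--Ushikoshi machinery to the transformed potentials $\widetilde{q}_k(t)$ on $\widetilde{\Omega}$, obtaining $\widetilde{q}_k \in C([0,T];H^2(\widetilde{\Omega}))$ (respectively $C^1([0,T];H^2(\widetilde{\Omega}))$) by the standard a priori estimate for a uniformly elliptic Dirichlet problem with $C^0$ (respectively $C^1$) coefficients. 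The scalar coefficients expressing $h(t) = \sum_k c_k(t) \nabla q_k(t)$ are determined by the linear system coming from $(h(t)-b(t),\nabla q_\ell(t))_{\Omega(t)}=0$, whose pull-back to $\widetilde{\Omega}$ inherits $C^0$ (resp.\ $C^1$) regularity in $t$ from that of $\widetilde{b}$ and $\widetilde{q}_k$. Hence the statement for $\widetilde{h}$ follows once the one for $\widetilde{w}$ is in place, and I focus on the latter. As preparation, I transform the strictly elliptic problem for $w(t)$ in \eqref{eq;h and w equations} into an ADN-elliptic system on the fixed $\widetilde{\Omega}$ with coefficients built from $g^{ij}$, $g_{ij}$, $\Gamma^i_{jk}$, and their derivatives, all of which inherit the time regularity of the diffeomorphism. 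Coupling the uniform estimate $\sup_{t\in[0,T]} C\bigl(\Omega(t)\bigr) < \infty$ from Section 4 with the change of variables yields
\[
\sup_{t \in [0,T]} \|\widetilde{w}(t)\|_{H^2(\widetilde{\Omega})} \le C \sup_{t \in [0,T]} \|\widetilde{b}(t)\|_{H^1(\widetilde{\Omega})} < \infty.
\]

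For part (i), fix $t_0 \in [0,T]$ and consider $\widetilde{w}(t) - \widetilde{w}(t_0)$, which satisfies an ADN system of the schematic form
\[
\widetilde{A}(t_0)\bigl(\widetilde{w}(t) - \widetilde{w}(t_0)\bigr) = \widetilde{b}(t) - \widetilde{b}(t_0) - \bigl(\widetilde{A}(t) - \widetilde{A}(t_0)\bigr)\widetilde{w}(t),
\]
where $\widetilde{A}(t)$ is the pulled-back differential operator; its coefficients tend to those of $\widetilde{A}(t_0)$ in $C^0(\widetilde{\Omega})$ as $t\to t_0$. The associated a priori estimate gives
\[
\|\widetilde{w}(t) - \widetilde{w}(t_0)\|_{H^2(\widetilde{\Omega})} \le C \|\widetilde{b}(t) - \widetilde{b}(t_0)\|_{H^1(\widetilde{\Omega})} + o(1)\|\widetilde{w}(t)\|_{H^2(\widetilde{\Omega})} + C\|\widetilde{w}(t) - \widetilde{w}(t_0)\|_{H^1(\widetilde{\Omega})}.
\]
The first two terms vanish in the limit by continuity of $\widetilde{b}$ and the uniform $H^2$-bound. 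To handle the residual $H^1$-term, from any sequence $t_n\to t_0$ the uniform $H^2$-bound together with Rellich--Kondrachov extracts a subsequence with $\widetilde{w}(t_n)$ strongly convergent in $H^1(\widetilde{\Omega})$; passing to the limit in the transformed decomposition and invoking uniqueness of the Helmholtz--Weyl decomposition at $t_0$ identifies the limit as $\widetilde{w}(t_0)$, so the whole sequence converges in $H^1$. Re-injecting this into the ADN estimate upgrades the convergence to $H^2$, proving (i).

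For part (ii), under $\widetilde{b} \in C^1([0,T];H^1(\widetilde{\Omega}))$ I study the difference quotients $D_\tau \widetilde{w}(t) := \tau^{-1}\bigl(\widetilde{w}(t+\tau) - \widetilde{w}(t)\bigr)$. Differentiating the transformed system, $D_\tau \widetilde{w}(t)$ satisfies an ADN system whose right-hand side converges in $H^1(\widetilde{\Omega})$ as $\tau\to 0$. The main obstacle is that the ADN estimate once again leaves the lower-order term $\|D_\tau \widetilde{w}(t)\|_{H^1}$ uncontrolled, and the subsequence-compactness trick from part (i) no longer applies, since one cannot a priori extract a single limit from the one-parameter family of difference quotients while simultaneously identifying it as the pointwise derivative. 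This is where the flux conservation becomes decisive: by the Assumption, $\phi\bigl(\Sigma(t),t\bigr) = \phi\bigl(\Sigma(t'),t'\bigr)$ is time-independent, so the condition $w(t) \in Z^2_\sigma\bigl(\Omega(t)\bigr)$ translates into the statement that $\widetilde{w}(t)$ has vanishing flux across a \emph{single} fixed family of cross-sections $\widetilde{\Sigma}_\ell \subset \widetilde{\Omega}$, for all $t$. In other words, $\widetilde{w}(t)$ lies in a time-independent closed subspace of $H^2(\widetilde{\Omega})$ complementary to the Foia\c{s}--Temam null space of $\rot$ under the non-slip boundary condition on $\widetilde{\Omega}$. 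On this fixed subspace, the Bolik--von Wahl / Kozono--Yanagisawa estimate bounds $\|\cdot\|_{H^2(\widetilde{\Omega})}$ by $\|\rot\,\cdot\|_{H^1(\widetilde{\Omega})}$ \emph{without} any surviving lower-order correction. Since $D_\tau\widetilde{w}(t)$ is a difference of elements of that subspace and so belongs to it, this produces a $\tau$-uniform $H^2$-bound on $D_\tau\widetilde{w}(t)$. A weak-$H^2$ limit then identifies $\pt_s \widetilde{w}(t)$, and replaying the Rellich argument from (i) for the family of quotients upgrades convergence to strong $H^2$. The statement for $\widetilde{h}$ in (ii) follows from the analogous Fujiwara--Ozawa differentiability of the Dirichlet potentials $\widetilde{q}_k$. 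The main difficulty, as flagged above, is converting a compactness-style proof of continuity into a structural proof of differentiability: the flux-invariance is exactly the ingredient that cancels the topological obstruction encoded in the rotation kernel.
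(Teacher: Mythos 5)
Your outline for the harmonic part and for the continuity assertion in (i) tracks the paper closely: a priori estimates for the transformed Dirichlet problems for $\widetilde{q}_k(t)$, the uniform bound $\sup_{t}C\bigl(\Omega(t)\bigr)<\infty$, Rellich compactness, and identification of the limit via uniqueness of the decomposition. You have also correctly isolated the flux invariance as the structural fact that rescues differentiability in (ii). However, there is a genuine gap in how you deploy it.

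You propose to obtain a $\tau$-uniform $H^2$-bound on the difference quotients $D_\tau\widetilde{w}(t)$ by using the Bolik--von Wahl / Kozono--Yanagisawa inequality $\|\cdot\|_{H^2(\widetilde{\Omega})}\le C\|\rot_y\,\cdot\|_{H^1(\widetilde{\Omega})}$ on the fixed subspace $Z^2_\sigma(\widetilde{\Omega})$. To feed that inequality you must control $\rot_y D_\tau\widetilde{w}(t)$ in $H^1(\widetilde{\Omega})$. But the push-forward of $\rot_x w(t)$ to $\widetilde{\Omega}$ is $\Rot(t)\widetilde{w}(t)$, not $\rot_y\widetilde{w}(t)$, and the operator $\Rot(t)-\rot_y$ is a first-order operator whose coefficients are $O(1)$ for every $t$ (they encode the full Jacobian of $\phi(\cdot,t)$, which is nowhere the identity). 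Expanding
\[
\rot_y D_\tau\widetilde{w}(t)
= D_\tau\bigl(\Rot(\cdot)\widetilde{w}(\cdot)\bigr)(t)
 - \bigl(\Rot(t+\tau)-\rot_y\bigr)D_\tau\widetilde{w}(t)
 - \frac{\Rot(t+\tau)-\Rot(t)}{\tau}\,\widetilde{w}(t),
\]
the first and third terms converge in $H^1(\widetilde{\Omega})$, but the middle term reintroduces $\|D_\tau\widetilde{w}(t)\|_{H^2}$ with a non-small constant, so the estimate is circular and the claimed $\tau$-uniform bound is not established.

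The paper avoids this precisely by freezing the base point: it pulls $w(t)$ back to the \emph{local} reference domain $\Omega(t_0)$ via $\vphi(\cdot,t)=\phi^{-1}\bigl(\phi(\cdot,t),t_0\bigr)$, where $\vphi(\cdot,t_0)$ is the identity and hence $\Rot(t_0)=\rot_{\tilde{x}}$. Then $\bigl(\Rot(t_0+\ep)-\rot_{\tilde{x}}\bigr)/\ep$ is a genuine divided difference of a smooth operator family and converges to $\dot{\Rot}(t_0)$, so \eqref{eq;rot ww} expresses $\rot_{\tilde{x}}$ of the difference quotient as an $H^1$-convergent right-hand side with no self-referential term. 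Combined with Lemma~\ref{lem;Xhar perp} (your flux-invariance step, correctly used) and the uniform decomposition constant of Theorem~\ref{thm;const KY dec}, this yields the Cauchy property in $H^2\bigl(\Omega(t_0)\bigr)$ directly. Transporting back to $\widetilde{\Omega}$ via Proposition~\ref{prop;funcsp} and \eqref{eq;tildewidetilde} is the final step. So your key idea is the right one, but you need to deploy it on $\Omega(t_0)$ rather than on the globally fixed $\widetilde{\Omega}$, for exactly the reason the paper moves between two reference domains throughout Section~\ref{subsec;rot}.
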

\begin{remark}
    \begin{itemize}
        \item[(a)] The time dependence of $\widetilde{h}(t)$ and $\widetilde{w}(t)$ 
        comes from the variation of the domain and the time regularity of data $\widetilde{b}(t)$.
        In particular, for the harmonic vector fields, since the basis 
        $\nabla q_1(t),\dots, \nabla q_K(t)$ of $V_{\mathrm{har}}(\Omega)$ are determined
        only by the domain 
        and since the contribution from $b(t)$ appears only in the coefficients, 
         we can separately discuss these two aspects for $\widetilde{h}(t)$.
        \item[(b)] We can extend Theorem \ref{thm;time depend HW dec div}
        to $b(t) \in W^{1,r}\bigl(\Omega(t)\bigr)$ with $\Div\, b(t) =0$ in $\Omega(t)$, $t\in [0,T]$,
        for all $1 <r <\infty$, just by replacing the norm by one on the $L^r$-Sobolev spaces.
        Namely, $\widetilde{h} \in C^m\bigl([0,T]; W^{1,r}(\widetilde{\Omega})\bigr)$
        and $\widetilde{w} \in C^m \bigl([0,T];W^{2,r}(\widetilde{\Omega})\bigr)$, provided 
        $\widetilde{b} \in C^m \bigl( [0,T];W^{1,r}(\widetilde{\Omega})\bigr)$ with some $m \in \{0,1\}$.
    \end{itemize}
\end{remark}

Since we can decompose $f(t)=b(t)+\nabla p(t)$ with $\Div\, b(t)=0$ with $\Omega(t)$ 
for every $f\in H^1\bigl(\Omega(t)\bigr)$,
investigating the time dependence of $p(t)$, 
we immediately obtain the following generalization. 
\begin{corollary}\label{cor;time depend HW dec gen}
    Let $\bigl\{\Omega(t)\bigr\}_{t\geq 0}$ be as in Assumption.
    Let $T>0$ and let $f(t) \in H^1\bigl(\Omega(t)\bigr)$ for $t \in [0,T]$.
    Suppose the Helmholtz-Weyl decomposition of $f(t)$ in $\Omega(t)$, i.e.,
    \begin{equation*}
        h(t) + \rot\, w(t) + \nabla p(t) = f(t) \quad \text{in } \Omega(t),
    \end{equation*}
    where $h(t)\in V_{\mathrm{har}}\bigl(\Omega(t)\bigr)$,
    $w(t) \in Z_\sigma^2\bigl(\Omega(t)\bigr) \cap H^2\bigl(\Omega(t)\bigr)$
    and
    $p(t) \in H^2\bigl(\Omega(t)\bigr)\cap H^1_{0}\bigl(\Omega(t)\bigr)$
    are uniquely determined, for all $t\in [0,T]$.
    \begin{itemize}
        \item[(i)] If the transformed vector filed $\widetilde{f}(t)$ in $\widetilde{\Omega}$ from $f(t)$,
        by \eqref{eq;transform u} satisfies
        \begin{equation*}
            \widetilde{f} \in C\bigl([0,T];H^1(\widetilde{\Omega})\bigr),
        \end{equation*}
        then the transformed vector fields 
        $\widetilde{h}(t)$, 
        $\widetilde{w}(t)$ on $\widetilde{\Omega}$ by \eqref{eq;transform u} and 
        the transformed function $\widetilde{p}(t)$ on $\widetilde{\Omega}$ by \eqref{eq;transform pi} satisfy
        \begin{equation*}
            \widetilde{h} \in C\bigl([0,T];H^1(\widetilde{\Omega})\bigr),
            \quad
            \widetilde{w} \in C\bigl([0,T];H^2(\widetilde{\Omega})\bigr)
            \quad\text{and}\quad
            \widetilde{p} \in C\bigl([0,T];H^2(\widetilde{\Omega})\cap H^1_0(\widetilde{\Omega})\bigr).
        \end{equation*}
        \item[(ii)] Furthermore, if $\widetilde{f}\in C^1\bigl([0,T];H^1(\widetilde{\Omega})\bigr)$
        then
        \begin{equation*}
            \widetilde{h} \in C^1\bigl([0,T];H^1(\widetilde{\Omega})\bigr),
            \quad
            \widetilde{w} \in C^1\bigl([0,T];H^2(\widetilde{\Omega})\bigr)
            \quad\text{and}\quad
            \widetilde{p} \in C^1\bigl([0,T];H^2(\widetilde{\Omega})\cap H^1_0(\widetilde{\Omega})\bigr).
        \end{equation*}
    \end{itemize}
\end{corollary}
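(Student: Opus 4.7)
The plan is to reduce to Theorem \ref{thm;time depend HW dec div} by handling the scalar potential through a Dirichlet problem. For each $t\in[0,T]$, let $p(t)\in H^2(\Omega(t))\cap H^1_0(\Omega(t))$ be the unique solution of
\begin{equation*}
  \Delta p(t)=\Div\,f(t)\ \text{in}\ \Omega(t),\qquad p(t)|_{\pt\Omega(t)}=0,
\end{equation*}
and set $b(t):=f(t)-\nabla p(t)$; then $\Div\,b(t)=0$ in $\Omega(t)$, and Proposition \ref{prop;KY}(i) yields the required $h(t)\in V_{\mathrm{har}}(\Omega(t))$ and $w(t)\in Z_\sigma^2(\Omega(t))\cap H^2(\Omega(t))$ with $b(t)=h(t)+\rot\,w(t)$. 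Uniqueness of the triple $(h,w,p)$ is immediate: applying $\Div$ to $h+\rot\,w+\nabla p=f$ forces $p$ to satisfy the above Dirichlet problem, after which the uniqueness statement in Proposition \ref{prop;KY}(i) pins down $h$ and $w$.

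The analytic core is the time regularity of $\widetilde{p}$. Writing the Dirichlet problem in the $y$-coordinates on $\widetilde{\Omega}$ via the scalar pullback \eqref{eq;transform pi} produces a $t$-parametrised family of strictly elliptic boundary-value problems
\begin{equation*}
  \mathcal{L}(t)\widetilde{p}(t)=\widetilde{G}(t)\ \text{in}\ \widetilde{\Omega},\qquad \widetilde{p}(t)|_{\pt\widetilde{\Omega}}=0,
\end{equation*}
whose coefficients, built from derivatives of $\phi$ and $\phi^{-1}$, are $C^\infty$ in $t\in[0,T]$, and whose right-hand side $\widetilde{G}(t)$ is a first-order linear combination of $\widetilde{f}(t)$ with smoothly $t$-dependent coefficients. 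This is precisely the strictly elliptic Dirichlet setting in the sense of \cite{ADN} treated by Fujiwara and Ozawa \cite{Fujiwara Ozawa} and by Kozono and the third author \cite{Kozono Ushikoshi}, already invoked for \eqref{eq;IntroHarmonic}: the uniform $H^2$ a priori bound $\|\widetilde{p}(t)\|_{H^2(\widetilde{\Omega})}\leq C\|\widetilde{G}(t)\|_{L^2(\widetilde{\Omega})}$ with $C$ independent of $t$ follows from uniform ellipticity, and, applied to the difference $\widetilde{p}(t)-\widetilde{p}(s)$ which satisfies $\mathcal{L}(t)(\widetilde{p}(t)-\widetilde{p}(s))=(\widetilde{G}(t)-\widetilde{G}(s))+(\mathcal{L}(s)-\mathcal{L}(t))\widetilde{p}(s)$, it yields continuity of $\widetilde{p}$ in $H^2(\widetilde{\Omega})\cap H^1_0(\widetilde{\Omega})$ for part (i); the same scheme applied to difference quotients gives $\widetilde{p}\in C^1([0,T];H^2(\widetilde{\Omega})\cap H^1_0(\widetilde{\Omega}))$ for part (ii).

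With $\widetilde{p}$ controlled, the conclusion is quick. Under the vector-field transform \eqref{eq;transform u}, $\nabla p$ on $\Omega(t)$ corresponds to the covariant gradient $\nabla_g\widetilde{p}$ on $\widetilde{\Omega}$, whose $H^1$-norm is bounded by $\|\widetilde{p}\|_{H^2(\widetilde{\Omega})}$ with multipliers $g^{ij}(y,t)$ smooth in $t$; hence $\nabla_g\widetilde{p}\in C^k([0,T];H^1(\widetilde{\Omega}))$ for the relevant $k\in\{0,1\}$. Consequently $\widetilde{b}=\widetilde{f}-\nabla_g\widetilde{p}\in C^k([0,T];H^1(\widetilde{\Omega}))$, and applying Theorem \ref{thm;time depend HW dec div} to $b(t)$ delivers the required regularity of $\widetilde{h}$ and $\widetilde{w}$. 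The step I expect to require the most care is the uniform-in-$t$ control of the coefficient perturbation $(\mathcal{L}(s)-\mathcal{L}(t))\widetilde{p}(s)$ in $L^2(\widetilde{\Omega})$, but this is routine because $\Phi\in C^\infty(\overline{Q}_\infty)$ and the uniform $H^2$-bound on $\widetilde{p}(s)$ is already in hand; the genuine difficulties (nontrivial kernel of the rotation, lower-order terms in the a priori estimate) are absorbed by Theorem \ref{thm;time depend HW dec div} itself.
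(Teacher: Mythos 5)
Your proposal is correct and follows essentially the same strategy as the paper: introduce $p(t)$ via the homogeneous Dirichlet Poisson problem $\Delta p(t)=\Div f(t)$ on $\Omega(t)$, establish time regularity of $\widetilde{p}$ from uniform-in-$t$ elliptic a priori estimates, set $b(t):=f(t)-\nabla p(t)$, observe $\widetilde{b}=\widetilde{f}-\nabla_g\widetilde{p}$ inherits the same regularity, and invoke Theorem~\ref{thm;time depend HW dec div}. The paper's only technical difference is that it transforms the Dirichlet problem onto the intermediate domain $\Omega(t_0)$ via $\varphi(\cdot,t)$ rather than directly onto $\widetilde{\Omega}$; this makes the transformed operator at $t=t_0$ literally the Euclidean Laplacian, so the locally uniform constant (Proposition~\ref{prop;uniform bound q_k}) can be obtained by perturbing off the standard $\Delta$-estimate, whereas your route needs the uniform-in-$t$ ADN estimate for the whole smooth family $\mathcal{L}(t)$ on $\widetilde{\Omega}$ directly — both are fine and yield the same bound by compactness of $[0,T]$ and smoothness of $\Phi$. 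You should also explicitly introduce the analogue of the operator $\dot{\mathcal{L}}(t)$ and the auxiliary Dirichlet problem for $\dot{p}(t_0)$ when treating the difference quotients in part (ii), which the paper spells out in \eqref{eq;Lap dotp}--\eqref{eq;Lt dotp}; your phrase ``the same scheme applied to difference quotients'' compresses precisely this step.
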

\subsection{Existence of time periodic solutions to (N-S$^\prime$)}
Under Assumption, it was shown that $\dim V_{\mathrm{har}}\bigl(\Omega(t)\bigr)=K$
for each $t\in \R$,
by \cite{Kozono Yanagisawa IUMJ}. 
Moreover, 
$V_{\mathrm{har}}\bigl(\Omega(t)\bigr)$ is spanned by 
the gradient flows of the solutions of the following Laplace equations with the 
Dirichlet boundary condition, i.e., for $k=1,\dots,K$, 
\begin{equation}\label{eq;Harmonic basis equation}
    \begin{cases}
        \Delta q_k(t) = 0 & \text{in } \Omega(t),
        \\
        q_k(t)|_{\Gamma_j(t)} = \delta_{k,j} &\text{for } j=0,\dots,K.
    \end{cases}
\end{equation} 
By the Schmidt orthonormalization, the orthonormal basis $\eta_1(t),\dots,\eta_K(t)$
of $V_{\mathrm{har}}\bigl(\Omega(t)\bigr)$ in  $L^2$-sense is obtained
with the relation $\displaystyle \eta_j(t)=\sum_{k=1}^K\alpha_{jk}(t)\nabla q_k(t)$, 
$j=1,\dots,K$ with some coefficients $\alpha_{jk}(t) \in \R$ for $j,k=1,\dots,K$.
For more detail, see \eqref{eq;cons Vhar}, below.
\smallskip

Then, we state our existence theorem 
of the time periodic solutions of (N-S$^\prime$).
\begin{theorem}\label{thm;time periodic}
    Let $T>0$  and let $\bigl\{\Omega(t)\bigr\}_{t\in\R}$ be as in Assumption.
    Let $f$ be external force on $Q_\infty$ with $ f\in L^2(Q_T)$ 
    and let $\beta(t) \in H^{\frac{1}{2}}
    \bigl(\pt\Omega(t)\bigr)$ with (G.F.C.) for all $t \in \R$ satisfy
    $\widetilde{\beta} \in C^1\bigl(\R;H^{\frac{1}{2}}(\pt \widetilde{\Omega})\bigr)$.
    Furthermore, we assume $\phi^{-1}(\cdot,t+T)=\phi^{-1}(\cdot,t)$, $f(t+T)=f(t)$ and $\beta(t+T)=\beta(t)$ for all $t\in \R$.
    If
    \begin{equation}\label{eq;smallness of harmonic}
        \sup_{0\leq t \leq T} 
        \left\| \sum_{\ell,k=1}^K \alpha_{k\ell}(t)\left(
            \int_{\Gamma_\ell(t)} \beta(t)\cdot \nu(t)\,dS
        \right) \eta_k(t)\right\|_{L^3(\Omega(t))} < \frac{1}{C_s},
    \end{equation}
    then there exist a solenoidal extension $b(t)\in H^1\bigl(\Omega(t)\bigr)$ of $\beta(t)$
    with $ \widetilde{b} \in C^1\bigl([0,T];H^1(\widetilde{\Omega})\bigr)$,
    an initial data $a \in L^2\bigl(\Omega(0)\bigr)$ and a weak solution $u$ of (N-S$^\prime$) such that
    $u(T)= a$ in $L^2_\sigma\bigl(\Omega(0)\bigr)$.
    Here,  $C_s= 3^{-\frac{1}{2}}2^{\frac{2}{3}}\pi^{-\frac{2}{3}}$ is the best constant of the Sobolev embedding 
    $H^1_0(\Omega) \hookrightarrow L^6(\Omega)$.
\end{theorem}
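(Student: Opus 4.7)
The plan is to construct a solenoidal extension $b$ of $\beta$ whose Helmholtz--Weyl decomposition $b = h + \rot\, w$ has harmonic part $h = h_0(t) := \sum_{k,\ell=1}^K \alpha_{k\ell}(t)\Phi_\ell(t)\eta_k(t)$ (with $\Phi_\ell(t) := \int_{\Gamma_\ell(t)}\beta\cdot\nu\,dS$) --- exactly the vector field controlled in \eqref{eq;smallness of harmonic} --- together with good time regularity, then to transform (N-S$^\prime$) to (N-S$^*$) on $\widetilde{Q}_T$, solve the periodic problem there by a Galerkin scheme plus Brouwer's theorem, and finally pass to the limit via Aubin--Lions.

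\textbf{Step 1 (Extension).} Since $\beta(t) - h_0(t)|_{\pt\Omega(t)}$ has zero flux on every $\Gamma_\ell(t)$, a Leray-type lift (realized via a Bogovski\u{\i}/Hopf stream-function construction on $\widetilde{\Omega}$ pulled back to $\Omega(t)$), concentrated near $\pt\Omega(t)$ in a strip of width $\rho > 0$, yields $b_\rho(t)$ with $\Div\,b_\rho(t)=0$, correct trace, $\widetilde{b}_\rho \in C^1([0,T];H^1(\widetilde{\Omega}))$, and
\begin{equation*}
    \bigl| (u\cdot\nabla u, b_\rho(t))_{\Omega(t)} \bigr| \leq \varepsilon(\rho)\, \|\nabla u\|_{L^2(\Omega(t))}^2 \qquad \forall\, u \in H^1_0(\Omega(t)),\ t\in[0,T],
\end{equation*}
with $\varepsilon(\rho) \to 0$ as $\rho \to 0$, via Hardy's inequality applied to the stream-function representation. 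Set $b := h_0 + b_\rho$; by the cross-section flux characterization of the harmonic component and uniqueness in Proposition \ref{prop;KY}(i), $h_0$ is the harmonic piece in $b = h + \rot\,w$, and Theorem \ref{thm;time depend HW dec div}(ii) delivers $\widetilde{b}, \widetilde{h} \in C^1([0,T]; H^1(\widetilde{\Omega}))$ and $\widetilde{w} \in C^1([0,T]; H^2(\widetilde{\Omega}))$ with $t$-uniform bounds. Choose $\rho$ small enough that
\begin{equation*}
    \kappa := C_s \sup_{t\in[0,T]} \|h_0(t)\|_{L^3(\Omega(t))} + \varepsilon(\rho) < 1;
\end{equation*}
this is possible by \eqref{eq;smallness of harmonic}.

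\textbf{Step 2 (Galerkin, energy estimate, fixed point).} With a complete basis $\{\widetilde{\varphi}_j\}_{j\in\N} \subset C^\infty_{0,\sigma}(\widetilde{\Omega})$ of $L^2_\sigma(\widetilde{\Omega})$ and $V_m := \mathrm{span}\{\widetilde{\varphi}_1,\dots,\widetilde{\varphi}_m\}$, let $\widetilde{u}_m := \sum_{j\leq m}c_j^m(t)\widetilde{\varphi}_j$ solve the Galerkin form of (N-S$^*$). Testing against $\widetilde{u}_m$ in $\langle\cdot,\cdot\rangle_t$ and translating to $\Omega(t)$-integrals through \eqref{eq;equivalence innerproduct L2}--\eqref{eq;equivalence innerproduct H1}, Reynolds transport gives $\langle\pt_s\widetilde{u}_m + M\widetilde{u}_m, \widetilde{u}_m\rangle_t = \tfrac12\tfrac{d}{dt}\|\widetilde{u}_m\|_t^2$, while $\langle N[\widetilde{u}_m,\widetilde{u}_m],\widetilde{u}_m\rangle_t = \langle N[\widetilde{b},\widetilde{u}_m],\widetilde{u}_m\rangle_t = 0$; only $\langle N[\widetilde{u}_m,\widetilde{b}],\widetilde{u}_m\rangle_t = -(u_m\cdot\nabla u_m, b)_{\Omega(t)}$ remains from the convection. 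Splitting $b = h_0 + b_\rho$, H\"older with the sharp Sobolev embedding $H^1_0\hookrightarrow L^6$ gives $|(u_m\cdot\nabla u_m, h_0)| \leq C_s\|h_0\|_{L^3}\|\nabla u_m\|_{L^2}^2$, and Step 1 bounds the $b_\rho$ contribution by $\varepsilon(\rho)\|\nabla u_m\|_{L^2}^2$, so $|(u_m\cdot\nabla u_m, b)| \leq \kappa \|\nabla u_m\|_{L^2(\Omega(t))}^2$. After absorbing the data term into the remaining dissipation and applying Poincar\'e,
\begin{equation*}
    \tfrac{d}{dt}\|\widetilde{u}_m\|_t^2 + (1-\kappa)\lambda_1\|\widetilde{u}_m\|_t^2 \leq C_0(t),\qquad C_0 \in L^1(0,T),
\end{equation*}
uniformly in $m$ and in the initial data. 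Gronwall yields $\|\widetilde{u}_m(T)\|^2 \leq e^{-(1-\kappa)\lambda_1 T}\|\widetilde{a}_m\|^2 + D$, so the Poincar\'e map $P_m: V_m \to V_m$, $P_m(\widetilde{a}_m) := \widetilde{u}_m(T)$ (continuous on the finite-dimensional $V_m$) maps the ball of radius $R_0$ with $R_0^2 := D/(1-e^{-(1-\kappa)\lambda_1 T})$ into itself; Brouwer yields a fixed point, hence a $T$-periodic Galerkin solution.

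\textbf{Step 3 (Passage to the limit).} The family $\{\widetilde{u}_m\}$ is uniformly bounded in $L^\infty(0,T;L^2_\sigma) \cap L^2(0,T;H^1_{0,\sigma})$, and $\{\pt_s\widetilde{u}_m\}$ is bounded in $L^{4/3}(0,T;(H^1_{0,\sigma})^\ast)$ via (N-S$^*$); Aubin--Lions extracts a subsequence $\widetilde{u}_m \to \widetilde{u}$ strongly in $L^2(0,T;L^2_\sigma)$, weakly-$\ast$ in $L^\infty(0,T;L^2_\sigma)$, and weakly in $L^2(0,T;H^1_{0,\sigma})$. Passing to the limit in the weak formulation (the convection term thanks to the strong $L^2$-convergence) produces a weak solution of (N-S$^*$); periodicity survives because the Galerkin traces at $0, T$ coincide and the limit has a continuous representative into $(H^1_{0,\sigma})^\ast$. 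Pulling back via $\Phi^{-1}$ and setting $a := u(\cdot,0) \in L^2_\sigma(\Omega(0))$ completes the proof. The principal obstacle throughout is the convection $(u\cdot\nabla b, u)$ on a multi-connected moving domain with large flux: the time-dependent Helmholtz--Weyl decomposition (Theorem \ref{thm;time depend HW dec div}) is precisely what enables the topological/regular splitting underpinning Step 1 to be executed uniformly in $t$ on $[0,T]$, isolating the unavoidable harmonic obstruction $h_0$ whose $L^3$-smallness is exactly \eqref{eq;smallness of harmonic}.
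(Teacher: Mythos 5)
Your proposal follows the paper's overall architecture closely --- extend $\beta$ by a vector whose harmonic part is exactly the $h_0$ controlled by \eqref{eq;smallness of harmonic}, absorb the convection term by $C_s\|h_0\|_{L^3}<1$ plus a Leray-Hopf $\varepsilon$, then run Galerkin plus Gronwall plus Brouwer on the transformed domain. Steps 2 and 3 are, up to cosmetic choices (a $t$-independent Galerkin basis with Aubin--Lions in place of the paper's $t$-dependent orthonormalization with Ascoli--Arzel\`{a} and Miyakawa--Teramoto's modified Friedrichs inequality), the same argument as the paper and both work.

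The genuine gap is in Step 1. You assert, without construction, that a ``Bogovski\u{\i}/Hopf stream-function lift on $\widetilde{\Omega}$ pulled back to $\Omega(t)$'' supplies $b_\rho(t)$ that is solenoidal, has the right trace, lies in $C^1([0,T];H^1(\widetilde{\Omega}))$ after transform, and satisfies $|(u\cdot\nabla u,\, b_\rho(t))|\le\varepsilon(\rho)\|\nabla u\|^2_{L^2(\Omega(t))}$ with a \emph{single} $\rho$ and a bound $\varepsilon(\rho)\to 0$ \emph{uniformly over $t\in[0,T]$}. The Hopf cutoff estimate near the boundary needs a stream function $v$ with $v\in H^2$ (so $v\in L^\infty$ in 3D, for the Hardy term $\rho\|v\|_{L^\infty}$), and the crucial uniform-in-$t$ choice of the cutoff width $\rho$ is not automatic: the proof of Lemma \ref{lem;cut rot w} hinges on taking $v=\widetilde{w}(t)$, the vector potential from the Helmholtz--Weyl decomposition, and on the \emph{uniform continuity} of $\widetilde{w}$ on $[0,T]$ in $H^2(\widetilde{\Omega})$ (Theorem \ref{thm;conti w}/\ref{thm;time depend HW dec div}) to pass from a $t_0$-dependent $\rho(\varepsilon,t_0)$ to a single $\rho_0$ via a finite $\delta$-net. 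Your proposal cites the $C^1$-regularity of $\widetilde{w}$ but never identifies the stream function with $w$ and never makes the uniform-in-$t$ compactness argument; citing ``$t$-uniform bounds'' is not enough, since bounds alone do not fix a common $\rho$. The simplest repair is to do exactly what the paper does: take \emph{any} Bogovski\u{\i} extension $b$, decompose $b=h+\rot\,w$ with Proposition \ref{prop;KY}, set $b_\varepsilon:=h+\rot(\theta w)$ where $\theta=\widetilde{\theta}_{\rho_0}$ comes from Lemma \ref{lem;cut rot w}, and replace $b$ by $b_\varepsilon$ as the extension. Then $b_\varepsilon$ is solenoidal (being $h$ plus a curl), has the correct trace ($\theta\equiv 1$ near $\partial\Omega(t)$), inherits $C^1$-in-$t$ regularity from Theorem \ref{thm;time depend HW dec div}(ii), and the convection term satisfies the Leray estimate uniformly in $t$ by Lemma \ref{lem;cut rot w}; this also removes the tension in your Step 1 between wanting $b_\rho$ concentrated in a boundary strip and simultaneously wanting $b_\rho=\rot\,w$ with $w\in Z^2_\sigma$ from the decomposition of your own $b$.
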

\begin{remark}
    \begin{itemize}
    \item[(a)] We note that $\alpha_{jk}(t)$ and $\eta_k(t)$ are determined only by $\Omega(t)$ and $C_s$ is an absolute constant.
    Hence, the condition \eqref{eq;smallness of harmonic} are formulated only by the given domain $\Omega(t)$ 
    and data $\beta(t)$ on $\pt \Omega(t)$, independent of the diffeomorphism or the fixed domain $\widetilde{\Omega}$.
    \item[(b)] Compared to the previous results of the second author \cite{Okabe JEE},
    since $\alpha_{k\ell}$ and $\eta_k$ depend on $t$, the assumption \eqref{eq;smallness of harmonic} gives us the relation of
    the balances of the domain variations and the flux.
    Indeed, we shall give the following corollaries.
    \end{itemize}
\end{remark}
\begin{corollary}
    Let $T>0$ and let $\{\Omega(t)\}_{t\in\R}$ be as in Assumption with $\phi^{-1}(\cdot,t+T)=\phi^{-1}(\cdot,t)$
    for all $t\in\R$.
    Assume that $\Omega(t)=\lambda(t)\Omega(0)=\{\lambda(t)x \in \R^3\,;\,x \in \Omega(0)\}$ with some
    $\lambda \in C^\infty(\R)$ satisfying $\lambda(t)>0$ and $\lambda(t+T)=\lambda(t)$, for all $t\in\R$.
    Let $\beta(t) \in H^{\frac{1}{2}}\bigl(\pt\Omega(t)\bigr)$ with (G.F.C.) for all $t\in \R$ satisfy
    $\widetilde{\beta} \in C^{1}\bigl(\R;H^{\frac{1}{2}}(\widetilde{\Omega})\bigr)$ 
    and $\beta(t+T)=\beta(t)$ for all $t\in\R$.
    If 
    \begin{equation}\label{eq;cor1}
        \frac{1}{\lambda(t)} \sum_{k=1}^K \left|
            \int_{\Gamma_k(t)} \beta(t)\cdot \nu(t)\,dS
        \right|<C_0 \quad \text{for all } t \in [0,T],
    \end{equation}
    then $\beta$ satisfies the assumption \eqref{eq;smallness of harmonic} in Theorem \ref{thm;time periodic}. Here, the constant $C_0>0$ depends only on $\Omega(0)$.
\end{corollary}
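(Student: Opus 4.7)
The plan is to exploit the scale-invariance of the Dirichlet--Laplace problem \eqref{eq;Harmonic basis equation} to derive explicit scaling laws for $q_k(t)$, for the Schmidt coefficients $\alpha_{jk}(t)$, and for the $L^2$-orthonormal harmonic basis $\eta_k(t)$, thereby reducing \eqref{eq;smallness of harmonic} to the pointwise hypothesis \eqref{eq;cor1}. First I would observe that, since $\Omega(t)=\lambda(t)\Omega(0)$ and $\Gamma_j(t)=\lambda(t)\Gamma_j(0)$, uniqueness for \eqref{eq;Harmonic basis equation} forces $q_k(t)(x)=q_k(0)(x/\lambda(t))$, so that $\nabla_x q_k(t)(x)=\lambda(t)^{-1}\nabla q_k(0)(x/\lambda(t))$. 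A change of variables $x=\lambda(t)y$ then yields the Gram matrix identity
\begin{equation*}
G(t)_{kj}:=(\nabla q_k(t),\nabla q_j(t))_{L^2(\Omega(t))}=\lambda(t)\,G(0)_{kj}.
\end{equation*}

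The Schmidt orthonormalization of $\{\nabla q_k(\cdot)\}$ in $L^2$ is, by construction, the unique lower-triangular matrix $A(\cdot)=(\alpha_{jk}(\cdot))$ with positive diagonal satisfying $A(\cdot)G(\cdot)A(\cdot)^{\top}=I$. Comparing these relations at times $0$ and $t$, the uniqueness of this factorization forces $\alpha_{jk}(t)=\lambda(t)^{-1/2}\alpha_{jk}(0)$ and hence
\begin{equation*}
\eta_j(t)(x)=\lambda(t)^{-3/2}\,\eta_j(0)(x/\lambda(t)),\qquad
\|\eta_k(t)\|_{L^3(\Omega(t))}=\lambda(t)^{-1/2}\|\eta_k(0)\|_{L^3(\Omega(0))}.
\end{equation*}
Writing $F_\ell(t):=\int_{\Gamma_\ell(t)}\beta(t)\cdot\nu(t)\,dS$ and applying the triangle inequality in $L^3(\Omega(t))$, these scalings combine to
\begin{equation*}
\left\|\sum_{k,\ell=1}^{K}\alpha_{k\ell}(t)F_\ell(t)\eta_k(t)\right\|_{L^3(\Omega(t))}
\leq C_1\,\lambda(t)^{-1}\sum_{\ell=1}^{K}|F_\ell(t)|,
\end{equation*}
where $C_1:=\sum_{k,\ell=1}^{K}|\alpha_{k\ell}(0)|\,\|\eta_k(0)\|_{L^3(\Omega(0))}$ depends only on $\Omega(0)$. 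Choosing $C_0:=(C_1 C_s)^{-1}$, the hypothesis \eqref{eq;cor1} then yields \eqref{eq;smallness of harmonic} uniformly on $[0,T]$, which is exactly the conclusion required.

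The only genuinely delicate step is the identification $\alpha_{jk}(t)=\lambda(t)^{-1/2}\alpha_{jk}(0)$: this relies on the uniqueness of the Cholesky-type factorization produced by Schmidt orthonormalization (equivalently, on the lower-triangular structure of $A$ with positive diagonal), so that the scalar dilation $G(t)=\lambda(t)G(0)$ of the Gram matrix transfers cleanly to a scalar dilation of $A(t)$. Once this step is pinned down, every other computation is a routine change of variables under $x=\lambda(t)y$.
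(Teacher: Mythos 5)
Your proof is correct and follows the same approach as the paper: both rest on the dilation identity $q_k(t)(x)=q_k(0)(x/\lambda(t))$ and the resulting scaling laws $\alpha_{jk}(t)=\lambda(t)^{-1/2}\alpha_{jk}(0)$ and $\eta_k(t)(x)=\lambda(t)^{-3/2}\eta_k(0)(x/\lambda(t))$, combined with a change of variables to pass from the $L^{3}(\Omega(t))$-norm to $L^{3}(\Omega(0))$. Your Cholesky-uniqueness argument for why the Gram--Schmidt coefficients scale homogeneously is a nice elaboration of a step the paper states without justification.
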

\begin{remark}
        We put $q_k(y,t):= q_k\bigl(y/\lambda(t)\bigr)$ for $y \in \Omega(t)$,
        where $q_k$  is the solution of 
        \eqref{eq;Harmonic basis equation} on $\Omega(0)$.
        Then we easily see that $q_k(t)$ is the solution of \eqref{eq;Harmonic basis equation}
        on $\Omega(t)$ and \eqref{eq;cor1} follows from 
        $\alpha_{jk}(t) = \frac{1}{\sqrt{\lambda(t)}} \alpha_{jk}(0)$ and
        $\eta_k(t)=\frac{1}{\lambda(t)\sqrt{\lambda{(t)}}} \eta_k(0)$.
\end{remark}
\begin{corollary}\label{cor;beta2}
    Let $T>0$ and let $\Omega(t)$ be an annulus $A_{R_0,R_1}(t):=\{x \in \R^3\,;\, 0<R_1(t)<|x|<R_0(t)\}$
    for $t\in \R$ with some $R_0,R_1\in C^\infty(\R)$ 
    satisfying $R_0(t+T)=R_0(t)$ and $R_1(t+T)=R_1(t)$, for all $t\in\R$.
    Let $\beta(t) \in H^{\frac{1}{2}}\bigl(\pt\Omega(t)\bigr)$ with (G.F.C.) for all $t\in \R$ satisfy
    $\widetilde{\beta} \in C^{1}\bigl(\R;H^{\frac{1}{2}}(\widetilde{\Omega})\bigr)$ 
    and $\beta(t+T)=\beta(t)$ for all $t\in\R$.
    If 
    \begin{equation*}
       2^{-\frac{2}{3}}3^{-\frac{1}{3}}\pi^{-\frac{2}{3}}\left( \frac{1}{R_1(t)^3}-\frac{1}{R_0(t)^3}\right)^{\frac{1}{3}} \left|
            \int_{|x|=R_1(t)} \beta(t)\cdot \nu(t)\,dS
        \right|< \frac{1}{C_s} \quad \text{for all } t \in [0,T],
    \end{equation*}
    then $\beta$ satisfies the assumption \eqref{eq;smallness of harmonic} in Theorem \ref{thm;time periodic}. 
\end{corollary}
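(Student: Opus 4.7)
The plan is to reduce \eqref{eq;smallness of harmonic} to a direct computation by solving \eqref{eq;Harmonic basis equation} in closed form on the annulus $\Omega(t)=A_{R_0(t),R_1(t)}$. Since $\pt\Omega(t)$ has exactly two components $\Gamma_0(t)=\{|x|=R_0(t)\}$ and $\Gamma_1(t)=\{|x|=R_1(t)\}$, one has $K=1$, and the double sum in \eqref{eq;smallness of harmonic} collapses to the single term $\alpha_{11}(t)F_1(t)\eta_1(t)$, where $F_1(t):=\int_{\Gamma_1(t)}\beta(t)\cdot\nu(t)\,dS$, $\eta_1(t)=\alpha_{11}(t)\nabla q_1(t)$, and Schmidt orthonormalization yields $\alpha_{11}(t)=\|\nabla q_1(t)\|_{L^2(\Omega(t))}^{-1}$.

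Using rotational symmetry, the unique solution of \eqref{eq;Harmonic basis equation} is radial and equals
\[
q_1(x,t)=\frac{R_0(t)R_1(t)}{R_0(t)-R_1(t)}\left(\frac{1}{|x|}-\frac{1}{R_0(t)}\right),\qquad
\nabla q_1(x,t)=-\frac{R_0(t)R_1(t)}{R_0(t)-R_1(t)}\,\frac{x}{|x|^3}.
\]
A direct integration in spherical coordinates then gives
\[
\|\nabla q_1(t)\|_{L^2(\Omega(t))}^2=\frac{4\pi R_0(t)R_1(t)}{R_0(t)-R_1(t)},\qquad
\|\nabla q_1(t)\|_{L^3(\Omega(t))}^3=\frac{4\pi}{3}\left(\frac{R_0(t)R_1(t)}{R_0(t)-R_1(t)}\right)^{\!3}\!\left(\frac{1}{R_1(t)^3}-\frac{1}{R_0(t)^3}\right).
\]

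Substituting into the identity
\[
\bigl\|\alpha_{11}(t)F_1(t)\eta_1(t)\bigr\|_{L^3(\Omega(t))}=\alpha_{11}(t)^2|F_1(t)|\,\|\nabla q_1(t)\|_{L^3(\Omega(t))}=\frac{|F_1(t)|\,\|\nabla q_1(t)\|_{L^3(\Omega(t))}}{\|\nabla q_1(t)\|_{L^2(\Omega(t))}^2},
\]
the geometric ratios $R_0(t)R_1(t)/(R_0(t)-R_1(t))$ cancel completely, leaving $|F_1(t)|$ multiplied by an absolute numerical constant and the factor $(R_1(t)^{-3}-R_0(t)^{-3})^{1/3}$. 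This is precisely the left-hand side of \eqref{eq;smallness of harmonic}, so the hypothesis of the corollary forces \eqref{eq;smallness of harmonic} to hold, and Theorem \ref{thm;time periodic} then delivers the desired time-periodic weak solution. There is no analytic obstacle; the entire argument is an explicit substitution, and the only care required is to track the cancellations so that the product $\alpha_{11}(t)^2\|\nabla q_1(t)\|_{L^3(\Omega(t))}$ depends on the radii solely through $(R_1(t)^{-3}-R_0(t)^{-3})^{1/3}$.
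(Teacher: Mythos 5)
Your approach coincides exactly with what the paper indicates in the remark following Corollary~\ref{cor;beta2}: solve the scalar Laplace problem \eqref{eq;Harmonic basis equation} in closed radial form, note $K=1$, and substitute the explicit $\nabla q_1(t)$, $\alpha_{11}(t)=\|\nabla q_1(t)\|_{L^2(\Omega(t))}^{-1}$, $\eta_1(t)=\alpha_{11}(t)\nabla q_1(t)$ into \eqref{eq;smallness of harmonic}. Your formulas for $q_1$, $\nabla q_1$, $\|\nabla q_1\|_{L^2}^2$ and $\|\nabla q_1\|_{L^3}^3$ are all correct, and the reduction to $\|h(t)\|_{L^3(\Omega(t))}=|F_1(t)|\,\|\nabla q_1(t)\|_{L^3}/\|\nabla q_1(t)\|_{L^2}^2$ together with the cancellation of $b(t)=R_0(t)R_1(t)/(R_0(t)-R_1(t))$ is right. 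This is the intended proof.

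The one place you stop short is the claim that the leftover numerical prefactor ``is precisely the left-hand side of'' the corollary's hypothesis: you never actually evaluate it. Carrying the arithmetic through,
\begin{equation*}
\frac{\|\nabla q_1(t)\|_{L^3(\Omega(t))}}{\|\nabla q_1(t)\|_{L^2(\Omega(t))}^2}
= \frac{\bigl(\tfrac{4\pi}{3}\bigr)^{1/3}\,b}{4\pi\,b}\Bigl(\tfrac{1}{R_1^3}-\tfrac{1}{R_0^3}\Bigr)^{1/3}
= 3^{-1/3}(4\pi)^{-2/3}\Bigl(\tfrac{1}{R_1^3}-\tfrac{1}{R_0^3}\Bigr)^{1/3}
= 2^{-4/3}3^{-1/3}\pi^{-2/3}\Bigl(\tfrac{1}{R_1^3}-\tfrac{1}{R_0^3}\Bigr)^{1/3},
\end{equation*}
so the constant that actually appears is $2^{-4/3}3^{-1/3}\pi^{-2/3}$, whereas the corollary's hypothesis carries $2^{-2/3}3^{-1/3}\pi^{-2/3}$. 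Since $2^{-4/3}<2^{-2/3}$, the corollary's hypothesis still \emph{implies} \eqref{eq;smallness of harmonic} (indeed $\sup_t\|h(t)\|_{L^3}\leq 2^{-2/3}/C_s<1/C_s$), so the statement is true as a sufficient condition, but your identity ``this is precisely the left-hand side'' is not literally correct and should be replaced by a one-sided comparison. Worth noting: the sharp coefficient appears to be $2^{-4/3}$, so either the paper's $2^{-2/3}$ is a deliberate overestimate or a typo; either way you should flag the factor rather than assert equality.
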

\begin{remark}
    \begin{itemize}
        \item[(a)] For the stationary problem of (N-S), the same situation was considered 
        by Kozono and Yanagisawa \cite{Kozono Yanagisawa MathZ}.
    \item[(b)] Solving \eqref{eq;Harmonic basis equation}, we explicitly see that 
    $\displaystyle \nabla q_1(x,t)=-\left(\frac{1}{R_1(t)}-\frac{1}{R_0(t)}\right)^{-1} \frac{x}{|x|^2}$
    is a basis of $V_{\mathrm{har}}\bigl(\Omega(t)\bigr)$. So, we can transform \eqref{eq;smallness of harmonic}
    into one in Corollary \ref{cor;beta2} by a direct calculation.
    \end{itemize}
\end{remark}
\section{Preliminaries}
%
In this section, we prepare the three topics. 
Firstly, we prepare the fundamental facts related to the diffeomorphism and consider that
the divergence of a vector field is maintained via the the transformation as in \eqref{eq;identify}.
Next, we investigate the equivalence of the function spaces on $\Omega(t)$ and $\widetilde{\Omega}$.
Finally, we express the operator $\Rot(t)$ of transformed rotation and derive Leray's inequality for the 
transformed rotation $\Rot(t)\,\widetilde{w}(t)$.
%
\subsection{Fundamental facts derived from the diffeomorphism}
We note that, by the notation 
$\phi^{-1}(y,s)=\bigl(\phi^{-1}_1(y,s),\phi^{-1}_2(y,s),\phi^{-1}_3(y,s)\bigr)$ in \eqref{eq;diffeo inverse},
\begin{equation}\label{eq;Kronecker}
    \sum_{\ell} \frac{\pt \phi^{-1}_i}{\pt y^\ell} 
    \bigl(\phi(x,t),t\bigr) 
    \frac{\pt \phi^\ell}{\pt x^j}(x,t) =\delta_{i,j},
    \quad
    \sum_{\ell} 
    \frac{\pt \phi^i}{\pt x^\ell}\bigl(\phi^{-1}(y,s),s\bigr)
    \frac{\pt \phi^{-1}_\ell}{\pt y^j}(y,s)=\delta_{i,j}
\end{equation}
for all $(x,t) \in \overline{Q}_\infty$ and for all $(y,s) \in \overline{\widetilde{Q}}_\infty$,
respectively.
Hereafter, we also write \eqref{eq;Kronecker} as 
$\displaystyle\sum_{\ell} \frac{\pt x^i}{\pt y^\ell}
\frac{\pt y^\ell}{\pt x^j}
=\delta_{i,j}$ and $\displaystyle\sum_{\ell}
\frac{\pt y^i}{\pt x^\ell}\frac{\pt x^\ell}{\pt y_j}=\delta_{i,j}$.
\begin{proposition}\label{prop;IW diffeo}
    We have followings:
    \begin{align}
        \label{eq;IW}
        \eqref{eq;J} &\text{ implies } \sum_{\ell } \Gamma_{i\ell}^\ell=0, \quad i=1,2,3,
        \\
        \label{eq;dsJ}
        \frac{\pt}{\pt s}J(s)&=\sum_{k,\ell} 
        \frac{\pt y^k}{\pt x^\ell}
        \frac{\pt^2 x^{\ell}}{\pt y^k \pt s}
         J(s),
        \\
        \label{eq;dsg}
        \sum_{i,j}\frac{\pt g_{ij}}{\pt s}&=
        \sum_{i,j,k,\ell} \left(
            g_{jk} \frac{\pt y^k}{\pt x^\ell}
            \frac{\pt^2 x^{\ell}}{\pt y^i \pt s}
            +
            g_{ik} \frac{\pt y^k}{\pt x^\ell}
            \frac{\pt^2 x^{\ell}}{\pt y^j \pt s}
        \right).
    \end{align}
\end{proposition}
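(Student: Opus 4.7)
The three identities all follow from Jacobi's formula for the derivative of a determinant together with the chain-rule identity $\sum_{\ell}\frac{\pt y^{i}}{\pt x^{\ell}}\frac{\pt x^{\ell}}{\pt y^{j}}=\delta_{i,j}$ recorded in \eqref{eq;Kronecker}. I plan to treat them in the order \eqref{eq;dsJ}, \eqref{eq;IW}, \eqref{eq;dsg}, since the first two share a common Jacobi-type computation and the third is a straightforward differentiation of a product.

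For \eqref{eq;dsJ}, I will invoke Jacobi's formula: for any smooth invertible matrix-valued function $A(s)$, one has $\pt_{s}\det A(s)=\det A(s)\cdot\mathrm{tr}\bigl(A(s)^{-1}\pt_{s}A(s)\bigr)$. Applying this to $A(s)=\bigl(\pt x^{\ell}/\pt y^{k}\bigr)_{\ell,k}$, whose determinant equals $J(s)$ because $\det(\pt\phi^{i}/\pt x^{j})=J(s)^{-1}$ by \eqref{eq;J}, and whose inverse has entries $(A^{-1})^{k}_{\ell}=\pt y^{k}/\pt x^{\ell}$, yields
\begin{equation*}
\pt_{s}J(s)=J(s)\sum_{k,\ell}\frac{\pt y^{k}}{\pt x^{\ell}}\frac{\pt^{2}x^{\ell}}{\pt y^{k}\pt s},
\end{equation*}
which is exactly \eqref{eq;dsJ}.

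For \eqref{eq;IW}, I will substitute the second expression $\Gamma^{k}_{ij}=\sum_{\ell}\frac{\pt y^{k}}{\pt x^{\ell}}\frac{\pt^{2}x^{\ell}}{\pt y^{i}\pt y^{j}}$ of the Christoffel symbol, contract $k=\ell$ and sum, interchange the two partials in $y$, and recognise the resulting expression as $\mathrm{tr}\bigl(A^{-1}\pt_{y^{i}}A\bigr)$ for the same matrix $A=\bigl(\pt x^{m}/\pt y^{\ell}\bigr)$. Jacobi's formula then identifies it with $\pt_{y^{i}}\log\det A=\pt_{y^{i}}\log J(s)$, which vanishes because \eqref{eq;J} forces $J$ to depend only on $s$. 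This is precisely the step at which hypothesis \eqref{eq;J} is used.

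For \eqref{eq;dsg}, I will differentiate the definition $g_{ij}=\sum_{m}\frac{\pt x^{m}}{\pt y^{i}}\frac{\pt x^{m}}{\pt y^{j}}$ with respect to $s$, obtaining two symmetric terms $\sum_{m}\frac{\pt^{2}x^{m}}{\pt y^{i}\pt s}\frac{\pt x^{m}}{\pt y^{j}}$ and $\sum_{m}\frac{\pt x^{m}}{\pt y^{i}}\frac{\pt^{2}x^{m}}{\pt y^{j}\pt s}$. In each term I will insert the resolution of identity $\delta_{m\ell}=\sum_{k}\frac{\pt x^{m}}{\pt y^{k}}\frac{\pt y^{k}}{\pt x^{\ell}}$ from \eqref{eq;Kronecker} and then contract using $\sum_{m}\frac{\pt x^{m}}{\pt y^{j}}\frac{\pt x^{m}}{\pt y^{k}}=g_{jk}$, respectively $g_{ik}$. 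Summing over $i,j$ yields \eqref{eq;dsg}. Since each step is mechanical, I anticipate no genuine obstacle; the only delicate point is keeping careful track of which indices are free and which are contracted when applying \eqref{eq;Kronecker}.
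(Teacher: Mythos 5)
Your proposal is correct and follows essentially the same route as the paper. For \eqref{eq;dsJ} the paper uses cofactor expansion of the Jacobian determinant and the identity $\Delta_{k\ell}/J = \pt y^\ell/\pt x^k$, which is the same content as your trace form of Jacobi's formula; for \eqref{eq;dsg} the paper carries out exactly the differentiation and resolution-of-identity insertion you describe. The one place you give more detail than the paper is \eqref{eq;IW}, where the paper simply defers to Inoue--Wakimoto; your direct argument via $\sum_\ell \Gamma^\ell_{i\ell}=\mathrm{tr}\bigl(A^{-1}\pt_{y^i}A\bigr)=\pt_{y^i}\log J(s)=0$ is a clean self-contained substitute and makes transparent exactly where hypothesis \eqref{eq;J} (namely that $J$ is independent of $y$) enters.
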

\begin{proof}
    \eqref{eq;IW} is easily obtained by a similar way to Inoue and Wakimoto 
    \cite[Proposition 2.3]{Inoue Wakimoto}. 
    \eqref{eq;dsJ} is also derived by the similar way. 
    Indeed, let $\Delta_{k\ell}$ be the $(k,\ell)$-cofactor of Jacobi matrix
    $
    \displaystyle 
    \left(\frac{\pt \phi_k^{-1}}{\pt y^\ell}\right)$.
    From \eqref{eq;Kronecker}, we have 
    \begin{equation*}
        \frac{1}{J(s)} \Delta_{k\ell}=\frac{\pt y^\ell}{\pt x^k},
        \quad k,\ell=1,2,3.
    \end{equation*}
    Hence,
    \begin{equation*}
        \frac{\pt }{\pt s}J(s)=
        \sum_{k,\ell} \frac{\pt}{\pt s} \left(\frac{\pt x^k}{\pt y^\ell}\right) \Delta_{k\ell}
        =
        \sum_{k,\ell} \frac{\pt y^\ell}{\pt x^k} 
        \frac{\pt^2 x^k}{\pt y^\ell \pt s} J(s).
    \end{equation*}
    Finally, for \eqref{eq;dsg}, we see that
    $\displaystyle 
    \frac{\pt g_{ij}}{\pt s}
    =\sum_{k} \left(\frac{\pt^2 x^k}{\pt y^i\pt s}\frac{\pt x^k}{\pt y^j}
    +
    \frac{\pt x^k}{\pt y^i}\frac{\pt^2 x^k}{\pt y^j\pt s}\right)$.
    Here, we have
    \begin{equation*}
        \begin{split}
            \sum_{k} \frac{\pt^2 x^k}{\pt y^i \pt s}\frac{\pt x^k}{\pt y^j}
            &=
            \sum_{k,\ell} \delta_{k,\ell}
            \frac{\pt^2 x^\ell}{\pt y^i \pt s}\frac{\pt x^k}{\pt y^j}
            =\sum_{k,\ell,m}
            \frac{\pt x^k}{\pt y^m} \frac{\pt y^m}{\pt x_\ell}
            \frac{\pt^2 x^\ell}{\pt y^i \pt s}\frac{\pt x^k}{\pt y^j}
            \\
            &=
            \sum_{\ell,m}
            g_{jm}  \frac{\pt y^m}{\pt x_\ell}
            \frac{\pt^2 x^\ell}{\pt y^i \pt s}.
        \end{split}
    \end{equation*}
    Hence, we have the desired identity \eqref{eq;dsg}. 
\end{proof}
Furthermore, we also introduce the following identities related to Proposition \ref{prop;IW diffeo}, 
which are effective to deal with operators $L(t)$ and $M$ in Section \ref{subsec;formulation}.
\begin{remark} \rm
\begin{itemize}
\item[(i)]
    By the same argument to derive \eqref{eq;IW}, we also have that
    \eqref{eq;J} implies 
    \begin{equation}\label{eq;IWR}
        \sum_{k,\ell} \frac{\pt x^\ell}{\pt y^k}
        \frac{\pt^2 y^k}{\pt x^\ell \pt x^i}=0, \quad i=1,2,3.
    \end{equation}
\item[(ii)]
Using \eqref{eq;Kronecker} and \eqref{eq;IW}, we can derive
\begin{equation}\label{eq;converse}
 \sum_{k,\ell} \frac{\pt x^\ell}{\pt y^k} 
 \frac{\pt^2 y^k}{\pt x^\ell \pt t}
 =
 -
 \sum_{k,\ell} \frac{\pt y^k}{\pt x^\ell}
 \frac{\pt^2 x^\ell}{\pt y^k \pt s}.
\end{equation}
\end{itemize}
\end{remark}
Let us consider that vector fields $u$ on $\Omega(t)$ and $\widetilde{u}$ on $\widetilde{\Omega}$ 
which are connected by the coordinate transformation \eqref{eq;identify}.
Then the divergence is preserved under the transformation provided \eqref{eq;J} holds.
\begin{proposition}[\cite{Inoue Wakimoto}]\label{prop;IW div}
    Under the assumption \eqref{eq;J},  for vector fields $u$ on $\Omega(t)$ 
    and $\widetilde{u}$ on $\widetilde{\Omega}$ which satisfy \eqref{eq;identify}
    it holds that
    \begin{equation*}
        \Div_x u = \Div_y \widetilde{u}. 
    \end{equation*}
\end{proposition}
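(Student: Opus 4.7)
The plan is a direct coordinate computation that turns the claimed identity into an application of the vanishing statement \eqref{eq;IW}. Starting from the inverse form of the identification \eqref{eq;identify}, namely $u^i(x)=\sum_k \tfrac{\pt x^i}{\pt y^k}\,\widetilde{u}^k(\phi(x,t))$, I would differentiate in $x^i$ via the chain rule and sum over $i$. Two kinds of terms arise: one in which the derivative hits $\widetilde{u}^k$ (through $\phi$), and one in which it hits the Jacobian factor $\tfrac{\pt x^i}{\pt y^k}$.

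For the first term, collecting the factor $\sum_i \tfrac{\pt x^i}{\pt y^k}\tfrac{\pt y^j}{\pt x^i}=\delta_{jk}$, which is exactly \eqref{eq;Kronecker}, contracts the sum down to $\sum_k \tfrac{\pt \widetilde{u}^k}{\pt y^k}=\Div_y\widetilde{u}$. For the second term, one more application of the chain rule reshapes it into $\sum_k\bigl(\sum_{i,j}\tfrac{\pt y^j}{\pt x^i}\tfrac{\pt^2 x^i}{\pt y^j\pt y^k}\bigr)\widetilde{u}^k$, which by the explicit formula for the Christoffel symbols recorded in Section 2.1 coincides with $\sum_k\bigl(\sum_\ell \Gamma^\ell_{k\ell}\bigr)\widetilde{u}^k$.

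At this point the conclusion is immediate from Proposition \ref{prop;IW diffeo}: the assumption \eqref{eq;J} yields \eqref{eq;IW}, i.e.\ $\sum_\ell \Gamma^\ell_{k\ell}=0$ for each $k$, so the zeroth-order remainder vanishes and we obtain $\Div_x u=\Div_y\widetilde{u}$. The main (and really only substantive) step is recognising the residual coefficient as the trace $\sum_\ell \Gamma^\ell_{k\ell}$; this needs careful bookkeeping of which derivatives act through the diffeomorphism and which on $\widetilde{u}$ itself, but nothing beyond the chain rule.

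A conceptual sanity check, which one could use as an alternative route, is the Riemannian divergence formula on $(\widetilde{\Omega},g_{ij})$: the divergence of $\widetilde{u}$ there is $\tfrac{1}{\sqrt{g}}\pt_{y^i}(\sqrt{g}\,\widetilde{u}^i)$ with $\sqrt{g}=J(t)$, and because \eqref{eq;J} makes $J(t)$ independent of $y$, this Riemannian divergence reduces to the Euclidean $\Div_y\widetilde{u}$, while being manifestly invariant under the change of coordinates and hence equal to $\Div_x u$. Either presentation is acceptable, but the index computation is the one that plugs directly into the identities already listed in Proposition \ref{prop;IW diffeo}.
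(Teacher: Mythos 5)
Your main argument is exactly the paper's proof: expand $\sum_i \pt_{x^i} u^i$ by the chain and product rules, recognize the zeroth-order remainder $\sum_{i,j}\frac{\pt y^j}{\pt x^i}\frac{\pt^2 x^i}{\pt y^j\pt y^k}\widetilde{u}^k$ as $\sum_\ell \Gamma^\ell_{k\ell}\,\widetilde{u}^k$, and kill it with \eqref{eq;IW} while the remaining term contracts to $\Div_y\widetilde{u}$ via \eqref{eq;Kronecker}. The Riemannian-divergence remark is a nice sanity check but you correctly present the index computation as the actual proof, which is what the paper does.
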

\begin{proof}
    By \eqref{eq;IW} we see that
    \begin{equation*}
        \begin{split}
            \sum_{i} \frac{\pt }{\pt x^i} u^i
            &=
            \sum_{i} \sum_{k} \frac{\pt y^k}{\pt x^i}
            \frac{\pt }{\pt y^k} \sum_{\ell} \frac{\pt x^i}{\pt y^\ell} 
            \widetilde{u}^\ell
            \\
            &=\sum_{i,k,\ell}
            \left(
                \frac{\pt y^k}{\pt x^i}\frac{\pt^2 x^i}{\pt y^\ell \pt y^k} 
                \widetilde{u}^\ell
                +
                \frac{\pt y^k}{\pt x^i}
                \frac{\pt x^i}{\pt y^\ell} \frac{\pt \widetilde{u}^\ell}{\pt y^k}
            \right)
            \\
            &=
            \sum_{k,\ell} \Gamma^k_{\ell k} \widetilde{u}^\ell 
            + 
            \sum_{k,\ell} \delta_{k,\ell} \frac{\pt \widetilde{u}^\ell}{\pt y^k}
            =
            \sum_{k} \frac{\pt \widetilde{u}^k}{\pt y^k}.
        \end{split}
    \end{equation*}
\end{proof}
Finally, related to the weak form of (N-S$^\prime$), we introduce the following formula 
about time differentiation.
\begin{proposition}\label{prop;duds}
    Under the assumption \eqref{eq;J}, it holds that
    \begin{equation}\label{eq;dudsv}
        \left\langle \frac{\pt \widetilde{u}}{\pt s}, \widetilde{v} \right\rangle_s
        +
        \langle M\widetilde{u}, \widetilde{v} \rangle_s
        =\frac{d }{ds} \langle \widetilde{u}, \widetilde{v}\rangle_s 
        -\left\langle \widetilde{u}, \frac{\pt \widetilde{v}}{\pt s}\right\rangle_s 
        -\langle \widetilde{u}, M\widetilde{v}\rangle_s 
        \quad \text{for } s\in \R,
    \end{equation}
    for all $\widetilde{u},\widetilde{v} \in C^1\bigl( \R;H^1_0(\widetilde{\Omega})\bigr)$.
    Especially, if $\widetilde{u}=\widetilde{v}$ in \eqref{eq;dudsv}, we have
    \begin{equation}\label{eq;ddsnorm}
        \frac{1}{2}\frac{d}{ds}\| {u}(s) \|_{L^2(\Omega(s))} 
        =\frac{1}{2}\frac{d}{ds} \langle \widetilde{u},\widetilde{u} \rangle_s
        = \left\langle \frac{\pt \widetilde{u}}{\pt s}, \widetilde{u} \right\rangle_s
        + \langle M\widetilde{u} , \widetilde{u} \rangle_s
    \end{equation}
    for all $s \in \R$, where $u(s)$ on $\Omega(s)$ is transformed from $\widetilde{u}(s)$ via \eqref{eq;identify}.
\end{proposition}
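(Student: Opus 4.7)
The identity \eqref{eq;dudsv} has the structure of a product rule for the $s$-dependent inner product $\langle\cdot,\cdot\rangle_s$, where $M$ encodes the geometric correction coming from the moving frame. The plan is to pull back to the moving domain via the isometry \eqref{eq;equivalence innerproduct L2}, apply the Reynolds transport theorem, and then invoke the transformation rule $\widetilde{\pt_t u} = \pt_s\widetilde{u} + M\widetilde{u}$ that was already employed in deriving (N-S$^{*}$) from (N-S$^{\prime}$) following Inoue--Wakimoto \cite{Inoue Wakimoto} and Miyakawa--Teramoto \cite{Miyakawa Teramoto}.

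Concretely, let $u(x,s),v(x,s)$ on $\Omega(s)$ correspond to $\widetilde{u}(y,s),\widetilde{v}(y,s)$ via \eqref{eq;identify}. Since $\widetilde{u}(s),\widetilde{v}(s) \in H^1_0(\widetilde{\Omega})$ and $\phi(\cdot,s)$ preserves boundaries, $u(\cdot,s)$ and $v(\cdot,s)$ vanish on $\pt\Omega(s)$. Extending $\widetilde{u},\widetilde{v}$ by zero and transporting back, Reynolds' transport theorem applies with vanishing boundary flux, giving
\begin{equation*}
    \frac{d}{ds}\langle \widetilde{u},\widetilde{v}\rangle_s
    = \frac{d}{ds}\bigl(u(\cdot,s),v(\cdot,s)\bigr)_{L^2(\Omega(s))}
    = (\pt_t u,v)_{L^2(\Omega(s))} + (u,\pt_t v)_{L^2(\Omega(s))}.
\end{equation*}
Applying the transformation rule to both terms and invoking \eqref{eq;equivalence innerproduct L2} once more yields $(\pt_t u,v)_{L^2(\Omega(s))} = \langle \pt_s\widetilde{u}+M\widetilde{u},\widetilde{v}\rangle_s$, and symmetrically for the other term. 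Rearranging gives \eqref{eq;dudsv}; the second identity \eqref{eq;ddsnorm} then follows immediately by setting $\widetilde{v}=\widetilde{u}$ and combining with \eqref{eq;equivalence innerproduct L2}.

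The main obstacle is justifying the correspondence $\widetilde{\pt_t u} = \pt_s\widetilde{u} + M\widetilde{u}$ in the $C^1(\R;H^1_0)$ setting, and ensuring Reynolds' theorem applies to $H^1$-fields rather than merely smooth ones; the latter is handled by smooth approximation. As a self-contained alternative in the spirit of Proposition \ref{prop;IW diffeo}, one can differentiate $\int_{\widetilde{\Omega}}\sum_{i,j} g_{ij}(y,s)\widetilde{u}^i\widetilde{v}^j J(s)\,dy$ under the integral: the $\pt_s\widetilde{u},\pt_s\widetilde{v}$ terms produce $\langle \pt_s\widetilde{u},\widetilde{v}\rangle_s + \langle \widetilde{u},\pt_s\widetilde{v}\rangle_s$, and using \eqref{eq;dsg} together with the easily-checked identity $\sum_i g_{ij}\frac{\pt y^i}{\pt x^\ell} = \frac{\pt x^\ell}{\pt y^j}$ identifies the $\pt_s g_{ij}$ contribution with the algebraic (second) part of $M\widetilde{u}$ plus $M\widetilde{v}$. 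The remaining covariant-derivative part of $M$ and the $\pt_s J$ contribution from \eqref{eq;dsJ} are then reconciled by integrating by parts (legitimate since $\widetilde{u},\widetilde{v}$ vanish on $\pt\widetilde{\Omega}$) and applying \eqref{eq;IW}, which annihilates the Christoffel traces that would otherwise block the identification.
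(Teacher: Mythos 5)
Your primary route, via Reynolds' transport theorem on the moving domain, is a genuinely different and conceptually cleaner argument than the one in the paper. The paper never leaves the fixed domain $\widetilde{\Omega}$: it recalls the explicit formula for $[M\widetilde{u}]^i$, computes $\langle M\widetilde{u},\widetilde{v}\rangle_s+\langle\widetilde{u},M\widetilde{v}\rangle_s$ by a direct manipulation (integration by parts, the identity $\pt_{y^k}g_{ij}=\sum_\ell\bigl(\Gamma^\ell_{ik}g_{\ell j}+\Gamma^\ell_{jk}g_{i\ell}\bigr)$, and \eqref{eq;converse}), and then separately identifies this with $\frac{d}{ds}\int_{\widetilde{\Omega}}g_{ij}\widetilde{u}^i\widetilde{v}^jJ(s)\,dy-\langle\pt_s\widetilde{u},\widetilde{v}\rangle_s-\langle\widetilde{u},\pt_s\widetilde{v}\rangle_s$ using \eqref{eq;dsJ} and \eqref{eq;dsg}. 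Your approach instead pulls back to $\Omega(s)$, uses zero boundary flux (valid since $u\cdot v$ has zero trace when $\widetilde{u},\widetilde{v}\in H^1_0$) and the isometry \eqref{eq;equivalence innerproduct L2}, and then invokes the transformation rule $\widetilde{\pt_t u}=\pt_s\widetilde{u}+M\widetilde{u}$. The trade-off is that this rule, although the paper states it as known (after (N-S$^*$)) citing Inoue--Wakimoto, is never used \emph{inside} the paper's proof of the proposition, which is therefore self-contained; your approach outsources the algebraic content to that lemma, whose proof is of the same order of difficulty as the paper's direct computation (one computes $\sum_i\frac{\pt y^j}{\pt x^i}\pt_t u^i$, using $\sum_i\frac{\pt y^j}{\pt x^i}\frac{\pt^2 x^i}{\pt y^k\pt y^m}=\Gamma^j_{km}$ and \eqref{eq;Kronecker}, and reads off precisely $\pt_s\widetilde{u}^j+[M\widetilde{u}]^j$). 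You correctly flag this, together with the density argument needed to apply Reynolds to $C^1(\R;H^1_0)$ fields, as the remaining work. Your ``self-contained alternative'' sketch is, modulo notation, exactly the paper's argument: differentiation under the integral, \eqref{eq;dsg}/\eqref{eq;dsJ} for the metric and Jacobian derivatives, integration by parts permitted by the $H^1_0$ hypothesis, and the Christoffel-trace cancellation coming from \eqref{eq;IW} (the paper organizes the last step through \eqref{eq;converse}, itself a consequence of \eqref{eq;Kronecker} and \eqref{eq;IW}). Both routes are sound; the paper's is lower-level but self-contained, yours is higher-level but leans on the Inoue--Wakimoto correspondence.
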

\begin{remark} 
    Especially, for the case $s=t$, we note that
    \begin{equation*}\frac{1}{2}\frac{d }{dt}\| u(t) \|_{L^2(\Omega(t))} 
        =\left\langle \frac{\pt \widetilde{u}}{\pt s}(t),\widetilde{u}(t) \right\rangle_t
        +\bigl\langle M \widetilde{u}(t),\widetilde{u}(t) \bigr\rangle_t. 
    \end{equation*}
\end{remark}
\begin{proof}
    We recall 
    $[M \widetilde{u}]^i =
    \displaystyle \sum_{k,\ell} \Gamma^{i}_{k\ell}
    \frac{\pt y^\ell}{\pt t}\widetilde{u}^k
    + \sum_{k,\ell} 
    \frac{\pt y^i}{\pt x^\ell}\frac{\pt^2 x^\ell}{\pt y^k \pt s}\widetilde{u}^k
    + \sum_{k} \frac{\pt y^k}{\pt t} \frac{\pt \widetilde{u}^i}{\pt y^k}$ 
    for $i=1,2,3$.
    By a direct calculation, we obtain
    \begin{multline}\label{eq;Muv}
        \langle M\widetilde{u}, \widetilde{v} \rangle_s  
            +\langle \widetilde{u},M\widetilde{v} \rangle_s 
            \\
            =\sum_{i,j,k,\ell} \int_{\widetilde{\Omega}} g_{ij}
            \left(
                \frac{\pt y^i}{\pt x^\ell} \frac{\pt^2 x^\ell}{\pt y^k \pt s}
                \widetilde{u}^k \widetilde{v}^j
                +
                \frac{\pt y^k}{\pt x^\ell} \frac{\pt^2 x^\ell}{\pt y^k \pt s}
                \widetilde{u}^i \widetilde{v}^j
                +
                \frac{\pt y^j}{\pt x^\ell} \frac{\pt^2 x^\ell}{\pt y^k \pt s}
                \widetilde{u}^i \widetilde{v}^k
            \right) J(s)\,dy,
    \end{multline} 
    making use of 
    \begin{equation*}
        \int_{\widetilde{\Omega}} 
        \sum_{i,j} g_{ij}\sum_{k} \frac{\pt y^k}{\pt t} \frac{\pt \widetilde{u}^i}{\pt y^k}\widetilde{v}^jJ(s)\,dy
        =
        -\sum_{i,j,k} \int_{\widetilde{\Omega}} 
        \frac{\pt}{\pt y^k} \left( 
            g_{ij}\frac{\pt y^k}{\pt t} \widetilde{v}^j
        \right) \widetilde{u}^i J(s)\,dy,
    \end{equation*}
    derived from integration by parts, $\displaystyle \frac{\pt g_{ij}}{\pt y^k} = \sum\limits_{\ell}\bigl(
        \Gamma^\ell_{ik}g_{\ell j} + \Gamma^\ell_{jk} g_{i \ell}
    \bigr)$ and \eqref{eq;converse}.
    On the other hand, by \eqref{eq;dsJ} and \eqref{eq;dsg}, we have
    \begin{equation*}
        \frac{d}{ds} \int_{\widetilde{\Omega}} \sum_{i,j} g_{ij} \widetilde{u}^i \widetilde{v}^j J(s)\,dy
        -\left\langle \frac{\pt \widetilde{u}}{\pt s}, \widetilde{v}\right\rangle_s
        -\left\langle \widetilde{u}, \frac{\pt \widetilde{v}}{\pt s} \right\rangle_s
        =\text{R.H.S. of \eqref{eq;Muv}.} 
    \end{equation*}
    This yields \eqref{eq;dudsv}.
\end{proof}
\subsection{Equivalence between function spaces on $\Omega(t)$ and $\widetilde{\Omega}$}
Let $T>0$ and recall $ Q_T = \bigcup\limits_{0<t<T} \Omega (t) \times \{t\}$ 
and $\widetilde{Q}_T = \widetilde{\Omega}\times (0,T)$.
We investigate the equivalence of $H^m$-norms on $\Omega(t)$ and $\widetilde{\Omega}$
with a uniform constant with respect to $t\in[0,T]$.
\begin{proposition}\label{prop;funcsp}
    Let $T>0$ be fixed. Then there exists a constant $C=C(T)>0$ such that for $m=0,1,2$, 
    for every $t \in [0,T]$ and for a function $\widetilde{f} \in H^m(\widetilde{\Omega})$
    we have 
    \begin{equation}\label{eq;equivH^2}
        \| f \|_{H^m(\Omega(t))} \leq C \| \widetilde{f} \|_{H^m(\widetilde{\Omega})}
        \qquad \text{where } f(x) = \widetilde{f}\bigl(\phi(x,t)\bigr),
    \end{equation}
    and such that, conversely, for $t \in [0,T]$ and for $f \in H^m\bigl(\Omega(t)\bigr)$ we have
    \begin{equation}\label{eq;convequivH^2}
        \|\widetilde{f}\|_{H^m(\widetilde{\Omega})} \leq C \|f\|_{H^m(\Omega(t))}
        \qquad \text{where } \widetilde{f}(y) = {f}\bigl(\phi^{-1}(y,t)\bigr).
    \end{equation}
\end{proposition}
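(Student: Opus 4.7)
The plan is to reduce everything to the change-of-variables formula together with the chain rule, and then extract uniformity in $t \in [0,T]$ from the compactness of the time interval and the $C^\infty$-regularity of the diffeomorphism $\Phi$.

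First I would handle $m=0$. For $f(x) = \widetilde{f}(\phi(x,t))$, a direct change of variables gives
\begin{equation*}
\|f\|_{L^2(\Omega(t))}^2 = \int_{\widetilde{\Omega}} |\widetilde{f}(y)|^2\, J(t)\, dy.
\end{equation*}
Since $\phi \in C^\infty(\overline{Q}_\infty)$ and $J(t) > 0$ with $J \in C^\infty(\R)$, the function $t \mapsto J(t)$ is continuous and strictly positive on the compact set $[0,T]$, so both $M_J := \sup_{t \in [0,T]} J(t)$ and $m_J := \inf_{t \in [0,T]} J(t)$ are finite and positive. This simultaneously yields \eqref{eq;equivH^2} and \eqref{eq;convequivH^2} for $m=0$.

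Next, for $m=1,2$, I would apply the chain rule, viewing $\partial_{x^j}f(x) = \sum_{k} (\partial_{y^k}\widetilde{f})(\phi(x,t))\,\partial_{x^j}\phi^k(x,t)$, and analogously for second derivatives (which will also involve $\partial_{x^j}\partial_{x^i}\phi^k$). Taking $L^2$-norms and using the change of variables again, the resulting bounds have the form
\begin{equation*}
\|f\|_{H^m(\Omega(t))} \leq C_m(t)\, \|\widetilde{f}\|_{H^m(\widetilde{\Omega})},
\end{equation*}
where $C_m(t)$ is a polynomial expression in $J(t)^{1/2}$ and in the sup-norms over $\overline{\Omega(t)}$ of the partial derivatives of $\phi(\cdot,t)$ up to order $m$. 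Since $\phi \in C^\infty(\overline{Q}_\infty)$, each such sup-norm is a continuous function of $t$ on the compact interval $[0,T]$, so we may take $C := \sup_{t \in [0,T]} C_m(t) < \infty$. The converse inequality \eqref{eq;convequivH^2} is obtained identically by using $\phi^{-1}(\cdot,t)$ instead: the estimate \eqref{eq;J} guarantees that $\det(\partial \phi^{-1}/\partial y) = J(t)$ stays bounded and bounded away from zero, so $\phi^{-1} \in C^\infty(\overline{\widetilde{Q}}_\infty)$ enjoys the same kind of uniform-in-$t$ control on its derivatives.

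There is no substantial obstacle: the argument is essentially pullback under a smooth diffeomorphism. The only point requiring care is ensuring that the constant $C = C(T)$ does not depend on $t$, and this is exactly what the $C^\infty$-regularity of $\Phi$ over the closure $\overline{Q}_\infty$ (or equivalently, the closure of the slab $\overline{Q}_T$) together with compactness of $[0,T]$ provides. If I wished to be even more explicit, I could write out the first-order chain-rule expression and estimate pointwise by $|\partial_{x^j}\phi^k(x,t)| \leq \|\partial_x \phi\|_{L^\infty(\overline{Q}_T)}$, then square, sum, and apply the $L^2$-change of variables; the $m=2$ case is identical in spirit but with one more application of the chain and product rules.
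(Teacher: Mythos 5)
Your proposal is correct and follows essentially the same route as the paper's proof: change of variables for the $L^2$-norm, chain rule for the first and second derivatives, and uniformity in $t$ extracted from the smoothness of $\phi$ (equivalently, taking the sup directly over the compact set $\overline{Q}_T$) together with the bounds on $J(t)$. The paper, like you, proves only one direction in detail and notes the converse is analogous.
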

\begin{remark}
    \begin{itemize}
    \item[(i)] We can easily extend \eqref{eq;equivH^2} and \eqref{eq;convequivH^2} for any $m \in \N$ 
    by an inductive argument.  
    Furthermore, we extend them for $H^s\bigl(\Omega(t)\bigr)$ and $H^s(\widetilde{\Omega})$
    for $s >0$ by real interpolation.
    \item[(ii)] \eqref{eq;equivH^2} and \eqref{eq;convequivH^2} hold true for vector fields.
    Indeed, it suffices to consider the estimates for an each component 
    $\displaystyle u^i(x)=\sum_{k} \frac{\pt y^i}{\pt x^k} \widetilde{u}^k$
    or
    $\displaystyle \widetilde{u}^i(y)=\sum_{\ell} \frac{\pt x^i}{\pt y^\ell} u^{\ell}$
    for $i=1,2,3$.
    \item[(iii)] We can replace the interval $[0,T]$ by an arbitrary interval $[T_0,T_1]$ modifying 
    the constants $C=C(T_0,T_1)>0$ in the estimates. 
    \end{itemize}
\end{remark}
\begin{proof}[{Proof of Propostion \ref{prop;funcsp}}]
    We prove only \eqref{eq;equivH^2} since \eqref{eq;convequivH^2} is just an analogy.
    
    Firstly, we note that $
        \| f\|_{L^2(\Omega(t))}^2
        \leq 
        \sup\limits_{0\leq t \leq T} J(t) \,\|\widetilde{f}\|_{L^2(\widetilde{\Omega})}^2 $.
    Furthermore, we see that
    \begin{align*}
        \frac{\pt f}{\pt x^k} (x)
        &=\sum_{m} \frac{\pt \phi^m}{\pt x^k}(x,t)
         \frac{\pt \widetilde{f}}{\pt y^m}\bigl(\phi(x,t)\bigr),
        \\
        \frac{\pt^2 f}{\pt x^k \pt x^\ell}(x)
        &=
        \sum_{m} \frac{\pt^2 \phi^m}{\pt x^k\pt x^\ell}(x,t)
        \frac{\pt \widetilde{f}}{\pt y^m}\bigl(\phi(x,t)\bigr)
        +
        \sum_{m,n}
        \frac{\pt \phi^m}{\pt x^k}(x,t)
        \frac{\pt \phi^n}{\pt x^\ell}(x,t) \frac{\pt^2 \widetilde{f}}{\pt y^m\pt y^n}
        \bigl(\phi(x,t)\bigr),
    \end{align*}
    for $k,\ell=1,2,3$. So we have that by changing variables $x=\phi^{-1}(y,t)$,
    \begin{align*}
        \left\|\frac{\pt f}{\pt x^k}\right\|_{L^2(\Omega(t))}       
        \leq \,& \sum_{m} \sup_{\overline{Q}_T} \left| 
            \frac{\pt \phi^m}{\pt x^k}(x,t) 
        \right| 
        \left\| \frac{\pt \widetilde{f}}{\pt y^m}\right\|_{L^2(\widetilde{\Omega})}
        \Bigl(\sup\limits_{0\leq t \leq T} J(t)\Bigr)^{\frac{1}{2}},
        \\
        \left\|\frac{\pt^2 f}{\pt x^k\pt x^\ell}\right\|_{L^2(\Omega(t))} 
        \leq \, &
        \sum_{m} 
        \sup_{\overline{Q}_T}
        \left| \frac{\pt^2 \phi^m}{\pt x^k\pt x^\ell}(x,t)\right|
        \left\|
            \frac{\pt \widetilde{f}}{\pt y^m}
        \right\|_{L^2(\widetilde{\Omega})}
        \Bigl(\sup\limits_{0\leq t \leq T} J(t) \Bigr)^{\frac{1}{2}}
        \\
        &+
        \sum_{m,n} 
        \sup_{\overline{Q}_T} \left|
            \frac{\pt \phi^m}{\pt x^k}(x,t)
        \frac{\pt \phi^n}{\pt x^\ell}(x,t)
        \right|
        \left\| \frac{\pt^2 \widetilde{f}}{\pt y^m\pt y^n}\right\|_{L^2(\widetilde{\Omega})}
        \Bigl(\sup\limits_{0\leq t \leq T} J(t)\Bigr)^{\frac{1}{2}},
    \end{align*}
for $k,\ell=1,2,3$. These yield \eqref{eq;equivH^2}.
\end{proof}

\subsection{Representation of rotations and Leray's inequality}
In order to represent a notation of rotations, 
it is so effective that we introduce the specific function for indices defined by
\begin{equation}\label{eq;defi sigma}
    \sigma(i)=\begin{cases}
        i & (i \leq 3),
        \\
        i-3 &  (i>3).
    \end{cases}
\end{equation}
Then we note that 
\begin{equation*}
    \rot \,w = \left( 
        \frac{\pt}{\pt x^{\sigma(i+1)}} w^{\sigma(i+2)} 
        -
        \frac{\pt}{\pt x^{\sigma(i+2)}} w^{\sigma(i+1)}
    \right)_{i=1,2,3}.
\end{equation*}
Let us consider to give a representation of the transformation of $\rot\,w$ on $\Omega(t)$ to that on $\widetilde{\Omega}$.
For this purpose,  
we define, for each $s\in\R$, 
\begin{equation}\label{eq;def rot}
    [\Rot(s)\,\widetilde{w}]^i
    :=
    \sum_{j,k,\ell} R^{i,1}_{j,k,\ell}(y,s) \widetilde{w}^\ell
    +
    \sum_{j,k,\ell} R^{i,2}_{j,k,\ell}(y,s) \frac{\pt \widetilde{w}^\ell}{\pt y^k}
    \quad\text{for all }\widetilde{w} \in H^1(\widetilde{\Omega}),
\end{equation}
where we put
\begin{align}
    \label{eq;R1}
    R^{i,1}_{j,k,\ell}(y,s)
    &:=
    \frac{\pt y^i}{\pt x^j}
    \left(
        \frac{\pt y^k}{\pt x^{\sigma(j+1)}} 
        \frac{\pt^2 x^{\sigma(j+2)}}{\pt y^k \pt y^\ell}
        -
        \frac{\pt y^k}{\pt x^{\sigma(j+2)}} 
        \frac{\pt^2 x^{\sigma(j+1)}}{\pt y^k \pt y^\ell}
    \right),
    \\
    \label{eq;R2}
    R_{j, k,\ell}^{i,2}(y,s)
    &:=
    \frac{\pt y^i}{\pt x^j}
    \left(
        \frac{\pt y^k}{\pt x^{\sigma(j+1)}} 
        \frac{\pt x^{\sigma(j+2)}}{  \pt y^\ell}
        -
        \frac{\pt y^k}{\pt x^{\sigma(j+2)}} 
        \frac{\pt x^{\sigma(j+1)}}{ \pt y^\ell}
    \right),
\end{align}
for $y\in \widetilde{\Omega}$  with $x=\phi^{-1}(y,s)$, for $i,j,k,\ell=1,2,3$.

Then, by a direct calculation, we can observe the relation
\begin{equation*}
    [\Rot(t) \,\widetilde{w}]^i(y) = \sum_{\ell} \frac{\pt y^i}{\pt x^\ell} [\rot \, w]^\ell(x)
    \quad \text{for } i=1,2,3,
\end{equation*}
with $x=\phi^{-1}(y,t)$ for each $t\in\R$ and for all $w \in H^1\bigl(\Omega(t)\bigr)$
where $\displaystyle \widetilde{w}^i=\sum_{\ell} \frac{\pt y^i}{\pt x^\ell} w^\ell$,
$i=1,2,3$.

Next, we transform $\rot \,(\theta w)$ 
with a cut-off function $\theta \in C^\infty (\overline{\Omega(t)})$ for each $t\in\R$.
So, for every  $\widetilde{w} \in H^1(\widetilde{\Omega})$ and  for every function 
$\widetilde{\theta} \in C^\infty(\overline{\widetilde{\Omega}})$, 
we put for $s\in \R$
\begin{equation*}
    \bigl[\Rot(s)[\widetilde{\theta},\widetilde{w}]\bigr]^i:=
    \sum_{j,k,\ell} R^{i,1}_{j,k,\ell}(y,s)\, \widetilde{\theta}\widetilde{w}^\ell
    +
    \sum_{j,k,\ell} R^{i,2}_{j,k,\ell}(y,s) 
    \left( 
        \frac{\pt \widetilde{\theta}}{\pt y^k} \widetilde{w}^\ell
        +\widetilde{\theta}\frac{\pt \widetilde{w}^\ell}{\pt y^k}
    \right),
\end{equation*}
for $i=1,2,3$.
Then,
we can easily confirm that for each $t\in \R$
\begin{equation*}
    \bigl[\Rot(t)[\widetilde{\theta},\widetilde{w}]\bigr]^i(y) 
    = \sum_\ell \frac{\pt y^i}{\pt x^\ell} [\rot\,(\theta w)]^\ell(x)
    \quad \text{for } i=1,2,3,
\end{equation*}
with $x=\phi^{-1}(y,t)$,
for all $w \in H^1\bigl(\Omega(t)\bigr)$ and $\theta \in C^\infty\bigl(\overline{\Omega(t)}\bigr)$,
where
$
     \widetilde{\theta}(y)= \theta(x)
$
     and 
$\displaystyle 
     \widetilde{w}^i(y)=\sum_\ell \frac{\pt y^i}{\pt x^\ell} w^\ell(x).
$

In order to deal with the convection term in (N-S$^*$), the following lemma plays 
an essential role, which is related to Leray's inequality.
\begin{lemma}\label{lem;cut rot w}
    Let $T>0$ and let $\widetilde{w} \in C \bigl([0,T]; H^2(\widetilde{\Omega})\bigr)$.
    For $\ep>0$ there exists a function $\widetilde{\theta} \in C^\infty(\overline{\widetilde{\Omega}})$
    such that
    \begin{equation}\label{eq;Leray}
        \bigl\langle 
            N[\widetilde{u},\widetilde{u}], \Rot(t)[\widetilde{\theta},\widetilde{w}(t)]
        \bigr\rangle_t 
        <
        \ep \| \nabla_{y} \widetilde{u}\|_{L^2(\widetilde{\Omega})}^2
        \quad
        \text{for }
        \widetilde{u} \in H^1_{0,\sigma}(\widetilde{\Omega}),
    \end{equation}
    for all $t \in [0,T]$.
\end{lemma}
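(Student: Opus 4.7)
The plan is to reduce the inequality to a Leray--Hopf-type estimate on the moving domain $\Omega(t)$ and to produce a single cutoff that works uniformly in $t$ via the diffeomorphism $\phi$. Using the identifications \eqref{eq;identify}, the inner-product equivalences \eqref{eq;equivalence innerproduct L2}--\eqref{eq;equivalence innerproduct H1}, and (the obvious extension of) the identity $[\Rot(t)\widetilde{w}]^i(y)=\sum_\ell(\pt y^i/\pt x^\ell)[\rot w]^\ell(x)$ to its $\theta$-weighted version, I would first rewrite
\begin{equation*}
\bigl\langle N[\widetilde{u},\widetilde{u}],\,\Rot(t)[\widetilde{\theta},\widetilde{w}(t)]\bigr\rangle_t
=\int_{\Omega(t)}(u\cdot\nabla_x u)\cdot\rot_x\bigl(\theta(\cdot,t)\,w(t)\bigr)\,dx,
\end{equation*}
where $u$ and $w(t)$ on $\Omega(t)$ correspond to $\widetilde{u}$ and $\widetilde{w}(t)$ via \eqref{eq;identify} (with $u\in H^1_0(\Omega(t))$ and $\Div_x u=0$ by Proposition \ref{prop;IW div}), and $\theta(x,t):=\widetilde{\theta}(\phi(x,t))$. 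Whenever $\widetilde{\theta}$ is supported near $\pt\widetilde{\Omega}$, $\theta(\cdot,t)$ is supported in a tubular neighborhood of $\pt\Omega(t)$ of thickness comparable to that of $\mathrm{supp}\,\widetilde{\theta}$ uniformly in $t\in[0,T]$.

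Next, the vector identity $u\cdot\nabla u=\nabla(|u|^2/2)-u\times\rot u$ together with $\Div\rot(\theta w)=0$ and $u|_{\pt\Omega(t)}=0$ eliminates the gradient piece on integration by parts. Expanding $\rot(\theta w)=\theta\rot w+\nabla\theta\times w$, the remaining integral splits into an \emph{interior} term involving $(\theta\rot w)\times u$ and a \emph{boundary-layer} term involving $(\nabla\theta\times w)\times u$, each paired with $\rot u$ in $L^2$. For the cutoff I would take the Hopf--Leray logarithmic construction: fix $\rho\in C^\infty(\overline{\widetilde{\Omega}})$ which agrees with $\mathrm{dist}(\cdot,\pt\widetilde{\Omega})$ in a neighborhood of $\pt\widetilde{\Omega}$ and is positive elsewhere, and for small $\mu>0$ set
\begin{equation*}
\widetilde{\theta}_\mu(y):=\zeta\bigl(\mu\log(1/\rho(y))\bigr),
\end{equation*}
where $\zeta\in C^\infty(\R)$, $\zeta\equiv1$ on $[2,\infty)$ and $\zeta\equiv 0$ on $(-\infty,1]$; then $\widetilde{\theta}_\mu\in C^\infty(\overline{\widetilde{\Omega}})$, $\mathrm{supp}\,\widetilde{\theta}_\mu\subset\{\rho\le e^{-1/\mu}\}$, and $|\nabla_y\widetilde{\theta}_\mu|\le C\mu/\rho$ on the support.

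Cauchy--Schwarz and Hardy's inequality $\int_{\Omega(t)}|u|^2/d(x,\pt\Omega(t))^2\,dx\le C\|\nabla_x u\|_{L^2}^2$ applied to the boundary-layer term, together with the three-dimensional embeddings $H^1_0\hookrightarrow L^6$, $H^2\hookrightarrow L^\infty$, $H^1\hookrightarrow L^6$ and H\"older with $\tfrac12=\tfrac16+\tfrac16+\tfrac16$ applied to the interior term, would yield
\begin{equation*}
\Bigl|\int_{\Omega(t)}(u\cdot\nabla u)\cdot\rot(\theta w(t))\,dx\Bigr|
\le C\Bigl(\mu\,\|w(t)\|_{L^\infty(\Omega(t))}+|\mathrm{supp}\,\theta(\cdot,t)|^{1/6}\|w(t)\|_{H^2(\Omega(t))}\Bigr)\|\nabla_x u\|_{L^2(\Omega(t))}^2.
\end{equation*}
By Proposition \ref{prop;funcsp} and $\widetilde{w}\in C([0,T];H^2(\widetilde{\Omega}))$ the factors $\|w(t)\|_{L^\infty(\Omega(t))}$ and $\|w(t)\|_{H^2(\Omega(t))}$ are uniformly bounded in $t\in[0,T]$, and \eqref{eq;J} gives $|\mathrm{supp}\,\theta(\cdot,t)|\le CJ(t)e^{-1/\mu}$ uniformly. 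Converting $\|\nabla_x u\|_{L^2(\Omega(t))}$ to $\|\nabla_y\widetilde{u}\|_{L^2(\widetilde{\Omega})}$ via \eqref{eq;equivalence innerproduct H1} (up to a uniform constant) and taking $\mu$ sufficiently small yields the desired strict inequality uniformly in $t\in[0,T]$.

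The main obstacle is ensuring the uniformity in $t$ of every ingredient: the Hardy constant on $\Omega(t)$, the Sobolev embedding constants on $\Omega(t)$, the measure factor $|\phi^{-1}(E,t)|\le CJ(t)|E|$, and the comparability $d(x,\pt\Omega(t))\sim\rho(\phi(x,t))$. Each reduces, via $\phi$, to a statement on $\widetilde{\Omega}$ and follows from $\Phi\in C^\infty(\overline{Q}_T;\overline{\widetilde{Q}}_T)$ with \eqref{eq;J} bounding $J(t)$ uniformly above and below on $[0,T]$; the careful pull-back of Hardy's inequality through the $t$-dependent diffeomorphism is the technical heart of the argument.
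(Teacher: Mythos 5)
Your plan is correct in substance but genuinely different in route, and a comparison is worthwhile.

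The paper never leaves the fixed domain $\widetilde{\Omega}$. It writes $\bigl\langle N[\widetilde{u},\widetilde{u}],\Rot(t_0)[\widetilde{\theta}_\rho,\widetilde{w}(t_0)]\bigr\rangle_{t_0}$ out componentwise, pulls $\nabla_n\widetilde{u}^i$ out in $L^2(\widetilde{\Omega})$, and bounds the remaining factor via the H\"older/Sobolev/Hardy chain on the \emph{fixed} domain $\widetilde{\Omega}$. The only $t$-dependence is in the bounded smooth coefficients $g_{ij}$, $R^{i,k}_{j,k,\ell}$, $J$, and in $\widetilde{w}(t)$; the paper removes the $t$-dependence of $\widetilde{w}(t)$ by a finite covering argument that exploits the uniform continuity of $t\mapsto\widetilde{w}(t)$ in $H^2(\widetilde{\Omega})$ (this is why the $L^3$-norms in \eqref{eq;key Leray} are taken on the thin layer at a fixed $t_0$ first). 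You instead transfer everything to $\Omega(t)$, use the identity $u\cdot\nabla u=\nabla(|u|^2/2)-u\times\rot u$ to kill the gradient piece, split $\rot(\theta w)=\theta\rot w+\nabla\theta\times w$, and then estimate on $\Omega(t)$. A nice feature of your route is that you bound the interior term by $|\mathrm{supp}\,\theta(\cdot,t)|^{1/6}\|w(t)\|_{H^2(\Omega(t))}$ using $H^2\hookrightarrow W^{1,6}$; since $\sup_t\|\widetilde{w}(t)\|_{H^2}$ is finite by continuity, this gives uniformity in $t$ immediately and bypasses the paper's finite covering step.

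The tradeoff is that you now have to establish, with constants uniform in $t\in[0,T]$: the Hardy inequality on $\Omega(t)$, the Sobolev embedding constants on $\Omega(t)$, the two-sided comparability $d(x,\pt\Omega(t))\sim\rho(\phi(x,t))$, and the measure bound $|\phi^{-1}(E,t)|\le C|E|$. You correctly flag these as the technical heart, and they all do follow from $\Phi\in C^\infty(\overline{Q}_T;\overline{\widetilde{Q}}_T)$ with the derivatives of $\phi$ and $\phi^{-1}$ bounded on the compact set; but none of this work is needed at all if you simply perform the entire estimate on $\widetilde{\Omega}$, where $d(\cdot,\pt\widetilde{\Omega})$, Hardy, and Sobolev are all fixed once and for all. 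In other words, your pull-back to $\Omega(t)$ creates the uniformity problem that the paper's setup is designed to avoid; the paper already has all the raw material on $\widetilde{\Omega}$ via the explicit kernels $R^{i,1}_{j,k,\ell}$, $R^{i,2}_{j,k,\ell}$ in the definition of $\Rot(t)[\widetilde{\theta},\widetilde{w}]$, so it need not leave that domain. Your plan is sound, but if you carry it out you should either supply the uniform pulled-back Hardy and comparability lemmas explicitly, or (better) keep the same cutoff and estimate but run the whole computation on $\widetilde{\Omega}$ as the paper does, which eliminates the issue.
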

To prove Lemma \ref{lem;cut rot w}, 
we introduce a cut off function $\widetilde{\theta}$ along to Temam \cite{Temam}.
Firstly, we introduce the following proposition.
\begin{proposition}\label{prop;theta}
    Let $d(y):=\mathrm{dist.}(y,\pt \widetilde{\Omega})$ and let $0<d_*<1$ so that
    $d \in C^\infty\bigl(\overline{\widetilde{\Omega}}_{d_*}\bigr)$, 
    where $\widetilde{\Omega}_{d_*}:=\{ y \in \widetilde{\Omega}\,;\, d(y)<d_*\}$. 
    Further let $0< \rho_* < \min\{1, -1/\log d_*\}$.
    For every $\rho \in (0,\rho_*)$, there exists
    $\widetilde{\theta}_\rho \in C^\infty(\overline{\widetilde{\Omega}})$ such that
    \begin{eqnarray*}
        \widetilde{\theta}_\rho (y) =\begin{cases}
            1, & \text{for } d(y) < \frac{1}{2} e^{-2/\rho}, 
            \\
            0, & \text{for } d(y) > 2 e^{-1/\rho},
        \end{cases}
    \end{eqnarray*}
    with 
    \begin{equation*}
        |\nabla_{y} \widetilde{\theta}(y)| \leq \frac{2\sqrt{3}\rho}{d(y)}.
    \end{equation*}
\end{proposition}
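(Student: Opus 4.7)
The plan is to construct $\widetilde{\theta}_\rho$ as a composition of the scaled logarithm $\rho\log d(\cdot)$ with a fixed one-dimensional cutoff, following the classical Leray--Hopf device. First I would fix, independently of $\rho$, a smooth $\xi\colon \R \to [0,1]$ with $\xi \equiv 1$ on $(-\infty,-2]$ and $\xi \equiv 0$ on $[-1,\infty)$, arranged so that $\|\xi'\|_{L^\infty(\R)} \leq 2\sqrt{3}$; since $\xi$ only has to interpolate from $1$ to $0$ over an interval of length $1$, this bound is comfortably achievable by mollifying a suitable piecewise linear profile. I would then set
\begin{equation*}
\widetilde{\theta}_\rho(y) := \xi\bigl(\rho\log d(y)\bigr) \text{ on } \widetilde{\Omega}_{d_*},\qquad
\widetilde{\theta}_\rho(y) := 0 \text{ on } \widetilde{\Omega}\setminus \widetilde{\Omega}_{d_*}.
\end{equation*}

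Next, I would verify that this definition produces a $C^\infty$ function on $\overline{\widetilde{\Omega}}$. The only possible issue is the glueing across the interior surface $\{d(y)=d_*\}$, because $d$ is only assumed smooth on $\overline{\widetilde{\Omega}}_{d_*}$. But the inner expression already vanishes once $\rho\log d(y) \geq -1$, i.e., once $d(y) \geq e^{-1/\rho}$, and the hypothesis $\rho < \rho_* < -1/\log d_*$ rewrites as $e^{-1/\rho} < d_*$; hence the inner piece is identically zero on the open slab $\{e^{-1/\rho} \leq d(y) < d_*\}$, and extending by zero across $\{d=d_*\}$ produces a function of class $C^\infty(\overline{\widetilde{\Omega}})$.

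The two pointwise conclusions then follow immediately from the properties of $\xi$: if $d(y) < \tfrac12 e^{-2/\rho}$ then $\rho\log d(y) < -2 + \rho\log\tfrac12 < -2$, so $\widetilde{\theta}_\rho(y) = 1$; and if $d(y) > 2e^{-1/\rho}$ then $\rho\log d(y) > -1 + \rho\log 2 > -1$, so $\widetilde{\theta}_\rho(y) = 0$. For the gradient, the chain rule yields
\begin{equation*}
\nabla_y \widetilde{\theta}_\rho(y) = \xi'\bigl(\rho\log d(y)\bigr)\,\frac{\rho}{d(y)}\,\nabla_y d(y),
\end{equation*}
and because $|\nabla_y d| = 1$ on $\widetilde{\Omega}_{d_*}$ (the standard eikonal identity for the distance to a smooth boundary), the desired bound $|\nabla_y \widetilde{\theta}_\rho(y)|\leq 2\sqrt{3}\rho/d(y)$ drops out of the choice $\|\xi'\|_\infty \leq 2\sqrt{3}$.

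The one place where care is required is precisely the smoothness at $\{d=d_*\}$: one must ensure the support of the composition stays strictly inside the region where $d$ itself is smooth, and the hypothesis $\rho_* < -1/\log d_*$ has been inserted into the statement exactly to secure this. Beyond that observation, the argument reduces to elementary calculus on the composition $\xi(\rho\log d)$.
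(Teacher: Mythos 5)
Your proposal is correct, and it is a cleaner variant of the same Leray--Hopf construction used in the paper. The paper builds an explicitly $\rho$-dependent profile
\begin{equation*}
\xi(z,\rho) = \begin{cases} 1, & z < e^{-2/\rho}, \\ \rho\log\bigl(e^{-1/\rho}/z\bigr), & e^{-2/\rho}\leq z < e^{-1/\rho}, \\ 0, & z \geq e^{-1/\rho},\end{cases}
\end{equation*}
which has slope $\rho/z$ in the transition zone, then mollifies it with a $\rho$-dependent scale $\lambda$ (getting the doubled bound $|\pt_z\Theta|\leq 2\rho/z$), and finally composes with $d(y)$, using the crude Lipschitz estimate $|\nabla_y d|\leq\sqrt{3}$ to produce the factor $\sqrt{3}$. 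You instead fix a single $\rho$-independent smooth cutoff $\xi$ on the real line with $\|\xi'\|_\infty \leq 2\sqrt{3}$ and compose with the scaled logarithm $\rho\log d(y)$, distributing the constant differently: you take the sharp eikonal identity $|\nabla_y d|=1$ (valid on $\widetilde{\Omega}_{d_*}$ since $d$ is smooth there) and let the slack sit in $\|\xi'\|_\infty$. Both routes land on the same bound $2\sqrt{3}\rho/d(y)$. Your version avoids the $\rho$-dependent mollification entirely and makes the dependence on $\rho$ transparent through the single factor $\rho$ in the chain rule, so it is arguably the more conceptual presentation. The gluing argument across $\{d=d_*\}$, using $\rho < -1/\log d_*$ to guarantee the transition region stays inside $\widetilde{\Omega}_{d_*}$, is the same observation made implicitly in the paper's choice of $\rho_*$; you have made it explicit, which is an improvement.

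Two minor remarks. First, when you claim the achievability of $\|\xi'\|_\infty\leq 2\sqrt{3}$, it is worth being precise that one should mollify a piecewise-linear profile that is constant slightly beyond $[-2,-1]$ and has slope $1/(1-2\epsilon)$ on a shrunken middle interval, with mollifier support $<\epsilon$, so that the constancy on $(-\infty,-2]$ and $[-1,\infty)$ is preserved exactly; this routinely gives $\|\xi'\|_\infty$ as close to $1$ as desired. Second, the bound $|\nabla_y\widetilde{\theta}_\rho|\leq 2\sqrt{3}\rho/d(y)$ is claimed pointwise on all of $\overline{\widetilde{\Omega}}$; it holds trivially where $\widetilde{\theta}_\rho$ is locally constant (gradient zero) and by your computation in the transition region, so there is no issue, but the reader should note $d(y)>0$ in the interior so the right-hand side is always finite.
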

We give a proof of this proposition just for reader's convenience.
\begin{proof}
    Let $\xi(z,\rho)$ be as 
    \begin{equation*}
        \xi(z,\rho):= \begin{cases}
            1, & \text{for } z < e^{-2/\rho},
            \\
            \rho \log \left( \displaystyle
                \frac{e^{-1/\rho}}{z}
            \right),
            & \text{for } e^{-2/\rho} \leq z  < e^{-1/\rho},
            \\
            0,
            & \text{for } z > e^{-1/\rho}.
        \end{cases}
    \end{equation*}
    Since $\xi$ is Lipschitz continuous on $[-1,2d_*]\times [\delta, \rho_*]$ 
    for every sufficiently small $\delta>0$, 
    we note that $\xi$ has a weak derivative with respect to $z$ (and also $\rho$).

    Next we consider the regularization of $\xi$ by the mollifier 
    $\chi_\lambda(z)=\frac{1}{\lambda}\chi\left(\frac{z}{\lambda}\right)$, $\lambda>0$ such as
    \begin{equation*}
        \chi (z) =\begin{cases}
            C\exp\left( -\frac{1}{1-|z|^2}\right),  & \text{for } |z|\leq 1,
            \\
            0, & \text{for } |z| \geq 1,
        \end{cases}
    \end{equation*}
    where $C=\int_{|z|<1} \exp\bigl( -{1}/({1-|z|^2})\bigr)\,dz$.
    Let $2\delta<\rho$ and $\lambda < \frac{1}{4}e^{-2/\delta}(<\frac{1}{4} e^{-2/\rho})$.
    Put 
    \begin{equation*}
        \Theta(z,\rho) := \int_{\R} \chi_{\lambda}(z^\prime) \xi(z-z^\prime,\rho)\,dz^\prime
        \quad \text{for }z\geq 0.
    \end{equation*}
    Then in the case (i) $z<\frac{1}{2}e^{-2/\rho}$, since $z-z^\prime< \frac{1}{2}e^{-2/\rho}+\lambda<e^{-2/\rho}$
    for $|z^\prime|\leq \lambda$,
    we see that 
    \begin{equation*}
        \xi(z-z^\prime,\rho) \equiv 1
        \quad \text{and}\quad
        \pt_z \xi(z-z^\prime,\rho) \equiv 0.
    \end{equation*}
    Hence, $\Theta(z,\rho)=1$ and $\pt_z\Theta(z,\rho) = 0$ in the case (i).
    We consider the case (ii) $\frac{1}{2}e^{-2/\rho} \leq z < 2e^{-1/\rho}$.
    Since noting that $z-z^\prime> z -\lambda> z - \frac{1}{4}e^{-2/\rho}>\frac{1}{2}z$
     for $|z^\prime| \leq \lambda$, we see that
    \begin{equation*}
        |\pt_z \xi(z-z^\prime,\rho)| \leq \frac{\rho}{z-z^\prime} \leq \frac{2\rho}{z}. 
    \end{equation*}
    Hence, we have $\displaystyle | \pt_z \Theta(z,\rho) | \leq \frac{2\rho}{z}$ for the case (ii).
    For the case (iii) $z>2e^{-1/\rho}$ we easily see that $\Theta(z,\rho)\equiv 0$ and 
    $\pt_z \Theta(z,\rho)\equiv 0$.

    Finally we put 
    \begin{equation*}
        \widetilde{\theta}_\rho(y):= \Theta\bigl( d(y),\rho\bigr)
        \quad \text{for } y \in \overline{\widetilde{\Omega}}.
    \end{equation*}
    Then, noting that $|d(y) - d(y^\prime)| \leq |y - y^\prime|$, i.e., $|\nabla_{y} d(y)| \leq \sqrt{3}$,
     $\widetilde{\theta}$ is the desired function.
\end{proof}
Next, we shall prove Lemma \ref{lem;cut rot w} with Proposition \ref{prop;theta}.
\begin{proof}[Proof of Lemma \ref{lem;cut rot w}]
    Let $\theta_\rho$ be in Proposition \ref{prop;theta} with a suitable $\rho>0$ determined later. 
    Referring to \eqref{eq;inner product L2}, we recall that for $t_0 \in [0,T]$, 
    \begin{equation*}
        \begin{split}
            \bigl\langle N[\widetilde{u},\widetilde{u}],
            \Rot(t_0)&[\widetilde{\theta}_\rho,\widetilde{w}(t_0)]
            \bigr\rangle_{t_0}
            \\
            =\,&
            \int_{\widetilde{\Omega}}
            \sum_{i,j}
            g_{ij}(y,t_0)
            \sum_{n} \widetilde{u}^n \nabla_n \widetilde{u}^i 
            \sum_{k,\ell,n}
            \biggl\{
                R^{j,1}_{k,\ell,m} (y,t_0) \widetilde{\theta}_\rho
                \widetilde{w}^{\ell}(t_0)
                \\
                &+
                R^{j,2}_{k,\ell,m} (y,t_0) 
                \Bigl(
                    \frac{\pt\widetilde{\theta}_\rho}{\pt y^m} \widetilde{w}^\ell(t_0)
                    +
                    \widetilde{\theta}_\rho \frac{\pt \widetilde{w}^\ell}{\pt y^m}(t_0)
                \Bigr)
            \biggr\}
            J(t_0)\,dy
            \\
            =:\,&
            \int_{\widetilde{\Omega}}
            \sum_{i,j,k,\ell,m,n}
            \nabla_n \widetilde{u}^i\, \widetilde{u}^n
            W^{i,j}_{k,\ell,m}(y,t_0) J(t_0)\,dy,
        \end{split}
    \end{equation*}
    where we put
    \begin{align*}
        W^{i,j}_{k,\ell,m}(y,t)
        := \,&
        g_{ij}(y,t)
        \biggl\{
                R^{j,1}_{k,\ell,m} (y,t) \widetilde{\theta}_\rho(y)
                \widetilde{w}^{\ell}(y,t)
                \\
                &+
                R^{j,2}_{k,\ell,m} (y,t) 
                \Bigl(
                    \frac{\pt\widetilde{\theta}_\rho}{\pt y^m} (y)\widetilde{w}^\ell(y,t)
                    +
                    \widetilde{\theta}_\rho(y) \frac{\pt \widetilde{w}^\ell}{\pt y^m}(y,t)
                \Bigr)
            \biggr\}.
    \end{align*}
    Then, we have that
    \begin{equation*}
        \bigl\langle N[\widetilde{u},\widetilde{u}],
        \Rot(t_0)[\widetilde{\theta}_\rho,\widetilde{w}(t_0)]
        \bigr\rangle_{t_0}
        \leq 
        \sum_{i,j,k,\ell,m,n} \| \nabla_{n} \widetilde{u}^i\|_{L^2(\widetilde{\Omega})}
        \| \widetilde{u}^n W^{i,j}_{k,\ell,n}(t_0)\|_{L^2(\widetilde{\Omega})} 
        \sup_{0\leq t \leq T} J(t).
    \end{equation*}
    Firstly, we shall estimate $\| \widetilde{u}^n W^{i,j}_{k,\ell,n}(t_0)\|_{L^2(\widetilde{\Omega})} $.
    So, putting
    \begin{equation*}
        U:=
        \max_{i,j,k,\ell,m}
        \biggl(
            \sup_{\widetilde{\Omega}\times [0,T]}
            \bigl| g_{ij}(y,t) R^{j,1}_{k,\ell,m}(y,t)\bigr|
            +
            \sup_{\widetilde{\Omega}\times [0,T]}
            \bigl| g_{ij}(y,t) R^{j,2}_{k,\ell,m}(y,t)\bigr|
        \biggr),
    \end{equation*}
    we note that
    \begin{equation*}
        \begin{split}
            |W^{i,j}_{k,\ell,m}(y,t_0)|
            &\leq U
            \left(
                |\widetilde{\theta}_\rho(y)| |\widetilde{w}(y,t_0)|
                +
                |\nabla_y \widetilde{\theta}_\rho(y) |
                |\widetilde{w}(y,t_0)|
                +
                |\widetilde{\theta}_\rho(y)|
                |\nabla_y \widetilde{w}(y,t_0)|
            \right)
            \\
            & \leq
            U \left(
                |\widetilde{w}(y,t_0)| \mathbbm{1}_{\{d(y)<2e^{-1/\rho}\}}
                +
                \frac{2\sqrt{3}\rho}{d(y)}|\widetilde{w}(y,t_0)|
                +
                |\nabla_y \widetilde{w}(y,t_0)|\mathbbm{1}_{\{d(y)<2e^{-1/\rho}\}}
            \right),
        \end{split}
    \end{equation*}
    where $\mathbbm{1}_A$ is the usual characteristic function of the set $A$,
    i.e., $\mathbbm{1}_A(x)=1$ if $x \in A$ otherwise $\mathbbm{1}_A(x)=0$.
    Hence, by the Sobolev inequality and the Hardy inequality,
    we have that
    \begin{equation}\label{eq;W}
        \begin{split}
            \| \widetilde{u}^n W^{i,j}_{k,\ell,m}(t_0) \|_{L^2(\widetilde{\Omega})}
            \leq \,&C \biggl(
            \| \widetilde{u} \|_{L^6(\widetilde{\Omega})}
            \bigl(
            \|\widetilde{w}(t_0)\|_{L^3(\{d(y)<2e^{-1/\rho}\})}
            +
            \| \nabla_y \widetilde{w}(t_0)\|_{L^3(\{d(y)<2e^{-1/\rho}\})}
            \bigr)
            \\
            &\quad +
            \rho \Bigl\|\frac{\widetilde{u}}{d}\Bigr\|_{L^2(\widetilde{\Omega})}
            \| \widetilde{w}(t_0)\|_{L^\infty(\widetilde{\Omega})}
            \biggr)
            \\ 
            \leq\,& C
            \Bigl( 
                \|\widetilde{w}(t_0)\|_{L^3(\{d(y)<2e^{-1/\rho}\})}
            +
            \| \nabla_y \widetilde{w}(t_0)\|_{L^3(\{d(y)<2e^{-1/\rho}\})}
            \\&\quad +
            \rho \|\widetilde{w}(t_0)\|_{L^\infty(\widetilde{\Omega})}
            \Bigr)\,\|\nabla_y \widetilde{u}\|_{L^2(\widetilde{\Omega})},
        \end{split}
    \end{equation}
    where the constant $C>0$ depends on $\widetilde{\Omega}$ 
    but is independent of $t_0$, $\rho$ and 
    $\widetilde{u} \in H^1_{0,\sigma}(\widetilde{\Omega})$.

    Secondly, we shall estimate $\| \nabla_n \widetilde{u}^i \|_{L^2(\widetilde{\Omega})}$.
    We see that by the Poincar\'{e} inequality
    \begin{equation}\label{eq;Hardy}
        \begin{split}
            \| \nabla_n \widetilde{u}^i \|_{L^2(\widetilde{\Omega})}
            &=
            \Bigl\| 
                \frac{\pt \widetilde{u}^i}{\pt y^n} 
                +
                \sum_{\ell} \Gamma^{i}_{n\ell}(\cdot,t_0) \widetilde{u}^\ell
            \Bigr\|_{L^2(\widetilde{\Omega})}
            \\
            &\leq
            C \bigl(\|\nabla_y \widetilde{u}\|_{L^2(\widetilde{\Omega})}
            + 
            \sup_{\widetilde{\Omega}\times[0,T]} |\Gamma^{i}_{n\ell}(y,t)|\cdot
            \| \widetilde{u} \|_{L^2(\widetilde{\Omega})}
            \bigr)
            \\
            &\leq C \|\nabla_y \widetilde{u}\|_{L^2(\widetilde{\Omega})},
        \end{split}
    \end{equation}
    for $i=1,2,3$, 
    where the constant $C>0$ depends on $\widetilde{\Omega}$ but is independent of
    $t_0$ and $\widetilde{u} \in H^1_{0,\sigma}(\widetilde{\Omega})$.
    
    Combining \eqref{eq;W} and \eqref{eq;Hardy},
    for every fixed $t_0 \in [0,T]$, for $\ep>0$,
    taking sufficiently small $\rho=\rho(\ep,t_0)>0$, we have
    \begin{equation}\label{eq;Leray t0}
        \bigl\langle 
            N[\widetilde{u},\widetilde{u}], 
            \Rot(t_0)[\widetilde{\theta}_\rho,\widetilde{w}(t_0)]
        \bigr\rangle_{t_0}
        \leq {\ep} \| \nabla_y \widetilde{u} \|_{L^2(\widetilde{\Omega})}^2
        \quad \text{for } \widetilde{u} \in H^1_{0,\sigma}(\widetilde{\Omega}).
    \end{equation}

    Next, we extend \eqref{eq;Leray t0} for all $t \in [0,T]$ with a suitable $\rho=\rho(\ep,T)>0$.
    Here, we emphasize and recall that in order to obtain \eqref{eq;Leray t0} at $t_0$,
    it suffices to assume
    \begin{equation}\label{eq;key Leray}
        \|\widetilde{w}(t_0)\|_{L^3(\{d(y)<2e^{-1/\rho}\})}
            +
            \| \nabla_y \widetilde{w}(t_0)\|_{L^3(\{d(y)<2e^{-1/\rho}\})}
            +
            \rho \|\widetilde{w}(t_0)\|_{L^\infty(\widetilde{\Omega})}
            < \frac{\ep}{C_*},
    \end{equation}
    with some constant $C_*>0$ is independent of $t_0$.
    Since $\widetilde{w}$ is uniformly continuous on $[0,T]$ in $H^2(\widetilde{\Omega})$,
    with the aid of the Sobolev inequality $H^2(\widetilde{\Omega}) \hookrightarrow L^3(\widetilde{\Omega})$,
    $H^2(\widetilde{\Omega})\hookrightarrow L^\infty(\widetilde{\Omega})$
    we can choose some $\delta>0$ such that if $|t-s|<\delta$ then it holds that
    \begin{equation}\label{eq;Leray conti}
        \|\widetilde{w}(t)-\widetilde{w}(s)\|_{L^3(\widetilde{\Omega})}
        + \| \nabla_y \widetilde{w}(t)
        -\nabla_y \widetilde{w}(s) \|_{L^2(\widetilde{\Omega})}
        +
        \| \widetilde{w}(t)-\widetilde{w}(s) \|_{L^\infty(\widetilde{\Omega})}
        < \frac{\ep}{C_*}.
    \end{equation}
    Under this situation, we take a finite sequence $\{t_m\}_{m=0}^{m_0}$ with
    $t_m = m \delta$,
    where $0=t_0<t_1<\dots<t_{m_0}\leq T <(m_0+1)\delta$. 
    Then we can choose suitable $\rho=\rho(t_m)$ and $\rho=\rho(T)$ 
    so that \eqref{eq;key Leray}, hence, \eqref{eq;Leray t0} holds for each
    $t_0,\dots,t_{m_0},T$. Here, we put
    \begin{equation*}
        \rho_0 = \min\{\rho(t_0),\rho(t_1),\dots,\rho(t_{m_0}),\rho(T)\}\,(<1).
    \end{equation*}
    Then we observe that a cut off 
    function $\widetilde{\theta}_{\rho_0}$ yields 
    \eqref{eq;Leray} for every $t\in[0,T]$. 
    Indeed, for every $t\in [0,T]$, we have $t_m\leq t<t_{m+1}$ (or $t_{m_0}\leq t < T$),
    So we have by \eqref{eq;Leray conti}
    \begin{equation*}
        \begin{split}
            \|&\widetilde{w}(t)\|_{L^3(\{d(y)<2e^{-1/\rho_0}\})}
            +
            \| \nabla_y \widetilde{w}(t)\|_{L^3(\{d(y)<2e^{-1/\rho_0}\})}
            +
            \rho_0 \|\widetilde{w}(t)\|_{L^\infty(\widetilde{\Omega})}
            \\
            &\leq 
            \|\widetilde{w}(t)-\widetilde{w}(t_m)\|_{L^3(\{d(y)<2e^{-1/\rho_0}\})}
            +\|\widetilde{w}(t_m)\|_{L^3(\{d(y)<2e^{-1/\rho_0}\})}
            \\
            &\quad +
            \| \nabla_y \widetilde{w}(t)-\nabla_y \widetilde{w}(t_m)\|_{L^3(\{d(y)<2e^{-1/\rho_0}\})}
            +\| \nabla_y \widetilde{w}(t_m)\|_{L^3(\{d(y)<2e^{-1/\rho_0}\})}
            \\
            &\quad +
             \rho_0 \|\widetilde{w}(t) - \widetilde{w}(t_m)\|_{L^\infty(\widetilde{\Omega})}
             +
              \rho_0 \|\widetilde{w}(t_m)\|_{L^\infty(\widetilde{\Omega})}
            \\
            &\leq \frac{\ep}{C_*} + \frac{\ep}{C_*}=\frac{2\ep}{C_*}.
        \end{split}
    \end{equation*} 
    Hence we obtain that for $t\in [0,T]$
    \begin{equation}
        \bigl\langle 
            N[\widetilde{u},\widetilde{u}], 
            \Rot(t)[\widetilde{\theta}_{\rho_0},\widetilde{w}(t)]
        \bigr\rangle_{t}
        \leq {2\ep} \| \nabla_y \widetilde{u} \|_{L^2(\widetilde{\Omega})}^2
        \quad \text{for } \widetilde{u} \in H^1_{0,\sigma}(\widetilde{\Omega}).
    \end{equation}
    Since $\ep>0$ is arbitrary, the proof is completed.
\end{proof}

\section{Time dependence of the Helmholtz-Weyl decomposition}
Let us consider the Helmholtz-Weyl decomposition for $b(t) \in H^1\bigl(\Omega(t)\bigr)$
satisfying $\Div\,b(t)=0$ in $\Omega(t)$ such as
\begin{equation*}
    h(t) + \rot\, w(t)=b(t)\quad \text{in } \Omega(t),
\end{equation*}
where $h(t) \in V_{\mathrm{har}}\bigl(\Omega(t)\bigr)$ and $w(t) \in 
Z^2_\sigma\bigl(\Omega(t)\bigr)=X_{\mathrm{har}}\bigl(\Omega(t)\bigr)^{\perp}\cap X_\sigma^2\bigl(\Omega(t)\bigr)$ 
for $t\in\R$. Here, see Section \ref{subsec;HW} for the definitions of the above function spaces. 

In this section, our main interests are to reveal the domain dependence of $h(t)$ and $w(t)$.
Precisely, we investigate the time continuity and also the time differentiability 
of $\widetilde{h}(t)$ and $\widetilde{w}(t)$ on $\widetilde{\Omega}$.
We note that there are two aspects to provide a change in $\widetilde{h}(t)$ and $\widetilde{w}(t)$.
One comes from  the domain perturbation. The other is due to the change of the given data $b(t)$
with respect to $t\in\R$.

It is known that $V_{\mathrm{har}}\bigl(\Omega(t)\bigr)$ is spanned by a finite numbers of flows with 
some scalar potentials. Moreover, these potentials are governed by
the Laplace equations on $\Omega(t)$ with the Dirichlet boundary condition.
Since we are able to directly investigate the variation of these solutions
under the domain perturbation,
we can explicitly carry out the analysis of $\widetilde{h}(t)$ with 
the information of the basis of $V_{\mathrm{har}}\bigl(\Omega(t)\bigr)$ and $\widetilde{b}(t)$.

In contrast, the analysis of $\widetilde{w}(t)$ is rather nontrivial. 
The difficulty comes from a lack of strategy directly 
to compare vector potentials at two different times 
due to the domain deformations 
and also due to the time variation of the operator $\Rot(\cdot)$.
Then, to investigate time continuity we focus on an a priori estimate
for $w(t)$ of some strictly elliptic system in the sense of 
Agmon, Douglis and Nirenberg \cite{ADN}.
However, since the lower order term, i.e.,
$L^2$-norm of $\widetilde{w}(t)$ remains in the a priori estimate 
in general,
%
 we investigate the domain dependence of the
constant $C(\Omega)>0$ as in the decomposition
\begin{equation*}
    \| w \|_{H^2(\Omega)} 
    \leq C(\Omega) \| b \|_{H^1(\Omega)},
    \quad\text{where } h+\rot\,w=b \quad\text{for all } b \in H^1(\Omega).
\end{equation*}
Then, 
the uniformly boundedness of the constant $C\bigl(\Omega(t)\bigr)$ on $[0,T]$ plays an essential role 
to assure the continuity of the vector potential $\widetilde{w}(t)$.
For the time differentiability of $\widetilde{w}(t)$, 
we need more specific qualitative features of vector potentials.
Indeed, we investigate the time-invariance of properties of orthogonal compliment of
$X_{\mathrm{har}}(\Omega)$
under the transformation of vector fields, 
based on the argument in Foia\c{s} and Temam \cite{Foias Temam}.

\subsection{Properties of diffeomorphism from $\Omega(t)$ to $\Omega(t_0)$}
\label{subsec;unit normal}
%
To derive continuity of $\widetilde{w}(t)$ at each $t_0 \in [0,T]$,
it is convenient to consider transformed vector fields on $\Omega(t_0)$ from $\Omega(t)$
rather than on $\widetilde{\Omega}$. 

For this purpose, for $t\in \R$
we introduce a $C^\infty$ diffeomorphism $\vphi(\cdot,t)$ from $\overline{\Omega(t)}$ to $\overline{\Omega(t_0)}$, 
using the relation $\Omega(t) \xrightarrow{\phi(\cdot,t)} \widetilde{\Omega}
\xrightarrow[]{\phi^{-1}(\cdot,t_0)} \Omega(t_0)$,
defined by
\begin{equation}\label{eq;diffeo t0}
   \Omega(t_0) \ni \tilde{x} =\vphi(x,t) := \phi^{-1}\bigl(\phi(x,t), t_0\bigr)
    \quad \text{for } x \in \overline{\Omega(t)}.
\end{equation}
We often use the notation $\tilde{x}$ for an coordinate of $\Omega(t_0)$ 
for some fixed $t_0\in \R$.
Then $\vphi^{-1}(\tilde{x},t)=\phi^{-1}\bigl(\phi(\tilde{x},t_0),t\bigr)$. 
Here we note that we inherit the notations $J(t),g_{ij}$, $g^{ij}$, $\Gamma^{i}_{jk}$, 
\dots, $\Rot(t)$ replacing $\phi$ by $\vphi$ (if there is no possibility of confusion from the context).

Moreover, via a coordinate transformation with $\varphi(\cdot,t)$, 
there is a one-to-one relation between vector fields $u$ on $\Omega(t)$ and $\tilde{u}$ on $\Omega(t_0)$
such that
\begin{equation}\label{eq;identify tilde}
    \tilde{u}^i= \sum_{\ell} \frac{\pt \tilde{x}^i}{\pt x^\ell} u^\ell,
    \quad{i.e.,}\quad 
    \tilde{u}^i(\tilde{x})=\sum_{\ell} \frac{\pt \vphi^i}{\pt x^\ell}(\varphi^{-1}(\tilde{x},t),t)
    u^\ell\bigl(\vphi^{-1}(\tilde{x},t)\bigr), \quad i=1,2,3.
\end{equation}
So, we can identify $u$ on $\Omega(t)$ with $\tilde{u}$ on $\Omega(t_0)$ by the above transformation.
Hence, we may often write just $\tilde{u}$ when $u$ on $\Omega(t)$ and $\vphi(\cdot,t)$ are obvious
from the context.
Furthermore we also often use a notation of a vector (or function) with short-tilde, like $\tilde{u}$, 
in order to express an arbitrary vector field (or function) on $\Omega(t_0)$ as well.

Furthermore, we have the relation between 
the coordinate transformation $\widetilde{u}$ in \eqref{eq;identify} and 
$\tilde{u}$ in \eqref{eq;identify tilde}.
Indeed, we see, by a direct calculation of the transformation: $\tilde{u} \text{ on } \Omega(t_0) \longmapsto \widetilde{\tilde{u}} \text{ on }\widetilde{\Omega}$, 
that
\begin{equation}\label{eq;tildewidetilde}
    \widetilde{\tilde{u}}^i=\sum_{\ell} \frac{\pt y^i}{\pt \tilde{x}^\ell} \tilde{u}^\ell
    =\sum_{\ell,k} \frac{\pt y^i}{\pt \tilde{x}^\ell}\frac{\pt \tilde{x}^\ell}{\pt x^k} u^k 
    =\sum_{k} \frac{\pt y^i}{\pt x^k} u^k=\widetilde{u}^i,
    \quad i=1,2,3.
\end{equation}
Therefore, the analysis on $\tilde{u}(t)$ on $\Omega(t_0)$ is valid for that on $\widetilde{u}(t)$ on $\widetilde{\Omega}$.
\medskip

Let us investigate a time local behavior around $t_0$ of the diffeomorphism and the Riemann metrics.
Firstly, since $\vphi^{-1}=(\vphi^{-1}_1, \vphi^{-1}_2, \vphi^{-1}_3)$ is a $C^\infty$ function on 
$\overline{\Omega(t_0)\times \R}$ and since $\vphi^{-1}(\tilde{x},t_0)=\tilde{x}$
and since $\displaystyle \frac{\pt \vphi_i^{-1}}{\pt \tilde{x}^k}(\tilde{x},t_0)=\delta_{i,k}$,
$\displaystyle \frac{\pt^2 \vphi^{-1}_i}{\pt \tilde{x}^k\tilde{x}^\ell} (\tilde{x},t_0)\equiv 0$,
$\displaystyle  \frac{\pt^3 \vphi^{-1}_i}{\pt \tilde{x}^k\pt \tilde{x}^\ell \pt \tilde{x}^m} (\tilde{x},t_0)\equiv 0$
for $i,k,\ell,m=1,2,3$, 
we have 
\begin{alignat}{2}
    \label{eq;eq vphiId}
    \vphi^{-1}(\tilde{x}, t) 
    &\to \tilde{x} \quad&\text{uniformly in $\overline{\Omega(t_0)}$ as } t \to t_0,
    \\
    \label{eq;eq vphidelta}
    \frac{\pt \vphi_i^{-1}}{\pt x^k} (\tilde{x},t)
    &\to \delta_{i,k}
    \quad&\text{uniformly in $\overline{\Omega(t_0)}$ as } t \to t_0,
    \\
    \label{eq;convphi2}
    \frac{\pt^2 \vphi^{-1}_i}{\pt \tilde{x}^k \pt\tilde{x}^\ell} (\tilde{x},t),
    \,
    \frac{\pt^3 \vphi^{-1}_i}{\pt \tilde{x}^k\pt \tilde{x}^\ell \pt \tilde{x}^m} (\tilde{x},t)
    &\to 0
    \quad&\text{uniformly in $\overline{\Omega(t_0)}$ as } t \to t_0
\end{alignat}
for $i,k,\ell,m=1,2,3$.
Similarly, since $\vphi$ is a $C^\infty$ function on $\overline{Q}_\infty$ 
and since $\vphi(x,t_0)=x$, 
$\displaystyle\frac{\pt \vphi^i}{\pt x^k}(x,t_0)=\delta_{i,k}$, 
$\displaystyle\frac{\pt^2 \vphi^i}{\pt x^k\pt x^\ell}(x,t_0)\equiv 0$
and $\displaystyle\frac{\pt^3 \vphi^i}{\pt x^k \pt x^\ell \pt x^m}(x,t_0)\equiv 0$,
for $i,k,\ell,m=1,2,3$, we see that
\begin{gather}\label{eq;vphitoId}
    \frac{\pt \vphi^i}{\pt x^k}\bigl(\vphi^{-1}(\tilde{x},t),t\bigr)
    \to \delta_{i,k}
    \quad\text{uniformly in $\overline{\Omega(t_0)}$ as } t \to t_0,
    \\
    \label{eq;ptvphito0}
    \frac{\pt^2 \vphi^i}{\pt x^k\pt x^\ell}\bigl(\vphi^{-1}(\tilde{x},t),t\bigr),\,\,
    \frac{\pt^3 \vphi^i}{\pt x^k \pt x^\ell \pt x^m}\bigl(\vphi^{-1}(\tilde{x},t),t\bigr)
    \to 0
    \quad\text{uniformly in $\overline{\Omega(t_0)}$ as } t \to t_0,
\end{gather}
for $i,k,\ell,m=1,2,3$.

Now, we recall 
\begin{equation*}
    g^{ij}(\tilde{x},t) =
    \sum_{k} 
    \frac{\pt \vphi^i}{\pt x^k}\bigl(\vphi^{-1}(\tilde{x},t),t\bigr)
    \frac{\pt \vphi^j}{\pt x^k}\bigl(\vphi^{-1}(\tilde{x},t),t\bigr).
\end{equation*}
Due to \eqref{eq;convphi2}, \eqref{eq;vphitoId} and \eqref{eq;ptvphito0}, 
by direct calculation, we immediately obtain the following.
\begin{proposition} \label{prop;gij}
    it holds that for $i,j,\ell,m=1,2,3$ 
    \begin{gather*}
        g^{ij} (\tilde{x},t),\; g_{ij}(\tilde{x},t) \to \delta_{i,j} 
        \quad\text{uniformly in $\overline{\Omega(t_0)}$ as } t \to t_0,
        \\
        \frac{\pt g^{ij}}{\pt \tilde{x}^\ell}(\tilde{x},t),\,\,
        \frac{\pt^2 g^{ij}}{\pt \tilde{x}^\ell\pt \tilde{x}^m}(\tilde{x},t)
        \to 0
        \quad\text{uniformly in $\overline{\Omega(t_0)}$ as } t \to t_0,
        \\
        \Gamma^i_{j\ell} (\tilde{x},t),\,\,
        \frac{\pt \Gamma^i_{j\ell}}{\pt \tilde{x}^m}(\tilde{x},t) 
        \to 0
        \quad \text{uniformly in $\overline{\Omega(t_0)}$ as } t \to t_0.
    \end{gather*}
\end{proposition}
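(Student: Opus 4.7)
My approach is to reduce every convergence claim to the limits already established for the diffeomorphism $\vphi$, namely \eqref{eq;eq vphiId}--\eqref{eq;convphi2} and \eqref{eq;vphitoId}--\eqref{eq;ptvphito0}, via the chain rule applied to the defining formulas for $g^{ij}$, $g_{ij}$ and $\Gamma^k_{ij}$ (with $\phi$ replaced by $\vphi$). The common structure throughout is that each chain-rule expansion is a sum of products in which, in every summand, at least one factor is known to tend to $0$ uniformly on $\overline{\Omega(t_0)}$ as $t\to t_0$ while the remaining factors stay uniformly bounded on a time-neighbourhood of $t_0$.

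For the metric I start from $g^{ij}(\tilde{x},t) = \sum_k \frac{\pt \vphi^i}{\pt x^k}\bigl(\vphi^{-1}(\tilde{x},t),t\bigr)\frac{\pt \vphi^j}{\pt x^k}\bigl(\vphi^{-1}(\tilde{x},t),t\bigr)$ and apply \eqref{eq;vphitoId} to each factor to get $g^{ij} \to \sum_k \delta_{i,k}\delta_{j,k} = \delta_{i,j}$. Since $(g_{ij}) = (g^{ij})^{-1}$ and matrix inversion is continuous at the identity, with $\det(g^{ij})$ bounded away from $0$ near $t_0$, the companion limit $g_{ij} \to \delta_{i,j}$ follows automatically. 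For $\frac{\pt g^{ij}}{\pt \tilde{x}^\ell}$, the chain rule produces a sum of terms each containing a factor $\frac{\pt^2 \vphi^i}{\pt x^k \pt x^n}\bigl(\vphi^{-1},t\bigr)$ or $\frac{\pt^2 \vphi^j}{\pt x^k \pt x^n}\bigl(\vphi^{-1},t\bigr)$, both of which vanish uniformly by \eqref{eq;ptvphito0}; the remaining factors $\frac{\pt \vphi^{-1}_n}{\pt \tilde{x}^\ell}$ and $\frac{\pt \vphi^j}{\pt x^k}$ stay uniformly bounded by \eqref{eq;eq vphidelta} and \eqref{eq;vphitoId}. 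For $\frac{\pt^2 g^{ij}}{\pt \tilde{x}^\ell \pt \tilde{x}^m}$, a second differentiation distributes the new $\pt/\pt \tilde{x}^m$ either onto a second-derivative factor of $\vphi$ (yielding a third derivative that vanishes by \eqref{eq;ptvphito0}), onto a $\frac{\pt \vphi^{-1}_n}{\pt \tilde{x}^\ell}$ factor (yielding a $\frac{\pt^2 \vphi^{-1}_n}{\pt \tilde{x}^\ell \pt \tilde{x}^m}$ that vanishes by \eqref{eq;convphi2}), or onto a factor that was already bounded (leaving the original vanishing factor intact). In every case at least one factor tends to $0$ uniformly, so the second derivative tends to $0$ uniformly as well.

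For the Christoffel symbols, I use the paper's own identity $\Gamma^k_{ij} = \sum_\ell \frac{\pt \tilde{x}^k}{\pt x^\ell}\frac{\pt^2 x^\ell}{\pt \tilde{x}^i \pt \tilde{x}^j}$, which in the $\vphi$ notation reads $\Gamma^k_{ij}(\tilde{x},t) = \sum_\ell \frac{\pt \vphi^k}{\pt x^\ell}\bigl(\vphi^{-1}(\tilde{x},t),t\bigr)\frac{\pt^2 \vphi^{-1}_\ell}{\pt \tilde{x}^i \pt \tilde{x}^j}(\tilde{x},t)$. The first factor stays bounded by \eqref{eq;vphitoId} while the second tends to $0$ by \eqref{eq;convphi2}, giving $\Gamma^k_{ij} \to 0$. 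One more differentiation produces terms in which either $\frac{\pt^2 \vphi^{-1}_\ell}{\pt \tilde{x}^i \pt \tilde{x}^j}$ is retained alongside a new $\frac{\pt^2 \vphi^k}{\pt x^\ell \pt x^n}$ (vanishing by \eqref{eq;ptvphito0}), or $\frac{\pt^2 \vphi^{-1}_\ell}{\pt \tilde{x}^i \pt \tilde{x}^j}$ is replaced by $\frac{\pt^3 \vphi^{-1}_\ell}{\pt \tilde{x}^i \pt \tilde{x}^j \pt \tilde{x}^m}$ (vanishing by \eqref{eq;convphi2}); hence $\frac{\pt \Gamma^k_{ij}}{\pt \tilde{x}^m} \to 0$ uniformly as well.

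The main obstacle is not mathematical but organisational: one has to keep careful track of which factor in each summand of each chain-rule expansion is guaranteed to vanish. The underlying reason everything works is that $\vphi(\cdot, t_0) = \vphi^{-1}(\cdot, t_0) = \mathrm{id}$, which forces all the mixed second and third derivatives of $\vphi$ and $\vphi^{-1}$ to vanish identically at $t = t_0$; joint continuity in $(x,t)$ of those derivatives on $\overline{Q}_\infty$ then upgrades this pointwise vanishing at $t_0$ to the uniform convergence on $\overline{\Omega(t_0)}$ required by the proposition.
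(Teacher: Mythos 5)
Your proposal is correct and takes essentially the same approach as the paper, which simply states that the convergences follow ``by direct calculation'' from \eqref{eq;convphi2}, \eqref{eq;vphitoId} and \eqref{eq;ptvphito0}; you have filled in that direct calculation, with the only cosmetic deviation being that you obtain $g_{ij}\to\delta_{i,j}$ by continuity of matrix inversion rather than by applying \eqref{eq;eq vphidelta} directly to $g_{ij}=\sum_k \frac{\pt \vphi^{-1}_k}{\pt\tilde{x}^i}\frac{\pt \vphi^{-1}_k}{\pt\tilde{x}^j}$, both of which are valid.
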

%
\subsection{Time dependence of the harmonic vector fields}
\label{sec;Vharmonic}
%
To consider time dependence of $\widetilde{h}(t)$, 
we shall investigate the time continuity and differentiability of the basis
of $V_{\mathrm{har}}\bigl(\Omega(t)\bigr)$.
A basis of $V_{\mathrm{har}}\bigl( \Omega(t)\bigr)$ is 
given by $\nabla q_k(t)$, $k=1,\dots,K$ whose scalar potential satisfies
the following Laplace equation with the Dirichlet boundary condition:
\begin{equation}\label{eq;Lap q_k}
    \begin{cases}
        \Delta q_k(t) =0  &\text{in } \Omega(t), 
        \\
        q_k(t)|_{\Gamma_\ell(t)}=\delta_{k,\ell} & \text{for }\,\ell=0,\dots,K 
\end{cases}
\end{equation}
for $k=1,\dots,K$. See, for instance, Kozono and Yanagisawa \cite{Kozono Yanagisawa IUMJ,Kozono Yanagisawa MathZ}.

To investigate the time continuity and differentiability of $\nabla q_k(t)$ at $t_0$,
we introduce the transformed function $\tilde{q}_k(t)$ on $\Omega(t_0)$ defined by
\begin{equation}\label{eq;defi tild q}
    \tilde{q}_k(\tilde{x},t)=q_k\bigl(\vphi^{-1}(\tilde{x},t),t\bigr)
    \quad \text{for } \tilde{x} \in \Omega(t_0).
\end{equation} 
Then we 
observe that \eqref{eq;Lap q_k} is equivalent to
\begin{equation}\label{eq;Ltq_k}
    \begin{cases}
        \mathcal{L}(t) \tilde{q}_k (t)= 0 & \text{in } \Omega(t_0),
        \\
        \tilde{q}_k(t)|_{\Gamma_\ell(t_0)}=\delta_{k,\ell} & \text{for } \ell=0,\dots,K
\end{cases}
\end{equation}
for $k=1,\dots,K$, where $\mathcal{L}(t)$ is defined by 
\begin{equation*}
    [\mathcal{L}(t) \tilde{q}] (\tilde{x})=
    \sum_{k,\ell} \frac{\pt }{\pt \tilde{x}^\ell} \left(
        g^{k\ell}(\tilde{x},t) \frac{\pt \tilde{q}}{\pt \tilde{x}^k}(\tilde{x})
    \right). 
\end{equation*}
%

For a while, we focus on the domain dependence of the solutions of \eqref{eq;Ltq_k}.
%
%
\subsubsection{Local uniform estimate of the solutions \eqref{eq;Ltq_k} for $t$}
Here, we prepare the following a priori estimates with uniform constant with repect to $t$.
\begin{proposition}\label{prop;uniform bound q_k}
    For each $t_0\in [0,T]$, there exist a constant $C=C(T,t_0)>0$  and $\delta>0$ 
    such that for all $t \in \R$ if $|t-t_0| < \delta$ then 
    \begin{equation}\label{eq;aprioriq}
        \| \tilde{v} \|_{H^2(\Omega(t_0))} \leq C \left(
            \| \mathcal{L}(t) \tilde{v} \|_{L^2(\Omega(t_0))} + 
            \| \tilde{v} \|_{H^{\frac{3}{2}}(\pt \Omega(t_0))}
        \right)
    \end{equation}
    for all $\tilde{v} \in H^2\bigl(\Omega(t_0)\bigr)$.
\end{proposition}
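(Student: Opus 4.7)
The plan is to view $\mathcal{L}(t)$ as a small perturbation of $\mathcal{L}(t_0)$ for $t$ near $t_0$ and to combine the standard elliptic a priori estimate for $\mathcal{L}(t_0)$ with an absorption argument. The crucial observation is that at $t=t_0$ we have $\varphi(x,t_0)=x$, hence $g^{ij}(\tilde x,t_0)=\delta_{ij}$, so $\mathcal{L}(t_0)=\Delta$ on the fixed smooth bounded domain $\Omega(t_0)$. The standard Agmon--Douglis--Nirenberg a priori estimate for the Dirichlet problem on $\Omega(t_0)$ therefore yields a constant $C_0=C_0(t_0)>0$ such that
\begin{equation*}
\|\tilde v\|_{H^2(\Omega(t_0))}\le C_0\bigl(\|\Delta \tilde v\|_{L^2(\Omega(t_0))}+\|\tilde v\|_{H^{3/2}(\pt\Omega(t_0))}\bigr)
\quad\text{for all }\tilde v\in H^2\bigl(\Omega(t_0)\bigr).
\end{equation*}

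Next I would expand the difference $\mathcal{L}(t)-\Delta$ acting on $\tilde v\in H^2(\Omega(t_0))$ as
\begin{equation*}
[\mathcal{L}(t)\tilde v -\Delta\tilde v](\tilde x)
=\sum_{k,\ell}\bigl(g^{k\ell}(\tilde x,t)-\delta_{k\ell}\bigr)\frac{\pt^2\tilde v}{\pt\tilde x^k\pt\tilde x^\ell}
+\sum_{k,\ell}\frac{\pt g^{k\ell}}{\pt\tilde x^\ell}(\tilde x,t)\,\frac{\pt \tilde v}{\pt\tilde x^k}.
\end{equation*}
By Proposition \ref{prop;gij}, both $g^{k\ell}(\cdot,t)-\delta_{k\ell}$ and $\pt_\ell g^{k\ell}(\cdot,t)$ converge to $0$ uniformly on $\overline{\Omega(t_0)}$ as $t\to t_0$. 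Consequently, given any $\ep>0$ there exists $\delta=\delta(\ep,t_0)>0$ such that, whenever $|t-t_0|<\delta$,
\begin{equation*}
\|\mathcal{L}(t)\tilde v-\Delta\tilde v\|_{L^2(\Omega(t_0))}\le \ep\,\|\tilde v\|_{H^2(\Omega(t_0))}
\quad\text{for all }\tilde v\in H^2\bigl(\Omega(t_0)\bigr).
\end{equation*}

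Combining the two displays and the triangle inequality gives
\begin{equation*}
\|\tilde v\|_{H^2(\Omega(t_0))}
\le C_0\bigl(\|\mathcal{L}(t)\tilde v\|_{L^2(\Omega(t_0))}+\ep\,\|\tilde v\|_{H^2(\Omega(t_0))}+\|\tilde v\|_{H^{3/2}(\pt\Omega(t_0))}\bigr).
\end{equation*}
Choosing $\ep>0$ so that $C_0\ep\le 1/2$ and then shrinking $\delta$ accordingly, the second term on the right can be absorbed into the left-hand side, producing the desired uniform estimate with $C=2C_0$.

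The only real obstacle is verifying the uniform smallness of $\mathcal{L}(t)-\Delta$ in the operator norm $H^2\to L^2$, and this is exactly what Proposition \ref{prop;gij} provides; the rest is classical elliptic theory together with an absorption argument. Note that the boundary term $\|\tilde v\|_{H^{3/2}(\pt\Omega(t_0))}$ is defined intrinsically on the fixed surface $\pt\Omega(t_0)$, so no time-dependent trace theory is required. I would also mention that a completely parallel argument yields the analogous estimate in $H^1$ or in $W^{2,r}$, which is useful for later sections treating time differentiability.
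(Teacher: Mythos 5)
Your proof is correct, and it takes a cleaner route than the paper's. The paper proves Proposition \ref{prop;uniform bound q_k} by contradiction: it posits a sequence $t_m\to t_0$ and unit-norm $\tilde v_m\in H^2\bigl(\Omega(t_0)\bigr)$ violating the estimate with constant $m$, then uses the a priori estimate for $\Delta_{\tilde x}$ at $t_0$ together with the same expansion of $\mathcal{L}(t_m)-\Delta_{\tilde x}$ and Proposition \ref{prop;gij} to derive $1=\|\tilde v_m\|_{H^2}\to 0$, a contradiction. You instead observe directly that Proposition \ref{prop;gij} gives $\|\mathcal{L}(t)-\Delta_{\tilde x}\|_{H^2\to L^2}\to 0$ uniformly as $t\to t_0$, and then absorb this small perturbation into the left-hand side of the classical estimate. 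Both arguments rest on exactly the same two ingredients — the ADN a priori estimate for the Dirichlet Laplacian on the fixed domain $\Omega(t_0)$ and the uniform convergence of $g^{k\ell}$ and $\pt_\ell g^{k\ell}$ — so the content is the same; your version is more direct and has the small advantage of producing an explicit constant $C=2C_0$ and making the quantitative dependence on $\delta$ transparent. Your closing remark that the same absorption works verbatim in $W^{2,r}$ is also in line with the paper's Remark after Theorem \ref{thm;time depend HW dec div}.
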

\begin{proof}
    Firstly, it is well known that for each $t\in \R$ 
    we can take $C=C(t)>0$ such that \eqref{eq;aprioriq} holds true.  
    Here, we prove the proposition by a contradiction argument. 
    Then for $m \in \N$ there exist $t_m \in \R$ and $\tilde{v}_m \in H^2\bigl(\Omega(t_0)\bigr)$ 
    such that
    $|t_m-t_0| < \frac{1}{m}$ and such that $\| \tilde{v}_m\|_{H^2(\Omega(t_0))} =1$ and
    \begin{equation*}
        \frac{1}{m} > \| \mathcal{L}(t_m) \tilde{v}_m \|_{L^2(\Omega(t_0))} 
        + \| \tilde{v}_m \|_{H^{\frac{3}{2}}(\pt \Omega(t_0))}.
    \end{equation*}
    Here, noting $\mathcal{L}(t_0)=\Delta_{\tilde{x}}$, we see that 
\begin{equation*}
    \begin{split}
        (\mathcal{L}(t_m) -\Delta_{\tilde{x}}) \tilde{v}_m
        &=\sum_{k,\ell} \frac{\pt }{\pt \tilde{x}^\ell} \left(
            \left(g^{k\ell}(\tilde{x},t_m) -\delta_{k,\ell}\right)
            \frac{\pt \tilde{v}_m}{\pt \tilde{x}^k}(\tilde{x}) 
            \right)
            \\
        &=
        \sum_{k,\ell} \left(
            \frac{\pt g^{k\ell}}{\pt \tilde{x}^\ell} 
            (\tilde{x},t_m)
            \frac{\pt \tilde{v}_m}{\pt \tilde{x}^k}(\tilde{x})
            +
            \left(g^{k\ell}(\tilde{x},t_m) -\delta_{k,\ell}\right)
            \frac{\pt^2 \tilde{v}_m}{\pt \tilde{x}^k \pt \tilde{x}^\ell}
            (\tilde{x})
        \right).
    \end{split}
\end{equation*}
So, by Proposition \ref{prop;gij}, we obtain that
\begin{equation}\label{eq;Lt-Delta}
    \begin{split}
    \bigl\|
        \bigl(\mathcal{L}(t_m) -\Delta_{\tilde{x}}\bigr) \tilde{v}_m
    \bigr\|_{L^2(\Omega(t_0))}  
    &\leq
    \sum_{k,\ell} \left(
        \sup_{\tilde{x}} \left|
            \frac{\pt g^{k\ell}}{\pt \tilde{x}^\ell} (\tilde{x},t_m)
        \right|
        +
        \sup_{\tilde{x}} 
    \left| 
        g^{k\ell}(\tilde{x},t_m) -\delta_{k,\ell} 
    \right|
    \right)\,\|\tilde{v}_m\|_{H^2(\Omega(t_0))}
    \\
    &\to 0 \quad \text{ as } t_m\to t_0.
    \end{split}
\end{equation}
    Then, by the a priori estimate of the Laplacian with the Dirichlet boundary condition 
    with some constant $C=C(t_0,\Delta_{\tilde{x}})>0$ 
    we have by \eqref{eq;Lt-Delta}
    \begin{equation*}
    \begin{split}
        1= \|\tilde{v}_m \|_{H^2(\Omega(t_0))} 
        &\leq C \left(
            \| \Delta_{\tilde{x}} \tilde{v}_m \|_{L^2(\Omega(t_0))} 
            + \| \tilde{v}_m\|_{H^{\frac{3}{2}}(\pt \Omega(t_0))}
        \right) 
        \\
        &\leq C \left(
            \bigl\|\bigl(\Delta_{\tilde{x}} - \mathcal{L}(t_m)\bigr)\tilde{v}_m \bigr\|_{L^2(\Omega(t_0))}
            + 
            \| \mathcal{L}(t_m)\tilde{v}_m \|_{L^2(\Omega(t_0))} 
            + \| \tilde{v}_m\|_{H^{\frac{3}{2}}(\pt \Omega(t_0))}
        \right)
        \\
        &\leq
        C \sum_{k,\ell} \left(
            \sup_{\tilde{x}} \left|
                \frac{\pt g^{k\ell}}{\pt \tilde{x}^\ell} (\tilde{x},t_m)
            \right|
            +
            \sup_{\tilde{x}} 
        \left| 
            g^{k\ell}(\tilde{x},t_m) -\delta_{k,\ell} 
        \right|
        \right)\,\|\tilde{v}_m\|_{H^2(\Omega(t_0))} 
        +
        \frac{C}{m} 
        \\
        &\to 0 \quad \text{as } m\to \infty.
    \end{split}
    \end{equation*}
This yields a contradiction. 
\end{proof}
%
\subsubsection{Time continuity of solutions to \eqref{eq;Ltq_k}}
Here, our aim is to consider the time continuity of the transformed function 
$\widetilde{q}_k(t)$ on $\widetilde{\Omega}$ from solutions $q_k(t)$ to \eqref{eq;Lap q_k}.
More precisely, for $k=1,\dots,K$ we put 
\begin{equation}\label{eq;qkkk}
    \widetilde{q}_k(y,t):= q_k \bigl(\phi^{-1}(y,t),t\bigr)
    \quad \text{for } y \in  \widetilde{\Omega}.
\end{equation}
Indeed, we have the following theorem.
\begin{theorem}\label{thm;rem1}
    Let $T>0$.
    It holds that $\widetilde{q}_k \in C\bigl([0,T];H^2(\widetilde{\Omega})\bigr)$
    for $k=1,\dots,K$.
\end{theorem}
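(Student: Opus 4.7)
The plan is to localize the argument at each $t_0 \in [0,T]$, transfer the problem to the fixed domain $\Omega(t_0)$ via the auxiliary diffeomorphism $\varphi(\cdot,t)$ of Section \ref{subsec;unit normal}, and exploit the crucial fact that the Dirichlet data $\delta_{k,\ell}$ on $\Gamma_\ell(t_0)$ is time-independent. Specifically, let $\tilde{q}_k(\tilde{x},t) := q_k\bigl(\varphi^{-1}(\tilde{x},t),t\bigr)$ on $\Omega(t_0)$, which solves \eqref{eq;Ltq_k}. Since $\varphi(\cdot,t_0)=\mathrm{id}$, the operator $\mathcal{L}(t_0)$ reduces to $\Delta_{\tilde{x}}$, and $\tilde{q}_k(t_0)$ coincides with the usual solution $q_k(t_0)$ on $\Omega(t_0)$, which lies in $H^2$ by standard elliptic regularity. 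Because the relation $\widetilde{q}_k(y,t)=\tilde{q}_k\bigl(\phi^{-1}(y,t_0),t\bigr)$ involves only the fixed $C^\infty$ diffeomorphism $\phi(\cdot,t_0)$, Proposition \ref{prop;funcsp} reduces the theorem to proving $\tilde{q}_k(t)\to\tilde{q}_k(t_0)$ in $H^2\bigl(\Omega(t_0)\bigr)$ as $t\to t_0$.

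The central observation is that the difference $\tilde{v}_t := \tilde{q}_k(t) - \tilde{q}_k(t_0)$ has \emph{zero} trace on $\pt\Omega(t_0)$ because both functions equal $\delta_{k,\ell}$ on each $\Gamma_\ell(t_0)$. Moreover, using $\mathcal{L}(t)\tilde{q}_k(t)=0$ and $\mathcal{L}(t_0)\tilde{q}_k(t_0)=\Delta_{\tilde{x}}\tilde{q}_k(t_0)=0$, we have
\begin{equation*}
    \mathcal{L}(t)\tilde{v}_t = -\bigl(\mathcal{L}(t)-\Delta_{\tilde{x}}\bigr)\tilde{q}_k(t_0)\quad\text{in }\Omega(t_0).
\end{equation*}

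Applying the uniform a priori estimate \eqref{eq;aprioriq} from Proposition \ref{prop;uniform bound q_k} (valid for $|t-t_0|<\delta$) yields
\begin{equation*}
    \|\tilde{v}_t\|_{H^2(\Omega(t_0))} \leq C\bigl\|\bigl(\mathcal{L}(t)-\Delta_{\tilde{x}}\bigr)\tilde{q}_k(t_0)\bigr\|_{L^2(\Omega(t_0))},
\end{equation*}
and the right-hand side is controlled exactly as in the bound \eqref{eq;Lt-Delta} appearing in the proof of Proposition \ref{prop;uniform bound q_k}: it is dominated by $\|\tilde{q}_k(t_0)\|_{H^2(\Omega(t_0))}$ times the uniform-supremum norms of $g^{k\ell}(\cdot,t)-\delta_{k,\ell}$ and $\pt_{\tilde{x}^\ell}g^{k\ell}(\cdot,t)$, both of which tend to $0$ as $t\to t_0$ by Proposition \ref{prop;gij}. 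Hence $\tilde{q}_k(t)\to\tilde{q}_k(t_0)$ in $H^2\bigl(\Omega(t_0)\bigr)$, and transferring back via Proposition \ref{prop;funcsp} gives the desired continuity on $\widetilde{\Omega}$.

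The main subtlety, and the reason the argument is clean, is the time-invariance of the Dirichlet data: without it, the difference $\tilde{v}_t$ would carry a nontrivial boundary contribution whose $H^{3/2}$-norm on $\pt\Omega(t_0)$ would itself need to be shown to vanish, which in turn would require a detailed analysis of how the trace behaves under the transport by $\varphi(\cdot,t)$. Since here the traces match identically, only the interior mismatch of the operators must be controlled, and this is handled entirely by Proposition \ref{prop;gij}.
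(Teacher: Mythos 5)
Your proposal is correct and follows essentially the same route as the paper: the paper first proves Lemma \ref{lem;contiq_k} (convergence $\tilde{q}_k(t)\to q_k(t_0)$ in $H^2(\Omega(t_0))$) by exactly the argument you give — zero boundary trace of the difference, the identity $\mathcal{L}(t)\bigl(\tilde{q}_k(t)-q_k(t_0)\bigr)=\bigl(\mathcal{L}(t)-\Delta_{\tilde{x}}\bigr)q_k(t_0)$, the uniform a priori estimate \eqref{eq;aprioriq}, and Proposition \ref{prop;gij} — and then transfers back to $\widetilde{\Omega}$ via the relation $\widetilde{\tilde{q}}_k(t)=\widetilde{q}_k(t)$ and Proposition \ref{prop;funcsp}. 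Your remarks on the role of the time-independent Dirichlet data are consistent with, and a helpful gloss on, what the paper does.
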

In order to prove Theorem \ref{thm;rem1}, it suffices to consider the 
continuity at $t_0$ of the solutions $\tilde{q}_k(t)$ to \eqref{eq;Ltq_k}. 
Firstly, we shall prove the following lemma for $\tilde{q}_k(t)$ for $k=1,\dots,K$, 
noting that $\tilde{q}_k(t_0)=q_k(t_0)$.
\begin{lemma}\label{lem;contiq_k}
    Let $t_0 \in [0,T]$ and let $\tilde{q}_k(t)$ is the solution of \eqref{eq;Ltq_k} for $t\in\R$, 
    $k=1,\dots,K$. 
    For $\ep >0$ there exists $\delta>0$ such that
    for every $t\in \R$ if $|t-t_0|<\delta$ then
    \begin{equation*}
        \| \tilde{q}_k(t) - {q}_k(t_0)\|_{H^2(\Omega(t_0))} < \ep
    \end{equation*}
    for $k=1,\dots,K$.
\end{lemma}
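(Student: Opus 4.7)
The plan is to reduce the claim to the uniform a priori estimate in Proposition~\ref{prop;uniform bound q_k} by exploiting that the Dirichlet data of $\tilde{q}_k(t)$ and $q_k(t_0)$ coincide on $\partial\Omega(t_0)$, so that their difference satisfies a boundary value problem with \emph{homogeneous} boundary data and a right-hand side that is small thanks to Proposition~\ref{prop;gij}.

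First I would set $w_k(t) := \tilde{q}_k(t) - q_k(t_0) \in H^2\bigl(\Omega(t_0)\bigr)$. Since both functions take the value $\delta_{k,\ell}$ on each component $\Gamma_\ell(t_0)$, the trace $w_k(t)|_{\partial\Omega(t_0)}$ vanishes identically, and in particular the boundary norm $\|w_k(t)\|_{H^{3/2}(\partial\Omega(t_0))}$ is zero. On the other hand, because $\mathcal{L}(t)\tilde{q}_k(t)=0$ in $\Omega(t_0)$ and $\mathcal{L}(t_0)=\Delta_{\tilde{x}}$ annihilates $q_k(t_0)$ (which is the classical harmonic extension of the Dirichlet data on $\Omega(t_0)$), we obtain
\begin{equation*}
\mathcal{L}(t)\,w_k(t) = -\mathcal{L}(t)\,q_k(t_0) = -\bigl(\mathcal{L}(t)-\Delta_{\tilde{x}}\bigr)q_k(t_0) \quad \text{in } \Omega(t_0).
\end{equation*}

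Next I would apply Proposition~\ref{prop;uniform bound q_k} to $w_k(t)$: for $|t-t_0|<\delta_0$ small enough one has
\begin{equation*}
\|w_k(t)\|_{H^2(\Omega(t_0))} \leq C\,\bigl\|\bigl(\mathcal{L}(t)-\Delta_{\tilde{x}}\bigr)q_k(t_0)\bigr\|_{L^2(\Omega(t_0))},
\end{equation*}
with a constant $C$ independent of $t$. To control the right-hand side I would expand the operator difference exactly as in \eqref{eq;Lt-Delta}, obtaining a sum of terms involving $g^{k\ell}(\tilde{x},t)-\delta_{k,\ell}$ and $\partial_{\tilde{x}^\ell}g^{k\ell}(\tilde{x},t)$ multiplied by first and second derivatives of the fixed function $q_k(t_0)\in H^2(\Omega(t_0))$. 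By Proposition~\ref{prop;gij} each of these coefficients converges to $0$ uniformly on $\overline{\Omega(t_0)}$ as $t\to t_0$, hence
\begin{equation*}
\bigl\|\bigl(\mathcal{L}(t)-\Delta_{\tilde{x}}\bigr)q_k(t_0)\bigr\|_{L^2(\Omega(t_0))} \leq \omega(t)\,\|q_k(t_0)\|_{H^2(\Omega(t_0))}
\end{equation*}
with $\omega(t)\to 0$ as $t\to t_0$. Combining the two estimates yields $\|w_k(t)\|_{H^2(\Omega(t_0))}<\varepsilon$ for $|t-t_0|$ sufficiently small.

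The main (and essentially only) obstacle is already handled by the preceding results: one needs the a priori bound of Proposition~\ref{prop;uniform bound q_k} to hold with a constant uniform in a neighborhood of $t_0$, since otherwise the factor $C=C(t)$ could blow up and swallow the smallness coming from $\omega(t)$. Given that proposition and the uniform convergence of the metric coefficients in Proposition~\ref{prop;gij}, the argument is direct. From this lemma, Theorem~\ref{thm;rem1} would then follow by transporting the estimate back to $\widetilde{\Omega}$ via the equivalence of norms in Proposition~\ref{prop;funcsp}, since $\widetilde{q}_k(t)$ on $\widetilde{\Omega}$ and $\tilde{q}_k(t)$ on $\Omega(t_0)$ are related by the $C^\infty$ diffeomorphism $\phi(\cdot,t_0)^{-1}\circ\phi(\cdot,t)$, whose derivatives are uniformly bounded for $t$ in a compact neighborhood of $t_0$.
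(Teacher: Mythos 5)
Your proposal is correct and follows essentially the same route as the paper: form the difference $w_k(t)=\tilde{q}_k(t)-q_k(t_0)$, observe that it has zero trace and satisfies $\mathcal{L}(t)w_k(t)=-(\mathcal{L}(t)-\Delta_{\tilde{x}})q_k(t_0)$, then combine the locally uniform a priori estimate of Proposition~\ref{prop;uniform bound q_k} with the uniform convergence of the coefficients in Proposition~\ref{prop;gij}. Your remark on transferring the estimate back to $\widetilde{\Omega}$ via Proposition~\ref{prop;funcsp} is also exactly how the paper obtains Theorem~\ref{thm;rem1}.
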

\begin{proof}
    Noting $\mathcal{L}(t)\bigl(\tilde{q}_k(t)-q_k(t_0)\bigr)=
    \bigl(\mathcal{L}(t)-\Delta_{\tilde{x}}\bigr)q_k(t_0)$
    and $\tilde{q}_k(t)-q_k(t_0)|_{\pt \Omega(t_0)}=0$ for $k=1,\dots,K$,
    by the uniform a priori estimate \eqref{eq;aprioriq} for $\mathcal{L}(t)$, 
    if $|t-t_0|<\delta$ with some $\delta>0$,
    then we have 
    \begin{equation*}
        \begin{split}
            \| \tilde{q}_k (t) -q_k(t_0) \|_{H^2(\Omega(t_0))}
            &\leq 
            C \bigl\| \mathcal{L}(t)\bigl(\tilde{q}_k(t)-q_k(t_0)\bigr) \bigr\|_{L^2(\Omega(t_0))}
            \\
            &=
            C \bigl\| 
                \bigl(\mathcal{L}(t)-\Delta_{\tilde{x}}\bigr)q_k(t_0) 
            \bigr\|_{L^2(\Omega(t_0))}
            \\
        &\leq
        C \sum_{i,j} \left(
            \sup_{\tilde{x}} \left|
                \frac{\pt g^{ij}}{\pt \tilde{x}^j} (\tilde{x},t)
            \right|
            +
            \sup_{\tilde{x}} 
        \left| 
            g^{ij}(\tilde{x},t) -\delta_{i,j} 
        \right|
        \right)\,\|{q}_k(t_0)\|_{H^2(\Omega(t_0))} 
        \\
        &\to 0 \quad \text{as } t\to t_0,
        \end{split}
    \end{equation*}
    for all $k=1,\dots,K$. This completes the proof.
\end{proof}
Next, we shall prove Theorem \ref{thm;rem1}.
\begin{proof}[{Proof of Theorem \ref{thm;rem1}}]
    Firstly, we consider the transformation: $\tilde{q}_k(t)\text{ on } \Omega(t_0)
    \longmapsto
    \widetilde{\tilde{q}}_k(t) \text{ on } \widetilde{\Omega}$, defined by 
    $\widetilde{\tilde{q}}_k(y,t):=\tilde{q}_k\bigl(\phi^{-1}(y,t_0),t\bigr)$ for $y\in\widetilde{\Omega}$
     for $k=1,\dots,K$. 
     Since $\tilde{q}_k(t)$ is defined by \eqref{eq;defi tild q} and
      $\vphi^{-1}(\tilde{x},t)=\phi^{-1}\bigl(\phi(\tilde{x},t_0),t\bigr)$ for
     $\tilde{x}\in \Omega(t_0)$, by a direct calculation, we see that
     \begin{equation}\label{eq;equiv tt_0tilde q}
        \widetilde{\tilde{q}}_k (y,t)= \tilde{q}_k\bigl(\phi^{-1}(y,t_0),t\bigr)
        = q_k \bigl(\phi^{-1}(y,t),t\bigr)
        =\widetilde{q}_k(y,t),
     \end{equation}
     where $\widetilde{q}_k(t)$ is in \eqref{eq;qkkk}.
     Hence, by Proposition \ref{prop;funcsp} and Lemma \ref{lem;contiq_k},
     \begin{equation*}
        \| \widetilde{q}_k(t) - \widetilde{q}_k(t_0)\|_{H^2(\widetilde{\Omega})}
        =
        \| \widetilde{\tilde{q}}_k(t) - \widetilde{q}_k(t_0)\|_{H^2(\widetilde{\Omega})}
        \leq C 
        \| \tilde{q}_k(t) - q_k(t_0)\|_{H^2(\Omega(t_0))} \to 0 \quad \text{as } t \to t_0,
     \end{equation*}
     where the constant $C>0$ is independent of $t\in[0,T]$. This completes the proof.
\end{proof}

Noting that $V_{\mathrm{har}}\bigl(\Omega(t)\bigr)$ is spanned by $\nabla q_1(t),\dots,\nabla q_K(t)$,
we give the following corollary.
\begin{corollary}\label{cor;CqC}
    There exists a constant $C_1,\,C_2>0$ such that 
    for all $t \in [0,T]$ 
    \begin{equation*}
        C_1 \leq \| \nabla q_k(t) \|_{H^1(\Omega(t))} \leq C_2, \quad k=1,\dots,K. 
    \end{equation*}
\end{corollary}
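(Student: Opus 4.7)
The plan is to obtain both bounds directly from Theorem~\ref{thm;rem1} combined with the compactness of $[0,T]$ and the norm-equivalence Proposition~\ref{prop;funcsp}.

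For the upper bound, I would apply Proposition~\ref{prop;funcsp} to get a constant $C>0$ independent of $t\in[0,T]$ such that
\begin{equation*}
    \|\nabla q_k(t)\|_{H^1(\Omega(t))}\leq \|q_k(t)\|_{H^2(\Omega(t))}\leq C\,\|\widetilde{q}_k(t)\|_{H^2(\widetilde{\Omega})}.
\end{equation*}
Since Theorem~\ref{thm;rem1} gives $\widetilde{q}_k\in C\bigl([0,T];H^2(\widetilde{\Omega})\bigr)$, the map $t\mapsto\|\widetilde{q}_k(t)\|_{H^2(\widetilde{\Omega})}$ is continuous on the compact interval $[0,T]$ and hence bounded by some $M>0$. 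Taking $C_2:=CM$ yields the upper estimate.

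For the lower bound, I would first note that for every $t\in[0,T]$, since $q_k(t)|_{\Gamma_k(t)}=1$ and $q_k(t)|_{\Gamma_0(t)}=0$, the function $q_k(t)$ is non-constant on $\Omega(t)$; in particular $\|\nabla q_k(t)\|_{L^2(\Omega(t))}>0$ for each $t$. Next I would verify that this quantity depends continuously on $t$. By the chain rule identity $\frac{\partial q_k}{\partial x^\ell}=\sum_{i} \frac{\partial y^i}{\partial x^\ell}\frac{\partial \widetilde{q}_k}{\partial y^i}$ and the change of variables $x=\phi^{-1}(y,t)$, one obtains the pullback formula
\begin{equation*}
    \|\nabla q_k(t)\|_{L^2(\Omega(t))}^2
    =\int_{\widetilde{\Omega}}\sum_{i,j} g^{ij}(y,t)\,\frac{\partial \widetilde{q}_k}{\partial y^i}(y,t)\,\frac{\partial \widetilde{q}_k}{\partial y^j}(y,t)\,J(t)\,dy.
\end{equation*}
Because $g^{ij}(\cdot,t)$ and $J(t)$ vary smoothly in $t$ and $\widetilde{q}_k\in C\bigl([0,T];H^1(\widetilde{\Omega})\bigr)$ by Theorem~\ref{thm;rem1}, the right-hand side is a continuous function of $t$ on $[0,T]$.

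A continuous strictly positive function on the compact interval $[0,T]$ attains a positive minimum, providing a constant $c_k>0$ with $\|\nabla q_k(t)\|_{L^2(\Omega(t))}\geq c_k$ for all $t\in[0,T]$; setting $C_1:=\min_{1\leq k\leq K} c_k$ gives the lower bound. There is no real obstacle here: the substantive analytic work has already been carried out in Theorem~\ref{thm;rem1} (continuity of $\widetilde{q}_k$) and Proposition~\ref{prop;funcsp} (uniform norm equivalence); the only point requiring minor care is the pullback formula used to transfer continuity of the moving-domain $L^2$-norm to a fixed-domain quantity.
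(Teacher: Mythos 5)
Your proposal is correct and follows essentially the same route as the paper's proof: reduce to a continuous function of $t$ on the fixed domain $\widetilde{\Omega}$ (via Theorem \ref{thm;rem1} and Proposition \ref{prop;funcsp} or the explicit pullback formula), observe it is strictly positive because the boundary condition in \eqref{eq;Lap q_k} forces $q_k(t)$ to be non-constant, and invoke compactness of $[0,T]$. The only cosmetic difference is that the paper transforms the vector field $\nabla q_k(t)$ into $\nabla_g\widetilde{q}_k(t)$ and applies Proposition \ref{prop;funcsp} to that, while you bound from above via $\|q_k(t)\|_{H^2(\Omega(t))}$ and from below via the $L^2$ pullback identity (which coincides with \eqref{eq;norm nq} in the paper); both are sound.
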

\begin{proof}
     Using the transformation of a vector field $\nabla q_k(t)$ on $\Omega(t)$ to that on $\widetilde{\Omega}$, 
     we see that
     \begin{equation*}
        \sum_{\ell} \frac{\pt \phi^i}{\pt x^\ell}\bigl(\phi^{-1}(y,t),t\bigr) 
        \frac{\pt q_k}{\pt x^\ell}\bigl(\phi^{-1}(y,t),t\bigr)
        = \sum_{\ell} g^{i\ell}(y,t) \frac{\pt \widetilde{q}_k}{\pt y^\ell}(y,t)=[\nabla_g \widetilde{q}_k]^i(y,t),
     \end{equation*}
    for all $y \in \widetilde{\Omega}$. 
    Furthermore, we easily see that $\nabla_g \widetilde{q}_k\in C\bigl([0,T];H^1(\widetilde{\Omega})\bigr)$ 
    by Theorem \ref{thm;rem1}.
    Hence, by Proposition \ref{prop;funcsp} we obtain
    
    \begin{equation*}
        C \min_{0\leq t\leq T} \| \nabla_g \widetilde{q}_k (t) \|_{H^1(\widetilde{\Omega})}
        \leq \| \nabla q_k (t) \|_{H^1(\Omega(t))}
        \leq C \max_{0 \leq t \leq T} \| \nabla_g \widetilde{q}_k (t) \|_{H^1(\widetilde{\Omega})}
    \end{equation*}
    for $k=1,\dots,K$. Here, we note that $\|\nabla_g \widetilde{q}_k(t)\|_{H^1(\widetilde{\Omega})} \neq 0$
    for all $t\in [0,T]$.
    Indeed, if not, we see that $\nabla q_k(x,t)\equiv 0$, hence $q_k(x,t) \equiv \text{Const.}$
    in $\overline{\Omega(t)}$ with some $t\in[0,T]$. 
    However this contradicts the boundary condition of \eqref{eq;Lap q_k}.
    This completes the proof.
\end{proof}
\subsubsection{Time differentiability of solutions to \eqref{eq;Ltq_k}}
\label{subsec;diff qk}
The aim of this subsubsection is to show 
$\widetilde{q}_k \in C^1\bigl([0,T];H^2(\widetilde{\Omega})\bigr)$, $k=1,\dots,K$.
For this aim, we focus on $\tilde{q}_k(t)$ on $\Omega(t_0)$ at each fixed $t_0\in [0,T]$ as in the above.

We introduce an operator $\dot{\mathcal{L}}(t_0)$ 
(which is the first variation of $\mathcal{L}(t)$ at $t_0$)
defined by for $\tilde{q} \in H^2\bigl(\Omega(t_0)\bigr)$
\begin{equation*}
    \begin{split}
    \dot{\mathcal{L}}(t_0)\tilde{q} 
    &:= \lim_{\ep \to 0} \frac{\mathcal{L}(t_0+\ep) - \Delta_{\tilde{x}}}{\ep} \tilde{q} 
    \quad \text{in } L^2\bigl(\Omega(t_0)\bigr) 
    \\
    &= \sum_{i,j} \frac{\pt}{\pt \tilde{x}^j} \left(
        \left.\frac{\pt g^{ij}}{\pt t}\right|_{t=t_0} \frac{\pt \tilde{q}}{\pt \tilde{x}^i}
    \right).
    \end{split}
\end{equation*}
Then, we  introduce solutions $\dot{q}_k(t_0)$, $k=1,\dots,K$, of the following Poisson equation with the homogeneous Dirichlet boundary condition:
\begin{equation}\label{eq;Lap val q_k}
    \begin{cases}
        \Delta_{\tilde{x}} \dot{q}_k (t_0) = -\dot{\mathcal{L}}(t_0) q_k(t_0) &\text{in }\Omega(t_0),
        \\
        \dot{q}_k (t_0) =0 &\text{on } \pt \Omega(t_0),
    \end{cases}
\end{equation}
where $q_k(t_0)$ is a solution to \eqref{eq;Lap q_k} at $t_0\in [0,T]$.

Firstly, we discuss the existence of a time derivative $\tilde{q}_k(t)$ at $t_0$.
\begin{lemma}\label{lem;diff q_k}
    Let $t_0 \in [0,T]$ and let $q_k(t_0)$, $k=1,\dots,K$, be solutions to \eqref{eq;Lap q_k} at $t_0$.
    Let $\tilde{q}_k(t)$ is a solution to \eqref{eq;Ltq_k} for $t\in \R$. 
    Then, 
    \begin{equation*}
        \lim_{\ep\to 0} \frac{\tilde{q}_k(t_0+\ep)-q_k(t_0)}{\ep} = \dot{q}_k(t_0)
        \quad \text{in } H^2\bigl(\Omega(t_0)\bigr),
    \end{equation*}
    where $\dot{q}_k(t_0)$ is a solution to \eqref{eq;Lap val q_k}.
\end{lemma}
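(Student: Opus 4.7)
The plan is to reduce the problem to an application of the uniform a priori estimate in Proposition \ref{prop;uniform bound q_k}. First, I would verify that $\dot{q}_k(t_0)\in H^2(\Omega(t_0))\cap H^1_0(\Omega(t_0))$ is well-defined: since $q_k(t_0)\in H^2(\Omega(t_0))$ and since the coefficients $\partial_t g^{ij}(\cdot,t_0)$ and $\partial_{\tilde{x}^j}\partial_t g^{ij}(\cdot,t_0)$ are smooth (from the $C^\infty$-regularity of $\phi$ and $\phi^{-1}$), the right-hand side $-\dot{\mathcal{L}}(t_0)q_k(t_0)$ lies in $L^2(\Omega(t_0))$, and standard $H^2$-solvability of the Poisson equation on a smooth domain provides $\dot{q}_k(t_0)$ uniquely.

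Next, I set
\[
r_\ep := \frac{\tilde{q}_k(t_0+\ep)-q_k(t_0)}{\ep}-\dot{q}_k(t_0).
\]
Since $\tilde{q}_k(t_0+\ep)|_{\Gamma_\ell(t_0)}=\delta_{k,\ell}=q_k(t_0)|_{\Gamma_\ell(t_0)}$ for all $\ell$ and $\dot{q}_k(t_0)|_{\partial\Omega(t_0)}=0$, the function $r_\ep$ belongs to $H^2(\Omega(t_0))$ with zero trace on $\partial\Omega(t_0)$. Using $\mathcal{L}(t_0+\ep)\tilde{q}_k(t_0+\ep)=0$, $\mathcal{L}(t_0)=\Delta_{\tilde{x}}$, $\Delta_{\tilde{x}}q_k(t_0)=0$, and $\Delta_{\tilde{x}}\dot{q}_k(t_0)=-\dot{\mathcal{L}}(t_0)q_k(t_0)$, a direct manipulation (adding and subtracting $\dot{\mathcal{L}}(t_0)q_k(t_0)$) yields
\[
\mathcal{L}(t_0+\ep)\,r_\ep
=-\Bigl[\tfrac{\mathcal{L}(t_0+\ep)-\mathcal{L}(t_0)}{\ep}-\dot{\mathcal{L}}(t_0)\Bigr]q_k(t_0)
-\bigl[\mathcal{L}(t_0+\ep)-\mathcal{L}(t_0)\bigr]\dot{q}_k(t_0).
\]

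Then I would show both terms on the right tend to zero in $L^2(\Omega(t_0))$. Writing $\mathcal{L}(t)$ in terms of $g^{ij}(\tilde{x},t)$ and $\partial_{\tilde{x}^j}g^{ij}(\tilde{x},t)$, the $C^\infty$-regularity of $\vphi$ and $\vphi^{-1}$ (combined with Proposition \ref{prop;gij}) implies that every coefficient and its first spatial derivative is $C^1$ in $t$ uniformly on $\overline{\Omega(t_0)}$. Hence the first-order difference quotient of each coefficient converges uniformly to the corresponding $t$-derivative as $\ep\to 0$. Multiplying by the first and second spatial derivatives of $q_k(t_0)\in H^2$ (resp.\ $\dot{q}_k(t_0)\in H^2$) and taking $L^2$-norms, both bracketed quantities vanish in the limit.

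Finally, since $r_\ep$ has zero boundary trace and $|\ep|<\delta$, the uniform a priori estimate of Proposition \ref{prop;uniform bound q_k} gives
\[
\|r_\ep\|_{H^2(\Omega(t_0))}\leq C\bigl\|\mathcal{L}(t_0+\ep)r_\ep\bigr\|_{L^2(\Omega(t_0))}\longrightarrow 0
\quad\text{as }\ep\to 0,
\]
which is exactly the conclusion. The principal obstacle is ensuring that the constant in the a priori estimate does not blow up as $\ep\to 0$; this is precisely the content of Proposition \ref{prop;uniform bound q_k}, without which the naive $t$-dependent bound would be useless for passing to the limit. Once this uniformity is in hand, the argument is essentially routine.
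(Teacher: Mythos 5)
Your proof is correct and follows essentially the same route as the paper: you derive the identical identity $\mathcal{L}(t_0+\ep)r_\ep = -\bigl[\tfrac{\mathcal{L}(t_0+\ep)-\Delta_{\tilde{x}}}{\ep}-\dot{\mathcal{L}}(t_0)\bigr]q_k(t_0) - \bigl(\mathcal{L}(t_0+\ep)-\Delta_{\tilde{x}}\bigr)\dot{q}_k(t_0)$ (the paper writes $\Delta_{\tilde x}$ where you write $\mathcal{L}(t_0)$), verify zero boundary trace, show the right-hand side vanishes in $L^2$ via smoothness of the coefficients, and close with the uniform a priori estimate from Proposition \ref{prop;uniform bound q_k}. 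Your version is somewhat more explicit about the well-definedness of $\dot{q}_k(t_0)$ and the $C^1$-in-$t$ convergence of the coefficients, but the structure and key ideas coincide with the paper's argument.
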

\begin{proof}
    In order to apply the a priori estimate \eqref{eq;aprioriq} for $\mathcal{L}(t_0+\ep)$, 
    we observe that by \eqref{eq;Ltq_k} and \eqref{eq;Lap val q_k},
    $\bigl(\tilde{q}_k(t)-q_k(t_0)\bigr)/\ep -\dot{q}_k(t)$ satisfies the following equation in $\Omega(t_0)$
    \begin{equation*}
        \begin{split}
            \mathcal{L}(t_0+\ep)&\left( 
                \frac{\tilde{q}_k(t_0+\ep) - q_k(t_0)}{\ep} - \dot{q}_k(t_0)
            \right)
            \\
            &=-\frac{\mathcal{L}(t_0+\ep) - \Delta_{\tilde{x}}}{\ep}q_k(t_0) 
            - \mathcal{L}(t_0+\ep)\dot{q}_k(t_0)
            \\
            &=
            -\frac{\mathcal{L}(t_0+\ep) - \Delta_{\tilde{x}}}{\ep}q_k(t_0) 
            +\dot{\mathcal{L}}(t)q_k(t_0) - \bigl(\mathcal{L}(t_0+\ep)-\Delta_{\tilde{x}}\bigr)\dot{q}_k(t_0).
        \end{split}
    \end{equation*}
    and  the boundary condition
    \begin{equation*}
        \frac{\tilde{q}_k(t_0+\ep) - q_k(t_0)}{\ep} - \dot{q}_k(t_0)=0 \quad \text{on } \pt \Omega(t_0).
    \end{equation*}

    Then, using \eqref{eq;aprioriq} we see that  by the definition of $\dot{\mathcal{L}}(t_0)$ 
    and by the same argument as in \eqref{eq;Lt-Delta},
    \begin{equation*}
        \begin{split}
            \left\| \frac{\tilde{q}_k(t_0+\ep) - q_k(t_0)}{\ep} - \dot{q}_k(t_0)\right\|_{H^2(\Omega(t_0))}
            &
            \leq
            C \left\|\mathcal{L}(t_0+\ep)\left( 
                \frac{\tilde{q}_k(t_0+\ep) - q_k(t_0)}{\ep} - \dot{q}_k(t_0)
            \right) \right\|_{L^2(\Omega(t_0))}
            \\
            &\to 0 \quad \text{as } \ep\to 0.
        \end{split}
    \end{equation*}
    This completes the proof.
\end{proof}
Next, we discuss the time continuity of the derivative $\dot{q}_k(t)$, $k=1,\dots,K$.
To begin with, to deal with $\dot{q}_k(t)$ for $t\in [0,T]$, we have to rewrite $\dot{\mathcal{L}}(t_0)$
replacing $t_0$ by $t$.
However, we remark that the Riemann metric $g^{ij}(\tilde{x},t)$, 
in the definition of $\dot{\mathcal{L}}(t_0)$ above, implicitly contains $t_0$ since it consists 
of the diffeomorphism $\varphi(\cdot,t)$ which depends on  $t_0 \in [0,T]$. 
So, we rewrite $\displaystyle \frac{\pt g^{ij}}{\pt t}$ using the diffeomorphism $\phi$, without $\varphi$.
Indeed, we define $\mathcal{G}^{ij}(x,t)$
on $\overline{Q}_\infty$ by 
\begin{equation*}
    \begin{split}
    \mathcal{G}^{ij}(x,t)
    =\,&
    -\sum_{k} \left(
        \frac{\pt^2 \phi^{-1}_i}{\pt y^k \pt s}\bigl(\phi(x,t),t\bigr) 
        \frac{\pt \phi^k}{\pt x^j}(x,t)
        +
        \frac{\pt^2 \phi^{-1}_j}{\pt y^k \pt s}\bigl(\phi(x,t),t\bigr) 
        \frac{\pt \phi^k}{\pt x^i}(x,t)
    \right)
    \quad \text{for } (x,t)\in \overline{Q}_\infty.
    \end{split}
\end{equation*}
Here, we note that $\mathcal{G}^{ij}\in C^\infty(\overline{Q}_\infty)$.
Noting the identity
\begin{multline*}
    0\equiv
    \sum_{k,\ell} \frac{\pt^2 \phi^{-1}_i}{\pt y^k \pt y^\ell}\bigl(\phi(x,t),t\bigr)
    \frac{\pt \phi^k}{\pt x^j}(x,t)\frac{\pt \phi^\ell}{\pt t}(x,t)
    \\+
    \sum_{k} \frac{\pt^2 \phi^{-1}_i}{\pt y^k \pt s}\bigl( \phi(x,t),t\bigr) 
    \frac{\pt \phi^k}{\pt x^j}(x,t)
    +
    \sum_{k} \frac{\pt \phi^{-1}_i}{\pt y^k} \bigl(\phi(x,t),t\bigr) 
    \frac{\pt^2 \phi^k}{\pt x^j \pt t}(x,t)
\end{multline*}
for all $(x,t) \in \overline{Q}_\infty$ from \eqref{eq;Kronecker},
 by an elementary calculation, we see that
$ \displaystyle \frac{\pt g^{ij}}{\pt t}(\tilde{x},t_0)=\mathcal{G}^{ij}(\tilde{x},t_0)$\footnote{The representation of $\mathcal{G}^{ij}$ is not unique in general, and has an equivalent one.}. 
Therefore, we can write $\dot{\mathcal{L}}(t)$ on $\Omega(t)$ such as 
\begin{equation*}
    \dot{\mathcal{L}}(t)q= \sum_{i,j} \frac{\pt }{\pt x^j} \left(
        \mathcal{G}^{ij}(t) \frac{\pt q}{\pt x^i}
    \right) \quad \text{for } q \in H^2\bigl(\Omega(t)\bigr),\; t\in [0,T].
\end{equation*}

Hence, we observe that $\dot{q}_k(t)$, $k=1,\dots,K$, satisfies the following equation on $\Omega(t)$:
\begin{equation*}
    \begin{cases}
        \Delta \dot{q}_k(t) = -\dot{\mathcal{L}}(t)q_k(t) &\text{in } \Omega(t),\\
        \dot{q}_k(t) = 0 & \text{on } \pt \Omega(t),
    \end{cases}
\end{equation*}
for each $t \in [0,T]$.

In order to investigate the time continuity at $t_0 \in [0,T]$ of 
the transformed function $\widetilde{\dot{q}}_k(t)$ on $\widetilde{\Omega}$, 
it is  effective to introduce the transformation:
$ \dot{q}_k(t) \text{ on } \Omega(t) \longmapsto \tilde{\dot{q}}_k(t) \text{ on } \Omega(t_0)$.
Here, $\tilde{\dot{q}}_k(\tilde{x},t)=\dot{q}_k\bigl(\vphi^{-1}(\tilde{x},t),t\bigr)$
for $\tilde{x} \in \Omega(t_0)$.
Moreover, we introduce
$\tilde{\mathcal{G}}^{ij}(\tilde{x},t)=\mathcal{G}^{ij}\bigl(\vphi^{-1}(\tilde{x},t),t\bigr)$.
Then, we see that $\tilde{\dot{q}}_k(t)$ satisfies the following (transformed) elliptic equation:
\begin{equation}\label{eq;Ltdotq_k}
    \begin{cases}
        \mathcal{L}(t)\, \tilde{\dot{q}}_k(t) 
        = -\tilde{\dot{\mathcal{L}}}(t)\,\tilde{q}_k(t)
        &\text{in }\Omega(t_0),
        \\
        \tilde{\dot{q}}_k(t)=0
        &\text{on } \pt\Omega(t_0),
    \end{cases}
\end{equation}
for all $t\in [0,T]$, where the operator $\tilde{\dot{\mathcal{L}}}(t)$ on $H^2\bigl(\Omega(t_0)\bigr)$
is defined by for $\tilde{q} \in H^2\bigl(\Omega(t_0)\bigr)$
\begin{equation}\label{eq;defi tildotLtqk}
    \begin{split}
    \bigl[\tilde{\dot{\mathcal{L}}}(t)\,\tilde{q}\bigr](\tilde{x})
    :=\,&
    \sum_{i,j,m,n} 
        \frac{\pt \vphi^m}{\pt x^i}\bigl(
        \vphi^{-1}(\tilde{x},t),t
    \bigr)
    \frac{\pt \vphi^n}{\pt x^j}\bigl(
        \vphi^{-1}(\tilde{x},t),t
    \bigr)
    \frac{\pt \tilde{ \mathcal{G}}^{ij}}{\pt \tilde{x}^n}(\tilde{x},t)
    \frac{\pt \tilde{q}}{\pt \tilde{x}^m}(\tilde{x})
    \\
    &+
    \sum_{i,j,m,n} 
    \frac{\pt \vphi^m}{\pt x^i}\bigl(
        \vphi^{-1}(\tilde{x},t),t
    \bigr)
    \frac{\pt \vphi^n}{\pt x^j}\bigl(
        \vphi^{-1}(\tilde{x},t),t
    \bigr)
    \tilde{\mathcal{G}}^{ij}(\tilde{x},t)
    \frac{\pt^2 \tilde{q}}{\pt \tilde{x}^m \pt \tilde{x}^n}(\tilde{x})
    \\
    &+
    \sum_{i,j,m} 
    \frac{\pt^2 \vphi^m}{\pt x^i \pt x^j} \bigl(\vphi^{-1} (\tilde{x},t),t\bigr)
    \tilde{\mathcal{G}}^{ij}(\tilde{x},t)\frac{\pt \tilde{q}}{\pt \tilde{x}^m}(\tilde{x}).
    \end{split}
\end{equation}
Here, we note that we easily see $\tilde{\dot{\mathcal{L}}}(t_0)=\dot{\mathcal{L}}(t_0)$ 
on $H^2\bigl(\Omega(t_0)\bigr)$
by \eqref{eq;vphitoId} and \eqref{eq;ptvphito0}. 
Indeed, 
\begin{equation}\label{eq;tildotLt0}
    \begin{split}
    \bigl[\tilde{\dot{\mathcal{L}}}(t_0)\,\tilde{q}\bigr](\tilde{x})
    =\,&
    \sum_{i,j,m,n} 
        \delta_{m,i} \delta_{n,j}
    \frac{\pt { \mathcal{G}}^{ij}}{\pt \tilde{x}^n}(\tilde{x},t_0)
    \frac{\pt \tilde{q}}{\pt \tilde{x}^m}(\tilde{x})
    \\
    &+
    \sum_{i,j,m,n} 
    \delta_{m,i} \delta_{n,j}
    {\mathcal{G}}^{ij}(\tilde{x},t_0)
    \frac{\pt^2 \tilde{q}}{\pt \tilde{x}^m \pt \tilde{x}^n}(\tilde{x},)
    \\
    =\,&
    \sum_{i,j} 
    \frac{\pt { \mathcal{G}}^{ij}}{\pt \tilde{x}^j}(\tilde{x},t_0)
    \frac{\pt \tilde{q}}{\pt \tilde{x}^i}(\tilde{x})
    +
    \sum_{i,j} 
    {\mathcal{G}}^{ij}(\tilde{x},t_0)
    \frac{\pt^2 \tilde{q}}{\pt \tilde{x}^i \pt \tilde{x}^j}(\tilde{x})
    \\
    =\,& \bigl[\dot{\mathcal{L}}(t_0)\tilde{q}\bigr](\tilde{x}).
    \end{split}
\end{equation}
Then, we obtain the time continuity of $\tilde{\dot{q}}_k(t)$ at $t_0$.
\begin{lemma}\label{lem;diff conti q_k}
    Let $t_0\in [0,T]$ and let $\tilde{\dot{q}}_k(t)$ is the solution of \eqref{eq;Ltdotq_k} for $t \in [0,T]$,
    $k=1,\dots,K$. 
    For $\ep>0$ there exists $\delta>0$ such that for every $t \in [0,T]$
    if $|t-t_0|<\delta$ then 
    \begin{equation*}
        \| \tilde{\dot{q}}_k(t) - \dot{q}_k (t_0) \|_{H^2(\Omega(t_0))} <\ep,
    \end{equation*}
    for $k=1,\dots,K$.
\end{lemma}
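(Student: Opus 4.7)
The plan is to mimic the argument of Lemma~\ref{lem;contiq_k}: subtract the two equations satisfied by $\tilde{\dot{q}}_k(t)$ and $\dot{q}_k(t_0)$ on the common domain $\Omega(t_0)$, and apply the uniform $H^2$-a priori estimate \eqref{eq;aprioriq} for the operator $\mathcal{L}(t)$ provided by Proposition~\ref{prop;uniform bound q_k}. Since both $\tilde{\dot{q}}_k(t)$ and $\dot{q}_k(t_0)$ vanish on $\pt\Omega(t_0)$, the boundary term in \eqref{eq;aprioriq} drops out, and matters reduce to showing
\begin{equation*}
    \bigl\|\mathcal{L}(t)\bigl(\tilde{\dot{q}}_k(t)-\dot{q}_k(t_0)\bigr)\bigr\|_{L^2(\Omega(t_0))} \longrightarrow 0
    \quad\text{as } t\to t_0.
\end{equation*}

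To compute the left-hand side, I would use \eqref{eq;Ltdotq_k} and \eqref{eq;Lap val q_k} together with $\mathcal{L}(t_0)=\Delta_{\tilde x}$ to write
\begin{equation*}
\mathcal{L}(t)\bigl(\tilde{\dot{q}}_k(t)-\dot{q}_k(t_0)\bigr)
= \underbrace{\bigl(\dot{\mathcal{L}}(t_0)-\tilde{\dot{\mathcal{L}}}(t)\bigr)q_k(t_0)}_{\mathrm{(I)}}
+ \underbrace{\tilde{\dot{\mathcal{L}}}(t)\bigl(q_k(t_0)-\tilde{q}_k(t)\bigr)}_{\mathrm{(II)}}
- \underbrace{\bigl(\mathcal{L}(t)-\Delta_{\tilde x}\bigr)\dot{q}_k(t_0)}_{\mathrm{(III)}},
\end{equation*}
where in deriving $\mathrm{(I)}$ I used the identification $\tilde{\dot{\mathcal{L}}}(t_0)=\dot{\mathcal{L}}(t_0)$ from \eqref{eq;tildotLt0}.

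Each term I would handle separately. Term $\mathrm{(III)}$ is exactly the quantity estimated in the proof of Lemma~\ref{lem;contiq_k}: by Proposition~\ref{prop;gij} the coefficients $g^{k\ell}(\cdot,t)-\delta_{k,\ell}$ and their first derivatives converge to $0$ uniformly, and $\dot{q}_k(t_0)$ is a fixed $H^2$-function, so $\mathrm{(III)}\to 0$ in $L^2$. Term $\mathrm{(I)}$ is of the same flavor: comparing the explicit expression \eqref{eq;defi tildotLtqk} for $\tilde{\dot{\mathcal{L}}}(t)$ with its $t=t_0$ specialization \eqref{eq;tildotLt0}, the difference of coefficients involves $\frac{\pt\vphi^m}{\pt x^i}(\vphi^{-1}(\tilde x,t),t)-\delta_{m,i}$, $\frac{\pt^2\vphi^m}{\pt x^i\pt x^j}(\vphi^{-1}(\tilde x,t),t)$ and $\tilde{\mathcal{G}}^{ij}(\tilde x,t)-\mathcal{G}^{ij}(\tilde x,t_0)$, all of which tend to $0$ uniformly on $\overline{\Omega(t_0)}$ by \eqref{eq;vphitoId}, \eqref{eq;ptvphito0} and the $C^\infty$-regularity of $\mathcal{G}^{ij}$ on $\overline{Q}_\infty$; since $q_k(t_0)\in H^2(\Omega(t_0))$ is fixed, $\mathrm{(I)}\to 0$ in $L^2$.

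The genuine obstacle, and the place where the hypotheses of the lemma are really used, is term $\mathrm{(II)}$. Here the operator $\tilde{\dot{\mathcal{L}}}(t)$ is of second order, so I need an $H^2$-smallness bound on $q_k(t_0)-\tilde{q}_k(t)$, not just $H^1$; this is exactly what Lemma~\ref{lem;contiq_k} supplies. Combined with the fact that the coefficients of $\tilde{\dot{\mathcal{L}}}(t)$, as read off from \eqref{eq;defi tildotLtqk}, are uniformly bounded on $\overline{\Omega(t_0)}$ for $t$ in a compact neighborhood of $t_0$ (since every ingredient is a $C^\infty$-function on $\overline{Q}_\infty$), one obtains $\|\mathrm{(II)}\|_{L^2(\Omega(t_0))}\le C\,\|q_k(t_0)-\tilde{q}_k(t)\|_{H^2(\Omega(t_0))}\to 0$. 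Putting $\mathrm{(I)}$, $\mathrm{(II)}$, $\mathrm{(III)}$ together and feeding the result into \eqref{eq;aprioriq} yields the claimed continuity, for each $k=1,\dots,K$ uniformly, so a common $\delta>0$ can be chosen at the end.
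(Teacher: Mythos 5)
Your proposal is correct and follows essentially the same strategy as the paper: subtract the two elliptic problems, apply the uniform a priori estimate from Proposition~\ref{prop;uniform bound q_k}, and split the cross term $-\tilde{\dot{\mathcal L}}(t)\tilde q_k(t)+\dot{\mathcal L}(t_0)q_k(t_0)$ by adding and subtracting, controlling each piece via Lemma~\ref{lem;contiq_k} and the uniform coefficient convergences. The only (immaterial) difference is which factor you hold fixed in the add-and-subtract: you put the operator difference on the fixed $q_k(t_0)$ and the moving operator $\tilde{\dot{\mathcal L}}(t)$ on the $H^2$-small difference, whereas the paper applies $\tilde{\dot{\mathcal L}}(t)-\dot{\mathcal L}(t_0)$ to $\tilde q_k(t)$ (using the $\limsup$ $H^2$-bound from Lemma~\ref{lem;contiq_k}) and the fixed $\dot{\mathcal L}(t_0)$ to the difference; both versions close by the same ingredients.
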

\begin{proof}
    In order to apply the a priori estimate \eqref{eq;aprioriq},
    from \eqref{eq;Lap val q_k} and \eqref{eq;Ltdotq_k}
    we confirm that
    \begin{equation}\label{eq;tilddotqt-dotqt0}
        \begin{split}
        \mathcal{L}\bigl(\tilde{\dot{q}}_k(t) -\dot{q}_k(t_0)\bigr) 
        &= -\bigl(\tilde{\dot{\mathcal{L}}}(t)-\dot{\mathcal{L}}(t_0)\bigr) \tilde{q}(t)
        - \dot{\mathcal{L}}(t_0)\bigl(\tilde{q}(t)-q(t_0)\bigr)
        -\bigl(\mathcal{L}(t)-\Delta_{\tilde{x}}\bigr) \dot{q}_k(t_0),
        \end{split}
    \end{equation}
    and $\tilde{\dot{q}}_k(t) - \dot{q}_k(t_0)=0$ on $\pt \Omega(t_0)$.
    
    Firstly, by \eqref{eq;vphitoId} and \eqref{eq;ptvphito0} noting that for $i,j,m,n=1,2,3$,
    \begin{gather*}
        \sup_{\tilde{x} \in \overline{\Omega(t_0)}}
        \left|
            \frac{\pt \vphi^m}{\pt x^i}\bigl(
            \vphi^{-1}(\tilde{x},t),t
        \bigr)
        \frac{\pt \vphi^n}{\pt x^j}\bigl(
            \vphi^{-1}(\tilde{x},t),t
        \bigr)
        \frac{\pt \tilde{ \mathcal{G}}^{ij}}{\pt \tilde{x}^n}(\tilde{x},t)
        -
        \delta_{m,i} \delta_{n,j}
        \frac{\pt { \mathcal{G}}^{ij}}{\pt \tilde{x}^n}(\tilde{x},t_0)
        \right|\to 0,
        \\
        \sup_{\tilde{x}\in \overline{\Omega(t_0)}}
        \left| 
            \frac{\pt \vphi^m}{\pt x^i}\bigl(
            \vphi^{-1}(\tilde{x},t),t
        \bigr)
        \frac{\pt \vphi^n}{\pt x^j}\bigl(
            \vphi^{-1}(\tilde{x},t),t
        \bigr)
        \tilde{\mathcal{G}}^{ij}(\tilde{x},t)
        -
        \delta_{m,i} \delta_{n,j}
        {\mathcal{G}}^{ij}(\tilde{x},t_0)
        \right|\to 0,
    \end{gather*}
    as $t \to t_0$ and noting that $\limsup\limits_{t\to t_0} \| \tilde{q}_k(t) \|_{H^2(\Omega(t_0))}<\infty$ by
    Lemma \ref{lem;contiq_k}, we obtain
   \begin{equation*}
    \bigl\| \bigl(\tilde{\dot{\mathcal{L}}}(t)-\dot{\mathcal{L}}(t_0)\bigr) \tilde{q}_k(t)\bigr\|_{L^2(\Omega(t_0))}
        \to 0
      \quad \text{as } t\to t_0.
   \end{equation*}
   Secondly, by Lemma \ref{lem;contiq_k}, we have 
   $\bigl\| \dot{\mathcal{L}}(t_0)\bigl(\tilde{q}_k(t) -q_k(t_0)\bigr)\bigr\|_{L^2(\Omega(t_0))} \to 0$
   as $t \to t_0$.
    The third term of the R.H.S. of \eqref{eq;tilddotqt-dotqt0} tends to 0 as $t\to t_0$ 
    by the same manner as in \eqref{eq;Lt-Delta}.
    This completes the proof.
\end{proof}
As a summary, we obtain the following theorem for $\widetilde{q}_k(t)$ on $\widetilde{\Omega}$ as in 
\eqref{eq;qkkk}.
\begin{theorem}\label{thm;contidiff q}
    Let $T>0$. It holds that $\widetilde{q}_k \in C^1\bigl([0,T];H^2(\widetilde{\Omega})\bigr)$
    for $k=1,\dots,K$.
\end{theorem}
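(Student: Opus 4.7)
The plan is to mimic the proof of Theorem \ref{thm;rem1}, invoking Lemmas \ref{lem;diff q_k} and \ref{lem;diff conti q_k} instead of Lemma \ref{lem;contiq_k}, and using Proposition \ref{prop;funcsp} to transfer estimates between $\Omega(t_0)$ and $\widetilde{\Omega}$. Fix $t_0\in[0,T]$. First I would set up the analogue of the identity \eqref{eq;equiv tt_0tilde q} for the time derivatives: define $\widetilde{\tilde{\dot{q}}}_k(y,t):=\tilde{\dot{q}}_k\bigl(\phi^{-1}(y,t_0),t\bigr)$ and $\widetilde{\dot{q}}_k(y,t):=\dot{q}_k\bigl(\phi^{-1}(y,t),t\bigr)$. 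Using $\vphi^{-1}(\tilde{x},t)=\phi^{-1}\bigl(\phi(\tilde{x},t_0),t\bigr)$, a direct substitution (identical to the one in \eqref{eq;equiv tt_0tilde q}) yields $\widetilde{\tilde{\dot{q}}}_k(y,t)=\widetilde{\dot{q}}_k(y,t)$ for all $y\in\widetilde{\Omega}$. This is the geometric bookkeeping that will allow the whole argument to go through.

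Next I would establish differentiability at $t_0$, i.e.\ that
\begin{equation*}
    \lim_{\ep\to 0}\frac{\widetilde{q}_k(t_0+\ep)-\widetilde{q}_k(t_0)}{\ep}=\widetilde{\dot{q}}_k(t_0)
    \quad\text{in }H^2(\widetilde{\Omega}).
\end{equation*}
By \eqref{eq;equiv tt_0tilde q}, the difference quotient on $\widetilde{\Omega}$ is the pull-back via $\phi^{-1}(\cdot,t_0)$ of the difference quotient $\bigl(\tilde{q}_k(t_0+\ep)-q_k(t_0)\bigr)/\ep$ on $\Omega(t_0)$. Applying Proposition \ref{prop;funcsp} and then Lemma \ref{lem;diff q_k} gives
\begin{equation*}
    \left\|\frac{\widetilde{q}_k(t_0+\ep)-\widetilde{q}_k(t_0)}{\ep}-\widetilde{\dot{q}}_k(t_0)\right\|_{H^2(\widetilde{\Omega})}
    \leq C\left\|\frac{\tilde{q}_k(t_0+\ep)-q_k(t_0)}{\ep}-\dot{q}_k(t_0)\right\|_{H^2(\Omega(t_0))}\to 0,
\end{equation*}
where $C=C(T)$ is the uniform constant from Proposition \ref{prop;funcsp}.

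Then I would prove continuity of the derivative, i.e.\ $\widetilde{\dot{q}}_k\in C\bigl([0,T];H^2(\widetilde{\Omega})\bigr)$. By the identity $\widetilde{\tilde{\dot{q}}}_k(y,t)=\widetilde{\dot{q}}_k(y,t)$ from the first step, applying Proposition \ref{prop;funcsp} gives
\begin{equation*}
    \|\widetilde{\dot{q}}_k(t)-\widetilde{\dot{q}}_k(t_0)\|_{H^2(\widetilde{\Omega})}
    =\|\widetilde{\tilde{\dot{q}}}_k(t)-\widetilde{\dot{q}}_k(t_0)\|_{H^2(\widetilde{\Omega})}
    \leq C\|\tilde{\dot{q}}_k(t)-\dot{q}_k(t_0)\|_{H^2(\Omega(t_0))},
\end{equation*}
and the right-hand side tends to $0$ as $t\to t_0$ by Lemma \ref{lem;diff conti q_k}. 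Since $t_0\in[0,T]$ was arbitrary, combining the two steps yields $\widetilde{q}_k\in C^1\bigl([0,T];H^2(\widetilde{\Omega})\bigr)$.

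The only delicate point is the bookkeeping of the two distinct pull-backs (via $\phi(\cdot,t)$ onto $\widetilde{\Omega}$ versus via $\vphi(\cdot,t)$ onto $\Omega(t_0)$), so that Lemmas \ref{lem;diff q_k} and \ref{lem;diff conti q_k}, which are phrased on the fixed reference domain $\Omega(t_0)$, can be correctly transferred to statements on $\widetilde{\Omega}$. The identity $\widetilde{\tilde{\dot{q}}}_k=\widetilde{\dot{q}}_k$ is what makes these two viewpoints compatible, and the uniform-in-$t$ equivalence of Sobolev norms in Proposition \ref{prop;funcsp} is what ensures the bounds transfer with a $t_0$-independent constant. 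Once these are in place, no further analysis is needed: the theorem reduces to a direct assembly of the two lemmas through the transfer map.
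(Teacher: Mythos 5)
Your proposal is correct and follows essentially the same approach as the paper: you establish the identity $\widetilde{\tilde{\dot{q}}}_k(y,t)=\widetilde{\dot{q}}_k(y,t)$ exactly as the paper does (mirroring \eqref{eq;equiv tt_0tilde q}), then derive differentiability at $t_0$ from Lemma \ref{lem;diff q_k} and continuity of the derivative from Lemma \ref{lem;diff conti q_k}, transferring the $\Omega(t_0)$-estimates to $\widetilde{\Omega}$ via Proposition \ref{prop;funcsp} with a $t_0$-independent constant. There is no gap; this matches the paper's proof.
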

\begin{proof} Let $t_0\in [0,T]$ be fixed.
    For the transformation: $\tilde{q}_k(t) \text{ on } \Omega(t_0)
    \longmapsto \widetilde{\tilde{q}}_k(t) \text{ on } \widetilde{\Omega}$, 
    we recall that $\widetilde{\tilde{q}}_k(t)=\widetilde{q}_k(t)$ in $\widetilde{\Omega}$ 
    as in \eqref{eq;equiv tt_0tilde q}.
    Moreover, we introduce the transformation: 
    $\dot{q}_k(t_0) \text{ on } \Omega(t_0) 
    \longmapsto \widetilde{\dot{q}}_k(t_0) \text{ on } \widetilde{\Omega}$.

    Firstly, the existence of time derivative of $\widetilde{q}_k(t)$ at $t_0$ 
    follows from Lemma \ref{lem;diff q_k}. Indeed, by Proposition \ref{prop;funcsp},
    we see that
    \begin{equation*}
        \begin{split}
            \left\| \frac{\widetilde{q}_k(t_0+\ep) - \widetilde{q}_k(t_0)}{\ep} 
            - \widetilde{\dot{q}}_k(t_0)\right\|_{H^2(\widetilde{\Omega})}
            &=
            \left\| \frac{\widetilde{\tilde{q}}_k(t_0+\ep) - \widetilde{q}_k(t_0)}{\ep} 
            - \widetilde{\dot{q}}_k(t_0)\right\|_{H^2(\widetilde{\Omega})}
            \\
            &\leq 
            C\left\| \frac{\tilde{q}_k(t_0+\ep) - {q}_k(t_0)}{\ep} 
            - {\dot{q}}_k(t_0)\right\|_{H^2({\Omega(t_0)})}
            \\
            &\to 0 \quad\text{as } \ep \to 0,
        \end{split}
    \end{equation*}
    where we can choose the constant $C>0$ is independent of $\ep$ near the origin.
    Therefore, we conclude that $\pt_s\widetilde{q}_k(t_0)=\widetilde{\dot{q}}_k(t_0)$
    in $\widetilde{\Omega}$.

    Next, we cosider the time continuity of $\pt_s \widetilde{q}_k(t)$ at $t_0$.
    Since $\widetilde{\dot{q}}_k(t) = \widetilde{\tilde{\dot{q}}}_k(t)$ in $\widetilde{\Omega}$ 
    as the above,
    by Lemma \ref{lem;diff conti q_k} and Proposition \ref{prop;funcsp}, we see that
    \begin{equation*}
        \begin{split}
            \| \pt_s \widetilde{q}_k(t) - \pt_s \widetilde{q}_k(t_0) \|_{H^2(\widetilde{\Omega})}
            &=
            \| \widetilde{\dot{q}}_k(t) - \widetilde{\dot{q}}_k(t_0) \|_{H^2(\widetilde{\Omega})}
            \\
            &=
            \|\widetilde{\tilde{\dot{q}}}_k(t) - \widetilde{\dot{q}}_k(t_0) \|_{H^2(\widetilde{\Omega})}
            \\
            &\leq
            C\| \tilde{\dot{q}}_k(t) - \dot{q}_k(t_0) \|_{H^2(\Omega(t_0))}
            \\
            &\to 0 \quad \text{as } t \to t_0,
        \end{split}
    \end{equation*}
    where the constant is independent of $t \in [0,T]$. 
    This completes the proof.
\end{proof}
\subsubsection{Orthogonal basis of the harmonic vector fields}
We consider the Gram-Schmidt orthonormalization
of the basis $\nabla q_1(t),\dots,\nabla q_K(t)$ of $V_{\mathrm{har}}\bigl(\Omega(t)\bigr)$
in the sense of $L^2\bigl(\Omega(t)\bigr)$. 
Let $\eta_1(t), \dots, \eta_K(t)$ be an orthonormal system constructed by the following procedure.
\begin{equation}\label{eq;cons Vhar}
    \begin{split}
    \eta_1(t) &= \frac{\nabla q_1(t)}{\|\nabla q_1(t) \|_{L^2(\Omega(t))}} 
    \\&
    =: \alpha_{11}(t)\nabla q_1(t),
    \\
    \eta_2(t) &= \frac{\nabla q_2(t) -\bigl(\nabla q_2(t),\eta_1(t)\bigr)\eta_1(t)}{\|
    \nabla q_2(t) -\bigl(\nabla q_2(t),\eta_1(t)\bigr)\eta_1(t)\|_{L^2(\Omega(t))}}
    \\&
    =: \alpha_{21}(t)\nabla q_1(t) + \alpha_{22}(t) \nabla q_2(t),
    \\
    &\vdots
    \\
    \eta_K(t) &=
    \frac{\nabla q_K(t) -\sum\limits_{k=1}^{K-1}\bigl(\nabla q_K(t),\eta_k(t) \bigr)\eta_k(t)}{
        \left\|\nabla q_K(t) -\sum\limits_{k=1}^{K-1}\bigl(\nabla q_K(t),\eta_k(t) \bigr)\eta_k(t)\right\|_{L^2(\Omega(t))}
    }
    \\&
    =:\alpha_{K1}(t)\nabla q_1(t) + \dots+ \alpha_{KK}(t)\nabla q_K(t).
    \end{split}
\end{equation}
Here, we put coefficients $\alpha_{jk}(t)$ for $j,k=1,\dots,K$ in the above, by setting $\alpha_{jk}(t)=0$ for $k>j$.

Moreover, we introduce transformed vector fields 
$\widetilde{\eta}_1(t),\dots,\widetilde{\eta}_K(t)$ on $\widetilde{\Omega}$
defined by,    for $i=1,2,3$, 
\begin{equation}\label{eq;cons Vhartilde}
    \widetilde{\eta}^{i}_k(t)=\sum_{\ell}\frac{\pt y^i}{\pt x^\ell}\eta^\ell_k(t),
    \quad\text{i.e.,}\quad
    \widetilde{\eta}_k^i(y,t) := \sum_{\ell} \frac{\pt \phi^i}{\pt x^\ell} \bigl(\phi^{-1}(y,t),t\bigr)
    \eta_k^\ell\bigl(\phi^{-1}(y,t),t\bigr)
\end{equation}
for all $y\in \widetilde{\Omega}$, for all $t\in[0,T]$, and for $k=1,\dots,K$.
\begin{lemma}\label{lem;alpha}
    Let $T>0$. It holds that
    \begin{equation*}
        \alpha_{jk} \in C^1\bigl([0,T]\bigr)
        \quad \text{and}\quad
        \widetilde{\eta}_k \in C^1\bigl([0,T];H^1(\widetilde{\Omega})\bigr).
    \end{equation*}
    for $j,k=1,\dots,K$.
\end{lemma}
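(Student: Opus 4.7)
The plan is to lift the entire Gram--Schmidt construction from $\Omega(t)$ to the fixed domain $\widetilde{\Omega}$ and express every quantity that appears in \eqref{eq;cons Vhar} in terms of objects whose $C^1$-regularity in $t$ has already been established. First I would verify, by the same direct calculation that identifies \eqref{eq;identify} with the $\nabla_g$-operator, that the transformed vector field of $\nabla q_k(t)$ is exactly $\nabla_g \widetilde{q}_k(t)$; this reduces the claim for $\widetilde{\eta}_k$ to studying linear combinations of $\nabla_g \widetilde{q}_k(t)$. Theorems \ref{thm;rem1} and \ref{thm;contidiff q} give $\widetilde{q}_k \in C^1\bigl([0,T];H^2(\widetilde{\Omega})\bigr)$, and since the components of the Riemann metric $g^{ij}(\cdot,t)$ are smooth functions on $\overline{\widetilde{Q}}_T$, this upgrades immediately to $\nabla_g \widetilde{q}_k \in C^1\bigl([0,T];H^1(\widetilde{\Omega})\bigr)$ for $k=1,\ldots,K$.

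Next I would rewrite every inner product and norm appearing in \eqref{eq;cons Vhar} in intrinsic form. By \eqref{eq;equivalence innerproduct L2}--\eqref{eq;equivalence innerproduct H1},
\begin{equation*}
    \bigl(\nabla q_j(t), \nabla q_k(t)\bigr)
    = \bigl\langle \nabla_g \widetilde{q}_j(t),\, \nabla_g \widetilde{q}_k(t)\bigr\rangle_t,
\end{equation*}
and more generally every scalar produced by the Gram--Schmidt step is a bilinear expression in the $\nabla_g \widetilde{q}_\ell(t)$, integrated against the $C^\infty$ weights $g_{ij}(\cdot,t)J(t)$. These are therefore $C^1$ in $t$. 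Then I would proceed by induction on the index $k$: assuming $\alpha_{j\ell}\in C^1([0,T])$ for $j<k$ and $\widetilde{\eta}_1,\ldots,\widetilde{\eta}_{k-1}\in C^1\bigl([0,T];H^1(\widetilde{\Omega})\bigr)$, the numerator of $\eta_k(t)$ (lifted to $\widetilde{\Omega}$) is a $C^1$ function of $t$ with values in $H^1(\widetilde{\Omega})$, and its squared norm $\langle\,\cdot\,,\,\cdot\,\rangle_t$ is a $C^1$ scalar function of $t$. Taking a square root and dividing yields $\alpha_{k\ell}\in C^1([0,T])$ and $\widetilde{\eta}_k\in C^1\bigl([0,T];H^1(\widetilde{\Omega})\bigr)$, provided the denominator does not vanish.

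The one point that requires genuine care is exactly this uniform non-degeneracy of the denominators on $[0,T]$. Here I would invoke Corollary \ref{cor;CqC} together with the pointwise linear independence of $\nabla q_1(t),\ldots,\nabla q_K(t)$ in $V_{\mathrm{har}}\bigl(\Omega(t)\bigr)$ (which is a standard consequence of the Dirichlet data in \eqref{eq;Harmonic basis equation}): at each $t\in[0,T]$ the Gram matrix $G(t)=\bigl(\langle \nabla_g\widetilde{q}_j(t),\nabla_g\widetilde{q}_k(t)\rangle_t\bigr)_{j,k}$ is positive definite, so $\det G(t)>0$. Since $t\mapsto \det G(t)$ is continuous and $[0,T]$ is compact, it is bounded below by a positive constant; the same therefore holds for each of the Gram--Schmidt denominators, which are principal-minor ratios of $G(t)$. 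This makes the Gram--Schmidt map smooth and completes the induction.

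The main obstacle is really this last verification—showing that the denominators in \eqref{eq;cons Vhar} are bounded away from zero uniformly on $[0,T]$, so that the division preserves the $C^1$-class. Once this uniform positivity is in hand, the remainder is a routine propagation of $C^1$-regularity through bilinear maps, square roots of positive $C^1$ scalars, and multiplication by $C^1$ scalars against $C^1\bigl([0,T];H^1(\widetilde{\Omega})\bigr)$-valued functions.
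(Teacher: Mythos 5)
Your proposal is correct and follows essentially the same route as the paper: lift everything to $\widetilde{\Omega}$ via the identity between the push-forward of $\nabla q_k(t)$ and $\nabla_g\widetilde{q}_k(t)$, use $\widetilde{q}_k\in C^1([0,T];H^2(\widetilde{\Omega}))$ from Theorem \ref{thm;contidiff q} together with the smoothness of $g_{ij}$, $g^{ij}$, $J$ to get $C^1$-regularity of all Gram--Schmidt inner products, and induct on $k$. The paper justifies the non-vanishing of the denominator by observing directly that vanishing at some $t_0$ would make $\nabla q_1(t_0),\dots,\nabla q_{j+1}(t_0)$ linearly dependent, which is the same fact you express via positive definiteness of the Gram matrix; your extra compactness remark (uniform positive lower bound) is true but not strictly needed, since pointwise non-vanishing of a continuous denominator already suffices for the quotient to remain $C^1$.
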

\begin{proof}
    We prove this lemma by induction argument.
    Firstly, noting Corollary \ref{cor;CqC}, we see that
    \begin{equation}\label{eq;norm nq}
        0\neq \| \nabla q_1(t) \|^2_{L^2(\Omega(t))}
        = \int_{\widetilde{\Omega}} 
        \sum_{k,\ell} g^{k\ell}(y,t) \frac{\pt \widetilde{q}_1}{\pt y^k} (y,t)
        \frac{\pt \widetilde{q}_1}{\pt y^\ell}(y,t) J(t)\,dy.
    \end{equation}
    Since $g^{k\ell} \in C^\infty(\overline{\widetilde{Q}}_\infty)$, 
    $J \in C^\infty\bigl([0,T]\bigr)$
    and $\widetilde{q}_1 \in C^1\bigl([0,T];H^2(\widetilde{\Omega})\bigr)$ by Theorem \ref{thm;contidiff q},
    the right hand side of \eqref{eq;norm nq} is differentiable with respect to $t$. 
    Hence, $\alpha_{11}\in C^1\bigl([0,T]\bigr)$.
    Therefore,  $\widetilde{\eta}_1 \in C^1\bigl([0,T];H^1(\widetilde{\Omega})\bigr)$,
    since $\widetilde{\eta}_1(t)=\alpha_{11}(t) \nabla_g \widetilde{q}_1(t)$.

    Next, let us assume
    $\widetilde{\eta}_{k} \in C^1\bigl([0,T];H^1(\widetilde{\Omega})\bigr)$ 
    and $\alpha_{k,\ell}\in C^1\bigl([0,T]\bigr)$ for $k=1,\dots,j$ and $\ell=1,\dots,K$.
    Suppose 
    \begin{equation*}
        \eta_{j+1}(t) = 
        \frac{\nabla q_{j+1}(t) -\sum\limits_{k=1}^{j}\bigl(\nabla q_{j+1}(t),\eta_k(t) \bigr)\eta_k(t)}{
        \left\|\nabla q_{j+1}(t) -\sum\limits_{k=1}^{j}
        \bigl(\nabla q_{j+1}(t),\eta_k(t) \bigr)\eta_k(t)\right\|_{L^2(\Omega(t))}}.
    \end{equation*}

    Here, we shall show that the denominator satisfies
    \begin{equation}\label{eq;alpha deno}
        \left\| \nabla q_{j+1}(t) -\sum_{k=1}^j \bigl(\nabla q_{j+1}(t),\eta_k(t)\bigr)\eta_k(t)\right\|_{L^2(\Omega(t))} \neq 0
    \end{equation}
    and is in $C^1\bigl([0,T]\bigr)$ as a function of variable $t$.
     Indeed, if it takes zero at some $t_0 \in [0,T]$ we have
    $\displaystyle \nabla q_{j+1}(t_0) -\sum_{k}^{j} \bigl(\nabla q_{j+1}(t_0),\eta_k(t_0)\bigr)\eta_k(t_0)=0$.
    However, this means $\nabla q_{1}(t_0),\dots, \nabla q_{j+1}(t_0)$ is linearly dependent, which makes a contradiction.
    Since
    \begin{equation}\label{eq;alpha j1}
        \bigl( \nabla q_{j+1}(t),\eta_k(t)\bigr)
        =
        \bigl\langle \nabla_g\widetilde{q}_{j+1}(t), \widetilde{\eta}_k(t) \bigr\rangle_t
        =\int_{\widetilde{\Omega}} \sum_{i} \frac{\pt \widetilde{q}_{j+1}}{\pt y^i} (y,t)
        \widetilde{\eta}_k^i(y,t) J(t)\,dy,
    \end{equation}
    for $k=1,\dots j$, we note that the right hand side of \eqref{eq;alpha j1} is in $C^1\bigl([0,T]\bigr)$
    as a function of $t$.
    Then, we see that the transformed vector
    \begin{equation*}
        \nabla_g \widetilde{q}_{j+1} -\sum_{k=1}^j \bigl(\nabla q_{j+1}(\cdot),\eta_k{(\cdot)}\bigr)\widetilde{\eta}_k
        \in C^1\bigl([0,T];H^1(\widetilde{\Omega})\bigr).
    \end{equation*}
    By the same argument on \eqref{eq;norm nq}, 
    we see that the left hand side of \eqref{eq;alpha deno} is in $C^{1}\bigl([0,T]\bigr)$
    as a function of $t$.

    Therefore, combining the assumption of induction, 
    we conclude that $\alpha_{j+1\,k}\in C^1\bigl([0,T]\bigr)$ for $k=1,\dots,j+1$ are well defined
    and that $\widetilde{\eta}_{j+1}\in C^1\bigl([0,T];H^1(\widetilde{\Omega})\bigr)$. This completes the proof.
\end{proof}
%
\subsubsection{Time dependence of the harmonic vector fields of the decomposition}
At the end of this subsection, we shall establish the time dependence of the 
$V_{\mathrm{har}}\bigl(\Omega(t)\bigr)$-component $h(t)$ in 
the Helmholtz-Weyl decomposition $h(t) + \rot\, w(t) = b(t)$ on $\Omega(t)$.

Since $V_{\mathrm{har}}\bigl(\Omega(t)\bigr)$ is spanned by
the orthonormal basis $\eta_1(t),\dots,\eta_k(t)$ constructed by \eqref{eq;cons Vhar} in the $L^2$-sense,
$h(t) \in V_{\mathrm{har}}\bigl(\Omega(t)\bigr)$ can be uniquely expressed by
\begin{equation}\label{eq;represent h}
    h(t) = \sum_{k=1}^K \bigl(b(t), \eta_k(t)\bigr)\eta_k(t)
    \quad\text{in } \Omega(t).
\end{equation}

Here, we recall the notation of the coordinate transformation for a vector field
$u$ on $Q_T$ as in  \eqref{eq;transform u} , i.e., for $i=1,2,3$,
\begin{equation*}
    \widetilde{u}^i = \sum_{\ell}\frac{\pt y^i}{\pt x^\ell} u^\ell,
    \quad \text{i.e.,} \quad
    \widetilde{u}^i(y,t) = \sum_{\ell} \frac{\pt \phi^i}{\pt x^\ell}
    \bigl(\phi^{-1}(y,t),t\bigr) u^\ell\bigl(\phi^{-1}(y,t),t\bigr)
    \quad \text{for } y \in \widetilde{\Omega}.
\end{equation*}

\begin{theorem}\label{thm;contideffi har}
    Let $T>0$. Let $b(t) \in H^1\bigl(\Omega(t)\bigr)$ satisfy $\Div\, b(t)=0$ in $\Omega(t)$ for 
    $t\in [0,T]$ and let $\displaystyle h(t)=\sum_{k=1}^K \bigl(b(t),\eta_k(t)\bigr)\eta_k(t)$
    for $t \in [0,T]$. If $\widetilde{b} \in C\bigl([0,T];H^1(\widetilde{\Omega})\bigr)$,
    then it holds that
    \begin{equation*}
        \widetilde{h} \in C\bigl([0,T];H^1(\widetilde{\Omega})\bigr).
    \end{equation*}
    Furthermore, if $\widetilde{b}\in C^1\bigl([0,T];H^1(\widetilde{\Omega})\bigr)$,
    then it holds that
    \begin{equation*}
        \widetilde{h} \in C^1 \bigl([0,T];H^1(\widetilde{\Omega})\bigr).
    \end{equation*}
\end{theorem}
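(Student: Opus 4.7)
The plan is to first rewrite everything in terms of transformed quantities on the fixed domain $\widetilde{\Omega}$ and then exploit the regularity of the basis $\widetilde{\eta}_k$ established in Lemma \ref{lem;alpha}. Since the coordinate transformation \eqref{eq;identify} is linear and the inner product identity \eqref{eq;equivalence innerproduct L2} gives $(b(t),\eta_k(t))=\langle \widetilde{b}(t),\widetilde{\eta}_k(t)\rangle_t$, the representation \eqref{eq;represent h} immediately transfers to $\widetilde{\Omega}$ as
\begin{equation*}
    \widetilde{h}(t)=\sum_{k=1}^{K} c_k(t)\,\widetilde{\eta}_k(t),
    \qquad
    c_k(t):=\langle \widetilde{b}(t),\widetilde{\eta}_k(t)\rangle_t.
\end{equation*}
Thus the proof reduces to tracking the time regularity of the scalar coefficients $c_k(t)$ and then invoking the product rule in $H^1(\widetilde{\Omega})$.

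For part (i), I would first show $c_k\in C([0,T])$. Writing $c_k(t)=\int_{\widetilde{\Omega}} \sum_{i,j} g_{ij}(y,t)\widetilde{b}^i(y,t)\widetilde{\eta}_k^j(y,t)J(t)\,dy$, continuity in $t$ follows from the uniform smoothness of $g_{ij}(y,\cdot)$ and $J(\cdot)$ on $[0,T]$ combined with the assumed continuity $\widetilde{b}\in C([0,T];H^1(\widetilde{\Omega}))$ and $\widetilde{\eta}_k\in C([0,T];H^1(\widetilde{\Omega}))$ from Lemma \ref{lem;alpha}; a standard add-and-subtract argument together with the Cauchy--Schwarz inequality in $L^2(\widetilde{\Omega})$ yields $|c_k(t)-c_k(t_0)|\to 0$ as $t\to t_0$. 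Since $\widetilde{\eta}_k\in C([0,T];H^1(\widetilde{\Omega}))$, the product $c_k(t)\widetilde{\eta}_k(t)$ is continuous in $H^1(\widetilde{\Omega})$ and the conclusion $\widetilde{h}\in C([0,T];H^1(\widetilde{\Omega}))$ follows by finite summation.

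For part (ii), I would differentiate $c_k(t)$ under the integral sign. Because $g_{ij}$, $J$ are $C^\infty$ in $t$ uniformly in $y$, and both $\widetilde{b}$ and $\widetilde{\eta}_k$ belong to $C^1([0,T];H^1(\widetilde{\Omega}))$ (the latter by Lemma \ref{lem;alpha}), the dominated convergence theorem (applied to the difference quotient) gives
\begin{equation*}
    c_k'(t)=\int_{\widetilde{\Omega}}\sum_{i,j}\Big(\partial_t g_{ij}\,\widetilde{b}^i\widetilde{\eta}_k^j J
    +g_{ij}\,\partial_s\widetilde{b}^i\,\widetilde{\eta}_k^j J
    +g_{ij}\,\widetilde{b}^i\,\partial_s\widetilde{\eta}_k^j J
    +g_{ij}\,\widetilde{b}^i\widetilde{\eta}_k^j J'\Big)dy,
\end{equation*}
and the same continuity argument from part (i) shows $c_k'\in C([0,T])$, hence $c_k\in C^1([0,T])$. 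The product rule then gives $\partial_s(c_k\widetilde{\eta}_k)=c_k'\widetilde{\eta}_k+c_k\,\partial_s\widetilde{\eta}_k\in C([0,T];H^1(\widetilde{\Omega}))$, and summing over $k$ yields $\widetilde{h}\in C^1([0,T];H^1(\widetilde{\Omega}))$.

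The only subtle point is the justification of differentiating $\langle\cdot,\cdot\rangle_t$ through its explicit $t$-dependence in the weights $g_{ij}(y,t)$ and $J(t)$; this is where the $C^\infty$ regularity of the diffeomorphism $\Phi$ (Assumption) and the compactness of $[0,T]$ together with the equivalence of norms from Proposition \ref{prop;funcsp} are used to obtain uniform bounds controlling the difference quotient. Apart from this, the argument is a direct bookkeeping of finite linear combinations with coefficients of known regularity.
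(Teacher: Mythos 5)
Your proposal is correct and takes essentially the same approach as the paper: both reduce $\widetilde{h}(t)=\sum_k c_k(t)\widetilde{\eta}_k(t)$ to tracking the regularity of $c_k(t)=\langle\widetilde{b}(t),\widetilde{\eta}_k(t)\rangle_t$ via the explicit weighted integral, invoking Lemma \ref{lem;alpha} for $\widetilde{\eta}_k\in C^1([0,T];H^1(\widetilde{\Omega}))$ together with the $C^\infty$ regularity of $g_{ij}$ and $J$, and then conclude by the product rule in $H^1(\widetilde{\Omega})$. Your write-up merely spells out the Cauchy--Schwarz and dominated-convergence steps that the paper's terse assertion ``the right hand side is in $C^m([0,T])$'' implicitly contains.
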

\begin{proof}
    To begin with, we see that
    \begin{equation}\label{eq;coeff th}
        \bigl(b(t),\eta_k(t)\bigr)
        =\bigl\langle \widetilde{b}(t), \widetilde{\eta}_k(t)\bigr\rangle_t
        =\int_{\widetilde{\Omega}} 
        \sum_{i,j} g_{ij}(y,t) \widetilde{b}^i(y,t)\widetilde{\eta}^j_k(y,t)J(t)\,dy.
    \end{equation}
Since $\widetilde{\eta}_k \in C^1\bigl([0,T];H^1(\widetilde{\Omega})\bigr)$ 
for $k=1,\dots,K$ by Lemma \ref{lem;alpha}, 
$g_{ij}\in C^\infty(\overline{\widetilde{Q}}_\infty)$ for $i,j=1,2,3$ and
 $J\in C^\infty\bigl([0,T]\bigr)$, 
the right hand side is in $C^m \bigl([0,T]\bigr)$
as a function of variable $t$, provided $\widetilde{b} \in C^m\bigl([0,T];H^1(\widetilde{\Omega})\bigr)$
with some $m \in \{0,1\}$. 
Therefore, since 
$\displaystyle\widetilde{h}(t) = \sum_{k=1}^K \bigl(b(t),\eta_k(t)\bigr)\widetilde{\eta}_k(t)$,
we conclude that $\widetilde{h}\in C^m\bigl([0,T];H^1(\widetilde{\Omega})\bigr)$
if $\widetilde{b} \in C^m\bigl([0,T];H^1(\widetilde{\Omega})\bigr)$ with some $m\in \{0,1\}$. 
This completes the proof.
\end{proof}
\subsection{Time dependence of the vector potentials}
\label{subsec;rot}
%
For the analysis of the time (or domain) dependence of the vector potential 
$w(t) \in H^2\bigl(\Omega(t)\bigr)$ in the Helmholtz-Weyl decomposition
for $b(t) \in H^1\bigl(\Omega(t)\bigr)$ on each $\Omega(t)$ for $t\in\R$,
we focus on a fact that $w(t)$ satisfies the following strictly elliptic system;
\begin{equation}\label{eq;Lap w}
\begin{cases}
    -\Delta w(t) = \rot\, b(t) &\text{in } \Omega(t),
    \\
    \rot\, w(t) \times \nu(t) = b(t) \times \nu(t)  &\text{on } \pt \Omega(t),
    \\
    w(t) \cdot \nu(t) =0   &\text{on } \pt \Omega(t),
\end{cases}
\end{equation}
where $\nu(x,t)=\bigl(\nu^1(x,t),\nu^2(x,t),\nu^2(x,t)\bigr)$
is the outward unit normal vector at $x \in \pt\Omega(t)$. 

To consider the time continuity and differentiability at a fixed $t_0 \in [0,T]$
of the transformed vector $\widetilde{w}(t)$ on $\widetilde{\Omega}$, 
it is convenient to introduce the transformation: $w(t) \text{ on } \Omega(t)
\longmapsto \tilde{w}(t) \text{ on } \Omega(t_0)$ defined by \eqref{eq;identify tilde} 
in Section \ref{subsec;unit normal}.
Moreover, in such a situation, we use the diffeomorphism 
$\varphi(\cdot,t):\overline{\Omega(t)} \to \overline{\Omega(t_0)}$
defined in \eqref{eq;diffeo t0} and inherit the notations $J(t)$, $g_{ij}$, $g^{ij}$, $\Gamma^i_{jk}$,
\dots, $\Rot(t)$ replacing $\phi(\cdot,t)$ by $\varphi(\cdot,t)$.
Especially, we remark that the kernel functions $R^{i,1}_{j,k,\ell}$, 
$R^{i,2}_{j,k,\ell}$ of $\Rot(t)$ in \eqref{eq;def rot}
are modified as 
\begin{equation}\label{eq;modified R}
    \begin{split}
    R^{i,1}_{j,k,\ell}(\tilde{x},t)
    &:=
    \frac{\pt \vphi^i}{\pt x^j} (x,t) \biggl( 
        \frac{\pt \vphi^k}{\pt x^{\sigma(j+1)}}(x,t) \frac{\pt^2 \vphi^{-1}_{\sigma(j+2)}}{\pt \tilde{x}^k\pt \tilde{x}^\ell} (\tilde{x},t)
        -
        \frac{\pt \vphi^k}{\pt x^{\sigma(j+2)}} (x,t)\frac{\pt^2 \vphi^{-1}_{\sigma(j+1)}}{\pt \tilde{x}^k\pt \tilde{x}^\ell} (\tilde{x},t) 
        \biggr),\\
    R^{i,2}_{j,k,\ell}(\tilde{x},t)
    &:=
    \frac{\pt \vphi^i}{\pt x^j} (x,t) \biggl( 
        \frac{\pt \vphi^k}{\pt x^{\sigma(j+1)}}(x,t) 
        \frac{\pt \vphi^{-1}_{\sigma(j+2)}}{\pt \tilde{x}^\ell} (\tilde{x},t)
        -
        \frac{\pt \vphi^k}{\pt x^{\sigma(j+2)}} (x,t)
        \frac{\pt \vphi^{-1}_{\sigma(j+1)}}{\pt \tilde{x}^\ell} (\tilde{x},t) 
        \biggr)
    \end{split}
\end{equation}
with $x=\vphi^{-1}(\tilde{x},t)$ for $\tilde{x}\in \Omega(t_0)$, which plays an essential role in this subsection.
Then, from \eqref{eq;eq vphiId} to \eqref{eq;ptvphito0}, we note that $\Rot(t_0)=\rot_{\tilde{x}}$ on $\Omega(t_0)$.

Let us transform \eqref{eq;Lap w} to an equivalent one on $\Omega(t_0)$ for $\tilde{w}(t)$ as follows.
\begin{equation}\label{eq;L(t)w(t)}
    \begin{cases}
        -L(t)\, \tilde{w}(t) = \Rot(t)\, \tilde{b}(t), & \text{in } \Omega(t_0),
        \\
        B_1[\Rot(t)\,\tilde{w}(t),\tilde{\nu}(t)]=B_1[\tilde{b}(t),\tilde{\nu}(t)] & \text{on } \pt\Omega(t_0),
        \\
        B_2[\tilde{w}(t),\tilde{\nu}(t)] = 0 & \text{on } \pt \Omega(t_0).
    \end{cases}
\end{equation}
Here, $L(t)$ as in Section \ref{subsec;formulation}, 
$\Rot(t)$ as in \eqref{eq;def rot} modified by \eqref{eq;modified R}, 
and 
\begin{align*}
    B_1[\tilde{u},\tilde{v}]^i &:= 
    \sum_{j,k,\ell} S^i_{j,k,\ell} \left(
        \tilde{u}^k \tilde{v}^\ell
        -
        \tilde{u}^\ell \tilde{v}^k
    \right), \quad i=1,2,3,
    \\
    B_2[\tilde{u},\tilde{v}]
    &:= \sum_{k,\ell} g_{k\ell} \tilde{u}^k \tilde{v}^\ell,
\end{align*}
for $\tilde{u}, \tilde{v}$ on $\pt \Omega(t_0)$ where we put 
\begin{equation*}
    S^i_{j,k,\ell}(\tilde{x},t)
    =
    \frac{\pt \vphi^i}{\pt x^j}\bigl(\vphi^{-1}(\tilde{x},t),t\bigr)
    \frac{\pt \vphi^{-1}_{\sigma(j+1)}}{\pt \tilde{x}^k} (\tilde{x},t)
    \frac{\pt \vphi^{-1}_{\sigma(j+2)}}{\pt \tilde{x}^\ell}(\tilde{x},t)
    \quad \text{for } \tilde{x} \in \Omega(t_0),
\end{equation*}
which is derived from the transformation
\begin{equation*}
    \begin{split}
        \sum_{j} \frac{\pt \tilde{x}^i}{\pt x^j}[u\times v]^j
        &=
        \sum_j 
        \frac{\pt \tilde{x}^i}{\pt x^j} 
        \bigl(
    u^{\sigma(j+1)}v^{\sigma(j+2)} - u^{\sigma(j+2)} v^{\sigma(j+1)}
    \bigr)
    \\
    &=
    \sum_{j} \frac{\pt \tilde{x}^i}{\pt x^j} 
    \left(
        \sum_{k} \frac{\pt x^{\sigma(j+1)}}{\pt \tilde{x}^k} 
        \tilde{u}^k
        \sum_{\ell} \frac{\pt x^{\sigma(j+2)}}{\pt \tilde{x}^\ell} 
        \tilde{v}^\ell 
        -
        \sum_{\ell} \frac{\pt x^{\sigma(j+2)}}{\pt \tilde{x}^\ell} 
        \tilde{u}^\ell
        \sum_{k} \frac{\pt x^{\sigma(j+1)}}{\pt \tilde{x}^k} 
        \tilde{v}^k
    \right)
    \\
    &=
    \sum_{j,k,\ell} 
    \frac{\pt \tilde{x}^i}{\pt x^j}
    \frac{\pt x^{\sigma(j+1)}}{\pt \tilde{x}^k}
    \frac{\pt x^{\sigma(j+2)}}{\pt \tilde{x}^\ell}
    \left(
        \tilde{u}^k \tilde{v}^\ell - \tilde{u}^\ell \tilde{v}^k
    \right),
    \end{split}
\end{equation*}
where the index $\sigma(i)$ is defined by \eqref{eq;defi sigma}.
%
%
\subsubsection{Expression of outward unit normal vectors on $\pt \Omega(t)$}
In order to deal with the transformed problem \eqref{eq;L(t)w(t)} on $\Omega(t_0)$ for a fixed $t_0\in [0,T]$, 
we shall prepare an explicit expression of $\nu(t)$ on $\pt\Omega(t)$ 
and its transformed vector $\tilde{\nu}(t)$ on $\pt\Omega(t_0)$. 

For this purpose, let us introduce a finite open covering $\{ O_\kappa \}$ in $\R^3$ of $\pt \Omega(t_0)$.
We can take $\tilde{G}_\kappa \in C^\infty(O_\kappa)$ which satisfies 
$\tilde{G}_\kappa(\tilde{x})=0$ if and only if $\tilde{x} \in \pt \Omega(t_0)\cap O_\kappa$
and satisfies 
\begin{equation*}
    \nu(\tilde{x},t_0)= \frac{\nabla_{\tilde{x}} \tilde{G}(\tilde{x})}{|\nabla_{\tilde{x}}\tilde{G}(\tilde{x})|}
    \quad
    \text{ for } \tilde{x} \in \pt \Omega(t_0) \cap O_\kappa.
\end{equation*}
Hereafter, we abbreviate $\tilde{G}_\kappa$ as $\tilde{G}$ for simplicity.
Now, we put $G(x,t):=\tilde{G}\bigl(\vphi(x,t)\bigr)$ for $x \in \Omega(t)$ 
with a suitable extension near a neighborhood of some boundary portion of $\pt\Omega(t)$ which contains $\vphi^{-1}(\pt\Omega(t_0)\cap O_\kappa)$ 
with the associated $\kappa$, if necessary. Then we may express that
\begin{equation*}
    \nu(x,t) = \frac{\nabla G (x,t)}{|\nabla G(x,t)|}
    \quad \text{for } x \in \pt\Omega(t).
\end{equation*}
We put  for $\tilde{x} \in \Omega(t_0)$,
\begin{equation*}
    \begin{split}
    D(\tilde{x},t)&:=\sum_{i}\left(\sum_{\ell} 
    \frac{\pt {\vphi}^\ell}{\pt x^i} \bigl(\vphi^{-1}(\tilde{x},t),t\bigr)
    \frac{\pt \tilde{G}}{\pt \tilde{x}^\ell}(\tilde{x})\right)^2,
    \\
    E(\tilde{x}) &:= |\nabla_{\tilde{x}} \tilde{G}(\tilde{x})|^2
    =\sum_i \left(\frac{\pt \tilde{G}}{\pt \tilde{x}^i}(\tilde{x})\right)^2.
    \end{split}
\end{equation*}
Then noting that $D(\tilde{x},t)=|\nabla G(x,t)|^2$ with $x=\vphi^{-1}(\tilde{x},t)$,  
we see that for $i=1,2,3$, 
\begin{equation}\label{eq;def nu}
    \begin{split}
    \tilde{\nu}^i (\tilde{x},t) 
    &=\sum_{k} \frac{\pt \tilde{x}^i}{\pt x^k} \nu^k(x,t) =\sum_{k} \frac{\pt \vphi^i}{\pt x^k} (x,t) 
    \frac{\displaystyle\frac{\pt G}{\pt x^k}(x,t) }{{|\nabla G(x,t)|}}
    =
    \sum_{k} g^{ik} (\tilde{x},t)
    \frac{\displaystyle\,\frac{\pt \tilde{G}}{\pt \tilde{x}^k}(\tilde{x})\,}{D(\tilde{x},t)}.
    \end{split}
\end{equation}
Here we note that since, by \eqref{eq;vphitoId}, 
$D(\tilde{x},t) \to E(\tilde{x})$ 
uniformly in some neighborhood of $\pt \Omega(t_0)$ as $t\to t_0$, 
we see that for $i=1,2,3$,
\begin{equation}\label{eq;nu0}
    \tilde{\nu}^i (\tilde{x},t) \to \nu^i (\tilde{x},t_0)
    \quad\text{uniformly in $\pt\Omega(t_0)$ as } t\to t_0.
\end{equation}
Moreover, by properties 
from \eqref{eq;eq vphiId} to \eqref{eq;ptvphito0} 
and by Proposition \ref{prop;gij}
we can directly derive 
$\displaystyle \frac{\pt D}{\pt \tilde{x}^j}(\tilde{x},t) \to 
\frac{\pt E}{\pt \tilde{x}^j}(\tilde{x})$ 
and
$\displaystyle \frac{\pt^2 D}{\pt \tilde{x}^j\pt \tilde{x}^k}(\tilde{x},t) \to
\frac{\pt^2 E}{\pt \tilde{x}^j\pt \tilde{x}^k}(\tilde{x})$
uniformly in $\tilde{x} \in \pt \Omega(t_0)$ as $t\to t_0$ for $j,k = 1,2,3$.
From these observation, we remark that for $i,j,k=1,2,3$,
\begin{gather}
    \label{eq;nu1}
    \sup_{\tilde{x}\in \pt \Omega(t_0)}
    \left| \frac{\pt \tilde{\nu}^i}{\pt \tilde{x}^j} (\tilde{x},t) 
    - \frac{\pt \nu^i}{\pt \tilde{x}^j}(\tilde{x},t_0)\right|
    \to 0 \quad \text{as } t\to t_0,
    \\
    \label{eq;nu2}
    \sup_{\tilde{x}\in \pt \Omega(t_0)}
    \left| \frac{\pt^2 \tilde{\nu}^i}{\pt \tilde{x}^j\pt \tilde{x}^k} (\tilde{x},t) 
    - \frac{\pt^2 \nu^i}{\pt \tilde{x}^j \pt \tilde{x}^k}(\tilde{x},t_0)\right|
    \to 0 \quad \text{as } t\to t_0,
\end{gather}
which play an important role in controlling the trace norm in the above a priori estimate.

Finally, due to \eqref{eq;nu0}, \eqref{eq;nu1} and \eqref{eq;nu2}, 
we have the following.
\begin{proposition}\label{prop;trace}
    Let $s \in [0,2]$. Then there exists a constant $C>0$ such that
    for every function $g \in H^{s}\bigl(\pt\Omega(t_0)\bigr)$ it holds that
    \begin{multline}\label{eq;traceconv}
        \bigl\|  g \bigl(\tilde{\nu}^i(t)-\nu^i(t_0)\bigr)  \bigr\|_{H^s(\pt\Omega(t_0))}
        \leq C
        \Bigg( 
            \sup_{\tilde{x}\in \pt\Omega(t_0)} 
            |\tilde{\nu}^i(\tilde{x},t)
            -
            \nu^i(\tilde{x},t_0)|
            \\+
            \sup_{\tilde{x}\in \pt\Omega(t_0)} 
            \bigl|\nabla_{\tilde{x}}\bigl(\tilde{\nu}^i(\tilde{x},t)
            -
            \nu^i(\tilde{x},t_0)\bigr)\bigr|
            +
            \sup_{\tilde{x}\in \pt\Omega(t_0)} 
            \left| \nabla_{\tilde{x}}^2\bigl(\tilde{\nu}^i(\tilde{x},t)
            -
            \nu^i(\tilde{x},t_0)\bigr)\right|
        \Bigg)
        \| g \|_{H^s(\pt \Omega(t_0))}
    \end{multline}
    for all $i=1,2,3$ and $t\in\R$.
\end{proposition}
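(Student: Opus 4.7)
The plan is to read \eqref{eq;traceconv} as a Sobolev-multiplier estimate on the compact $C^\infty$ hypersurface $\pt\Omega(t_0)$: setting $f(\tilde{x}) := \tilde{\nu}^i(\tilde{x},t) - \nu^i(\tilde{x},t_0)$, the claim amounts to showing that the multiplication operator $M_f : g \mapsto fg$ is bounded on $H^s\bigl(\pt\Omega(t_0)\bigr)$ for every $s \in [0,2]$, with operator norm controlled by the $C^2$-norm of $f$ on $\pt\Omega(t_0)$. Once this is established, substituting back the explicit $f$ from \eqref{eq;def nu} produces exactly \eqref{eq;traceconv}, since $f$ is genuinely $C^\infty$ on $\pt\Omega(t_0)$ (being built from the $C^\infty$ diffeomorphism $\vphi$, the smooth defining function $\tilde{G}$, and the metric $g^{ij}$).

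First, I would verify the bound at the three integer endpoints $s = 0, 1, 2$ directly. The case $s = 0$ is immediate from H\"older's inequality. For $s = 1$ and $s = 2$, I fix a finite atlas on the compact surface $\pt\Omega(t_0)$ together with a subordinate smooth partition of unity; in each chart the intrinsic norm on $H^s\bigl(\pt\Omega(t_0)\bigr)$ is equivalent to a Euclidean $H^s$-norm of the pulled back function, and the Leibniz rule combined with $L^\infty \times L^2$ pairings,
\begin{equation*}
\| \nabla_{\mathrm{tan}}(fg) \|_{L^2} \leq \| f \|_\infty \| \nabla_{\mathrm{tan}} g \|_{L^2} + \| \nabla_{\mathrm{tan}} f \|_\infty \| g \|_{L^2},
\end{equation*}
and analogously
\begin{equation*}
\| \nabla_{\mathrm{tan}}^2 (fg) \|_{L^2} \leq \| f \|_\infty \| \nabla_{\mathrm{tan}}^2 g \|_{L^2} + 2 \| \nabla_{\mathrm{tan}} f \|_\infty \| \nabla_{\mathrm{tan}} g \|_{L^2} + \| \nabla_{\mathrm{tan}}^2 f \|_\infty \| g \|_{L^2},
\end{equation*}
produce the desired bound at $k = 1, 2$. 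The ambient derivatives $\nabla_{\tilde{x}}$ that appear in \eqref{eq;traceconv} pointwise dominate the tangential ones, because $\pt\Omega(t_0)$ is a compact $C^\infty$ hypersurface of $\R^3$, so the right-hand side takes exactly the form stated.

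For the remaining non-integer values $s \in (0,2)\setminus\{1\}$, I would invoke real interpolation. Having the uniform bound $\| M_f \|_{H^k \to H^k} \leq C\bigl( \|f\|_\infty + \|\nabla f\|_\infty + \|\nabla^2 f\|_\infty \bigr)$ for $k = 0, 1, 2$ and noting that $H^s\bigl(\pt\Omega(t_0)\bigr)$ arises as a real interpolation space between the pairs $(H^0, H^1)$ (for $s \in (0,1)$) and $(H^1, H^2)$ (for $s \in (1,2)$), the interpolation property of the functor $(\,\cdot\,,\,\cdot\,)_{\theta,2}$ automatically propagates the bound to every intermediate $s$ with the same right-hand side.

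The only (minor) obstacle is the clean reduction from intrinsic tangential derivatives on $\pt\Omega(t_0)$ to the ambient $\nabla_{\tilde{x}}$-derivatives of $f$ that are written in \eqref{eq;traceconv}. This is geometric bookkeeping: on a compact $C^\infty$ hypersurface one has $|\nabla_{\mathrm{tan}} f| \leq |\nabla_{\tilde{x}} f|$ pointwise, with an analogous pointwise estimate for second derivatives up to uniformly bounded terms coming from the second fundamental form of $\pt\Omega(t_0)$. Hence the three supremum quantities appearing in \eqref{eq;traceconv} dominate the $C^2$-norm of $f$ on $\pt\Omega(t_0)$ up to an absolute constant, which completes the proof.
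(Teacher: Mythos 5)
Your proposal is correct and follows essentially the same route as the paper's own proof: localize $\pt\Omega(t_0)$ via a finite atlas with subordinate partition of unity, apply the Leibniz rule to bound the multiplier norm at integer exponents, and then interpolate to cover fractional $s$. The paper verifies only $s=0$ and $s=2$ before interpolating the whole interval in one step, and records the tangential-versus-ambient derivative bookkeeping through the $W^{2,\infty}$-norm of the chart inverse $\gamma_\kappa^{-1}$ rather than through the second fundamental form, but these are presentational variants of the same argument.
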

\begin{proof}
    We shall calculate its trace norm according to a definition as in Lions and Magenes \cite[Chapter 1, Section 7]{Lions Magenes}.
    Let $\{ O_\kappa\}$ be a finite family of open sets in $\R^3$, covering $\pt\Omega(t_0)$ and assume that
    for each $O_\kappa$ there exists a $C^\infty$ diffeomorphism $\gamma_\kappa$ from $O_\kappa$ to
    $Q:=\bigl\{(\xi^\prime,\xi^3) \in \R^3\,;\, |\xi^\prime|<1,\,-1 < \xi^3 <1\bigr\}$ with
    $\gamma_\kappa\bigl(\pt\Omega(t_0)\cap O_\kappa\bigr)=Q\cap \{\xi^3=0\}$.
    Furthermore, let $\{\varrho_\kappa\}$ be an associated partition of unity on $\pt\Omega(t_0)$ 
    with $\varrho_\kappa \in C_0^\infty(O_\kappa)$.
    So putting $\widetilde{\varrho_\kappa g} (\xi^\prime):= \varrho_\kappa\bigl(\gamma^{-1}_\kappa(\xi^\prime,0)\bigr)
    g\bigl(\gamma^{-1}_\kappa(\xi^\prime,0)\bigr)$ 
    for $g \in H^s\bigl(\pt\Omega(t_0)\bigr)$, the trace norm of $g$ is defined by
    \begin{equation*}
        \|g \|_{H^s(\pt \Omega(t_0))} = \left[
            \sum_{\kappa} \| \widetilde{\varrho_\kappa g} \|^2_{H^s(\R^2_{\xi^\prime})} 
        \right]^{\frac{1}{2}}.
    \end{equation*}

    Firstly, we consider $s=0$. 
    Further, we put $\Lambda(t)=\tilde{\nu}^i(t) - \nu^i(t_0)$ for simplicity.
    We note that $\widetilde{\varrho_\kappa \bigl(g \Lambda(t)\bigr)}(\xi^\prime) 
    = \widetilde{\varrho_\kappa g} (\xi^\prime) \Lambda\bigl(\gamma^{-1}_\kappa(\xi^\prime,0),t\bigr)$.
    Then, we see that
    \begin{equation*}
        \begin{split}
        \| \widetilde{\varrho_\kappa g \Lambda(t)} \|_{L^2(\R^2)}
        &\leq 
        \sup_{\pt\Omega(t_0)}|\Lambda(\tilde{x},t)| \cdot \|\widetilde{\varrho_\kappa g} \|_{L^2(\R^2)}
        \\
        &\leq 
        \left(
            \sup_{\pt\Omega(t_0)}|\Lambda(\tilde{x},t)|
            +
            \sup_{\pt\Omega(t_0)}|\nabla_{\tilde{x}} \Lambda(\tilde{x},t)|
            + 
            \sup_{\pt\Omega(t_0)}|\nabla_{\tilde{x}}^2 \Lambda(\tilde{x},t)|
        \right)\|\widetilde{\varrho_\kappa g} \|_{L^2(\R^2)}.
        \end{split}
    \end{equation*}
Next, we consider the case $s=2$.
By an elementary calculation, we see that
\begin{equation}
    \frac{\pt \bigl(\widetilde{\varrho_\kappa g}\Lambda(t)\bigr)}{\pt \xi^i}
    =
    \frac{\pt \widetilde{\varrho_\kappa g}}{\pt \xi^i} \Lambda(t) 
    + \widetilde{\varrho_\kappa g} \sum_{k} 
    \frac{\pt \gamma_{\kappa,k}^{-1}}{\pt\xi^i}\frac{\pt \Lambda}{\pt \tilde{x}^k}(t),
\end{equation}
\begin{equation}
    \begin{split}
    \frac{\pt^2 \bigl(\widetilde{\varrho_\kappa g}\Lambda(t)\bigr)}{\pt \xi^i \pt \xi^j}
    =&
    \frac{\pt^2 \widetilde{\varrho_\kappa g}}{\pt \xi^i \pt \xi^j} \Lambda(t)
    +
    \sum_{k} 
    \left(\frac{\pt \widetilde{\varrho_\kappa g}}{\pt \xi^i} 
    \frac{\pt \gamma_{\kappa,k}^{-1}}{\pt \xi^j}
    +\frac{\pt \widetilde{\varrho_\kappa g}}{\pt \xi^j}
    \frac{\pt \gamma_{\kappa,k}^{-1}}{\pt \xi^i}
    \right) \frac{\pt \Lambda}{\pt \tilde{x}^k}(t)
    \\
    &+
    \sum_{k} \widetilde{\varrho_\kappa g}
    \frac{\pt^2 \gamma_{\kappa,k}^{-1}}{\pt \xi^i \xi^j} \frac{\pt \Lambda}{\pt \tilde{x}^k}(t)
    +
    \sum_{k,\ell}
    \widetilde{\varrho_\kappa g}
    \frac{\pt \gamma_{\kappa,k}^{-1}}{\pt \xi^i}
    \frac{\pt \gamma_{\kappa,\ell}^{-1}}{\pt \xi^j}
    \frac{\pt^2 \Lambda}{\pt \tilde{x}^k \pt \tilde{x}^\ell}(t)
    \end{split}
\end{equation}
for $i,j=1,2$. 
Therefore, taking a $L^2(\R^2)$-norm, we obtain that
\begin{multline}
    \| \nabla_{\xi^\prime}^2 \bigl(
    \widetilde{\varrho_\kappa g\Lambda(t)}\bigr) \|_{L^2(\R^2)}
    \\ 
    \leq
    C 
    \left(
        \sup_{\pt\Omega(t_0)}|\Lambda(\tilde{x},t)|
        +
        \sup_{\pt\Omega(t_0)}|\nabla_{\tilde{x}} \Lambda(\tilde{x},t)|
        + 
        \sup_{\pt\Omega(t_0)}|\nabla_{\tilde{x}}^2 \Lambda(\tilde{x},t)|
    \right)
    \|  \widetilde{\varrho_\kappa g} \|_{H^2(\R^2)},
\end{multline}
where the constant $C>0$ may depend on $\|\gamma_\kappa^{-1}\|_{W^{2,\infty}(Q)} $.
This yields \eqref{eq;traceconv} for $s=2$.

Since $H^s\bigl(\pt \Omega(t_0)\bigr)$, $s \in [0,2]$ is 
the real interpolation space between $L^2\bigl(\pt\Omega(t_0)\bigr)$
and 
$H^2\bigl(\pt \Omega(t_0)\bigr)$
we obtain \eqref{eq;traceconv} with a constant independent of $\Lambda(t)$.
This completes the proof.
\end{proof}

\subsubsection{Local uniform estimate of solutions to \eqref{eq;L(t)w(t)} for $t$}
\label{sec;Apriori w}
%
Let us consider the a priori estimate for the solution of \eqref{eq;L(t)w(t)}.

To begin with, as is mentioned in \cite[Lemma 4.4]{Kozono Yanagisawa IUMJ}, 
since \eqref{eq;Lap w} takes a form of the strictly elliptic operator with 
the complementing boundary condition in the sense of Agmon, Douglis and Nirenberg
\cite{ADN}, we have an a priori estimate for \eqref{eq;Lap w}:
\begin{equation}\label{eq;apriori w}
        \| w \|_{H^2(\Omega(t))} \leq 
        C \left(
            \| \Delta w \|_{L^2(\Omega(t))} +
            \| \rot\,w \times \nu(t) \|_{H^{\frac{1}{2}}(\pt \Omega(t))}
            +
            \| w \cdot \nu(t) \|_{H^{\frac{3}{2}}(\pt \Omega(t))}
            +
            \| w \|_{L^2(\Omega(t))}
        \right)
    \end{equation}
    for all $w \in H^2\bigl(\Omega(t)\bigr)$ with some constant $C>0$ which may depend $\Omega(t)$.

    Then, by the direct calculation, we can obtain the  the equivalence between
a trace norm on $\pt\Omega(t)$ and on $\pt\Omega(t_0)$.
Therefore, from \eqref{eq;apriori w}, we have the following
\begin{multline}\label{eq;aprioriLtwCt}
    \| \tilde{w} \|_{H^2(\Omega(t_0))}
    \leq 
    C(t)
    \Big(
        \| L(t) \tilde{w} \|_{L^2(\Omega(t_0))}
       \\ +
        \| B_1[\Rot(t)\,\tilde{w}, \tilde{\nu}(t)] \|_{H^{\frac{1}{2}}(\pt \Omega(t_0))}
        +
        \| B_2[\tilde{w},\tilde{\nu}(t)] \|_{H^{\frac{3}{2}}(\pt\Omega(t_0))}
        +
        \|\tilde{w}\|_{L^2(\Omega(t_0))}
    \Big),
\end{multline}
for all $\tilde{w} \in H^2\bigl(\Omega(t_0)\bigr)$, for $t\in[0,T]$.

Hence, we have the following locally uniform estimate of $C(t)>0$ in \eqref{eq;aprioriLtwCt}:
\begin{lemma}\label{lem;apriori w}
    Let $t_0 \in [0,T]$ be fixed. There exist a constant $C=C(T,t_0)>0$ and 
    $\delta>0$ such that for every $t\in \R$ if $|t-t_0| <\delta$ then
    it holds that 
        \begin{multline}\label{eq;aprioriLtw}
            \| \tilde{w} \|_{H^2(\Omega(t_0))}
            \leq 
            C
            \Big(
                \| L(t) \tilde{w} \|_{L^2(\Omega(t_0))}
               \\ +
                \| B_1[\Rot(t)\,\tilde{w}, \tilde{\nu}(t)] \|_{H^{\frac{1}{2}}(\pt \Omega(t_0))}
                +
                \| B_2[\tilde{w},\tilde{\nu}(t)] \|_{H^{\frac{3}{2}}(\pt\Omega(t_0))}
                +
                \|\tilde{w}\|_{L^2(\Omega(t_0))}
            \Big),
        \end{multline}
    for all $\tilde{w} \in H^2\bigl(\Omega(t_0)\bigr)$.
\end{lemma}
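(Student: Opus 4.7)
\medskip

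\noindent\emph{Proof plan.} The strategy mirrors that of Proposition \ref{prop;uniform bound q_k}: exploit the fact that at $t=t_0$ the diffeomorphism $\varphi(\cdot,t_0)$ is the identity, so that $L(t_0)=\Delta_{\tilde x}$, $\Rot(t_0)=\rot_{\tilde x}$, $S^i_{j,k,\ell}(t_0)$ reduces to the Levi-Civita pattern giving the usual cross product, and $\tilde\nu(t_0)=\nu(t_0)$. Consequently $B_1[\rot_{\tilde x}\tilde w,\nu(t_0)] = \rot_{\tilde x}\tilde w\times\nu(t_0)$ and $B_2[\tilde w,\nu(t_0)]=\tilde w\cdot\nu(t_0)$, so the system \eqref{eq;L(t)w(t)} collapses at $t_0$ to the classical strictly elliptic system \eqref{eq;Lap w} on $\Omega(t_0)$ in the sense of \cite{ADN}. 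Hence \eqref{eq;apriori w} furnishes a constant $C_0=C_0(\Omega(t_0))>0$ such that
\begin{equation*}
    \|\tilde w\|_{H^2(\Omega(t_0))} \leq C_0\bigl(
        \|\Delta_{\tilde x}\tilde w\|_{L^2}
        +\|\rot_{\tilde x}\tilde w\times\nu(t_0)\|_{H^{1/2}(\pt\Omega(t_0))}
        +\|\tilde w\cdot\nu(t_0)\|_{H^{3/2}(\pt\Omega(t_0))}
        +\|\tilde w\|_{L^2}\bigr)
\end{equation*}
for all $\tilde w\in H^2\bigl(\Omega(t_0)\bigr)$.

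The next step is to insert the decompositions $\Delta_{\tilde x}\tilde w = L(t)\tilde w - (L(t)-\Delta_{\tilde x})\tilde w$ and, on the boundary, $\rot_{\tilde x}\tilde w\times\nu(t_0) = B_1[\Rot(t)\tilde w,\tilde\nu(t)] - \bigl\{B_1[\Rot(t)\tilde w,\tilde\nu(t)] - B_1[\rot_{\tilde x}\tilde w,\nu(t_0)]\bigr\}$, and similarly for $B_2$. Splitting each boundary difference as
\begin{equation*}
    B_1[\Rot(t)\tilde w,\tilde\nu(t)] - B_1[\rot_{\tilde x}\tilde w,\nu(t_0)]
    = B_1[(\Rot(t)-\rot_{\tilde x})\tilde w,\tilde\nu(t)]
    + \bigl(B_1[\rot_{\tilde x}\tilde w,\tilde\nu(t)]-B_1[\rot_{\tilde x}\tilde w,\nu(t_0)]\bigr),
\end{equation*}
plus an analogous splitting involving the difference of the coefficient tensors $S^i_{j,k,\ell}(t)-S^i_{j,k,\ell}(t_0)$, reduces matters to estimating three types of perturbation terms.

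For the interior term, Proposition \ref{prop;gij} yields $\|(L(t)-\Delta_{\tilde x})\tilde w\|_{L^2}\leq \varepsilon_1(t)\,\|\tilde w\|_{H^2(\Omega(t_0))}$ with $\varepsilon_1(t)\to 0$ as $t\to t_0$, exactly as in \eqref{eq;Lt-Delta}. For the first kind of boundary perturbation, the kernels $R^{i,1}_{j,k,\ell}(\tilde x,t)$ and $R^{i,2}_{j,k,\ell}(\tilde x,t)-\delta$-pattern in \eqref{eq;modified R}, as well as the tensors $S^i_{j,k,\ell}(t)-S^i_{j,k,\ell}(t_0)$, converge to zero together with their first tangential derivatives uniformly on $\overline{\Omega(t_0)}$ as $t\to t_0$ thanks to \eqref{eq;eq vphiId}--\eqref{eq;ptvphito0}; combined with the trace inequality $H^{3/2}(\pt\Omega(t_0))\hookrightarrow $ multiplication-friendly space on $H^2$, this gives a bound $\varepsilon_2(t)\,\|\tilde w\|_{H^2(\Omega(t_0))}$. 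For the second kind, Proposition \ref{prop;trace} applied with $s=1/2$ and $s=3/2$, together with \eqref{eq;nu0}--\eqref{eq;nu2}, produces a bound $\varepsilon_3(t)\,\|\tilde w\|_{H^2(\Omega(t_0))}$ with $\varepsilon_3(t)\to 0$.

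Collecting, there is a function $\varepsilon(t)\to 0$ such that
\begin{equation*}
    \|\tilde w\|_{H^2(\Omega(t_0))}
    \leq C_0\bigl(\|L(t)\tilde w\|_{L^2}+\|B_1[\Rot(t)\tilde w,\tilde\nu(t)]\|_{H^{1/2}}
    +\|B_2[\tilde w,\tilde\nu(t)]\|_{H^{3/2}}+\|\tilde w\|_{L^2}\bigr)
    + C_0\,\varepsilon(t)\,\|\tilde w\|_{H^2(\Omega(t_0))}.
\end{equation*}
Choosing $\delta>0$ small enough that $C_0\varepsilon(t)<1/2$ whenever $|t-t_0|<\delta$ allows absorption into the left-hand side, yielding \eqref{eq;aprioriLtw} with constant $C=2C_0$. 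The main obstacle is the control of the boundary perturbations in the fractional Sobolev norms: the interior bound is routine, but making sense of $B_1[\rot_{\tilde x}\tilde w,\tilde\nu(t)-\nu(t_0)]$ in $H^{1/2}(\pt\Omega(t_0))$ and of $B_2[\tilde w,\tilde\nu(t)-\nu(t_0)]$ in $H^{3/2}(\pt\Omega(t_0))$ by quantities that tend to zero uniformly requires the $W^{2,\infty}$-type convergence of $\tilde\nu(t)$ furnished by \eqref{eq;nu1}--\eqref{eq;nu2} together with Proposition \ref{prop;trace}; alternatively, one could argue by contradiction in the spirit of Proposition \ref{prop;uniform bound q_k}, which bypasses writing the perturbation estimates out explicitly.
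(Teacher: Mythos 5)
Your proof is essentially correct, and the core estimates you invoke — the interior perturbation bound via Proposition \ref{prop;gij} and \eqref{eq;Lt-Delta}, the kernel-coefficient differences $R^{i,1}_{j,k,\ell}(t)\to 0$, $R^{i,2}_{j,k,\ell}(t)-R^{i,2}_{j,k,\ell}(t_0)\to 0$, $S^i_{j,k,\ell}(t)-S^i_{j,k,\ell}(t_0)\to 0$ in $C^2(\overline{\Omega(t_0)})$, and the normal-vector differences controlled by Proposition \ref{prop;trace} together with \eqref{eq;nu0}--\eqref{eq;nu2} — are precisely the ones the paper uses. The outer framework differs: you carry out a direct absorption argument, writing each perturbation term as $\varepsilon_i(t)\|\tilde w\|_{H^2(\Omega(t_0))}$ and then taking $\delta$ so small that $C_0\sum_i\varepsilon_i(t)<\tfrac12$, whereas the paper proves the lemma by contradiction: it normalizes $\|\tilde w_m\|_{H^2(\Omega(t_0))}=1$, splits the ADN right-hand side at $t_0$ into the ``target'' terms plus corrections (the $I_1,I_2,I_3$ decomposition of the $B_1$ difference in \eqref{eq;dec B_1 times} and the $J_1,J_2$ decomposition of the $B_2$ difference in \eqref{eq;dec B2}), and shows each correction tends to $0$, yielding $1\leq o(1)$. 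The two arguments are logically equivalent rewritings of the same estimate; the contradiction version is slightly lighter notationally because it never has to name $\varepsilon(t)$ or fix a threshold, while your direct version makes the constant $C=2C_0$ explicit — which can be a useful feature. One imprecision worth fixing: in the middle of your argument you write that the coefficient tensors converge ``together with their first tangential derivatives,'' but Proposition \ref{prop;trace} (and the $H^{3/2}$-trace control of $B_2$) requires convergence in $C^2(\overline{\Omega(t_0)})$ — second derivatives included; you do acknowledge the $W^{2,\infty}$-type convergence at the very end, but the intermediate phrasing undersells what is actually needed.
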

\begin{proof}
    We prove the lemma by contradiction argument.
    Let us assume for every $m \in \N$, 
    there exist $t_m \in \R$ and $ \tilde{w}_n \in H^2\bigl(\Omega(t_0)\bigr)$ 
    with
    $\| \tilde{w}_m\|_{H^2(\Omega(t_0))} \equiv 1$ 
    such that $|t -t_0|< \frac{1}{m}$
    and such that
    \begin{multline*}
        \frac{1}{m}
        >
        \| L(t_m) \tilde{w}_m \|_{L^2(\Omega(t_0))}
        \\ +
         \| B_1[\Rot(t_m)\,\tilde{w}_m, \tilde{\nu}(t_m)] \|_{H^{\frac{1}{2}}(\pt \Omega(t_0))}
         +
         \| B_2[\tilde{w}_m,\tilde{\nu}(t_m)] \|_{H^{\frac{3}{2}}(\pt\Omega(t_0))}
         +
         \|\tilde{w}_m\|_{L^2(\Omega(t_0))}
    \end{multline*} 
   On the other hand, by the a priori estimate of \eqref{eq;apriori w} at $t_0$, 
   we have 
   \begin{equation}\label{eq;w contradict}
    \begin{split}
    1 =\,&\| \tilde{w}_m \|_{H^2(\Omega(t_0))} 
    \\
    \leq\,  &C_* \Bigl(
        \|\Delta_{\tilde{x}} \tilde{w}_m \|_{L^2(\Omega(t_0))}
        +
        \|\rot_{\tilde{x}}\, \tilde{w}_m \times \nu(t_0)\|_{H^{\frac{1}{2}}(\pt \Omega(t_0))}
       \\
       & +
        \| \tilde{w}_m \cdot \nu(t_0)\|_{H^{\frac{2}{3}}(\pt \Omega(t_0))}
        +
        \| \tilde{w}_m \|_{L^2(\Omega(t_0))}
        \Bigr)
        \\
        \leq \,&
        C_*\Bigl(
        \bigl\|\bigl( \Delta_{\tilde{x}} -L(t_m)\bigr)\tilde{w}_m\bigr\|_{L^2(\Omega(t_0))}
        + \|L(t_m) \tilde{w}_m \|_{L^2(\Omega(t_0))}
        \\
        &+
        \bigl\| \rot_{\tilde{x}}\, \tilde{w}_m \times \nu(t_0) 
        -B_1[\Rot(t_n)\,\tilde{w}_m,\tilde{\nu}(t_m)]\bigr\|_{H^{\frac{1}{2}}(\pt \Omega(t_0))}
        +
        \bigl\| B_1[\Rot(t_m)\,\tilde{w}_m,\tilde{\nu}(t_m)]\bigr\|_{H^{\frac{1}{2}}(\pt \Omega(t_0))}
        \\
        &+
        \bigl\| \tilde{w}_m \cdot \nu(t_0) - B_2[\tilde{w}_m, \tilde{\nu}(t_m)] \bigr\|_{H^{\frac{3}{2}}(\pt \Omega(t_0))}
        +
        \bigl\| B_2[\tilde{w}_m, \tilde{\nu}(t_m)] \bigr\|_{H^{\frac{3}{2}}(\pt \Omega(t_0))}
        \\
        &+ \|\tilde{w}_m \|_{L^2(\Omega(t_0))}
        \Bigr)
        \\
        \leq \,& 
        C_* \Bigl(
        \bigl\|\bigl( \Delta_{\tilde{x}} -L(t_m)\bigr)\tilde{w}_m\bigr\|_{L^2(\Omega(t_0))}
        +
        \bigl\| \rot_{\tilde{x}}\, \tilde{w}_m \times \nu(t_0) 
        -B_1[\Rot(t_m)\,\tilde{w}_m,\tilde{\nu}(t)]\bigr\|_{H^{\frac{1}{2}}(\pt \Omega(t_0))}
        \\
        & +
        \bigl\| \tilde{w}_m \cdot \nu(t_0) - B_2[\tilde{w}_m, \tilde{\nu}(t_m)] \bigr\|_{H^{\frac{3}{2}}(\pt \Omega(t_0))}
        \Bigr)
        + 
        \frac{C_*}{m}.
    \end{split}
   \end{equation}
Here, we recall that for $\tilde{w}\in H^2\bigl(\Omega(t_0)\bigr)$
\begin{equation*}
    \begin{split}
    [L(t) \tilde{w}]^i =\,& 
    \sum_{k,\ell} \frac{\pt }{\pt \tilde{x}^\ell} \left(
        g^{k\ell}
        \frac{\pt \tilde{w}^i}{\pt \tilde{x}^k}
        \right)
       \\& +
        2 \sum_{j,k,\ell} 
        g^{k\ell} \Gamma^{i}_{jk} \frac{\pt \tilde{w}^j}{\pt \tilde{x}^\ell}
        +
        \sum_{j,k,\ell}
        \left(
            \frac{\pt}{\pt \tilde{x}^k} \bigl( g^{k\ell} \Gamma^{i}_{j\ell}\bigr)
            +
            \sum_{n} g^{k\ell} \Gamma^{n}_{j\ell} \Gamma^{i}_{kn} 
        \right) \tilde{w}^j.
    \end{split}
\end{equation*}
Therefore, we have that
\begin{multline*}
    [L(t_m) \tilde{w}]^i(\tilde{x})  -\Delta_{\tilde{x}}\tilde{w}_m^i(\tilde{x})=
    \sum_{k,\ell} \frac{\pt }{\pt \tilde{x}^\ell} \left(
        \bigl(g^{k\ell} (\tilde{x},t_m) - \delta_{k,\ell}\bigr)
        \frac{\pt \tilde{w}^i}{\pt \tilde{x}^k}(\tilde{x})
        \right)
       \\ +
        2 \sum_{j,k,\ell} 
        g^{k\ell}(\tilde{x},t_m) \Gamma^{i}_{jk}(\tilde{x},t_m)
         \frac{\pt \tilde{w}^j}{\pt \tilde{x}^\ell}(\tilde{x},t_m)
        +
        \sum_{j,k,\ell}
        \bigg(
            \frac{\pt}{\pt \tilde{x}^k} \bigl( g^{k\ell} (\tilde{x},t_m)\Gamma^{i}_{j\ell}(\tilde{x},t_m)\bigr)
           \\ +
            \sum_{n} g^{k\ell}(\tilde{x},t_m) \Gamma^{n}_{j\ell} (\tilde{x},t_m)\Gamma^{i}_{kn} (\tilde{x},t_m)
        \bigg) \tilde{w}^j(\tilde{x},t_m).
\end{multline*}
Since $\| \tilde{w}_m \|_{H^2(\Omega(t_0))}\equiv 1$, by the similar argument of \eqref{eq;Lt-Delta} 
and by Proposition \ref{prop;gij}, we observe that
\begin{equation}\label{eq;L(t)-Delta}
    \bigl\| \bigl(L(t_m) -\Delta_{\tilde{x}}\bigr) \tilde{w}_m \bigr\|_{L^2(\Omega(t_0))} \to 0 \quad \text{as }{t_m \to t_0}.
\end{equation}
Next, we see that
\begin{equation}\label{eq;dec B_1 times}
    \begin{split}
        B_1&[\Rot(t_m)\,\tilde{w}_n,\tilde{\nu}(t_m)]^i -[\rot_{\tilde{x}}\,\tilde{w}_m \times \nu(t_0)]^i 
        \\
        =\,&
        \sum_{j,k,\ell}
        S^{i}_{j,k,\ell} (t_m)\Bigl(
            [\Rot(t_m)\tilde{w}_m]^k \bigl(\tilde{\nu}^\ell(t_m)-\nu^\ell(t_0)\bigr)
            -
            [\Rot(t_m)\tilde{w}_m]^\ell\bigl(\tilde{\nu}^k(t_m)-\nu^k(t_0)\bigr)
        \Bigr)
        \\
        &+
        \sum_{j,k,\ell} 
        S^{i}_{j,k,\ell} (t_m)\Bigl(
            [\Rot(t_m)\tilde{w}_m - \rot_{\tilde{x}}\,\tilde{w}_m]^k \nu^\ell(t_0)
            -
            [\Rot(t_m)\tilde{w}_m - \rot_{\tilde{x}}\,\tilde{w}_m]^\ell \nu^k(t_0)
        \Bigr)
        \\
        &+
        \sum_{j,k,\ell} 
        \Bigl(S^{i}_{j,k,\ell}(t_m)-\delta_{i,j}\delta_{\sigma(j+1),k} \delta_{\sigma(j+2),\ell}\Bigr)
        \Bigl(
            [\rot_{\tilde{x}}\,\tilde{w}_m]^k \nu^\ell(t_0) - [\rot_{\tilde{x}}\,\tilde{w}_m]^\ell \nu^k(t_0)
        \Bigr) 
        \\
        =:\,& I_1+I_2+I_3.
    \end{split}
\end{equation}
We shall estimate $I_1$. We note that $S^i_{j,k,\ell},\, R^{i,1}_{j,k,\ell},\, R^{i,2}_{j,k,\ell}
\in C^\infty \bigl(\overline{\Omega(t_0)\times (t_0-1,t_0+1)}\bigr)$
for $i,j,k,\ell=1,2,3$.
 So, it holds that
\begin{equation*}
    \sup_{|t-t_0|\leq 1}\| S^i_{j,k,\ell} (\cdot,t)\|_{C^2(\overline{\Omega(t_0)})}
    <M,\quad i,j,k,\ell=1,2,3,
\end{equation*}
with some $M>0$ and so does for $R^{i,1}_{j,k,\ell}$ and $R^{i,2}_{j,k,\ell}$.
Hence, by Proposition \ref{prop;trace} and the trace theorem, we have
\begin{equation*}
    \begin{split}
    \|I_1\|_{H^{\frac{1}{2}}(\pt\Omega(t_0))} 
    &\leq 
    C M\sum_{k} 
    \sum_{j=0}^2\sup_{\pt\Omega(t_0)}
    \left|\nabla_{\tilde{x}}^j \bigl(\tilde{\nu}^k (\tilde{x},t_m)-\nu^k(\tilde{x},t)\bigr)\right|
    \|\Rot(t_m) \tilde{w}_m\|_{H^{\frac{1}{2}}(\pt \Omega(t_0))} 
    \\
    &\leq
    C M\sum_{k} 
    \sum_{j=0}^2\sup_{\pt\Omega(t_0)}
    \left|\nabla_{\tilde{x}}^j \bigl(\tilde{\nu}^k (\tilde{x},t_m)-\nu^k(\tilde{x},t)\bigr)\right|
    \|\Rot(t_m) \tilde{w}_m\|_{H^{1}( \Omega(t_0))} 
    \\
    &\leq
    C M^2\sum_{k} 
    \sum_{j=0}^2\sup_{\pt\Omega(t_0)}
    \left|\nabla_{\tilde{x}}^j \bigl(\tilde{\nu}^k (\tilde{x},t_m)-\nu^k(\tilde{x},t)\bigr)\right|
    \|\tilde{w}_m\|_{H^{2}( \Omega(t_0))}. 
    \end{split} 
\end{equation*}
Since $ \|\tilde{w}_m \|_{H^2(\Omega(t_0))} \equiv 1$, 
\eqref{eq;nu0}, \eqref{eq;nu1} and \eqref{eq;nu2} yield that $\|I_1\|_{H^{\frac{1}{2}}(\pt\Omega(t_0))} \to 0$
as $t_m \to t_0$.

Next, we shall estimate $I_2$. by the definition of $\Rot(t)$ with $\vphi$, 
we note that 
\begin{equation}\label{eq;rot Rot}
    [\rot_{\tilde{x}}\, \tilde{w}]^i = [\Rot(t_0)\,\tilde{w} ]^i
    =
    \sum_{j,k,\ell} R^{i,2}_{j,k,\ell} (\tilde{x},t_0) \frac{\pt \tilde{w}^k}{\pt \tilde{x}^\ell}
    \quad\text{for } \tilde{w} \in H^1\bigl(\Omega(t_0)\bigr).
\end{equation}
Hence, we have
\begin{equation*}
    [\Rot(t_m)\, \tilde{w}_m - \rot_{\tilde{x}}\,w_m]^i
    =
    \sum_{j,k,\ell} R^{i,1}_{j,k,\ell} (\tilde{x},t_m) \tilde{w}_m^k
    +
    \sum_{j,k,\ell} \bigl( R^{i,2}_{j,k,\ell}(\tilde{x},t_m)
    -R^{i,2}_{j,k,\ell}(\tilde{x},t_0)\bigr) \frac{\pt \tilde{w}_m^k}{\pt \tilde{x}^\ell}. 
\end{equation*}
Moreover we note that by \eqref{eq;eq vphidelta}, \eqref{eq;convphi2}, \eqref{eq;vphitoId} 
it holds that for $i,j,k,\ell=1,2,3$,
\begin{gather}
    \label{eq;R^1}
    \| R^{i,1}_{j,k,\ell} (\cdot,t_m) \|_{C^2(\overline{\Omega(t_0)})} \to 0
    \quad \text{as } t_m \to t_0,
    \\
    \label{eq;R^2}
    \| R^{i,2}_{j,k,\ell} (\cdot,t_m) - R^{i,2}_{j,k,\ell} (\cdot,t_0)\|_{C^2(\overline{\Omega(t_0)})}
    \to 0
    \quad \text{as } t_m \to t_0.
\end{gather}
Then by an analogy of the proof of Proposition \ref{prop;trace} with taking $\Lambda=\nu(t_0)$,
we directly derive
\begin{equation*}
    \begin{split}
        \bigl\| S^i_{j,k,\ell}(t_m)& [\Rot(t_m)\,\tilde{w}_m - \rot_{x}\,\tilde{w}_m]^k \nu^\ell(t_0)
        \bigr\|_{H^{\frac{1}{2}}(\pt \Omega(t_0))}
        \\
        &\leq 
        C \sum_{n=0}^2 \sup_{\pt\Omega(t_0)} 
        \bigl|\nabla_{\tilde{x}}^n\nu^\ell(\tilde{x},t_0)\bigr|\,
        \bigl\|  S^i_{j,k,\ell} [\Rot(t_m)\,\tilde{w}_m - \rot_{x}\,\tilde{w}_m]^k
        \bigr\|_{H^{\frac{1}{2}}(\pt \Omega(t_0))}
        \\
        &\leq
        C \sum_{n=0}^2 \sup_{\pt\Omega(t_0)} 
        \bigl|\nabla_{\tilde{x}}^n\nu^\ell(\tilde{x},t_0)\bigr|\,
        \bigl\|  S^i_{j,k,\ell} [\Rot(t_m)\,\tilde{w}_m - \rot_{x}\,\tilde{w}_m]^k
        \bigr\|_{H^{1}( \Omega(t_0))}
        \\
        &\leq
        CM  \sum_{n=0}^2 \sup_{\pt\Omega(t_0)} 
        \bigl|\nabla_{\tilde{x}}^n\nu^\ell(\tilde{x},t_0)\bigr|\,
        \sum_{j^\prime,k^\prime,\ell^\prime}\Bigl(
        \| R^{k,1}_{j\prime,k^\prime,\ell^\prime} (\cdot,t_m) \|_{C^2(\overline{\Omega(t_0)})}
        \\
        &\quad +
        \| R^{k,2}_{j^\prime,k^\prime,\ell^\prime} (\cdot,t_m) 
        - R^{k,2}_{j^\prime,k^\prime,\ell^\prime} (\cdot,t_0)\|_{C^2(\overline{\Omega(t_0)})}
       \Bigr) \|\tilde{w}_m\|_{H^2(\Omega(t_0))}.
    \end{split}
\end{equation*}
Therefore, we have $\|I_2\|_{H^{\frac{1}{2}}(\pt\Omega(t_0))} \to 0$ as $t_m \to t_0$.

Furthermore, we see that for $i,j,k,\ell=1,2,3$,
\begin{equation*}
    \bigl\|S^{i}_{j,k,\ell}(t_m)-\delta_{i,j}\delta_{\sigma(j+1),k} \delta_{\sigma(j+2),\ell}
    \bigr\|_{C^2(\overline{\Omega(t_0)})}
    \to 0 
    \quad \text{as } t_m \to t_0
\end{equation*}
Hence, similarly we have
\begin{equation*}
    \begin{split}
        \bigl\|\bigl(S^{i}_{j,k,\ell}&(t_m)-\delta_{i,j}\delta_{\sigma(j+1),k} \delta_{\sigma(j+2),\ell}
        \bigr)
            [\rot_{\tilde{x}}\,\tilde{w}_m]^k \nu^\ell(t_0)\bigr\|_{H^{\frac{1}{2}}(\pt \Omega(t_0))}
        \\
        &
        \leq 
        C \sum_{n=0}^2 \sup_{\pt \Omega(t_0)} \bigl|\nabla^n_{\tilde{x}} \nu^\ell(\tilde{x},t_0)\bigr|
        \bigl\|
            \bigl(S^{i}_{j,k,\ell}(t_m)-\delta_{i,j}\delta_{\sigma(j+1),k} \delta_{\sigma(j+2),\ell}
        \bigr)
            [\rot_{\tilde{x}}\,\tilde{w}_m]^k
        \bigr\|_{H^{\frac{1}{2}}(\pt\Omega(t_0))}
        \\
        &
        \leq 
        C \sum_{n=0}^2 \sup_{\pt \Omega(t_0)} \bigl|\nabla^n_{\tilde{x}} \nu^\ell(\tilde{x},t_0)\bigr|
        \bigl\|
            \bigl(S^{i}_{j,k,\ell}(t_m)-\delta_{i,j}\delta_{\sigma(j+1),k} \delta_{\sigma(j+2),\ell}
        \bigr)
            [\rot_{\tilde{x}}\,\tilde{w}_m]^k
        \bigr\|_{H^{1}(\Omega(t_0))}
        \\
        &
        \leq 
        C \sum_{n=0}^2 \sup_{\pt \Omega(t_0)} \bigl|\nabla^n_{\tilde{x}} \nu^\ell(\tilde{x},t_0)\bigr|
        \bigl\| S^{i}_{j,k,\ell}(t_m)-\delta_{i,j}\delta_{\sigma(j+1),k} \delta_{\sigma(j+2),\ell}
        \bigr\|_{C^2(\overline{\Omega(t_0)})}
        \| \tilde{w}_m \|_{H^2(\Omega(t_0))}.
    \end{split}
\end{equation*}
Therefore we obtain $\|I_3\|_{H^{\frac{1}{2}}(\pt \Omega(t_0))} \to 0$
as $t_m\to t_0$.

Finally, since $\displaystyle \tilde{w}_m\cdot \nu(t_0)
=\sum_{k,\ell} \delta_{k,\ell}\tilde{w}^k_m \nu^\ell(t_0) $, we see that
\begin{equation}\label{eq;dec B2}
    \begin{split}
        B_2[\tilde{w}_n,\tilde{\nu}(t_m)] - &\tilde{w}_m \cdot \nu(t_0)
        \\
        &=
        \sum_{k,\ell} g_{k\ell}(t_m) \tilde{w}_m^k \bigl(\tilde{\nu}^\ell(t_m) - \nu^\ell(t_0)\bigr)
        +
        \sum_{k,\ell} \bigl(g_{k\ell}(t_m) - \delta_{k,\ell}\bigr) \tilde{w}_m^k \nu^\ell(t_0)
        \\
        &=:J_1+J_2.
    \end{split}
\end{equation}
Since $\displaystyle \sup_{|t-t_0|\leq 1} \|g_{k\ell} (t)\|_{C^2(\overline{\Omega(t_0)})}<M$ for $k,\ell=1,2,3$,
with some $M>0$, by Proposition \ref{prop;trace},
\eqref{eq;nu0}, \eqref{eq;nu1} and \eqref{eq;nu2}, we have
\begin{equation*}
    \begin{split}
        \|J_1\|_{H^{\frac{3}{2}}(\pt \Omega(t_0))}
        &\leq
        C \sum_{k,\ell}\sum_{j=0}^2 \sup_{\pt\Omega(t_0)} 
        \bigl| \nabla_{\tilde{x}}^j\bigl( \tilde{\nu}^\ell(\tilde{x},t_m) - \nu^\ell(\tilde{x},t_0)\bigr)\bigr|
        \| g_{k\ell} (t_m) \tilde{w}^k_m \|_{H^{\frac{3}{2}}\pt \Omega(t_0)} 
        \\
        &\leq
        CM \sum_{\ell}\sum_{j=0}^2 \sup_{\pt\Omega(t_0)} 
        \bigl| \nabla_{\tilde{x}}^j\bigl( \tilde{\nu}^\ell(\tilde{x},t_m) - \nu^\ell(\tilde{x},t_0)\bigr)\bigr|
        \| \tilde{w}_m \|_{H^2(\Omega(t_0))}
        \\
        &\to 0 \quad \text{as } t_m \to t_0.
        \end{split}
\end{equation*}
Moreover, we have
\begin{equation*}
    \begin{split}
        \|J_2\|_{H^{\frac{3}{2}}(\pt \Omega(t_0))} 
        &\leq
        C\sum_{k,\ell} \sum_{j=0}^2
        \bigl| \nabla_{\tilde{x}}^j\nu^\ell(\tilde{x},t_0)\bigr|
        \bigl\| \bigl(
            g_{k\ell}(t_m) - \delta_{k,\ell}
        \bigr) \tilde{w}^k_m\bigr\|_{H^{\frac{3}{2}}(\pt \Omega(t_0))}
        \\
        &\leq
        C\sum_{k,\ell} \sum_{j=0}^2
        \bigl| \nabla_{\tilde{x}}^j\nu^\ell(\tilde{x},t_0)\bigr|
        \| 
            g_{k\ell}(t_m) - \delta_{k,\ell}
        \|_{C^2(\overline{\Omega(t_0)})}
        \| \tilde{w}_m \|_{H^{2}( \Omega(t_0))}
        \\
        &\to 0 \quad \text{as } t_m\to t_0.
    \end{split}
\end{equation*}
Therefore, by \eqref{eq;w contradict} we see that $\|w_m\|_{H^2(\Omega(t_0))} \to 0$ as $t_m\to t_0$.
However, this contradicts to the assumption.
The proof is completed.
\end{proof}
\subsubsection{Uniform estimate of the constant for $t$ in the Helmholtz-Weyl decomposition}
We discuss the (domain) dependence of the constant of the estimates as in
the Helmholtz-Weyl decomposition. 
Similar to the argument for harmonic vector fields, 
the a priori estimate \eqref{eq;aprioriLtw} plays an important role in the analysis of the 
time dependence of $w(t)$.
However, it contains the lower order term of the solution of \eqref{eq;L(t)w(t)},
in contrast to the case of harmonic vector fields.
Therefore, we may not investigate the time continuity and differentiability of ${w}(t)$ 
only by $L(t)$, $B_1$ and $B_2$ as in \eqref{eq;aprioriLtw}.
Then, we additionally consider the uniform estimate of 
the constant for $t$ in the Helmholtz-Weyl decomposition as in Proposition \ref{prop;KY}.

Introducing a Banach space $H_{\mathrm{div}}^1(\Omega) :=\{ u \in H^1(\Omega)\,;\, \Div\,u =0 \text{ in }\Omega\}$,
due to Proposition \ref{prop;KY},
we define a bounded linear operator 
\begin{equation*}
    f_\Omega : H^1_{\mathrm{div}} (\Omega) \ni b \mapsto f_{\Omega}[b]=w \in H^2(\Omega) \cap Z^2_\sigma(\Omega)
    \quad\text{with $w$ is in \eqref{eq;KY dec}.}
\end{equation*}
Here, we put 
\begin{equation*}
    C(\Omega) := \sup_{b \in H^1_{\Div}(\Omega)\setminus \{0\}}
    \frac{\| f_\Omega[b]\|_{H^2(\Omega)}}{\| b \|_{H^1(\Omega)}} < \infty.
\end{equation*}

Now, we investigate the boundedness of $C\bigl(\Omega(t)\bigr)$ for $t\in [0,T]$.
\begin{theorem}\label{thm;const KY dec}
    Let $T>0$. Then it holds that
    \begin{equation}\label{eq;bound COmega}
        \sup_{0\leq t \leq T} C\bigl( \Omega(t)\bigr) < \infty.
    \end{equation}
\end{theorem}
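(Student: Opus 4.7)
The plan is to argue by contradiction using a combination of the uniform a priori estimate in Lemma \ref{lem;apriori w} and compactness of the embedding $H^2 \hookrightarrow L^2$. Suppose, toward contradiction, that $\sup_{t \in [0,T]} C(\Omega(t)) = \infty$. Then one can extract a sequence $t_m \in [0,T]$, a sequence $b_m \in H^1_{\Div}(\Omega(t_m))$ with $\|b_m\|_{H^1(\Omega(t_m))}=1$, and the associated $w_m = f_{\Omega(t_m)}[b_m] \in Z^2_\sigma(\Omega(t_m)) \cap H^2(\Omega(t_m))$ such that $\|w_m\|_{H^2(\Omega(t_m))} \to \infty$. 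Since $[0,T]$ is compact, pass to a subsequence with $t_m \to t_0$ for some $t_0 \in [0,T]$.

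Next, I would transport everything to the fixed domain $\Omega(t_0)$ via the diffeomorphism $\varphi(\cdot, t_m)$ of Section \ref{subsec;unit normal}, obtaining $\tilde{b}_m$ and $\tilde{w}_m$ on $\Omega(t_0)$. By Proposition \ref{prop;funcsp}, $\|\tilde{b}_m\|_{H^1(\Omega(t_0))}$ remains bounded and $\|\tilde{w}_m\|_{H^2(\Omega(t_0))} \to \infty$. Normalize by setting
\begin{equation*}
\bar{w}_m := \tilde{w}_m / \|\tilde{w}_m\|_{H^2(\Omega(t_0))}, \qquad \bar{b}_m := \tilde{b}_m / \|\tilde{w}_m\|_{H^2(\Omega(t_0))},
\end{equation*}
so that $\|\bar{w}_m\|_{H^2(\Omega(t_0))}=1$ and $\bar{b}_m \to 0$ in $H^1(\Omega(t_0))$. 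The pair $(\bar{w}_m, \bar{b}_m)$ satisfies the transformed system \eqref{eq;L(t)w(t)} at $t=t_m$. For $m$ large so that $|t_m-t_0|<\delta$, Lemma \ref{lem;apriori w} yields
\begin{equation*}
1 = \|\bar{w}_m\|_{H^2(\Omega(t_0))} \leq C\bigl(\|\Rot(t_m)\bar{b}_m\|_{L^2} + \|B_1[\bar{b}_m,\tilde{\nu}(t_m)]\|_{H^{1/2}} + \|\bar{w}_m\|_{L^2}\bigr).
\end{equation*}
Since the coefficients of $\Rot(t_m)$ and $B_1[\cdot,\tilde{\nu}(t_m)]$ are uniformly bounded in $C^2$ on $[0,T]$, the first two terms are dominated by $C\|\bar{b}_m\|_{H^1} \to 0$. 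Hence the desired contradiction reduces to proving $\|\bar{w}_m\|_{L^2(\Omega(t_0))} \to 0$.

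By the compact embedding $H^2(\Omega(t_0)) \hookrightarrow L^2(\Omega(t_0))$, pass to a further subsequence so that $\bar{w}_m \to w^*$ strongly in $L^2$ and weakly in $H^2$. The aim is to identify $w^* = 0$. Passing to the limit in the transformed equations, and using Proposition \ref{prop;gij} together with the convergences \eqref{eq;nu0}--\eqref{eq;nu2} and \eqref{eq;R^1}--\eqref{eq;R^2} to handle the coefficients $L(t_m) \to \Delta_{\tilde{x}}$, $\Rot(t_m)\to\rot_{\tilde{x}}$, $B_1[\cdot,\tilde{\nu}(t_m)] \to (\cdot)\times\nu(t_0)$, $B_2[\cdot,\tilde{\nu}(t_m)] \to (\cdot)\cdot\nu(t_0)$, the limit $w^*\in H^2(\Omega(t_0))$ satisfies
\begin{equation*}
-\Delta_{\tilde{x}} w^*=0 \text{ in }\Omega(t_0),\qquad \rot_{\tilde{x}} w^*\times\nu(t_0)=0,\qquad w^*\cdot\nu(t_0)=0 \text{ on }\partial\Omega(t_0),
\end{equation*}
together with $\Div_{\tilde{x}} w^* = 0$ (via Proposition \ref{prop;IW div} applied to each $\bar{w}_m$).

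The main obstacle—and the key place where the paper's observation about flux invariance enters—is to show $w^* \in Z^2_\sigma(\Omega(t_0))$, i.e. $w^* \perp X_{\mathrm{har}}(\Omega(t_0))$. By the Foia\c{s}--Temam characterization, $X_{\mathrm{har}}(\Omega)$-orthogonality of a divergence-free, tangent vector field is equivalent to vanishing of its fluxes across the cross sections $\Sigma_\ell$. Now $w_m \in Z^2_\sigma(\Omega(t_m))$ gives $\int_{\Sigma_\ell(t_m)} w_m\cdot\nu_\ell(t_m)\,dS = 0$ for each $\ell$; by the invariance of flux under the coordinate transformation emphasised in the introduction, and since Assumption (ii) arranges $\phi(\Sigma(t_m),t_m)=\phi(\Sigma(t_0),t_0)$ (so $\varphi(\Sigma_\ell(t_m),t_m)=\Sigma_\ell(t_0)$), one obtains $\int_{\Sigma_\ell(t_0)} \bar{w}_m\cdot\nu_\ell(t_0)\,dS=0$ for every $m$, which persists in the limit. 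Thus $w^* \in Z^2_\sigma(\Omega(t_0)) \cap H^2(\Omega(t_0))$ solves the Helmholtz-Weyl problem for $b=0$, so by the uniqueness part of Proposition \ref{prop;KY}(i) we conclude $w^* = 0$. This yields $\|\bar{w}_m\|_{L^2} \to 0$ and hence $1 \leq o(1)$, a contradiction, completing the proof of \eqref{eq;bound COmega}.
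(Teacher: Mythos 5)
Your proposal is correct and follows the paper's overall contradiction-plus-compactness framework, but it takes a genuinely different and cleaner route at the crucial step of showing $w^* \perp X_{\mathrm{har}}\bigl(\Omega(t_0)\bigr)$. The paper, after obtaining $w_0$ as a weak $H^2$ / strong $H^1$ limit of $\tilde{w}(t_m)$, introduces a normalized basis $\zeta_1(t_m),\dots,\zeta_L(t_m)$ of $X_{\mathrm{har}}\bigl(\Omega(t_m)\bigr)$, transports it to $\Omega(t_0)$, passes to a limit $\zeta_{\ell,0}$, verifies via the lower bound from \cite[Theorem 2.4]{Kozono Yanagisawa IUMJ} that the limit is nontrivial, then checks mutual orthogonality to conclude $\zeta_{1,0},\dots,\zeta_{L,0}$ is a basis, and finally shows $(w_0,\zeta_{\ell,0})=0$ by transporting the orthogonality relations on $\Omega(t_m)$. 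You avoid all of this by invoking the flux characterization of $X_{\mathrm{har}}^\perp$ from Foia\c{s}--Temam and the invariance of the cross-sectional fluxes under the pushforward $\varphi(\cdot,t_m)_*$ --- precisely the content of the paper's Lemma \ref{lem;Xhar perp}, which appears in the later Section 4.3.5 but is logically independent of Theorem \ref{thm;const KY dec}, so there is no circularity. (Indeed the paper's own Remark at the end of Section 4.3.5 acknowledges that the flux argument could be used earlier; you actually carry this out.) You also bypass the paper's separate analysis of the harmonic part ($\tilde{h}(t_m)\to 0$) by passing to the limit in the transformed elliptic system \eqref{eq;L(t)w(t)} rather than in the decomposition identity $\Rot(t_m)\,\tilde{w}(t_m) = \tilde{b}(t_m)-\tilde{h}(t_m)$; this is a further simplification, at the cost of having to justify the limit of the boundary conditions in $H^{1/2}$ and $H^{3/2}$ (routine via boundedness of traces from weak $H^2$ convergence and the uniform $C^2$ control of the coefficient tensors).

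Two small points deserve attention. First, when you write that the flux identities $\int_{\Sigma_\ell(t_0)}\bar{w}_m\cdot\nu_\ell(t_0)\,dS=0$ ``persist in the limit,'' you should note that for $\bar{w}_m\in X^2_\sigma\bigl(\Omega(t_0)\bigr)$ the flux equals the $L^2$ pairing $(\bar{w}_m,\nabla p_\ell(t_0))$ with the Foia\c{s}--Temam potential, which is what makes it stable under $L^2$ convergence; and one should also record that $\bar{w}_m$ and $w^*$ are in $X^2_\sigma\bigl(\Omega(t_0)\bigr)$ so that this identification applies on both sides. Second, the closing step ``solves the Helmholtz--Weyl problem for $b=0$, so by uniqueness $w^*=0$'' glosses over establishing $\rot_{\tilde{x}} w^*=0$: from $-\Delta w^*=0$ and $\Div w^*=0$ one gets $\rot\rot w^*=0$, and then
\begin{equation*}
0 = (\rot\rot w^*,\,w^*) = \|\rot w^*\|_{L^2(\Omega(t_0))}^2 - \int_{\partial\Omega(t_0)}\bigl(\rot w^*\times\nu(t_0)\bigr)\cdot w^*\,dS,
\end{equation*}
whose boundary term vanishes by the limiting boundary condition, so $\rot w^*=0$ and hence $w^*\in X_{\mathrm{har}}\bigl(\Omega(t_0)\bigr)\cap X_{\mathrm{har}}\bigl(\Omega(t_0)\bigr)^\perp=\{0\}$; the paper reaches $\rot w_0=0$ directly from $\Rot(t_m)\tilde{w}(t_m)\to 0$ in $H^1$ and $\Rot(t_m)\tilde{w}(t_m)\to\rot w_0$ in $L^2$. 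With these two points spelled out, your proof is complete and somewhat more economical than the one in the paper.
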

\begin{proof}
    We prove this theorem by a contradiction argument.
    Let us assume $\sup\limits_{0\leq t \leq T} C\bigl(\Omega(t)\bigr)=\infty$.
    Hence, for each $m \in \N$ there exits $t_m \in [0,T]$
    such that $C\bigl(\Omega(t_m)\bigr)>m$. 
    By  the Bolzano-Weierstrass theorem, taking a subsequence of $\{t_m\}_{m\in\N}$ if necessary,
    we may also assume $t_m \to t_0$ as $m\to\infty$ with some $t_0 \in [0,T$].
    Moreover, for each $t_m$ we can choose $b(t_m) \in H_{\Div}^1\bigl(\Omega(t_m)\bigr)$
    such that
    \begin{gather}\label{eq;bm}
        \|b(t_m) \|_{H^1(\Omega(t_m))} =\frac{1}{C\bigl(\Omega(t_m)\bigr)} <\frac{1}{m},
        \\
        \label{eq;CwC}
        \frac{1}{2} < 
        \bigl\| f_{\Omega(t_m)} \bigl[b(t_m)\bigr] \bigr\|_{H^2(\Omega(t_m))} \leq 1
    \end{gather}
    for all $m\in \N$.

    For $t_0\in [0,T]$, introducing the transformation $u\text{ on }\Omega(t)
    \longmapsto \tilde{u} \text{ on }\Omega(t_0)$ as in \eqref{eq;identify tilde}
    with the diffeomorphism $\varphi(\cdot,t)$ defined by \eqref{eq;diffeo t0}, 
    we see that by Proposition \ref{prop;funcsp} and \eqref{eq;bm}
    \begin{equation*}
        \|\tilde{b}(t_m) \|_{H^1(\Omega(t_0)) }
        \leq
        C \| b(t_m) \|_{H^1(\Omega(t_m))} \to 0 \quad \text{as } t_m \to t_0.
    \end{equation*}
    Moreover, as in \eqref{eq;represent h}, the harmonic vector filed $h(t_m)$ on $\Omega(t_m)$ as 
    in the Helmholtz-Weyl decomposition of $b(t_m)$ is expressed as
    \begin{equation*}
        h(t_m)=\sum_{k=1}^K \bigl( b(t_m), \eta_k(t_m)\bigr) \eta_k(t_m)
        \quad \text{on } \Omega(t_m),
    \end{equation*}
    where $\eta_1(t_m),\dots,\eta_K(t_m)$ are orthonormal basis of $V_{\mathrm{har}}\bigl(\Omega(t_m)\bigr)$
    defined by \eqref{eq;cons Vhar}.
    So, by Lemma \ref{lem;alpha} we see that
    \begin{equation*}
        \begin{split}
        \| \tilde{h}(t_m) \|_{H^1(\Omega(t_0))}
        &\leq 
        C \| h (t_m) \|_{H^1(\Omega(t_m))}
        \leq
        C\sum_{k=1}
         \bigl|\bigl(b(t_m),\eta_k(t_m)\bigr)\bigr|
        \| \eta_k(t_m) \|_{H^1(\Omega(t_m))}
        \\
       & \leq C
        \|b(t_m)\|_{L^2(\Omega(t_m))} 
        \sum_{k} \sup_{0\leq t \leq T} \| \widetilde{\eta}_k(t) \|_{H^1(\widetilde{\Omega})}
        \\
        & \to 0 \quad \text{as } t_m \to t_0.
        \end{split}
    \end{equation*}
    Then, let $w(t_m)=f_{\Omega(t_m)}[b(t_m)] \in H^2\bigl(\Omega(t_m)\bigr) \cap Z_\sigma^2\bigl(\Omega(t_m)\bigr)$.
    Since $\tilde{h}(t_m) + \Rot(t_m)\,\tilde{w}(t_m) = \tilde{b}(t_m)$ on $\Omega(t_0)$ for $m \in \N$,
     we have
    \begin{equation}\label{eq;Rot tw t0}
        \| \Rot(t_m)\,\tilde{w}(t_m) \|_{H^{1}(\Omega(t_0))} 
        =
        \| \tilde{b}(t_m) - \tilde{h}(t_m) \|_{H^1(\Omega(t_0))}
        \to 0
        \quad \text{as } t_m \to t_0.
    \end{equation}

    On the other hand, from \eqref{eq;CwC} and Proposition \ref{prop;funcsp}, there exist constant $C_1, C_2>0$ such that
    \begin{equation}\label{eq;CtwC}
        C_1 < \| \tilde{w}(t_m) \|_{H^2(\Omega(t_0))} < C_2 \quad \text{for }m\in\N.
    \end{equation}
    Since $H^2\bigl(\Omega(t_0)\bigr)$ is compactly embedded in $H^1(\Omega(t_0))$, we may assume, taking a subsequence if necessary,
    that there exists $w_0 \in H^2\bigl(\Omega(t_0)\bigr)$ such that 
    \begin{equation}\label{eq;tw conv w0}
        \begin{cases}
            \tilde{w}(t_m) \rightharpoonup w_0  &\text{weakly in } H^2\bigl(\Omega(t_0)\bigr) \text{ as } t_m \to t_0,
            \\
            \tilde{w}(t_m) \to w_0 &\text{strongly in } H^1\bigl(\Omega(t_0)\bigr) \text{ as } t_m \to t_0. 
        \end{cases}
    \end{equation}
    To begin with, we shall show $w_0 \in X^2_\sigma\bigl(\Omega(t_0)\bigr) \cap H^2\bigl(\Omega(t_0)\bigr)$.
    Firstly by Proposition \ref{prop;IW div}, we have $0=\Div\, w(t_m)=\Div_{\tilde{x}} \,\tilde{w}(t_m)$.
    Hence, $\Div_{\tilde{x}} w_0=0$.
    To confirm the boundary condition $w_0\cdot \nu(t_0)=0$, where $\nu(t_0)$ is the outward unit normal vector on $\pt\Omega(t_0)$,
    taking an arbitrary test function $\tilde{\psi} \in C^\infty(\overline{\Omega(t_0)})$, we see that
    \begin{equation*}
        \begin{split}
            \bigl\langle w_0 \cdot \nu(t_0),\tilde{\psi}\bigr\rangle_{\pt\Omega(t_0)}
            &=
            (w_0,\nabla_{\tilde{x}}\tilde{\psi}) + (\Div_{\tilde{x}}\,w_0,\tilde{\psi})
            \\
            &=\bigl(w_0 - \tilde{w}(t_m),\nabla_{\tilde{x}}\tilde{\psi}\bigr)
            +
            \bigl(\tilde{w}(t_m), \nabla_{\tilde{x}}\tilde{\psi}\bigr).
        \end{split}
    \end{equation*}
    Here, we have
    \begin{equation*}
        \bigl|\bigl(w_0 - \tilde{w}(t_m),\nabla_{\tilde{x}}\tilde{\psi}\bigr)
        \bigr|
        \leq \| w_0 - \tilde{w}(t_m)\|_{L^2(\Omega(t_0))}\|\nabla_{\tilde{x}} \tilde{\psi}\|_{L^2(\Omega(t_0))} \to 0 
        \quad \text{ as } t_m \to t_0,
    \end{equation*}
    and noting $\displaystyle
    \psi(x)=\tilde{\psi}(\vphi(x,t_m))$ for $x \in \Omega(t_m)$, since $w(t_m) \in X_\sigma^2\bigl(\Omega(t_m)\bigr)
    \subset L^2_\sigma\bigl(\Omega(t_m)\bigr)$ we have by changing variables $x=\varphi^{-1} (\tilde{x},t_m)$,
    \begin{equation}\label{eq;twntpsi}
        \begin{split}
            0=\int_{\Omega(t_m)} w(x,t_m)\cdot \nabla \psi(x)\,dx
            =
            \int_{\Omega(t_0)} \tilde{w}(\tilde{x},t_m)
             \cdot \nabla_{\tilde{x}} \tilde{\psi} (\tilde{x})\,d\tilde{x}\, J(t_m)
             \quad\text{for } m\in \N.
        \end{split}
    \end{equation}
    Therefore, we obtain that $w_0\cdot \nu(t_0)=0$ in the weak sense, 
    hence $w_0 \in X_\sigma^2\bigl(\Omega(t_0)\bigr)$.

    Next, we observe that $\Rot(t_m) \tilde{w}(t_m) \to \rot_{\tilde{x}}\,w_0$ in $L^2\bigl(\Omega(t_0)\bigr)$ as
    $t_m \to t_0$.
    Recalling \eqref{eq;rot Rot}, we see that for $i=1,2,3$,
    \begin{equation}\label{eq;Rottm-rot}
        \begin{split}
            [\Rot(t_m)\, \tilde{w}(t_m) - \rot_{\tilde{x}}\,w_0]^i
            =\,&
            \sum_{j,k,\ell} R^{i,1}_{j,k,\ell}(t_m) \,\tilde{w}(t_m) 
            +
            \sum_{j,k,\ell} 
            \bigl(
            R^{i,2}_{j,k,\ell}(t_m) - R^{i,2}_{j,k,\ell}(t_0)
            \bigr)
            \frac{\pt\tilde{w}^k}{\pt \tilde{x}^\ell}(t_m)
            \\
            &+ \bigl[
                \rot_{\tilde{x}}\,\bigl(\tilde{w}(t_m) - w_0\bigr)
            \bigr]^i.
        \end{split}
    \end{equation}
    By \eqref{eq;R^1}, \eqref{eq;R^2}, \eqref{eq;CtwC} and \eqref{eq;tw conv w0}, 
    we have the desired convergence and combining \eqref{eq;Rot tw t0} we have
    \begin{equation*}
        \rot_{\tilde{x}}\, w_0= \lim_{t_m\to t_0} \Rot(t_m) \, \tilde{w}(t_m) =0.
    \end{equation*}
    So, by \cite[(1) of Theorem 2.1]{Kozono Yanagisawa IUMJ} we obtain $w_0 \in X_{\mathrm{har}}\bigl(\Omega(t_0)\bigr)$.

    However, we claim that $w_0 \perp X_{\textrm{har}}\bigl(\Omega(t_0)\bigr)$. 
    To confirm this claim, we introduce the orthogonal basis of $X_{\mathrm{har}}\bigl(\Omega(t)\bigr)$, $\zeta_1(t),\dots,\zeta_L(t)$ with
    $\| \zeta_\ell (t) \|_{H^2(\Omega(t))}=1$, $\ell=1,\dots, L$, 
    in $L^2$-sense,
    for each $t \in \R$, since $\dim X_{\mathrm{har}}\bigl(\Omega(t)\bigr)=L$ 
    for all $t\in\R$ 
    under our assumption \eqref{eq;KL}, 
    for more details, see \cite{Kozono Yanagisawa IUMJ}.
    Moreover, we introduce the transformed vector field 
    $\tilde{\zeta}_\ell(t)$ on $\Omega(t_0)$.
    Then, since 
    $C\leq \| \tilde{\zeta}_\ell(t_m)\|_{H^2(\Omega(t_0))} \leq C^\prime$ 
    for $\ell=1,\dots,L$, with some constant $C,C^\prime>0$
    by Proposition \ref{prop;funcsp}, 
    for each $\ell=1,\dots,L$, there exists a nontrivial $\zeta_{\ell,0} \in H^2\bigl(\Omega(t_0)\bigr)$
    such that 
    \begin{equation}\label{eq;tzeta conv zeta0}
        \begin{cases}
            \tilde{\zeta}_\ell(t_m) \rightharpoonup \zeta_{\ell, 0}  &\text{weakly in } H^2(\Omega(t_0)) \text{ as } t_m \to t_0,
            \\
            \tilde{\zeta}_\ell(t_m) \to \zeta_{\ell, 0} &\text{strongly in } H^1(\Omega(t_0)) \text{ as } t_m \to t_0. 
        \end{cases}
    \end{equation}
    Here, $\zeta_{\ell,0}\neq 0$ follows from the fact that
    $\Div_{\tilde{x}}\,\tilde{\zeta}_{\ell}(t_m)=0$, 
    $\rot_{\tilde{x}}\, \tilde{\zeta}_{\ell}(t_m)
    =\bigl(\rot_{\tilde{x}} -\Rot(t_m)\bigr)\tilde{\zeta}_{\ell}(t_m)$ in $\Omega(t_0)$
    and $\tilde{\zeta}_{\ell}(t_m)\cdot\nu(t_0)=0$ in $\pt \Omega(t_0)$
    and the inequality in \cite[(2.8) in Theorem 2.4]{Kozono Yanagisawa IUMJ}:
    \begin{equation*}
        \begin{split}
       0< C &\leq \| \tilde{\zeta}_{\ell}(t_m)\|_{H^2(\Omega(t_0))}\\
        &\leq C_{\Omega(t_0)}\Bigl(
            \bigl\|\bigl(\rot_{\tilde{x}}-\Rot(t_m)\bigr)\tilde{\zeta}_\ell(t_m)\bigr\|_{H^1(\Omega(t_0))}
            + \sum_{\ell^\prime=1}^L 
            \bigl|\bigl(\tilde{\zeta}_{\ell}(t_m), \zeta_{\ell^\prime}(t_0)\bigr)\bigr|
        \Bigr).
        \end{split}
    \end{equation*}
So, letting $t_m\to t_0$, we observe that 
$\sum_{\ell^\prime=1}^L \bigl|\bigl(\zeta_{\ell,0},\zeta_{\ell^\prime}(t_0)\bigr)\bigr|\neq 0$.

    Noting that $\Rot(t_m)\,\tilde{\zeta}_\ell(t_m)=0$ from $\rot\, \zeta_\ell(t_m)=0$, by the same argument as above,
    we have for $\ell=1,\dots,L$,
    \begin{equation*}
        \begin{cases}
            \Div_{\tilde{x}}\, \zeta_{\ell,0}=0 &\text{in }\Omega(t_0),
            \\
            \rot_{\tilde{x}}\, \zeta_{\ell,0}=0 & \text{in } \Omega(t_0),
            \\
            \zeta_{\ell,0} \cdot \nu(t_0)=0 & \text{on } \pt \Omega(t_0).
        \end{cases}
    \end{equation*}
    Therefore, we obtain $\zeta_{\ell,0} \in X_{\textrm{har}}\bigl(\Omega(t_0)\bigr)$, $\ell=1,\dots,L$.
    Furthermore, we shall show $\zeta_{\ell,0} \perp \zeta_{\ell^\prime,0}$ for $\ell \neq \ell^\prime$.
    Indeed, 
    \begin{equation}\label{eq;zeta perp}
        (\zeta_{\ell,0},\zeta_{\ell^\prime,0})
        =
        \bigl(\zeta_{\ell,0} -\tilde{\zeta}_\ell(t_m), \zeta_{\ell^\prime,0}\bigr)
        +
        \bigl(\tilde{\zeta}_{\ell}(t_m), \zeta_{\ell^\prime,0} -\tilde{\zeta}_{\ell^\prime}(t_m)\bigr)
        +
        \bigl(\tilde{\zeta}_\ell(t_m), \tilde{\zeta}_{\ell^\prime}(t_m)\bigr).
    \end{equation}
    Here, to estimate the third term of the right hand side of \eqref{eq;zeta perp}, we note that
    \begin{equation}
        \begin{split}
            0= \bigl( \zeta_\ell (t_m),\zeta_{\ell^\prime}(t_m)\bigr)
            &=\int_{\Omega(t_m)} \sum_{i} \zeta_{\ell}^i(x,t_m)\zeta_{\ell^\prime}^i(x,t_m)\,dx
            \\
            &=\int_{\Omega(t_0)} \sum_{j,k} g_{jk}(\tilde{x},t_m)
            \tilde{\zeta}_\ell^j(\tilde{x},t_m) 
            \tilde{\zeta}_{\ell^\prime}^k (\tilde{x},t_m) J(t_m)\,d\tilde{x}.
        \end{split}
    \end{equation}
    Therefore, we have
    \begin{equation}\label{eq;zetalzetalp}
        \int_{\Omega(t_0)} \sum_{j,k} g_{jk}(\tilde{x},t_m)
            \tilde{\zeta}_\ell^j(\tilde{x},t_m) 
            \tilde{\zeta}_{\ell^\prime}^k (\tilde{x},t_m) \,d\tilde{x}=0.
    \end{equation}
    Hence, using \eqref{eq;zetalzetalp}, we see that
    \begin{equation}
        \begin{split}
        \bigl(\tilde{\zeta}_{\ell}(t_m),\tilde{\zeta}_{\ell^\prime}(t_m)\bigr)
        &=
        \int_{\Omega(t_0)} \sum_{j} \tilde{\zeta}_\ell^j(\tilde{x},t_m) 
        \tilde{\zeta}_{\ell^\prime}^j (\tilde{x},t_m) \,d\tilde{x}
        \\
        &=
        \int_{\Omega(t_0)} \sum_{j,k} 
        \bigl(\delta_{j,k} - g_{jk}(\tilde{x},t_m) \bigr)
        \tilde{\zeta}_\ell^j(\tilde{x},t_m) 
        \tilde{\zeta}_{\ell^\prime}^k (\tilde{x},t_m) \,d\tilde{x}
        \\
        \end{split}
    \end{equation}
    Therefore, by Proposition \ref{prop;gij}, we have
    \begin{equation}\label{eq;tzetaltzetalp}
        \begin{split}
        \bigl| \bigl(\tilde{\zeta}_{\ell}(t_m),\tilde{\zeta}_{\ell^\prime}(t_m)\bigr)\bigr|
        &\leq
        C \sum_{j,k}\sup_{\Omega(t_0)}
        \bigl|\delta_{j,k} - g_{jk}(\tilde{x},t_m)\bigr|
        \| \tilde{\zeta}_\ell(t_m) \|_{L^2(\Omega(t_0))}
        \| \tilde{\zeta}_{\ell^\prime}(t_m)\|_{L^2(\Omega(t_0))}
        \\
        & \to 0 \quad \text{as } t_m \to t_0.
        \end{split}
    \end{equation}
    Thus, by \eqref{eq;zeta perp}, we obtain $\zeta_{\ell,0} \perp \zeta_{\ell^\prime,0}$.
    Hence, $\zeta_{1,0},\dots,\zeta_{L,0}$ is a orthogonal basis of $X_{\textrm{har}}\bigl(\Omega(t_0)\bigr)$.
    
    Replacing $\zeta_{\ell,0}$ and $\tilde{\zeta}_{\ell}(t_m)$ from \eqref{eq;zeta perp}
    to \eqref{eq;tzetaltzetalp} by
    $w_0$ and $\tilde{w}(t_m) \in Z_\sigma^2\bigl(\Omega(t_m)\bigr)$, respectively, we also obtain that
    \begin{equation}\label{eq;w0perpXhar}
        (w_0,\zeta_{\ell,0})=0, \quad \ell=1,\dots,L.
    \end{equation}
   As a conclusion, $w_0 \in X_{\mathrm{har}}\bigl(\Omega(t_0)\bigr)^{\perp} \cap X_{\mathrm{har}}\bigl(\Omega(t_0)\bigr)$,
   hence, $w_0=0$. 
   Now, by \eqref{eq;tw conv w0} we emphasize that 
    \begin{equation*}
        \| \tilde{w}(t_m) \|_{H^1(\Omega(t_0))} =\| \tilde{w}(t_m) - w_0 \|_{H^1(\Omega(t_0))}
        \to 0 \quad \text{as } t_m \to t_0.
    \end{equation*}

    We recall that the vector potential $\tilde{w}(t_m)$ satisfies \eqref{eq;L(t)w(t)} for $t_m$. 
    So by Lemma \ref{lem;apriori w} we see that
    \begin{equation*}
        \| \tilde{w}(t_m) \|_{H^2(\Omega(t_0))}
        \leq
        C\left(
            \|\Rot (t_m)\, \tilde{b}(t_m)\|_{L^2(\Omega(t_0))}
            +
            \bigl\| B_1\bigl[\tilde{b}(t_m),\tilde{\nu}(t_m)\bigr] \bigr\|_{H^\frac{1}{2}(\Omega(t_0))}
            +
            \|\tilde{w}(t_m)\|_{L^2(\Omega(t_0))}
        \right)
    \end{equation*}
    for sufficiently large $m\in\N$.
    By the similar manner in the previous Section \ref{sec;Apriori w}, 
    we recall that
    \begin{gather*}
        B_1[\tilde{b}(t_m),\tilde{\nu}(t_m)]^i=
        \sum_{j,k,\ell} S^{i}_{j,k,\ell}(t_m) \bigl(
            \tilde{b}^k(t_m) \tilde{\nu}^\ell(t_m) - \tilde{b}^\ell(t_m) \tilde{\nu}^k(t_m)
        \bigr),
        \\
        \sup_{t\in[0,T]} \| S^i_{j,k,\ell}(t)\|_{C^2(\overline{\Omega(t_0)})} < M,
        \quad i,j,k,\ell=1,2,3,
        \\
       \sum_{n=0}^2  \sup_{\pt \Omega(t_0)} |\nabla_{\tilde{x}}^n \tilde{\nu}^\ell(\tilde{x},t_m) |<M,
       \quad m\in\N,\;\ell=1,2,3.
    \end{gather*}
    Like a proof of Proposition \ref{prop;trace}, we can directly derive that for $i,j,k,\ell=1,2,3$,
    \begin{equation*}
        \begin{split}
        \| S^i_{j,k,\ell} (t_m) \tilde{b}^k(t_m) \tilde{\nu}^\ell(t_m)
        \|_{H^\frac{1}{2}(\pt\Omega(t_0))}
        &\leq
        C \sum_{n=0}^2 \sup_{\pt\Omega(t_0)}|\nabla_{\tilde{x}}^n \tilde{\nu}^\ell(t_m)|
        \| S^{i}_{j,k,\ell}(t_m) \tilde{b}^k(t_m) \|_{H^\frac{1}{2}(\pt \Omega(t_0))}
        \\
        &\leq
        C \sum_{n=0}^2 \sup_{\pt\Omega(t_0)}|\nabla_{\tilde{x}}^n \tilde{\nu}^\ell(t_m)|
        \| S^{i}_{j,k,\ell}(t_m) \tilde{b}^k(t_m) \|_{H^1( \Omega(t_0))}
        \\
        &\leq
        CM^2 
        \| \tilde{b}(t_m) \|_{H^1( \Omega(t_0))} 
        \\
        &\to 0 \quad\text{as } t_m \to t_0.
        \end{split}
    \end{equation*}
    Therefore, $\|B_1[\tilde{b}(t_m),\tilde{\nu}(t_m)]\|_{H^{\frac{1}{2}}(\pt\Omega(t_0))} \to 0$ 
    as $t_m \to t_0$. Since it is easy to see that 
    \begin{equation*} \| \Rot(t_m)\,\tilde{b}(t_m) \|_{L^2(\Omega(t_0))} \to 0
        \quad \text{as } t_m \to t_0.
    \end{equation*}
    we obtain that
    \begin{equation*}
        \| \tilde{w}(t_m) \|_{H^2(\Omega(t_m))} \to 0 \quad \text{as } t_m \to t_0.
    \end{equation*}
    However, this contradicts \eqref{eq;CtwC}. The proof is completed.
\end{proof}
%
\subsubsection{Time continuity of the vector potentials}
By the virtue of the a priori estimate \eqref{eq;apriori w} and 
the uniform estimate of the constant of the Helmholtz-Weyl decomposition \eqref{eq;bound COmega},
we establish the time continuity of the vector potential as follows.


%
\begin{theorem}\label{thm;conti w}
    Let $T>0$ and let $b(t) \in H^1\bigl(\Omega(t)\bigr)$ with $\Div\, b(t)=0$ in $\Omega(t)$ for $t\in \R$
    and let $w(t) \in Z_\sigma^{2}\bigl(\Omega(t)\bigr)\cap H^2\bigl(\Omega(t)\bigr)$
    and $h(t) \in V_{\mathrm{har}}\bigl(\Omega(t)\bigr)$ satisfy
    $h(t) + \rot\,w(t)=b(t)$ in $\Omega(t)$.
    if $\widetilde{b} \in C\bigl([0,T];H^1(\widetilde{\Omega})\bigr)$ then
    \begin{equation}
        \widetilde{w} \in C\bigl([0,T]; H^2(\widetilde{\Omega})\bigr).
    \end{equation}
\end{theorem}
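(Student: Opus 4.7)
The plan is to prove continuity at each fixed $t_0 \in [0,T]$ by working on the reference domain $\Omega(t_0)$ via the diffeomorphism $\varphi(\cdot,t)$ introduced in \eqref{eq;diffeo t0}. For the transformed vector potential $\tilde{w}(t)$ on $\Omega(t_0)$, I will show $\tilde{w}(t) \to w(t_0)$ in $H^2(\Omega(t_0))$ as $t \to t_0$, which, via \eqref{eq;tildewidetilde} and Proposition \ref{prop;funcsp}, implies $\widetilde{w}(t) \to \widetilde{w}(t_0)$ in $H^2(\widetilde{\Omega})$.

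First I would write down the equation satisfied by $\tilde{w}(t) - w(t_0)$ on $\Omega(t_0)$, using \eqref{eq;L(t)w(t)} at time $t$ and the identities $L(t_0) = \Delta_{\tilde{x}}$, $\Rot(t_0) = \rot_{\tilde{x}}$, $\tilde{\nu}(t_0) = \nu(t_0)$ at $t_0$. This gives
\begin{equation*}
    -L(t)\bigl(\tilde{w}(t) - w(t_0)\bigr) = \Rot(t)\tilde{b}(t) - \rot_{\tilde{x}} b(t_0) + \bigl(L(t) - \Delta_{\tilde{x}}\bigr) w(t_0),
\end{equation*}
with boundary terms $B_1[\Rot(t)(\tilde{w}(t) - w(t_0)), \tilde{\nu}(t)]$ and $B_2[\tilde{w}(t) - w(t_0), \tilde{\nu}(t)]$ involving the differences of $\Rot$ and $\tilde{\nu}$ between $t$ and $t_0$ applied to $w(t_0)$. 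Applying the uniform a priori estimate of Lemma \ref{lem;apriori w} yields
\begin{equation*}
    \|\tilde{w}(t) - w(t_0)\|_{H^2(\Omega(t_0))} \leq C\bigl(\mathcal{E}(t) + \|\tilde{w}(t) - w(t_0)\|_{L^2(\Omega(t_0))}\bigr),
\end{equation*}
where $\mathcal{E}(t) \to 0$ as $t \to t_0$: the convergence of the interior term follows from $\widetilde{b} \in C([0,T]; H^1)$ combined with \eqref{eq;L(t)-Delta}, \eqref{eq;R^1}, \eqref{eq;R^2}, and the boundary terms decay by Proposition \ref{prop;trace} using \eqref{eq;nu0}--\eqref{eq;nu2}.

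The remaining step, and the main obstacle, is to show $\|\tilde{w}(t) - w(t_0)\|_{L^2(\Omega(t_0))} \to 0$. For this I would use the uniform boundedness of $C(\Omega(t))$ from Theorem \ref{thm;const KY dec}, together with Proposition \ref{prop;funcsp}, to deduce that $\{\tilde{w}(t)\}_{t \in [0,T]}$ is bounded in $H^2(\Omega(t_0))$ (at least near $t_0$). Then, given any sequence $t_m \to t_0$, the compact embedding $H^2(\Omega(t_0)) \hookrightarrow H^1(\Omega(t_0))$ produces a subsequence converging weakly in $H^2$ and strongly in $H^1$ to some $w_* \in H^2(\Omega(t_0))$. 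Following the argument in the proof of Theorem \ref{thm;const KY dec}, one verifies that $w_* \in X^2_\sigma(\Omega(t_0)) \cap H^2(\Omega(t_0))$ (using Proposition \ref{prop;IW div} and a weak-form test against $\tilde{\psi}$ as in \eqref{eq;twntpsi}), that $w_* \perp X_{\text{har}}(\Omega(t_0))$ (using the convergent basis $\tilde{\zeta}_\ell(t_m) \rightharpoonup \zeta_{\ell,0}$ as in \eqref{eq;tzeta conv zeta0}--\eqref{eq;w0perpXhar}), and that $h(t_0) + \rot_{\tilde{x}} w_* = b(t_0)$ by passing to the limit in $\tilde{h}(t_m) + \Rot(t_m)\tilde{w}(t_m) = \tilde{b}(t_m)$, invoking Theorem \ref{thm;contideffi har} for the harmonic part.

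By the uniqueness of the Helmholtz-Weyl decomposition within $V_{\text{har}}(\Omega(t_0)) \oplus [Z^2_\sigma \cap H^2](\Omega(t_0))$ stated in Proposition \ref{prop;KY}, we conclude $w_* = w(t_0)$. Since every subsequence of $\{\tilde{w}(t_m)\}$ admits a sub-subsequence converging to $w(t_0)$ strongly in $L^2$, the whole family satisfies $\tilde{w}(t) \to w(t_0)$ in $L^2(\Omega(t_0))$. Feeding this back into the a priori estimate from Step 1 gives $\tilde{w}(t) \to w(t_0)$ in $H^2(\Omega(t_0))$, completing the proof. The delicate point is the orthogonality $w_* \perp X_{\text{har}}(\Omega(t_0))$: the basis $\zeta_\ell(t_m)$ lives on moving domains, so one must transplant it to $\Omega(t_0)$, extract a limit, and verify that the limits again form an orthogonal basis of $X_{\text{har}}(\Omega(t_0))$ using Proposition \ref{prop;gij} to replace $g_{jk}(\tilde{x},t_m)$ with $\delta_{jk}$ in the limit, exactly as carried out in \eqref{eq;zetalzetalp}--\eqref{eq;tzetaltzetalp}.
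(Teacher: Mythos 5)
Your proposal is correct and takes essentially the same approach as the paper's proof of Theorem~\ref{thm;conti w} via Lemma~\ref{lem;conti w local}: work on the reference domain $\Omega(t_0)$, control the lower-order term $\|\tilde{w}(t)-w(t_0)\|_{L^2(\Omega(t_0))}$ by combining the uniform bound from Theorem~\ref{thm;const KY dec} with the compact embedding $H^2\hookrightarrow H^1$ and the uniqueness of the Helmholtz--Weyl decomposition, and then feed this into the a priori estimate to upgrade to $H^2$ convergence. The only organizational difference is that you apply the uniform estimate of Lemma~\ref{lem;apriori w} (with the operators $L(t)$, $B_1$, $B_2$) to the difference $\tilde{w}(t)-w(t_0)$ at the outset, while the paper argues by contradiction, first establishes the strong $H^1$ limit, and then applies the fixed estimate \eqref{eq;apriori w} at $t_0$ with $\Delta_{\tilde{x}}$, $\rot_{\tilde{x}}$, $\nu(t_0)$; these are equivalent given the convergence results \eqref{eq;L(t)-Delta}, \eqref{eq;R^1}, \eqref{eq;R^2}, \eqref{eq;nu0}--\eqref{eq;nu2}, Proposition~\ref{prop;trace}.
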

For the analysis of the time continuity at $t_0\in[0,T]$, 
we consider the transformed vector $\tilde{w}(t)$ on $\Omega(t_0)$
defined by \eqref{eq;identify tilde} with the diffeomorphism 
$\varphi(\cdot,t):\overline{\Omega(t)}\to \overline{\Omega(t_0)}$ 
defined by \eqref{eq;diffeo t0}.
The following lemma plays an essential role to prove Theorem \ref{thm;conti w}.
\begin{lemma}\label{lem;conti w local}
    Under the assumption of Theorem \ref{thm;conti w}, for each $t_0\in [0,T]$,
    for every $\ep>0$ there exists $\delta>0$ such that
    for $t\in [0,T]$ if $|t-t_0|<\delta$ then
    \begin{equation*}
        \|\tilde{w}(t) -w(t_0) \|_{H^2(\Omega(t_0))} <\ep.
    \end{equation*}
\end{lemma}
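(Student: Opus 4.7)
The plan is to bound $\tilde{w}(t) - w(t_0)$ in $H^2(\Omega(t_0))$ via the local uniform a priori estimate (Lemma \ref{lem;apriori w}), by viewing this difference as the solution of an elliptic system on the fixed domain $\Omega(t_0)$. Subtracting \eqref{eq;Lap w} at $t_0$ from the transformed system \eqref{eq;L(t)w(t)}, I obtain
\begin{equation*}
\begin{cases}
-L(t)\bigl(\tilde{w}(t) - w(t_0)\bigr) = \Rot(t)\tilde{b}(t) - \rot_{\tilde{x}} b(t_0) + \bigl(L(t)-\Delta_{\tilde{x}}\bigr) w(t_0) & \text{in } \Omega(t_0), \\
B_1[\Rot(t)(\tilde{w}(t)-w(t_0)),\tilde{\nu}(t)] = (\text{boundary residuals in } B_1) & \text{on } \pt\Omega(t_0), \\
B_2[\tilde{w}(t)-w(t_0),\tilde{\nu}(t)] = -B_2[w(t_0),\tilde{\nu}(t)] + w(t_0)\cdot\nu(t_0) & \text{on } \pt\Omega(t_0).
\end{cases}
\end{equation*}
Applying Lemma \ref{lem;apriori w} with $|t-t_0|<\delta$ yields
\begin{equation*}
\|\tilde{w}(t)-w(t_0)\|_{H^2(\Omega(t_0))} \leq C\bigl(I_L(t)+I_{B_1}(t)+I_{B_2}(t)+\|\tilde{w}(t)-w(t_0)\|_{L^2(\Omega(t_0))}\bigr),
\end{equation*}
where $I_L, I_{B_1}, I_{B_2}$ collect the residual norms coming from $L(t)-\Delta_{\tilde{x}}$, $\Rot(t)-\rot_{\tilde{x}}$, $\tilde{\nu}(t)-\nu(t_0)$, and the differences $\tilde{b}(t)-b(t_0)$, $\tilde{h}(t)-h(t_0)$.

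The first step is to show $I_L(t)+I_{B_1}(t)+I_{B_2}(t)\to 0$ as $t\to t_0$, which proceeds exactly as in the proof of Lemma \ref{lem;apriori w}: the operator differences are handled by \eqref{eq;L(t)-Delta}, \eqref{eq;R^1}-\eqref{eq;R^2}, and Proposition \ref{prop;gij}; the boundary trace contributions are controlled via Proposition \ref{prop;trace} together with \eqref{eq;nu0}-\eqref{eq;nu2}; and the continuity of the data $\tilde{b}(t)$ in $H^1(\Omega(t_0))$ follows from the hypothesis $\widetilde{b}\in C([0,T];H^1(\widetilde{\Omega}))$ and Proposition \ref{prop;funcsp}, while the continuity of $\tilde{h}(t)$ in $H^1(\Omega(t_0))$ is inherited from Theorem \ref{thm;contideffi har} (via the identity \eqref{eq;tildewidetilde}).

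The second and decisive step is to show $\|\tilde{w}(t)-w(t_0)\|_{L^2(\Omega(t_0))}\to 0$, which is the main obstacle since it does not follow from the a priori estimate alone. Here Theorem \ref{thm;const KY dec} enters crucially: combined with Proposition \ref{prop;funcsp} and the continuity of $\widetilde{b}$, it gives $\sup_{|t-t_0|<1}\|\tilde{w}(t)\|_{H^2(\Omega(t_0))}<\infty$. By Rellich--Kondrachov, any sequence $t_m\to t_0$ admits a subsequence (not relabeled) with $\tilde{w}(t_m)\rightharpoonup w_\infty$ weakly in $H^2$ and strongly in $H^1$ for some $w_\infty\in H^2(\Omega(t_0))$. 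I then identify $w_\infty=w(t_0)$ by reproducing the limit arguments from the proof of Theorem \ref{thm;const KY dec}: the identities $\Div w(t_m)=0$ and $w(t_m)\cdot\nu(t_m)=0$ pass to the limit (using \eqref{eq;twntpsi}) to give $w_\infty\in X^2_\sigma(\Omega(t_0))$; passing to the limit in $(\tilde{w}(t_m),\tilde{\zeta}_\ell(t_m))=0$ with the $t$-dependent metric, along the lines of \eqref{eq;zeta perp}-\eqref{eq;tzetaltzetalp}, yields $w_\infty\perp X_{\mathrm{har}}(\Omega(t_0))$, so $w_\infty\in Z^2_\sigma(\Omega(t_0))$; and from $\Rot(t_m)\tilde{w}(t_m)=\tilde{b}(t_m)-\tilde{h}(t_m)$ together with \eqref{eq;Rottm-rot} and the strong $H^1$ convergence of $\tilde{w}(t_m)$, I obtain $\rot_{\tilde{x}} w_\infty = b(t_0)-h(t_0)=\rot w(t_0)$. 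Hence $w_\infty-w(t_0)\in X_{\mathrm{har}}(\Omega(t_0))\cap Z^2_\sigma(\Omega(t_0))=\{0\}$, so $w_\infty=w(t_0)$. Uniqueness of the subsequential limit implies $\tilde{w}(t)\to w(t_0)$ strongly in $L^2(\Omega(t_0))$, and combining with the a priori estimate gives $H^2$ convergence.

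The main obstacle is precisely the identification of $w_\infty$ in Step 2, particularly the orthogonality $w_\infty\perp X_{\mathrm{har}}(\Omega(t_0))$: this requires carefully passing to the limit in an inner product involving both the $t$-dependent metric $g_{ij}(\cdot,t)$ and the $t$-dependent basis $\tilde{\zeta}_\ell(t)$ of $X_{\mathrm{har}}(\Omega(t))$, and is unavailable at the purely qualitative PDE level without Theorem \ref{thm;const KY dec}.
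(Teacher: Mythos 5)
Your proposal is correct and follows essentially the same route as the paper: you apply an a priori elliptic estimate to $\tilde{w}(t)-w(t_0)$ on $\Omega(t_0)$, show the operator, boundary, and data residuals vanish as $t\to t_0$ via Propositions \ref{prop;gij}, \ref{prop;trace} and \eqref{eq;nu0}--\eqref{eq;nu2}, and resolve the lower-order $L^2$ term through Theorem \ref{thm;const KY dec}, Rellich--Kondrachov, and identification of the weak limit $w_\infty$ with $w(t_0)$ by uniqueness in the decomposition (via $\rot_{\tilde{x}} w_\infty=\rot_{\tilde{x}} w(t_0)$ and $w_\infty\in Z^2_\sigma(\Omega(t_0))$). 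The only differences are cosmetic: the paper argues by contradiction along a sequence $t_m\to t_0$ and applies the fixed-$t_0$ estimate \eqref{eq;apriori w} with $\Delta_{\tilde{x}}$, $\rot_{\tilde{x}}$, $\nu(t_0)$ rather than the $t$-dependent operators $L(t)$, $B_1$, $B_2$ from Lemma \ref{lem;apriori w}, but the residual decompositions end up being the same.
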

\begin{proof}
    We prove this lemma by contradiction argument.
    We assume that there exist $t_0 \in [0,T]$ and $\ep_0$ such that
    for every $m\in \N$ there exists $t_m\in[0,T]$ such that
    $|t_m-t_0|<\frac{1}{m}$ and $\| \tilde{w}(t_m) -w(t_0) \|_{H^2(\Omega(t_0))}\geq \ep_0$.

    Here, we note that $\widetilde{b} \in C\bigl([0,T];H^1(\widetilde{\Omega})\bigr)$ 
    implies $\tilde{b}(t_m) \to b(t_0)$ in $H^1\bigl(\Omega(t_0)\bigr)$ as $t_m\to t_0$.
    Indeed, by the relation $\widetilde{\tilde{b}}(t)=\widetilde{b}(t)$ in $\widetilde{\Omega}$
    as in \eqref{eq;tildewidetilde} and Proposition \ref{prop;funcsp}, we see that
    \begin{equation*}
        \begin{split}
            \| \tilde{b}(t_m) - b(t_0) \|_{H^1(\Omega(t_0))}
            &\leq
            C\| \widetilde{\tilde{b}}(t_m) - \widetilde{b}(t) \|_{H^1(\widetilde{\Omega})}
            =C \| \widetilde{b}(t_m) - \widetilde{b}(t_0) \|_{H^1(\widetilde{\Omega})}
            \\
            &\to 0 \quad \text{as } t_m \to t_0,
        \end{split}
    \end{equation*}
    where the constant $C>0$ is independent of $t\in [0,T]$.
Since $\widetilde{h} \in C\bigl([0,T];H^1(\widetilde{\Omega})\bigr)$ from Theorem \ref{thm;contideffi har}, 
by the same way we note that $\tilde{h}(t_m)\to h(t_0)$ in $H^1\bigl(\Omega(t_0)\bigr)$ as $t_m\to t_0$.

    Therefore,  letting $t_m \to t_0$, we see that
    \begin{equation}\label{eq;Rottw rot wt0}
        \begin{split}
        \Rot(t_m)\,\tilde{w}(t_m)&=\tilde{b}(t_m)-\tilde{h}(t_m)
            \\    
            & \to b(t_0) - h(t_0)
            = \rot_{\tilde{x}}\, w(t_0).
        \end{split}
    \end{equation}

    On the other hand, by Theorem \ref{thm;const KY dec} and Proposition \ref{prop;funcsp},
    we see that
    \begin{equation*}
        \| \tilde{w}(t_m)\|_{H^2(\Omega(t_0))}
        \leq
        C\| w(t_m) \|_{H^2(\Omega(t_m))}
        \leq 
        C \|b(t_m)\|_{H^1(\Omega(t_m))} 
        \leq
        C \sup_{0\leq t \leq T} \| \widetilde{b}(t) \|_{H^1(\widetilde{\Omega})}
    \end{equation*}
    for sufficiently large $m\in\N$, where the constant $C>0$ is independent of $m$.
    Therefore, taking a subsequence of $\{t_m\}$ if necessary, 
    we may take $w_\infty \in H^2\bigl(\Omega(t_0)\bigr)$
    such that
    \begin{equation}
        \begin{cases}
            \tilde{w}(t_m) \rightharpoonup w_\infty 
            &\text{weakly in } H^2(\Omega(t_0)) \quad\text{as } t_m \to t_0,
            \\
            \tilde{w}(t_m) \to w_\infty
            &\text{strongly in } H^1\bigl(\Omega(t_0)\bigr)
            \quad \text{as } t_m \to t_0.
        \end{cases}
    \end{equation}
    Then, by the same argument as \eqref{eq;Rottm-rot}, we see that
    \begin{equation}\label{eq;Rottw rot winf}
        \Rot (t_m)\, \tilde{w}(t_m) \to \rot_{\tilde{x}}\,w_\infty 
        \; \text{in } L^2\bigl(\Omega(t_0)\bigr)
        \quad\text{as } t_m \to t_0.
    \end{equation}
    Moreover, since $\Div_{\tilde{x}}\,\tilde{w}(t_m) =0$, 
    we have $\Div_{\tilde{x}}\,w_\infty=0$. 
    Since $\bigl(\tilde{w}(t_m),\nabla_{\tilde{x}} \tilde{\psi}\bigr)=0$ for all 
    test function $\tilde{\psi}\in C^\infty(\overline{\Omega(t_0)})$ 
    by the argument in \eqref{eq;twntpsi}, we also see that 
    \begin{equation*}
        \langle w_\infty \cdot \nu(t_0), \tilde{\psi} \rangle_{\pt \Omega(t_0)}
        = \bigl(w_\infty - \tilde{w}(t_m), \nabla_{\tilde{x}} \tilde{\psi} \bigr) \to 0 \quad \text{as } t_m \to t_0.
    \end{equation*}
    Hence $w_\infty \in X_\sigma^2\bigl(\Omega(t_0)\bigr)$. Furthermore, by the same argument to obtain \eqref{eq;w0perpXhar},
    we see that $w_\infty \perp X_{\mathrm{har}}\bigl(\Omega(t_0)\bigr)$. 
    Therefore, $w_\infty \in Z^2_{\sigma}\bigl(\Omega(t_0)\bigr)$.
    Hence, by \eqref{eq;Rottw rot wt0}, \eqref{eq;Rottw rot winf} and by the uniqueness of the decomposition, we note that 
    \begin{equation*}
        w(t_0) = w_\infty. 
    \end{equation*}
    Therefore, we obtain that 
    \begin{equation}\label{eq;wtmtowt0}
        \| \tilde{w}(t_m) - w(t_0) \|_{H^1(\Omega(t_0))} \to 0
        \quad\text{as }
    t_m \to t_0.
    \end{equation}

    Then, by \eqref{eq;apriori w}, we have that
    \begin{multline}\label{eq;apriori w at t0}
            \| \tilde{w}(t_m) -w(t_0) \|_{H^2(\Omega(t_0))}
            \leq 
            C_*
            \Bigl(
                \bigl\| \Delta_{\tilde{x}} \bigl( 
                    \tilde{w}(t_m) - w (t_0)
                \bigr)
                \bigr\|_{L^2(\Omega(t_0))}
                \\
                +
                \bigl\| \rot_{\tilde{x}}\,\bigl( \tilde{w}(t_m) - w(t_0)\bigr) \times \nu(t_0)
                \bigr\|_{H^{\frac{1}{2}}(\pt\Omega(t_0))}
                +
                \bigl\|
                    \bigl( \tilde{w}(t_m) - w(t_0)\bigr) \cdot \nu(t_0)
                \bigr\|_{H^{\frac{3}{2}}(\pt\Omega(t_0))}
               \\ +
                \|  \tilde{w}(t_m) - w(t_0) \|_{L^2(\Omega(t_0))}
            \Bigr).
    \end{multline}
    Here, we note that, recalling $\tilde{w}(t_m)$ satisfies \eqref{eq;L(t)w(t)} for $t_m$,
    \begin{equation*}
        \begin{split}
            \Delta_{\tilde{x}} \bigl( 
                \tilde{w}(t_m) - w (t_0)
            \bigr)
            &=
            \Delta_{\tilde{x}} \tilde{w}(t_m) + \rot_{\tilde{x}}\, b(t_0) 
            \\
            &=
            \bigl( \Delta_{\tilde{x}} -L(t_m)\bigr) \tilde{w}(t_m)
            - \Rot(t_m)\,\tilde{b}(t_m) +\rot_{\tilde{x}}\, b(t_0).
        \end{split}
    \end{equation*}
    Since $\| \tilde{w}(t_m)\|_{H^2(\Omega(t_0))}<C$ for sufficiently large $m\in\N$,
    by the same argument as \eqref{eq;L(t)-Delta},
    \begin{equation*}
        \bigl\| \bigl( \Delta_{\tilde{x}} -L(t_m)\bigr) \tilde{w}(t_m)
        \bigr\|_{L^2(\Omega(t_0))}\to 0
        \quad \text{as } t_m \to t_0.
    \end{equation*}
    And, by the same way as \eqref{eq;Rottm-rot}, we have that
    \begin{equation*}
        \| \Rot(t_m)\,\tilde{b}(t_m) - \rot_{\tilde{x}}\, b(t_0)
        \|_{L^2(\Omega(t_0))} \to 0
        \quad \text{as } t_m \to t_0.
    \end{equation*}
    Next, we note that
    \begin{equation*}
        \begin{split}
            \rot_{\tilde{x}}\,&\bigl( \tilde{w}(t_m) - w(t_0)\bigr) \times \nu(t_0)
            =
            \rot_{\tilde{x}}\, \tilde{w}(t_m)\times \nu(t_0) - b(t_0) \times \nu(t_0)
            \\
            &=
            \rot_{\tilde{x}}\, \tilde{w} (t_m) \times \nu(t_0) -
            B_1[\Rot(t_m)\,\tilde{w}(t_m),\tilde{\nu}(t_m)]
            + B_1[\tilde{b}(t_m),\tilde{\nu}(t_m)] - b(t_0)\times \nu(t_0).
        \end{split}
    \end{equation*}
    By the same argument of the convergence of each term  in \eqref{eq;dec B_1 times},
    we get
    \begin{equation*}
        \bigl\| \rot_{\tilde{x}}\, \tilde{w} (t_m) \times \nu(t_0) -
        B_1[\Rot(t_m)\,\tilde{w}(t_m),\tilde{\nu}(t_m)]
        \bigr\|_{H^{\frac{1}{2}}(\pt \Omega(t_0))} 
        \to 0
        \quad \text{as } t_m \to t_0.
    \end{equation*}
    Moreover, similar to \eqref{eq;dec B_1 times}, we see that for $i=1,2,3$
    \begin{equation*}
        \begin{split}
            B_1&[\tilde{b}(t_m),\tilde{\nu}(t_m)]^i -[b(t_0) \times \nu(t_0)]^i 
            \\
            =\,&
            \sum_{j,k,\ell}
            S^{i}_{j,k,\ell} (t_m)\Bigl(
                \tilde{b}^k(t_m) \bigl(\tilde{\nu}^\ell(t_m)-\nu^\ell(t_0)\bigr)
                -
                \tilde{b}^\ell(t_m)\bigl(\tilde{\nu}^k(t_m)-\nu^k(t_0)\bigr)
            \Bigr)
            \\
            &+
            \sum_{j,k,\ell} 
            S^{i}_{j,k,\ell} (t_m)\Bigl(
                \bigl(\tilde{b}^k(t_m) - b^k(t_0)\bigr) \nu^\ell(t_0)
                -
                \bigl(\tilde{b}^\ell(t_m) - b^\ell(t_0)\bigr) \nu^k(t_0)
            \Bigr)
            \\
            &+
            \sum_{j,k,\ell} 
            \Bigl(S^{i}_{j,k,\ell}(t_m)-\delta_{i,j}\delta_{\sigma(j+1),k} \delta_{\sigma(j+2),\ell}\Bigr)
            \Bigl(
                b^k(t_0) \nu^\ell(t_0) - b^\ell(t_0) \nu^k(t_0)
            \Bigr) 
            \\
            =:\, & I_1^\prime + I_2^\prime + I_3^\prime.
        \end{split}
    \end{equation*}
    The convergences of $I_1^\prime$, $I_2^\prime$ and $I_3^\prime$ are derived by an analogy of 
    those of $I_1$, $I_2$ and $I_3$ in \eqref{eq;dec B_1 times}.
    Therefore, we see that
    \begin{equation*}
        \bigl\|\rot_{\tilde{x}}\,\bigl( \tilde{w}(t_m) - w(t_0)\bigr) \times \nu(t_0)
        \bigr\|_{H^{\frac{1}{2}}(\pt\Omega)} \to 0
        \quad \text{as } t_m \to t_0.
    \end{equation*}

    Finally, we note that 
    \begin{equation*}
        \begin{split}
        \bigl( \tilde{w}(t_m) - w(t_0)\bigr) \cdot \nu(t_0)
        = \tilde{w}(t_m) \cdot \nu(t_0)
        = \tilde{w}(t_m) \cdot \nu(t_0) -B_2[\tilde{w}(t_m),\tilde{\nu}(t_m)].
        \end{split}
    \end{equation*}
    So by the same argument of  the convergence of each term  in \eqref{eq;dec B2},
    we have 
    \begin{equation*}
        \bigl\|\bigl( \tilde{w}(t_m) - w(t_0)\bigr) \cdot \nu(t_0) 
        \bigr\|_{H^{\frac{3}{2}}(\pt \Omega(t_0))}
        \to 0
        \quad\text{as } t_m \to t_0.
    \end{equation*}

    Since we have \eqref{eq;wtmtowt0}, by \eqref{eq;apriori w at t0} we have
    \begin{equation*}
        \| \tilde{w}(t_m) - w(t_0) \|_{H^2(\Omega(t_0))}
        \to 0
        \quad \text{as } t_m \to t_0.
    \end{equation*}
    However, this contradicts the assumption $\|\tilde{w}(t_m) -w(t_0)\|_{H^2(\Omega(t_0))}\geq \ep_0$.
    The proof is completed.
\end{proof}
Finally, we give the proof of Theorem \ref{thm;conti w}.
\begin{proof}[Proof of Theorem \ref{thm;conti w}]
    At first, we recall the relation $\widetilde{\tilde{w}}(t) = \widetilde{w}(t)$ on $\widetilde{\Omega}$ as in
    \eqref{eq;tildewidetilde}.
    Therefore, by Lemma \ref{lem;conti w local} and by Proposition \ref{prop;funcsp} we see that
    \begin{equation*}
        \| \widetilde{w}(t) - \widetilde{w}(t_0) \|_{H^2(\widetilde{\Omega})}
        =
        \| \widetilde{\tilde{w}}(t) - \widetilde{w}(t_0)\|_{H^2(\widetilde{\Omega})}
        \leq C
        \| \tilde{w}(t) - w(t_0) \|_{H^2(\Omega(t_0))}
        \to 0,
    \end{equation*}
    as $t\to t_0$, where the constnat $C>$ is independent of $t \in [0,T]$.
\end{proof}

\subsubsection{Time differentiability of the vector potentials}

To investigate the time differentiability of $\widetilde{w}(t)$ at $t_0 \in [0,T]$,
we also make use of the transformation: 
$w(t) \text{ on }\Omega(t) \longmapsto \tilde{w}(t) \text{ on } \Omega(t_0)$.
Different from the case of time continuity, the difficulty arises that
Theorem \ref{thm;const KY dec} is not enough to dominate the lower order term
of $\bigl(\tilde{w}(t)-w(t_0)\bigr)/(t-t_0)$ in the a priori estimate.
Hence, we need more qualitative information of the vector potentials in 
$w(t) \in Z^2_{\sigma}\bigl(\Omega(t)\bigr)$.
Indeed, the important question is  whether the coordinate transform \eqref{eq;identify tilde}
preserves the orthogonality to $X_{\mathrm{har}}\bigl(\Omega(t)\bigr)$ or not.


%


To consider the question, let us introduce a specific basis of $X_{\mathrm{har}}\bigl(\Omega(t)\bigr)$
constructed by Foia\c{s} and Temam \cite{Foias Temam}.
Let $p_\ell(t) \in C^\infty\bigl(\dot{\Omega}(t)\bigr)$, $\ell=1,\dots,L$ satisfy
\begin{equation}\label{eq;basis of Xhar}
    \begin{cases}
        \Delta p_\ell(t) =0 &\text{in }\dot{\Omega}(t),
        \smallskip\\
        \displaystyle
        \frac{\pt p_\ell(t)}{\pt \nu(t)} =0 &\text{on }\pt \Omega(t), 
        \smallskip\\
        \displaystyle
        \left[ \frac{\pt p_\ell (t)}{\pt \nu_j(t)}\right]_{\Sigma_{j}(t)}
        =0, \quad [p_\ell(t)]_{\Sigma_j(t)}=\delta_{\ell,j} 
        &\text{for } j=1,\dots,L,
    \end{cases}
\end{equation}
where $[f]_{\Sigma_j(t)}$ denotes the jump of the value $f$ on $\Sigma_j(t)$ defined by
\begin{equation*}
    [f]_{\Sigma_j(t)} :=f|_{\Sigma^+_j(t)} - f|_{\Sigma_j^-(t)},
\end{equation*}
 $\Sigma_j^+(t)$ and $\Sigma_j^-(t)$ denote two sides of $\Sigma_j(t)$,
and where $\nu_j(t)$ is the unit normal vector to $\Sigma_j(t)$ oriented 
from $\Sigma_j^+(t)$ toward $\Sigma_j^-(t)$.
We note that $p_\ell(t)$ has multivalued on $\Omega(t)$ 
but single valued in $\dot{\Omega}(t)$
and  $\nabla p_\ell \in C^\infty(\overline{\Omega(t)})$, 
for all $ \ell = 1,\dots,L$. Then 
$\nabla p_1(t), \dots, \nabla p_L(t)$ is the basis of 
$X_{\mathrm{har}}\bigl(\Omega(t)\bigr)$. 
See also Temam \cite[Appendix I]{Temam}.

\begin{lemma}\label{lem;Xhar perp}
    Let $w(t) \in Z^2_{\sigma}\bigl(\Omega(t)\bigr)\cap H^2\bigl(\Omega(t)\bigr)$ for $t \in \R$. 
    Then, $\tilde{w}(t) \in Z^2_{\sigma}\bigl(\Omega(t_0)\bigr)\cap H^2\bigl(\Omega(t_0)\bigr)$.
\end{lemma}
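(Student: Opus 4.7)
The plan is to verify each property in the definition of $Z^2_\sigma\bigl(\Omega(t_0)\bigr)\cap H^2\bigl(\Omega(t_0)\bigr)$, namely (i) $H^2$-regularity, (ii) $\Div_{\tilde x}\tilde{w}(t)=0$ in $\Omega(t_0)$, (iii) $\tilde{w}(t)\cdot\nu(t_0)=0$ on $\pt\Omega(t_0)$, and (iv) $\tilde{w}(t)\perp X_{\mathrm{har}}\bigl(\Omega(t_0)\bigr)$. Since $\varphi(\cdot,t)=\phi^{-1}(\cdot,t_0)\circ\phi(\cdot,t)$ inherits from $\phi$ the constant Jacobian property $\det(D\varphi(\cdot,t))\equiv J(t)^{-1}$ (a ratio of the corresponding values for $\phi$), all fundamental identities of Section~3.1 transfer verbatim to $\varphi$.

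First, (i) is a direct consequence of (the vector-field version of) Proposition~\ref{prop;funcsp}, and (ii) follows from Proposition~\ref{prop;IW div} applied with $\varphi$ in place of $\phi$. For (iii), I would use that $\varphi(\cdot,t)$ maps $\pt\Omega(t)$ to $\pt\Omega(t_0)$; representing $\pt\Omega(t_0)$ locally by $\tilde G=0$ so that $\pt\Omega(t)$ is $G(x,t):=\tilde G\bigl(\varphi(x,t)\bigr)=0$, the outward normals satisfy $\nu(t)\propto(D\varphi)^{T}\nu(t_0)$ on the boundary, and hence $w(t)\cdot\nu(t)=0$ is equivalent, up to a nonvanishing scalar factor, to $\tilde w(t)\cdot\nu(t_0)=0$.

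The main obstacle is (iv), since neither the definition of orthogonality nor the transformation formula relates $\tilde w(t)$ on $\Omega(t_0)$ directly to $X_{\mathrm{har}}\bigl(\Omega(t)\bigr)$. The key is to switch to the flux characterization using the Foia\c{s}–Temam basis $\nabla p_1(t),\dots,\nabla p_L(t)$ from \eqref{eq;basis of Xhar}. For any $u\in X^2_\sigma\bigl(\Omega(t)\bigr)$, integration by parts on the simply connected domain $\dot\Omega(t)=\Omega(t)\setminus\Sigma(t)$, combined with $\Div u=0$, $u\cdot\nu=0$ on $\pt\Omega(t)$, and the jump condition $[p_\ell]_{\Sigma_j(t)}=\delta_{\ell,j}$, yields
\begin{equation*}
(u,\nabla p_\ell(t))=\int_{\Sigma_\ell(t)}u\cdot\nu_\ell(t)\,dS,\qquad \ell=1,\dots,L.
\end{equation*}
Thus $u\perp X_{\mathrm{har}}\bigl(\Omega(t)\bigr)$ is equivalent to vanishing of all cross-section fluxes.

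Using $\phi(\Sigma(t),t)=\phi(\Sigma(t'),t')$ from the Assumption, we have $\varphi\bigl(\Sigma_\ell(t),t\bigr)=\Sigma_\ell(t_0)$ for each $\ell$. Then a Piola-type cross-product identity $(D\varphi\,v_1)\times(D\varphi\,v_2)=\det(D\varphi)(D\varphi)^{-T}(v_1\times v_2)$ applied to a surface parametrization gives the relation $\tilde\nu_\ell\,d\tilde S=J(t)^{-1}(D\varphi)^{-T}\nu_\ell(t)\,dS$, and combining this with $\tilde u=(D\varphi)u$ produces the pointwise equality $\tilde u\cdot\tilde\nu_\ell\,d\tilde S=J(t)^{-1}u\cdot\nu_\ell(t)\,dS$. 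Integrating over $\Sigma_\ell(t_0)$ yields the flux conservation asserted in the introduction, so that
\begin{equation*}
\int_{\Sigma_\ell(t_0)}\tilde w(t)\cdot\tilde\nu_\ell\,d\tilde S=J(t)^{-1}\int_{\Sigma_\ell(t)}w(t)\cdot\nu_\ell(t)\,dS=0.
\end{equation*}
Applying the Foia\c{s}–Temam characterization once more on $\Omega(t_0)$—with basis $\nabla p_1(t_0),\dots,\nabla p_L(t_0)$—converts this vanishing back into $\tilde w(t)\perp X_{\mathrm{har}}\bigl(\Omega(t_0)\bigr)$, completing the proof. The delicate points are ensuring the orientation conventions for $\nu_\ell$ match up under $\varphi$ and verifying the Piola identity in the present nonstandard setting, but beyond that the argument is routine.
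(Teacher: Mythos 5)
Your overall strategy coincides with the paper's: reduce the orthogonality condition (iv) to the vanishing of the cross-section fluxes $\int_{\Sigma_\ell}u\cdot\nu_\ell\,dS$ via the Foia\c{s}--Temam basis $\nabla p_1,\dots,\nabla p_L$, and then show that these fluxes are conserved under $\varphi(\cdot,t)$. Where you differ is in the mechanism for establishing flux conservation. You invoke the surface Piola/Nanson identity $\tilde\nu_\ell\,d\tilde S=\det(D\varphi)(D\varphi)^{-T}\nu_\ell\,dS$ directly, paired with $\tilde u=(D\varphi)u$, to get the pointwise surface identity $\tilde u\cdot\tilde\nu_\ell\,d\tilde S=\det(D\varphi)\,u\cdot\nu_\ell\,dS$, and then integrate. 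The paper instead never touches the surface measure: it observes that the \emph{volume} integrand $w\cdot\nabla_x p_\ell$ equals $\tilde w\cdot\nabla_{\tilde x}\tilde p_\ell$ pointwise (the Jacobians cancel when a contravariant vector is contracted with a covariant gradient), so that
\begin{equation*}
0=(w(t),\nabla_x p_\ell(t))_{\Omega(t)}=J(t)\int_{\dot\Omega(t_0)}\tilde w(t)\cdot\nabla_{\tilde x}\tilde p_\ell(t)\,d\tilde x=J(t)\int_{\Sigma_\ell(t_0)}\tilde w(t)\cdot\nu_\ell(t_0)\,dS,
\end{equation*}
the last step by integration by parts on $\dot\Omega(t_0)$ using $\Div_{\tilde x}\tilde w=0$, $\tilde w\cdot\nu(t_0)=0$, and the preserved jump condition $[\tilde p_\ell(t)]_{\Sigma_j(t_0)}=\delta_{\ell,j}$. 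The paper's route is slightly more economical because it avoids the orientation and area-element bookkeeping that you rightly flag as a delicate point, and it uses only identities already on the record (the inner-product invariance and Proposition \ref{prop;IW div}); your route buys a more geometric, transparent statement of flux invariance. Both are valid. One small bookkeeping point: for $\varphi(\cdot,t)=\phi^{-1}(\cdot,t_0)\circ\phi(\cdot,t)$ the spatially constant Jacobian is $\det(D\varphi(\cdot,t))=J(t_0)/J(t)$ rather than $J(t)^{-1}$; this changes nothing since only the spatial constancy and positivity matter, but if you keep the notation ``$J(t)$'' for the $\varphi$-Jacobian you should flag the redefinition as the paper does. Your argument for (iii) via $\nu(t)\propto(D\varphi)^{T}\nu(t_0)$ is also fine and is a direct alternative to the weak-form argument used in \eqref{eq;twntpsi}.
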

\begin{proof}
Since $Z^2_{\sigma}\bigl(\Omega(t)\bigr)
=X_{\mathrm{har}}\bigl(\Omega(t)\bigr)^\perp \cap X_{\sigma}^2\bigl(\Omega(t)\bigr)$,
we note that $w(t) \perp X_{\mathrm{har}}\bigl(\Omega(t)\bigr)$.
So, 
since $\Div\, w(t)=0$ a.e. in $\Omega(t)$ and $w(t)\cdot\nu(t)=0$ on $\pt \Omega(t)$, by a direct calculation, we see that
\begin{equation*}
    \begin{split}
        0&=\bigl(w(t),\nabla_x p_\ell(t)\bigr)
        =\int_{\dot{\Omega}(t)} w(t)\cdot \nabla_x p_\ell(t)\,dx
        \\
        &=
        \int_{\pt \dot{\Omega}(t)} w(t)\cdot \nu(t) \, p_\ell(t)\,dS
        -
        \int_{\dot{\Omega}(t)} \Div\, w(t) \,p_\ell(t)\,dx
        \\
        &=
        \sum_{j=1}^L \int_{\Sigma_j(t)} w(t)\cdot \nu_j(t)\,
        \bigl[p_\ell(t)\bigr]_{\Sigma_j(t)}\,dS
        \\
        &=\int_{\Sigma_\ell(t)} w(t)\cdot \nu_\ell(t)\,dS,
    \end{split}
\end{equation*}
for all $\ell=1,\dots,L$. On the other hand,
we put $\tilde{p}_\ell(\tilde{x},t):= p_\ell \bigl(\varphi^{-1}(\tilde{x},t),t\bigr)$
for $\tilde{x}\in \Omega(t_0)$, for $\ell=1,\dots,L$.
Then we easily see that 
\begin{equation*}
    \bigl[\tilde{p}_\ell(t)\bigr]_{\Sigma_j(t_0)}=\delta_{\ell,j}
    \quad \text{for } \ell,j =1,\dots,L,
\end{equation*}
since the diffeomorphism $\varphi(\cdot,t)$ maps $\Sigma_j(t)$ to $\Sigma_j(t_0)$.
Moreover, we note that 
$\nabla_{\tilde{x}}\tilde{p_\ell}(t) \in C^{\infty}\bigl(\overline{\Omega(t_0)}\bigr)$.
Then, 
since $\tilde{w}(t) \in L^2_{\sigma}\bigl(\Omega(t_0)\bigr) 
\cap H^2\bigl(\Omega(t_0)\bigr)$ with $\Div\,\tilde{w}(t)=0$
a.e. in $\Omega(t_0)$ and $\tilde{w}(t)\cdot \nu(t_0)=0$ on $\pt \Omega(t_0)$,
we have by the similar manner as the above, for $\ell=1,\dots,L$,
\begin{equation*}
    \begin{split}
0&=\int_{\Omega(t)} w(t)\cdot \nabla_x p_\ell (t)\,dx
=\int_{\Omega(t_0)} \tilde{w}(t) \cdot \nabla_{\tilde{x}}\tilde{p}_\ell(t)J(t)\,d\tilde{x}
= J(t) \int_{\dot{\Omega}(t_0)} \tilde{w}(t) \cdot \nabla_{\tilde{x}}\tilde{p}_\ell(t)\,d\tilde{x}
\\
&=J(t) \int_{\Sigma_\ell(t_0)} \tilde{w}(t)\cdot \nu_\ell(t_0)\,dS.
    \end{split}
\end{equation*}
Hence, we obtain that
\begin{equation*}
    \int_{\Sigma_\ell(t_0)} \tilde{w}(t)\cdot \nu_\ell(t_0)\,dS=0
    \quad\text{for } \ell=1,\dots,L.
\end{equation*}
This yields that $\tilde{w}(t) \perp X_{\mathrm{har}}\bigl(\Omega(t_0)\bigr)$.
Indeed, 
let $\nabla_{\tilde{x}} p_1(t_0),\dots, \nabla_{\tilde{x}}p_L(t_0)$ be
the basis of $X_{\mathrm{har}}\bigl(\Omega(t_0)\bigr)$ defined from \eqref{eq;basis of Xhar}.
Then we immediately obtain that
\begin{equation*}
    \bigl(\tilde{w}(t),\nabla_{\tilde{x}}p_\ell(t_0)\bigr)
    =\int_{\Sigma_\ell(t_0)} \tilde{w}(t)\cdot \nu_\ell(t_0)\,dS=0
    \quad \text{for } \ell=1,\dots,L.
\end{equation*}
This completes the proof.
\end{proof}
%
%

Next, we discuss the differentiability of $\tilde{w}(t)$ at $t_0 \in [0,T]$.
The following lemma gives the existence of the derivative.
\begin{lemma}\label{lem;diff local w}
    Let $T>0$.
    Let $b(t) \in H^1\bigl(\Omega(t)\bigr)$ with $\Div\,b(t)=0$ in $\Omega(t)$ for $t\in \R$
    and let $w(t) \in Z^2_{\sigma}\bigl(\Omega(t)\bigr)\cap H^2\bigl(\Omega(t)\bigr)$ 
    and $h(t) \in V_{\mathrm{har}}\bigl(\Omega(t)\bigr)$ satisfy
    $h(t) + \rot\, w(t) =b(t)$ in $\Omega(t)$.
    If $\widetilde{b} \in C^1\bigl([0,T];H^1(\widetilde{\Omega})\bigr)$,
    then, for every $t_0 \in [0,T]$, there exists $\dot{w}(t_0) \in H^2\bigl(\Omega(t_0)\bigr)$ such that
    \begin{equation*}
        \lim_{\ep \to 0} 
        \frac{\tilde{w}(t_0+\ep) - w(t_0)}{\ep}=\dot{w}(t_0) 
        \quad \text{in } H^2\bigl(\Omega(t_0)\bigr).
    \end{equation*}
\end{lemma}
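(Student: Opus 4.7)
The plan is to identify $\dot{w}(t_0)$ via the Helmholtz--Weyl decomposition of an explicit $H^1$-limit, and to deduce $H^2$-convergence of the difference quotient from the boundedness of the solution operator $f_{\Omega(t_0)}$ supplied by Proposition \ref{prop;KY}, avoiding the troublesome lower-order term in \eqref{eq;aprioriLtw} precisely by exploiting the orthogonality to $X_{\mathrm{har}}\bigl(\Omega(t_0)\bigr)$. Set $v_\ep := \bigl(\tilde{w}(t_0+\ep) - w(t_0)\bigr)/\ep$ for small $\ep\neq 0$. The first key observation is that $v_\ep \in Z_\sigma^2\bigl(\Omega(t_0)\bigr) \cap H^2\bigl(\Omega(t_0)\bigr)$: Proposition \ref{prop;IW div} gives $\Div_{\tilde{x}}\tilde{w}(t_0+\ep) = 0$, and Lemma \ref{lem;Xhar perp} ensures $\tilde{w}(t_0+\ep) \in Z_\sigma^2\bigl(\Omega(t_0)\bigr)$, so both $v_\ep\cdot\nu(t_0)=0$ on $\pt\Omega(t_0)$ and orthogonality to $X_{\mathrm{har}}\bigl(\Omega(t_0)\bigr)$ are preserved under the $t$-dependent coordinate transformation. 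By the uniqueness statement in the Helmholtz--Weyl decomposition, $v_\ep = f_{\Omega(t_0)}\bigl[\rot_{\tilde{x}}v_\ep\bigr]$, whence
\begin{equation*}
\|v_\ep - v_{\ep'}\|_{H^2(\Omega(t_0))} \leq C\bigl(\Omega(t_0)\bigr)\,\|\rot_{\tilde{x}}(v_\ep - v_{\ep'})\|_{H^1(\Omega(t_0))}.
\end{equation*}

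Next, I express $\rot_{\tilde{x}}v_\ep$ as a sum of tractable difference quotients. Using $\Rot(t)\,\tilde{w}(t) = \tilde{b}(t) - \tilde{h}(t)$ on $\Omega(t_0)$ (the Helmholtz--Weyl decomposition transported from $\Omega(t)$ to $\Omega(t_0)$) together with $\Rot(t_0)=\rot_{\tilde{x}}$, I rewrite
\begin{equation*}
\rot_{\tilde{x}}v_\ep = \frac{\tilde{b}(t_0+\ep) - b(t_0)}{\ep} - \frac{\tilde{h}(t_0+\ep) - h(t_0)}{\ep} + \frac{\bigl(\rot_{\tilde{x}} - \Rot(t_0+\ep)\bigr)\tilde{w}(t_0+\ep)}{\ep}.
\end{equation*}
The first term converges in $H^1\bigl(\Omega(t_0)\bigr)$: the fixed diffeomorphism $\phi(\cdot,t_0)$, the identity $\widetilde{\tilde{b}}(t) = \widetilde{b}(t)$ from \eqref{eq;tildewidetilde}, and Proposition \ref{prop;funcsp} transfer $\widetilde{b}\in C^1\bigl([0,T];H^1(\widetilde{\Omega})\bigr)$ to $\tilde{b}\in C^1\bigl([0,T];H^1(\Omega(t_0))\bigr)$. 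The second term likewise converges in $H^1\bigl(\Omega(t_0)\bigr)$ once Theorem \ref{thm;contideffi har} is invoked to give $\widetilde{h} \in C^1\bigl([0,T];H^1(\widetilde{\Omega})\bigr)$. For the third term, the kernels $R^{i,1}_{j,k,\ell}$ and $R^{i,2}_{j,k,\ell}$ in \eqref{eq;modified R} are smooth on $\overline{\Omega(t_0)\times[0,T]}$, so their difference quotients in $t$ converge in $C^2\bigl(\overline{\Omega(t_0)}\bigr)$; coupling this with the $H^2$-continuity $\tilde{w}(t_0+\ep)\to w(t_0)$ in $H^2\bigl(\Omega(t_0)\bigr)$ from Lemma \ref{lem;conti w local} delivers the required $H^1$-convergence.

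Combining these convergences, $\rot_{\tilde{x}}v_\ep$ is Cauchy in $H^1\bigl(\Omega(t_0)\bigr)$, and the displayed inequality makes $v_\ep$ Cauchy in $H^2\bigl(\Omega(t_0)\bigr)$; the limit defines $\dot{w}(t_0) \in Z_\sigma^2\bigl(\Omega(t_0)\bigr) \cap H^2\bigl(\Omega(t_0)\bigr)$. The main obstacle is precisely the third term: its $H^1$-convergence is delicate because the operator difference quotient acts on the already $t$-dependent function $\tilde{w}(t_0+\ep)$, so the previously established $H^2$-continuity (Lemma \ref{lem;conti w local}) is essential here---weaker regularity would yield only $L^2$-control and would not suffice. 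The structural obstacle, however, is the lower-order term in the a priori estimate \eqref{eq;aprioriLtw}, which cannot be absorbed by a difference-quotient argument; Lemma \ref{lem;Xhar perp} circumvents this by keeping $v_\ep$ inside $Z_\sigma^2\bigl(\Omega(t_0)\bigr)$, so that the cleaner bound from Proposition \ref{prop;KY} with the $L^2$-lower-order term eliminated applies.
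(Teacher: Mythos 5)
Your proposal is correct and follows essentially the same route as the paper's proof: the identity $\Rot(t_0+\ep)\,\tilde{w}(t_0+\ep)=\tilde{b}(t_0+\ep)-\tilde{h}(t_0+\ep)$ is rearranged into the paper's equation \eqref{eq;rot ww}, the membership $v_\ep\in Z^2_\sigma\bigl(\Omega(t_0)\bigr)\cap H^2\bigl(\Omega(t_0)\bigr)$ from Lemma \ref{lem;Xhar perp} is used to recognize this as the Helmholtz--Weyl decomposition of a divergence-free $H^1$ right-hand side, and the bounded solution operator $f_{\Omega(t_0)}$ from Proposition \ref{prop;KY} (linearity giving the Cauchy property) upgrades $H^1$-convergence of $\rot_{\tilde x}v_\ep$ to $H^2$-convergence of $v_\ep$. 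You also correctly identify the three needed convergences (difference quotients of $\tilde b$, $\tilde h$, and the kernel of $\Rot$, the last resting on Lemma \ref{lem;conti w local}) and correctly explain why orthogonality to $X_{\mathrm{har}}\bigl(\Omega(t_0)\bigr)$ replaces the problematic lower-order term of \eqref{eq;aprioriLtw}, which is exactly the point the paper emphasizes.
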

\begin{proof}
To begin with, as a preparation, we note that for $t_0 \in [0,T]$
\begin{equation*}
    \dot{b}(t_0):= \lim_{\ep \to 0} 
    \frac{\tilde{b}(t_0+\ep)-b(t_0)}{\ep}
    \quad\text{and}\quad
    \dot{h}(t_0):= \lim_{\ep \to 0} 
    \frac{\tilde{h}(t_0+\ep)-h(t_0)}{\ep}
\end{equation*}
exist in $H^1\bigl(\Omega(t_0)\bigr)$, since $b,h\in C^1\bigl([0,T];H^1(\widetilde{\Omega})\bigr)$
from the assumption and Theorem \ref{thm;contideffi har}, respectively.
Actually, $\dot{b}^i(t_0)=\displaystyle\sum_{k} \frac{\pt \tilde{x}^i}{\pt y^k} \pt_s \widetilde{b}^k(t_0)$
and
$\dot{h}^i(t_0)=\displaystyle\sum_{k} \frac{\pt \tilde{x}^i}{\pt y^k} \pt_s \widetilde{h}^k(t_0)$
for $i=1,2,3$.
Indeed, from the transformation: $\tilde{b}(t)\text{ on }\Omega(t_0) 
\stackrel{\phi(\cdot,t_0)}{\longmapsto}\widetilde{\tilde{b}}(t) \text{ on }\widetilde{\Omega}$
and the relation $\widetilde{\tilde{b}}(t)=\widetilde{b}(t)$ 
in $\widetilde{\Omega}$ as in \eqref{eq;tildewidetilde},
we have that for $i=1,2,3$,
\begin{equation*}
    \tilde{b}^i(t) =\sum_{k}\frac{\pt \tilde{x}^i}{\pt y^k} \widetilde{b}^k(t),
    \quad\text{i.e.,}\quad
    \tilde{b}^i(\tilde{x},t)= \sum_{k} \frac{\pt \phi^{-1}_{i}}{\pt y^k} (y,t_0)\widetilde{b}^k(y,t)
    \quad\text{with } y=\phi(\tilde{x},t_0).
\end{equation*}
Hence, by a direct calculation, we see that 
\begin{equation*}
    \begin{split}
    \left\| \frac{\tilde{b}^i(t_0+\ep) -b^i(t_0)}{\ep} 
    -\sum_{k}\frac{\pt \tilde{x}^i}{\pt y^k}\pt_s\widetilde{b}^k(t_0)\right\|_{H^1(\Omega(t_0))} 
    &\leq 
    C
    \left\| \frac{\widetilde{b}(t_0+\ep)-\widetilde{b}(t_0)}{\ep}-\pt_s\widetilde{b}(t_0)\right\|_{H^1(\widetilde{\Omega})}
    \\
    &\to 0 \quad \text{as } \ep \to 0,
    \end{split}
\end{equation*}
for $i=1,2,3$, where we can take the constant $C>0$ is independent of $\ep$ and $t_0$.
So does for $h(t)$.

Noting that 
$ \tilde{h}(t_0+\ep) + \Rot (t_0+\ep)\,\tilde{w}(t_0+\ep)=\tilde{b}(t_0+\ep)$ 
in $\Omega(t_0)$
for $\ep\in \R\setminus \{0\}$, we see that
\begin{equation*}
    \begin{split}
    \tilde{b}(t_0+\ep) -b(t_0) - &\tilde{h}(t_0+\ep) + h(t_0)
    \\ 
    &= 
    \Rot (t_0+\ep)\, \tilde{w}(t_0+\ep) - \rot\, w(t_0)
    \\
    &= \bigl(\Rot(t_0+\ep) - \rot_{\tilde{x}} \bigr) \tilde{w}(t_0+\ep) + \rot_{\tilde{x}}\bigl(\tilde{w}(t_0+\ep)-w(t_0)\bigr).
    \end{split}
\end{equation*}
More precisely, we have
\begin{multline}\label{eq;rot ww}
    \rot_{\tilde{x}}\,
        \frac{\tilde{w}(t_0+\ep) -w(t_0)}{\ep}
    \\=
    \frac{\tilde{b}(t_0+\ep) -b(t_0)}{\ep} 
    - \frac{\tilde{h}(t_0+\ep) - h(t_0)}{\ep}
    -\frac{\Rot(t_0+\ep) - \rot_{\tilde{x}}}{\ep} \tilde{w}(t_0+\ep). 
\end{multline}
Here, taking the third term of \eqref{eq;rot ww} into account, we put for $\tilde{v} \in H^1\bigl(\Omega(t_0)\bigr)$
\begin{equation*}
    \begin{split}
    \dot{\Rot}(t_0)\, \tilde{v} 
    &:= \lim_{\ep\to 0} \frac{\Rot(t_0 +\ep ) -\rot_{\tilde{x}}}{\ep} \tilde{v}
    \quad \text{in } L^2\bigl(\Omega(t_0)\bigr)
    \\
    &= \left(
        \sum_{j,k,\ell} \frac{\pt R^{i,1}_{j,k,\ell}}{\pt t}(t_0)\tilde{v}^\ell
        +
        \sum_{j,k,\ell} \frac{\pt R^{i,2}_{j,k,\ell}}{\pt t}(t_0) \frac{\pt \tilde{v}^\ell}{\pt \tilde{x}^k}
    \right)_{i=1,2,3}.
    \end{split}
\end{equation*}
Moreover, we see that $\dot{\Rot}(t_0)\,\tilde{v}$ is also well defined in $H^1\bigl(\Omega(t_0)\bigr)$
for $\tilde{v} \in H^2\bigl(\Omega(t_0)\bigr)$.
Since $\tilde{w}(t_0+\ep) \to w(t_0)$ in $H^2\bigl(\Omega(t_0)\bigr)$ 
as $\ep \to 0$
by Lemma \ref{lem;conti w local}, 
so $\| \tilde{w}(t_0+\ep)\|_{H^2(\Omega(t_0))}$
is bounded for $\ep$ sufficiently close to $0$.
Therefore, we can see that  each term in the right hand side of \eqref{eq;rot ww}
is differentiable at $t_0$.

On the other hand, 
since $\bigl(\tilde{w}(t_0+\ep)-w(t_0)\bigr)/\ep  \in 
Z_\sigma^2\bigl(\Omega(t_0)\bigr) \cap H^2\bigl(\Omega(t_0)\bigr)$ by Lemma \ref{lem;Xhar perp},
we see that \eqref{eq;rot ww} gives the Helmholtz-Weyl decomposition on $\Omega(t_0)$ 
of the right hand side of \eqref{eq;rot ww},
since the R.H.S. of \eqref{eq;rot ww} is divergence free.
Therefore, by Proposition \ref{prop;KY} it holds that
\begin{equation*}
    \left\| \frac{\tilde{w}(t_0+\ep)-\tilde{w}(t_0)}{\ep}\right\|_{H^2(\Omega(t_0))}
    \leq C\| \text{R.H.S. of \eqref{eq;rot ww}}\|_{H^1(\Omega(t_0))},
\end{equation*}
for $\ep\neq 0$, where we can take the constant $C>0$ is independent of $\ep$ and $t_0$ by
Theorem \ref{thm;const KY dec}. The linearity of the above estimate yields the existence of
$\displaystyle \dot{w}(t_0):= \lim\limits_{\ep\to0}\frac{\tilde{w}(t_0+\ep)-w(t_0)}{\ep}$ 
in $H^2\bigl(\Omega(t_0)\bigr)$.
Indeed, $\displaystyle \left\{\frac{\tilde{w}(t_0+\ep)-w(t_0)}{\ep}\right\}$ forms a Cauchy sequence as $\ep\to0$.
This completes the proof.
\end{proof}

It should be noted that, from the proof of Lemma \ref{lem;diff local w}, we also see that 
\begin{equation*}
    \dot{w}(t_0) \perp X_{\mathrm{har}}\bigl(\Omega(t_0)\bigr),
\end{equation*}
and 
\begin{equation*}
    \lim_{\ep\to 0} \left\|\rot_{\tilde{x}}\,\frac{\tilde{w}(t_0+\ep)-\tilde{w}(t_0)}{\ep} -\rot_{\tilde{x}}\,\dot{w}(t_0)
    \right\|_{H^1(\Omega(t_0))}=0.
\end{equation*}
Hence, we obtain
\begin{equation}\label{eq;rot dotw}
    \rot_{\tilde{x}}\, \dot{w}(t_0) =  \dot{b}(t_0) - \dot{h}(t_0) -\dot{\Rot}(t_0)\, w(t_0)
    \quad \text{in } \Omega(t_0),
\end{equation}
and, noting that $\Div_{\tilde{x}}\, \dot{\Rot}(t_0)\,w(t_0)=0$
in $\Omega(t_0)$, its estimate
\begin{equation}\label{eq;bounded dotw}
    \begin{split}
    \| \dot{w}(t_0) \|_{H^2(\Omega(t_0))} 
    &\leq 
    C \bigl( \| \dot{b}(t_0) \|_{H^1(\Omega(t_0))}
    + \| \dot{h}(t_0) \|_{H^1(\Omega(t_0))} + \| \dot{\Rot}(t_0) \,w(t_0) \|_{H^1(\Omega(t_0))}
    \bigr)
    \\
    &\leq 
    C \left(
        \sup_{0\leq t \leq T} \| \pt_s \widetilde{b}(t) \|_{H^1(\widetilde{\Omega})}
        +
        \sup_{0\leq t \leq T} \| \pt_s \widetilde{h}(t) \|_{H^1(\widetilde{\Omega})}
        +
        \sup_{0\leq t \leq T} \| \widetilde{w} (t) \|_{H^2(\widetilde{\Omega})}
    \right),
    \end{split}
\end{equation}
where we can choose the constant $C>0$ is independent of $t_0\in [0,T]$ by Theorem \ref{thm;const KY dec},
Proposition \ref{prop;funcsp}, and by the estimate of kernel functions in $\dot{\Rot}(t_0)$.
\medskip

Next, we discuss time continuity of the derivative  $\dot{w}(t)$ at $t_0$.
Hence, similar to $\dot{\mathcal{L}}(t_0)$ in Section \ref{subsec;diff qk},
we have to rewrite $\dot{\Rot}(t_0)$ replacing $t_0$ by $t$.
since $R^{i,1}_{j,k,\ell}, R^{i,2}_{j,k,\ell}$ consist of the  diffeomorphism $\vphi(\cdot,t)$
which depends on  $t_0$, we rewrite the kernel functions in $\dot{\Rot}(t)$ 
by using $\phi(\cdot,t)$ without $\varphi(\cdot,t)$.
Hence, we define $\mathcal{R}^{i,1}_{\ell}$ and $\mathcal{R}^{i,2}_{k,\ell}$ on $\overline{Q}_\infty$ by
\begin{equation*}
    \begin{split}
        \mathcal{R}^{i,1}_\ell(x,t):=\,&
        \sum_{j,k} \biggl(
            \frac{\pt^3 \phi^{-1}_{\sigma(i+2)}}{\pt y^j \pt y^k \pt s}\bigl(\phi(x,t),t\bigr)
            \frac{\pt \phi^j}{\pt x^{\sigma(i+1)}}(x,t)
           \\ &\quad -
            \frac{\pt^3 \phi^{-1}_{\sigma(i+1)}}{\pt y^j \pt y^k \pt s}\bigl(\phi(x,t),t\bigr)
            \frac{\pt \phi^j}{\pt x^{\sigma(i+2)}}(x,t)
        \biggr)\frac{\pt \phi^k}{\pt x^\ell}(x,t)
        \\
        &+\sum_{j} \biggl(
            \frac{\pt^2 \phi^{-1}_{\sigma(i+2)}}{\pt y^j \pt s}
            \bigl(\phi(x,t),t\bigr) \frac{\pt^2 \phi^j}{\pt x^{\sigma(i+1)}\pt x^\ell}(x,t)
            \\ &\quad -
            \frac{\pt^2 \phi^{-1}_{\sigma(i+1)}}{\pt y^j \pt s}
            \bigl(\phi(x,t),t\bigr) \frac{\pt^2 \phi^j}{\pt x^{\sigma(i+2)}\pt x^\ell}
        \biggr) \quad \text{for } (x,t) \in \overline{Q}_\infty
    \end{split}
\end{equation*}
and noting that for $j,k=1,2,3$, $\sigma(j+1)=k$ is equivalent to $j=\sigma(k+2)$ and $ \sigma(j+2)=\sigma(k+1)$,
and that $\sigma(j+2)=k$ is equivalent to $j=\sigma(k+1)$ and $\sigma(j+1)=\sigma(k+2)$ from \eqref{eq;defi sigma}, 
\begin{equation*}
    \begin{split}
        \mathcal{R}^{i,2}_{k,\ell}(x,t):=\,&
        \sum_j \biggl(
            -\frac{\pt^2 \phi^{-1}_i}{\pt y^j \pt s}\bigl(\phi(x,t),t\bigr)
            \frac{\pt \phi^{j}}{\pt x^{\sigma(k+2)}}(x,t) \delta_{\sigma(k+1),\ell}
            \\
            &\quad +
            \frac{\pt^2 \phi^{-1}_i}{\pt y^j \pt s}\bigl(\phi(x,t),t\bigr)
            \frac{\pt \phi^{j}}{\pt x^{\sigma(k+1)}}(x,t) \delta_{\sigma(k+2),\ell}
            \biggr)\\
            &+\sum_j \biggl(
                -\frac{\pt^2 \phi^{-1}_k}{\pt y^j \pt s}\bigl(\phi(x,t),t\bigr)
                \frac{\pt \phi^{j}}{\pt x^{\sigma(i+1)}}(x,t) \delta_{\sigma(i+2),\ell}
                \\
                &\quad +
                \frac{\pt^2 \phi^{-1}_k}{\pt y^j \pt s}\bigl(\phi(x,t),t\bigr)
                \frac{\pt \phi^{j}}{\pt x^{\sigma(i+2)}}(x,t) \delta_{\sigma(i+1),\ell} 
        \biggr)
        \\&+\sum_j \biggl(
            \delta_{k,\sigma(i+1)}
            \frac{\pt^2 \phi^{-1}_{\sigma(i+2)}}{\pt y^j \pt s}\bigl(\phi(x,t),t\bigr)
            \frac{\pt \phi^{j}}{\pt x^{\ell}}(x,t)
            \\
            &\quad -\delta_{k,\sigma(i+2)}
            \frac{\pt^2 \phi^{-1}_{\sigma(i+1)}}{\pt y^j \pt s}\bigl(\phi(x,t),t\bigr)
            \frac{\pt \phi^{j}}{\pt x^{\ell}}(x,t) 
            \biggr)\quad \text{for } (x,t) \in \overline{Q}_\infty.
    \end{split}
\end{equation*}
Then, we easily see that $\mathcal{R}^{i,1}_\ell, \mathcal{R}^{i,2}_{k,\ell}\in C^\infty(\overline{Q}_\infty)$
and that
\begin{equation*}
    \bigl[\dot{\Rot}(t_0)\, \tilde{v}\bigr]^i = \sum_{\ell} \mathcal{R}^{i,1}_\ell(\tilde{x},t_0) \tilde{v}^\ell+ 
    \sum_{k,\ell} \mathcal{R}^{i,2}_{k,\ell}(\tilde{x},t_0) \frac{\pt \tilde{v}^\ell}{\pt \tilde{x}^\ell},
    \quad i=1,2,3,\; \text{for } \tilde{v} \in H^1\bigl(\Omega(t_0)\bigr).
\end{equation*}
So, we can generalize $\dot{\Rot}(t)$ on $H^1\bigl(\Omega(t)\bigr)$ for all $t\in [0,T]$ with $\mathcal{R}^{i,1}_{\ell}$,
$\mathcal{R}^{i,2}_{k,\ell}$.
Hence, \eqref{eq;rot dotw} holds for each $\Omega(t)$ such as
\begin{equation}\label{eq;rot dotw t}
    \rot\, \dot{w}(t) = \dot{b}(t) - \dot{h}(t) - \dot{\Rot}(t)\,w(t)\quad\text{in }\Omega(t).
\end{equation}

Let us transform the relation \eqref{eq;rot dotw t} into that in $\Omega(t_0)$ using the diffeomorphism $\varphi(\cdot,t)$.
So, putting $\tilde{\mathcal{R}}^{i,1}_{\ell}(\tilde{x},t)=\mathcal{R}^{i,1}_{\ell} \bigl(\vphi^{-1}(\tilde{x},t),t\bigr)$
and $\tilde{\mathcal{R}}^{i,2}_{k,\ell}(\tilde{x},t)=\mathcal{R}^{i,2}_{k,\ell} \bigl(\vphi^{-1}(\tilde{x},t),t\bigr)$
for $\tilde{x} \in \Omega(t_0)$, $t\in [0,T]$ and for $i,k,\ell=1,2,3$, we put and see that
for $\tilde{v} \in H^2\bigl(\Omega(t_0)\bigr)$
\begin{equation*}
    \begin{split}
    \bigl[\tilde{\dot{\Rot}}(t)\, \tilde{v}\bigr]^i(\tilde{x})
    &:=\sum_j \frac{\pt \vphi^i}{\pt x^j}\bigl(\vphi^{-1}(\tilde{x},t),t\bigr)
    \bigl[\dot{\Rot}(t)\,v\bigr]^j\bigl(\vphi(\tilde{x},t)\bigr)
    \\
    &=\sum_{j} \frac{\pt \vphi^i}{\pt x^j}\bigl(\vphi^{-1}(\tilde{x},t),t\bigr)
    \biggl\{
        \sum_{\ell,m}  \tilde{\mathcal{R}}^{j,1}_{\ell}(\tilde{x},t)
        \frac{\pt \vphi^{-1}_\ell}{\pt \tilde{x}^m}(\tilde{x},t)
        \tilde{v}^m(\tilde{x})
        \\
        &\quad+
        \sum_{k,\ell} \tilde{\mathcal{R}}^{j,2}_{k,\ell}(\tilde{x},t)
        \biggl(
            \sum_{m,n}\frac{\pt^2 \vphi^{-1}_\ell}{\pt \tilde{x}^m\tilde{x}^n}(\tilde{x},t) 
            \frac{\pt \vphi_m}{\pt x^k}\bigl(\vphi^{-1}(\tilde{x},t),t\bigr)\tilde{v}^m(\tilde{x})
            \\
            &\quad +
            \sum_{m,n} \frac{\pt \vphi^{-1}_\ell}{\pt \tilde{x}^{m}}(\tilde{x},t)
            \frac{\pt \vphi^n}{\pt x^k}\bigl(\vphi^{-1}(\tilde{x},t),t\bigr)
            \frac{\pt \tilde{v}^m}{\pt \tilde{x}^n}(\tilde{x})
        \biggr)
    \biggr\}.
    \end{split}
\end{equation*}
Hence, from \eqref{eq;eq vphidelta} to \eqref{eq;vphitoId} 
and by Lemma \ref{lem;conti w local}, we remark that for $i=1,2,3$,
\begin{equation*}
    \begin{split}
    \bigl[\tilde{\dot{\Rot}}(t)\,\tilde{v}\bigr]^i
    &\to
    \sum_{j} \delta_{i,j}\biggl\{
        \sum_{m,n} \mathcal{R}^{j,1}_{\ell}(\tilde{x},t_0) \delta_{\ell,m}\tilde{v}^m
        +\sum_{k,\ell} \mathcal{R}^{j,2}_{k,\ell}(\tilde{x},t_0) 
        \biggl(
            \sum_{m,n}\delta_{\ell,m}\delta_{n,k}\frac{\pt\tilde{v}^m}{\pt \tilde{x}^n}
        \biggr)
    \biggr\}
    \\
    &=\bigl[\dot{\Rot}(t_0)\,\tilde{v}\bigr]^i \quad \text{in } H^1\bigl(\Omega(t_0)\bigr)\,\text{as }t\to t_0.
    \end{split}
\end{equation*}
Then, by the transformation for \eqref{eq;rot dotw t}, we obtain that
\begin{equation}\label{eq;transformed rot dotw}
    \Rot(t)\,\tilde{\dot{w}}(t) = \tilde{ \dot{b}}(t) - \tilde{\dot{h}}(t) - \tilde{\dot{\Rot}}(t)\, \tilde{w}(t) 
    \quad\text{in } \Omega(t_0).
\end{equation}
Then, we discuss the time continuity of the derivative $\dot{w}(t)$ at $t_0$ in the following sense.
\begin{lemma}\label{lem;conti local dotw}
    Under the assumption of Lemma \ref{lem;diff local w}, 
    for $t_0 \in [0,T]$ and for $\ep>0$ there exists $\delta>0$ such that
    for every $t\in [0,T]$ if $|t-t_0|<\delta$ then 
    \begin{equation*}
        \| \tilde{\dot{w}}(t) - \dot{w}(t_0)\|_{H^2(\Omega(t_0))}
        < \ep.
    \end{equation*}
\end{lemma}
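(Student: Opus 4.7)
The strategy is to argue by contradiction, mirroring the approach used for Lemma \ref{lem;conti w local}. Suppose that for some $\ep_0>0$ there exists $t_m\to t_0$ in $[0,T]$ with $\|\tilde{\dot{w}}(t_m)-\dot{w}(t_0)\|_{H^2(\Omega(t_0))}\geq \ep_0$. The uniform estimate \eqref{eq;bounded dotw} combined with Proposition \ref{prop;funcsp} yields uniform boundedness of $\{\tilde{\dot{w}}(t_m)\}$ in $H^2(\Omega(t_0))$; passing to a subsequence one extracts $\dot{w}_\infty\in H^2(\Omega(t_0))$ with $\tilde{\dot{w}}(t_m)\rightharpoonup \dot{w}_\infty$ weakly in $H^2$ and strongly in $H^1$ on $\Omega(t_0)$.

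Next I would verify that $\dot{w}_\infty\in Z^2_\sigma(\Omega(t_0))\cap H^2(\Omega(t_0))$ and then identify it. The conditions $\Div_{\tilde{x}}\,\tilde{w}(t)=0$ (Proposition \ref{prop;IW div}), $\tilde{w}(t)\cdot\nu(t_0)=0$ on $\pt\Omega(t_0)$, and $\tilde{w}(t)\perp X_{\mathrm{har}}\bigl(\Omega(t_0)\bigr)$ (Lemma \ref{lem;Xhar perp}) all survive differentiation in $t$ and pass to the weak $H^2$ limit, giving the desired membership. To identify $\dot{w}_\infty=\dot{w}(t_0)$, I would take the limit $t_m\to t_0$ in the transformed rotation relation \eqref{eq;transformed rot dotw}. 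Since $\widetilde{b},\widetilde{h}\in C^1\bigl([0,T];H^1(\widetilde{\Omega})\bigr)$ (the latter by Theorem \ref{thm;contideffi har}) and $\widetilde{w}\in C\bigl([0,T];H^2(\widetilde{\Omega})\bigr)$ by Theorem \ref{thm;conti w}, Proposition \ref{prop;funcsp} gives $\tilde{\dot{b}}(t_m)\to \dot{b}(t_0)$, $\tilde{\dot{h}}(t_m)\to \dot{h}(t_0)$ in $H^1(\Omega(t_0))$ and $\tilde{w}(t_m)\to w(t_0)$ in $H^2(\Omega(t_0))$. The smoothness in $t$ of the kernels $\mathcal{R}^{i,1}_\ell,\mathcal{R}^{i,2}_{k,\ell}$ then yields $\tilde{\dot{\Rot}}(t_m)\,\tilde{w}(t_m)\to \dot{\Rot}(t_0)\,w(t_0)$ in $H^1$, while by the argument of \eqref{eq;Rottm-rot} one gets $\Rot(t_m)\,\tilde{\dot{w}}(t_m)\to \rot_{\tilde{x}}\,\dot{w}_\infty$ in $L^2$. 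Comparing with \eqref{eq;rot dotw} forces $\rot_{\tilde{x}}\,\dot{w}_\infty=\rot_{\tilde{x}}\,\dot{w}(t_0)$, and uniqueness within $Z^2_\sigma(\Omega(t_0))$ via Proposition \ref{prop;KY} yields $\dot{w}_\infty=\dot{w}(t_0)$; in particular $\tilde{\dot{w}}(t_m)\to \dot{w}(t_0)$ strongly in $H^1(\Omega(t_0))$.

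Finally I would upgrade the $H^1$-convergence to $H^2$. Observing that $\dot{w}(t)$ is itself the Helmholtz--Weyl vector potential of the divergence-free field $F(t):=\dot{b}(t)-\dot{h}(t)-\dot{\Rot}(t)\,w(t)$, it satisfies an elliptic system of the same structure as \eqref{eq;Lap w} with $b$ replaced by $F(t)$, and correspondingly $\tilde{\dot{w}}(t_m)$ satisfies the transformed system \eqref{eq;L(t)w(t)} with $\tilde{b}$ replaced by $\tilde{F}(t_m)$. Applying the a priori estimate \eqref{eq;apriori w} at $t_0$ to the difference $\tilde{\dot{w}}(t_m)-\dot{w}(t_0)$ and rewriting $\Delta_{\tilde{x}}$, $\rot_{\tilde{x}}$ and $\nu(t_0)$ through their perturbed counterparts $L(t_m)$, $\Rot(t_m)$ and $\tilde{\nu}(t_m)$, all commutator errors $L(t_m)-\Delta_{\tilde{x}}$, $\Rot(t_m)-\rot_{\tilde{x}}$, $B_1[\,\cdot\,,\tilde{\nu}(t_m)]-\,\cdot\,\times\nu(t_0)$ and $B_2[\,\cdot\,,\tilde{\nu}(t_m)]-\,\cdot\,\cdot\,\nu(t_0)$ are controlled exactly as in the proof of Lemma \ref{lem;conti w local} via Propositions \ref{prop;gij} and \ref{prop;trace}. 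Combined with $\tilde{F}(t_m)\to F(t_0)$ in $H^1(\Omega(t_0))$ from the previous step and the vanishing of $\|\tilde{\dot{w}}(t_m)-\dot{w}(t_0)\|_{L^2(\Omega(t_0))}$ by strong $H^1$-convergence, this contradicts $\|\tilde{\dot{w}}(t_m)-\dot{w}(t_0)\|_{H^2(\Omega(t_0))}\geq \ep_0$.

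The main obstacle is the identification step. Because the a priori estimate for the vector potential retains a lower-order $L^2$-term, strong $H^1$-convergence via weak compactness is indispensable, and this requires both $\dot{w}_\infty$ and $\dot{w}(t_0)$ to lie in the same orthogonal complement $X_{\mathrm{har}}\bigl(\Omega(t_0)\bigr)^\perp$. It is precisely this invariance of orthogonality under the coordinate transform---secured qualitatively by the flux-preserving property in Lemma \ref{lem;Xhar perp}, rather than by the quantitative bound of Theorem \ref{thm;const KY dec} alone---that makes the differentiability argument succeed.
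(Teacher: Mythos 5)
Your proof is correct, but it takes a genuinely longer route than the paper's. You mirror the contradiction-plus-weak-compactness strategy used for the \emph{continuity} of the vector potential (Lemma~\ref{lem;conti w local}): extract a weak $H^2$ / strong $H^1$ subsequential limit $\dot{w}_\infty$ from the uniform bound \eqref{eq;bounded dotw}, check $\dot{w}_\infty\in Z^2_\sigma(\Omega(t_0))$, identify $\dot{w}_\infty=\dot{w}(t_0)$ by passing to the limit in the rotation relation and invoking uniqueness of the decomposition, and only then invoke the ADN a~priori estimate \eqref{eq;apriori w} to upgrade to $H^2$, with the strong $H^1$-convergence killing the lower-order $\|\cdot\|_{L^2}$ term. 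In this scheme Lemma~\ref{lem;Xhar perp} serves purely as an identification tool for the weak limit.

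The paper's proof is a direct estimate that bypasses the compactness entirely. Precisely because $\tilde{\dot{w}}(t)-\dot{w}(t_0)\in Z^2_\sigma(\Omega(t_0))$ (Lemma~\ref{lem;Xhar perp}), the relation \eqref{eq;conti tildotw} is itself a Helmholtz--Weyl decomposition of its right-hand side with vanishing harmonic part, so Proposition~\ref{prop;KY}(i) combined with the uniform constant of Theorem~\ref{thm;const KY dec} yields
$\|\tilde{\dot{w}}(t)-\dot{w}(t_0)\|_{H^2(\Omega(t_0))}\leq C\,\|\text{R.H.S.\ of \eqref{eq;conti tildotw}}\|_{H^1(\Omega(t_0))}$
with no lower-order term, and one then only needs to show the four pieces $K_1,\dots,K_4$ of the right-hand side go to zero. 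So the final paragraph of your proposal somewhat misdiagnoses the situation: the lower-order $L^2$ term is only present if one insists on the ADN estimate \eqref{eq;apriori w}, and the paper avoids it. Both approaches rely on Lemma~\ref{lem;Xhar perp} and Theorem~\ref{thm;const KY dec}, but the paper uses orthogonality to obtain a \emph{clean} estimate directly, whereas you use it to identify the weak limit inside a compactness argument; the paper's route is shorter and is in fact the one the authors flag in the remark at the end of the subsection as the ``qualitative'' differentiability strategy, reserving the heavier ADN-plus-compactness machinery for the continuity result where orthogonality is not exploited.
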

\begin{proof}
    Firstly, from \eqref{eq;rot dotw} and \eqref{eq;transformed rot dotw} we note that
\begin{equation}\label{eq;conti tildotw}
    \begin{split}
    \rot_{\tilde{x}} \bigl( \tilde{\dot{w}}(t) - \dot{w}(t_0) \bigr) =\,& 
    \tilde{\dot{b}}(t) -  {\dot{b}}(t_0) 
    -  \tilde{\dot{h}}(t) + {\dot{h}}(t_0) -\tilde{\dot{\Rot}}(t)\,\tilde{w}(t)+\dot{\Rot}(t_0)\,w(t_0)
    \\
    &-\bigl(\Rot(t)-\rot_{\tilde{x}}\bigr) \tilde{\dot{w}}(t)
    \qquad \text{in } \Omega(t_0).
    \end{split}
\end{equation}

On the other hand, since $\dot{w}(t) \perp X_{\mathrm{har}}\bigl(\Omega(t)\bigr)$, 
it also holds $\tilde{\dot{w}}(t) \perp X_{\mathrm{har}}\bigl(\Omega(t_0)\bigr)$
from Lemma \ref{lem;Xhar perp}. 
Moreover, $\tilde{\dot{w}}(t)\cdot \nu(t_0)=0$ on $\pt \Omega(t_0)$ and $\Div\, \tilde{\dot{w}}(t)=0$ in $\Omega(t_0)$.
Hence, we see that \eqref{eq;conti tildotw} is the Helmholtz-Weyl decomposition on $\Omega(t_0)$
in the sense of Proposition \ref{prop;KY}.
Therefore, we have the estimate
\begin{equation*}
    \begin{split}
    \| \tilde{\dot{w}}(t) - \dot{w}(t_0) \|_{H^2(\Omega(t_0))} 
    &\leq 
    C\| \text{R.H.S. of \eqref{eq;conti tildotw}} \|_{H^1(\Omega(t_0))}
    \\
    &\leq C\Bigl(\bigl\| \tilde{\dot{b}}(t) -  {\dot{b}}(t_0) \bigr\|_{H^1(\Omega(t_0))}
    +
    \bigl\| \tilde{\dot{h}}(t) - {\dot{h}}(t_0)\bigr\|_{H^1(\Omega(t_0))}
    \\
    & \quad +
        \| \tilde{\dot{\Rot}}(t)\tilde{w}(t) - \dot{\Rot}(t_0)w(t_0)\|_{H^1(\Omega(t_0))}
        \\
    & \quad 
    +  \bigl\| \bigl(\Rot(t) -\rot\bigr) \tilde{\dot{w}}(t) \bigr\|_{H^1(\Omega(t_0))}
        \Bigr)
        \\
        &=: C \bigl( K_1 + K_2 + K_3 + K_4\bigr),
    \end{split}
\end{equation*}
where we can take the constant $C>0$ is independent of $t\in [0,T]$ by Theorem \ref{thm;const KY dec}.

We shall estimate $K_1$.
Since $\displaystyle  \dot{b}^i(x,t)=\sum_{k} \frac{\pt \phi_i^{-1}}{\pt y^k}(\phi(x,t),t)\pt_s
\widetilde{b}^k(\phi(x,t),t)$, by a direct calculation we see that for $\tilde{x}= \vphi(x,t)$
\begin{equation*}
    \tilde{ \dot{b}}^i(\tilde{x},t)=
    \sum_\ell \frac{\pt \vphi^i}{\pt x^\ell} (x,t) \dot{b}^\ell(x,t)= 
    \sum_k 
    \frac{\pt \phi_i^{-1}}{\pt y^k}\bigl(\phi(\tilde{x},t_0),t_0\bigr)
    \pt_s \widetilde{b}^k\bigl(\phi(\tilde{x},t_0),t\bigr).
\end{equation*} 
Hence, we have by Proposition \ref{prop;funcsp}
\begin{equation}\label{eq;K1}
    K_1 \leq C \|\pt_s \widetilde{b}(t) - \pt_s \widetilde{b}(t_0) \|_{H^{1}(\widetilde{\Omega})} \to 0
    \quad \text{as } t\to t_0,
\end{equation}
where the constant $C>$ is independent of $t\in [0,T]$.
By the same way, we have $K_2\to 0$ as $t\to t_0$.

Since $\tilde{\dot{\Rot}}(t) \to \dot{\Rot}(t_0)$ strongly as $t \to t_0$,
and since $\tilde{w}(t) \to w(t_0)$ in $H^2\bigl(\Omega(t_0)\bigr)$ as $t\to t_0$ 
by Lemma \ref{lem;conti w local}, 
\begin{equation*}
    \begin{split}
        K_3 
        &\leq 
        \bigl\| \bigl(\tilde{\dot{\Rot}}(t) -\dot{\Rot}(t_0)\bigr)\tilde{w}(t)
        \bigr\|_{H^1(\Omega(t_0))}
        + \bigl\| \dot{\Rot}(t_0) \bigl( \tilde{w}(t)-w(t_0)\bigr) \bigr\|_{H^1(\Omega(t_0))}
        \\
        &\to 0 \quad \text{as } t\to t_0.
    \end{split}
\end{equation*}

Finally, since $\|\tilde{\dot{w}}(t)\|_{H^2(\Omega(t_0))}$ is bounded for $t \in [0,T]$
from \eqref{eq;bounded dotw}, we obtain $K_4 \to 0$ as $t\to t_0$ by direct calculation.
This completes the proof.
\end{proof}

As a summary, we conclude the following.
\begin{theorem}
    Let $T>0$. Let $b(t) \in H^1\bigl(\Omega(t)\bigr)$ with $\Div\,b(t)=0$ in $\Omega(t)$ for $t \in \R$.
    Let $w(t) \in Z_{\sigma}^2\bigl(\Omega(t)\bigr) \cap H^2\bigl(\Omega(t)\bigr)$
    and let $h(t) \in V_{\mathrm{har}}\bigl(\Omega(t)\bigr)$ satisfy the Helmholtz-Weyl decomposition
    $h(t) + \rot\, w(t) =b(t)$ in $\Omega(t)$.
    If $\widetilde{b} \in C^1\bigl([0,T];H^1(\widetilde{\Omega})\bigr)$, then it holds that
    \begin{equation*}
        \widetilde{w} \in C^1\bigl([0,T];H^2(\widetilde{\Omega})\bigr).
    \end{equation*}
\end{theorem}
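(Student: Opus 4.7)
The plan is to reduce the theorem to the two local results already established on the fixed domain $\Omega(t_0)$, namely Lemma \ref{lem;diff local w} (existence of the strong $H^2$-derivative $\dot{w}(t_0)$) and Lemma \ref{lem;conti local dotw} (continuity of $\tilde{\dot{w}}(t)$ at $t_0$), and then transport them to $\widetilde{\Omega}$ via the identity $\widetilde{\tilde{w}}(t)=\widetilde{w}(t)$ from \eqref{eq;tildewidetilde}, combined with the uniform norm equivalence in Proposition \ref{prop;funcsp}. The whole argument is a bookkeeping step: the conceptually heavy content (uniform boundedness of $C(\Omega(t))$, preservation of $X_{\mathrm{har}}^\perp$ under coordinate transform, and the a priori estimate for the difference quotient in the Helmholtz--Weyl decomposition) is already absorbed into these two lemmas.

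First I would fix $t_0\in[0,T]$ and denote by $\widetilde{\dot{w}}(t_0)$ the push-forward under $\phi(\cdot,t_0)$ of $\dot{w}(t_0)\in H^2(\Omega(t_0))$ to $\widetilde{\Omega}$. Since $\widetilde{\tilde{w}}(t)=\widetilde{w}(t)$ as vector fields on $\widetilde{\Omega}$ for every $t$, Proposition \ref{prop;funcsp} produces a constant $C>0$ (independent of $\varepsilon$ and $t_0$) with
\begin{equation*}
\left\|\frac{\widetilde{w}(t_0+\varepsilon)-\widetilde{w}(t_0)}{\varepsilon}-\widetilde{\dot{w}}(t_0)\right\|_{H^2(\widetilde{\Omega})}
\leq C\left\|\frac{\tilde{w}(t_0+\varepsilon)-w(t_0)}{\varepsilon}-\dot{w}(t_0)\right\|_{H^2(\Omega(t_0))},
\end{equation*}
and the right-hand side tends to $0$ as $\varepsilon\to 0$ by Lemma \ref{lem;diff local w}. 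Hence $\pt_s\widetilde{w}(t_0)$ exists in $H^2(\widetilde{\Omega})$ and equals $\widetilde{\dot{w}}(t_0)$.

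Next I would handle continuity of $t\mapsto \pt_s\widetilde{w}(t)$ at $t_0$. Applying \eqref{eq;tildewidetilde} to the vector field $\dot{w}(t)$ (rather than to $w(t)$) gives $\widetilde{\tilde{\dot{w}}}(t)=\widetilde{\dot{w}}(t)$ in $\widetilde{\Omega}$. Using this together with Proposition \ref{prop;funcsp} and Lemma \ref{lem;conti local dotw}, I obtain
\begin{equation*}
\|\pt_s\widetilde{w}(t)-\pt_s\widetilde{w}(t_0)\|_{H^2(\widetilde{\Omega})}
=\|\widetilde{\dot{w}}(t)-\widetilde{\dot{w}}(t_0)\|_{H^2(\widetilde{\Omega})}
\leq C\|\tilde{\dot{w}}(t)-\dot{w}(t_0)\|_{H^2(\Omega(t_0))}\to 0
\end{equation*}
as $t\to t_0$, with $C>0$ independent of $t$. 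Since $t_0\in[0,T]$ was arbitrary, the conclusion $\widetilde{w}\in C^1([0,T];H^2(\widetilde{\Omega}))$ follows.

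I do not expect any real obstacle in executing this plan: the only subtlety is confirming that the composite transformation $\Omega(t)\xrightarrow{\vphi(\cdot,t)}\Omega(t_0)\xrightarrow{\phi(\cdot,t_0)}\widetilde{\Omega}$ reproduces the single-step transformation $\Omega(t)\xrightarrow{\phi(\cdot,t)}\widetilde{\Omega}$ at the level of the difference quotient; this is exactly the content of \eqref{eq;tildewidetilde}, applied successively to $w(t)$ and to $\dot{w}(t)$. All constants produced by Proposition \ref{prop;funcsp} are uniform in $t\in[0,T]$, so no extra care is needed when letting $t\to t_0$.
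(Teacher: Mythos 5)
Your proposal is correct and matches the paper's own proof step for step: first the push-forward of Lemma \ref{lem;diff local w} via Proposition \ref{prop;funcsp} and the identity \eqref{eq;tildewidetilde} to obtain $\pt_s\widetilde{w}(t_0)=\widetilde{\dot{w}}(t_0)$, and then the same transfer of Lemma \ref{lem;conti local dotw} (applying \eqref{eq;tildewidetilde} to $\dot{w}(t)$ rather than $w(t)$) to get continuity of $\pt_s\widetilde{w}$. The ``subtlety'' you flag about the composite diffeomorphism reproducing the single-step one is precisely what \eqref{eq;tildewidetilde} encodes, and you invoke it correctly at both stages.
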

\begin{proof} Let $t_0 \in [0,T]$ be fixed.
    To begin with, we recall the relation $\widetilde{\tilde{w}}(t) = \widetilde{w}(t)$ in $\widetilde{\Omega}$
    as in \eqref{eq;tildewidetilde}. 

    We shall show $\pt_s \widetilde{w}(t_0) = \widetilde{\dot{w}}(t_0)$ in $\widetilde{\Omega}$ for $t_0\in [0,T]$.
    Indeed, by Proposition \ref{prop;funcsp} and Lemma \ref{lem;diff local w}, we have that
    \begin{equation*}
        \begin{split}
            \left\| \frac{\widetilde{w}(t_0+\ep)-\widetilde{w}(t_0)}{\ep}
             - \widetilde{\dot{w}}(t_0)\right\|_{H^2(\widetilde{\Omega})}
             &=
             \left\| \frac{\widetilde{\tilde{w}}(t_0+\ep)-\widetilde{w}(t_0)}{\ep}
             - \widetilde{\dot{w}}(t_0)\right\|_{H^2(\widetilde{\Omega})}
             \\
             &\leq 
             C\left\| \frac{\tilde{w}(t_0+\ep)-{w}(t_0)}{\ep}
             - {\dot{w}}(t_0)\right\|_{H^2(\Omega(t_0))} 
             \\
             &\to 0 \quad\text{as } t\to t_0,
        \end{split}
    \end{equation*}
    where we can take the constant $C>0$ independent of sufficiently small $|\ep|$ and $t_0 \in [0,T]$. 

    Moreover, by Proposition \ref{prop;funcsp} and Lemma \ref{lem;conti local dotw}, we have
    \begin{equation*}
        \begin{split}
            \| \pt_s \widetilde{w}(t) - \pt_s \widetilde{w}(t_0) \|_{H^2(\widetilde{\Omega})}
            &=
            \bigl\| \widetilde{\tilde{\dot{w}}}(t)-\widetilde{\dot{w}}(t_0)
            \bigr\|_{H^2(\widetilde{\Omega})}
            \\
            &\leq C \| \tilde{\dot{w}}(t) -\dot{w}(t_0) \|_{H^2(\Omega(t_0))}
            \\
            &\to 0 \quad \text{as } t \to t_0,
        \end{split}
    \end{equation*}
    where the constant $C>0$ is independent of $t \in [0,T]$. This completes the proof.
\end{proof}
\begin{remark}
    The strategy to derive time differentiability needs a qualitative observation for 
    $X_{\mathrm{har}}\bigl(\Omega(t)\bigr)$ which comes from the topology of the domain
    as in Lemma \ref{lem;Xhar perp}.
    Here, we emphasize that this strategy can be also applicable to 
    the proof of the time continuity of $\widetilde{w}(t)$, provided Theorem \ref{thm;const KY dec} is established.
    However, to derive time continuity we adopt a more general approach with 
    the a priori estimate, since the quantitative analysis still valid, 
    regardless of the topology of the domain.
\end{remark}

\subsection{Proof of Corollary \ref{cor;time depend HW dec gen}}
\label{subsec;cor}
%
The scalar potential $p \in H^2\bigl(\Omega(t)\bigr)\cap H^1_0\bigl(\Omega(t)\bigr)$  
in the Helmholtz-Weyl decomposition for $f(t) \in H^1\bigl(\Omega(t)\bigr)$ is uniquely determined
by the following Poisson equation with the homogeneous Dirichlet boundary condition:
\begin{equation}\label{eq;Lap p}
\begin{cases}
    \Delta p(t) = \Div f(t) &\text{in }\Omega(t),\\
    p(t)=0 & \text{on }\pt \Omega(t).
\end{cases}
\end{equation}

So, we shall show the following theorem.
\begin{theorem}\label{thm;timedepend p}
    Let $T>0$. Let $f(t) \in H^1\bigl(\Omega(t)\bigr)$ for $t\in \R$
    and let $p(t)\in H^2\bigl(\Omega(t)\bigr)\cap H^1_0\bigl(\Omega(t)\bigr)$
    be the solution of \eqref{eq;Lap p}.
    If $\widetilde{f} \in C\bigl([0,T];H^1(\widetilde{\Omega})\bigr)$
    then $\widetilde{p} \in C\bigl([0,T];H^2(\widetilde{\Omega})
    \cap H^1_0(\widetilde{\Omega})\bigr)$.
   Furthermore, if $\widetilde{f} \in C^1\bigl([0,T];H^1(\widetilde{\Omega})\bigr)$
    then $\widetilde{p} \in C^1\bigl([0,T];H^2(\widetilde{\Omega})
    \cap H^1_0(\widetilde{\Omega})\bigr)$.
\end{theorem}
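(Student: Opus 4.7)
The plan is to adapt, nearly verbatim, the strategy used in Section \ref{sec;Vharmonic} for the harmonic potentials $q_k(t)$, exploiting the fact that \eqref{eq;Lap p} is a Dirichlet problem whose a priori estimate involves no lower-order term in $p$. Fix $t_0\in[0,T]$ and work with the diffeomorphism $\vphi(\cdot,t):\overline{\Omega(t)}\to\overline{\Omega(t_0)}$ of Section \ref{subsec;unit normal}. Set the transformed scalar $\tilde{p}(\tilde{x},t):=p(\vphi^{-1}(\tilde{x},t),t)$ and the transformed vector field $\tilde{f}(t)$ on $\Omega(t_0)$ via \eqref{eq;identify tilde}. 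Since the Jacobian of $\vphi(\cdot,t)$ is constant in $\tilde{x}$ (equal to $J(t_0)/J(t)$), the argument of Proposition \ref{prop;IW div} applies and gives $\Div_x f(t)=\Div_{\tilde{x}}\tilde{f}(t)$. Hence \eqref{eq;Lap p} becomes
\begin{equation*}
\mathcal{L}(t)\tilde{p}(t)=\Div_{\tilde{x}}\tilde{f}(t)\ \ \text{in }\Omega(t_0),\qquad \tilde{p}(t)=0\ \ \text{on }\pt\Omega(t_0),
\end{equation*}
the exact analog of \eqref{eq;Ltq_k}.

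For the continuity claim, I would observe that $\tilde{p}(t)-p(t_0)$ has zero Dirichlet data and satisfies
\begin{equation*}
\mathcal{L}(t)\bigl(\tilde{p}(t)-p(t_0)\bigr)=\Div_{\tilde{x}}\bigl(\tilde{f}(t)-f(t_0)\bigr)+\bigl(\Delta_{\tilde{x}}-\mathcal{L}(t)\bigr)p(t_0).
\end{equation*}
The locally uniform a priori estimate of Proposition \ref{prop;uniform bound q_k} applies to $\mathcal{L}(t)$ near $t_0$. The second term on the right tends to $0$ in $L^2(\Omega(t_0))$ by Proposition \ref{prop;gij}, exactly as in \eqref{eq;Lt-Delta}. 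For the first term, the hypothesis $\widetilde{f}\in C([0,T];H^1(\widetilde{\Omega}))$, combined with the identity $\widetilde{\tilde{f}}=\widetilde{f}$ of \eqref{eq;tildewidetilde} and Proposition \ref{prop;funcsp}, yields $\tilde{f}(t)\to f(t_0)$ in $H^1(\Omega(t_0))$, so $\Div_{\tilde{x}}(\tilde{f}(t)-f(t_0))\to 0$ in $L^2(\Omega(t_0))$. The a priori estimate then gives $\tilde{p}(t)\to p(t_0)$ in $H^2(\Omega(t_0))$, and one more application of Proposition \ref{prop;funcsp} transfers the convergence to $\widetilde{p}(t)\to\widetilde{p}(t_0)$ in $H^2(\widetilde{\Omega})$, with membership in $H^1_0(\widetilde{\Omega})$ being preserved by composition with $\phi^{-1}(\cdot,t)$.

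For differentiability, I would introduce $\dot{p}(t_0)\in H^2(\Omega(t_0))\cap H^1_0(\Omega(t_0))$ as the unique solution of
\begin{equation*}
\Delta_{\tilde{x}}\dot{p}(t_0)=-\dot{\mathcal{L}}(t_0)p(t_0)+\Div_{\tilde{x}}\dot{f}(t_0)\ \ \text{in }\Omega(t_0),\qquad \dot{p}(t_0)=0\ \ \text{on }\pt\Omega(t_0),
\end{equation*}
where $\dot{\mathcal{L}}(t_0)$ is as in Section \ref{subsec;diff qk} and $\dot{f}(t_0):=\lim_{\ep\to 0}(\tilde{f}(t_0+\ep)-f(t_0))/\ep$ exists in $H^1(\Omega(t_0))$ under the stronger hypothesis $\widetilde{f}\in C^1([0,T];H^1(\widetilde{\Omega}))$. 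The difference quotient $(\tilde{p}(t_0+\ep)-p(t_0))/\ep-\dot{p}(t_0)$ then satisfies the elliptic equation arising in the proof of Lemma \ref{lem;diff q_k}, augmented by the extra inhomogeneity $\Div_{\tilde{x}}\bigl((\tilde{f}(t_0+\ep)-f(t_0))/\ep-\dot{f}(t_0)\bigr)$, which vanishes in $L^2(\Omega(t_0))$ as $\ep\to 0$. The uniform a priori estimate yields existence of $\pt_s\widetilde{p}(t_0)=\widetilde{\dot{p}}(t_0)$ in $H^2(\widetilde{\Omega})$. Time continuity of the derivative is obtained afterwards by rewriting $\dot{\mathcal{L}}(t)$ and $\dot{f}(t)$ on $\Omega(t_0)$ through $\vphi(\cdot,t)$, mirroring \eqref{eq;defi tildotLtqk}, and running the argument of Lemma \ref{lem;diff conti q_k}.

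The main technical item, rather than a true obstacle, is the bookkeeping needed to carry the new inhomogeneous right-hand side $\Div_{\tilde{x}}\tilde{f}(t)$ through each step; the Dirichlet nature of \eqref{eq;Lap p} means that no kernel appears and no uniform Helmholtz-type constant (as in Theorem \ref{thm;const KY dec}) is required, which is the essential simplification compared with the vector-potential analysis of Section \ref{subsec;rot}.
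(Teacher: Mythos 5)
Your proposal follows essentially the same route as the paper: transforming to the fixed domain $\Omega(t_0)$ via $\vphi(\cdot,t)$, invoking Proposition~\ref{prop;IW div} to preserve $\Div$, applying the uniform a priori estimate of Proposition~\ref{prop;uniform bound q_k} (which, being a Dirichlet estimate, has no lower-order term), and treating differentiability through the auxiliary problem \eqref{eq;Lap dotp} with the extra inhomogeneity $\Div_{\tilde{x}}\dot{f}(t_0)$ exactly as in Lemmas~\ref{lem;diff q_k} and~\ref{lem;diff conti q_k}. The bookkeeping you anticipate is precisely what the paper carries out, and your observation that the Jacobian of $\vphi(\cdot,t)$ equals $J(t_0)/J(t)$ (constant in $\tilde{x}$), so that Proposition~\ref{prop;IW div} still applies, correctly fills the one detail the paper leaves implicit.
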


In order to prove Corollary \ref{cor;time depend HW dec gen}, we put $b(t):=f(t)-\nabla p(t)$.
Then, we easily see by Theorem \ref{thm;timedepend p} that $\Div\, b(t)=0$ in $\Omega(t)$ and 
$\widetilde{b}\in C^m\bigl([0,T];H^1(\widetilde{\Omega})\bigr)$ 
with some $m\in\{0,1\}$, according to $\widetilde{f}\in C^m\bigl([0,T];H^1(\widetilde{\Omega})\bigr)$.
Therefore, applying Theorem \ref{thm;time depend HW dec div} to $b(t)$,
we immediately obtain Corollary \ref{cor;time depend HW dec gen}.

For the proof, similar to the argument as in Section \ref{sec;Vharmonic}, we introduce 
the transformed function $\tilde{p}(t)$ on $\Omega(t_0)$ defined by
\begin{equation*}
    \tilde{p}(\tilde{x},t)= p\bigl(\varphi^{-1}(\tilde{x},t),t\bigr)
    \quad \text{for }\tilde{x} \in \Omega(t_0),
\end{equation*}
where the diffeomorphism $\varphi(\cdot,t):\Omega(t)\to\Omega(t_0)$ is as in \eqref{eq;diffeo t0} for a fixed $t_0\in[0,T]$.
Then, by the virtue of Proposition \ref{prop;IW div}, \eqref{eq;Lap p} is equivalent to 
\begin{equation}\label{eq;Lt p}
    \begin{cases}
        \mathcal{L}(t)\,\tilde{p}(t) = \Div_{\tilde{x}} \tilde{f}(t) &\text{in } \Omega(t_0),\\
        \tilde{p}(t)=0 & \text{on } \pt \Omega(t_0).
    \end{cases}
\end{equation}
\subsubsection{Time continuity of the solution to \eqref{eq;Lt p} for $t$}
\begin{lemma}\label{lem;conti tildep}
    Under the assumption of Theorem \ref{thm;timedepend p},
    for $t_0\in[0,T]$ and for $\ep>0$ then there exists $\delta>0$ such that
    for every $t\in\R$ if $|t-t_0|<\delta$ then 
    $\| \tilde{p}(t) -p(t_0)\|_{H^2(\Omega(t_0))} < \ep$.
\end{lemma}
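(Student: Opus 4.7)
The plan is to mirror the proof of Lemma \ref{lem;contiq_k}, applying the locally uniform a priori estimate for the transformed Dirichlet operator $\mathcal{L}(t)$ (Proposition \ref{prop;uniform bound q_k}) to the difference $\tilde{p}(t) - p(t_0)$. Since $\tilde{p}(t_0) = p(t_0)$, $\mathcal{L}(t_0) = \Delta_{\tilde x}$, and both $\tilde p(t)$ and $p(t_0)$ vanish on $\pt \Omega(t_0)$, the difference satisfies
\begin{equation*}
    \begin{cases}
        \mathcal{L}(t)\bigl(\tilde{p}(t) - p(t_0)\bigr)
        = \Div_{\tilde{x}}\bigl(\tilde{f}(t) - f(t_0)\bigr)
        + \bigl(\Delta_{\tilde{x}} - \mathcal{L}(t)\bigr) p(t_0)
        & \text{in } \Omega(t_0), \\
        \tilde{p}(t) - p(t_0) = 0 & \text{on } \pt \Omega(t_0).
    \end{cases}
\end{equation*}
Thus Proposition \ref{prop;uniform bound q_k} yields, for $|t-t_0|<\delta$ with some $\delta>0$,
\begin{equation*}
    \|\tilde{p}(t) - p(t_0)\|_{H^2(\Omega(t_0))}
    \leq C \Bigl(
        \|\tilde{f}(t) - f(t_0)\|_{H^1(\Omega(t_0))}
        + \bigl\|\bigl(\Delta_{\tilde{x}} - \mathcal{L}(t)\bigr) p(t_0)\bigr\|_{L^2(\Omega(t_0))}
    \Bigr),
\end{equation*}
and it remains to show both terms on the right tend to $0$ as $t \to t_0$.

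For the first term, I would invoke the identity $\widetilde{\tilde{f}}(t) = \widetilde{f}(t)$ on $\widetilde{\Omega}$ from \eqref{eq;tildewidetilde} together with Proposition \ref{prop;funcsp}:
\begin{equation*}
    \|\tilde{f}(t) - f(t_0)\|_{H^1(\Omega(t_0))}
    \leq C \|\widetilde{f}(t) - \widetilde{f}(t_0)\|_{H^1(\widetilde{\Omega})}
    \to 0 \quad \text{as } t \to t_0,
\end{equation*}
which uses exactly the hypothesis $\widetilde{f} \in C\bigl([0,T]; H^1(\widetilde{\Omega})\bigr)$. For the second term, I would replicate the calculation carried out in \eqref{eq;Lt-Delta}: expanding $\mathcal{L}(t) - \Delta_{\tilde{x}}$ in terms of $g^{k\ell}(\tilde{x},t) - \delta_{k,\ell}$ and $\pt g^{k\ell}/\pt\tilde{x}^\ell(\tilde{x},t)$ and applying Proposition \ref{prop;gij} gives
\begin{equation*}
    \bigl\|\bigl(\Delta_{\tilde{x}} - \mathcal{L}(t)\bigr) p(t_0)\bigr\|_{L^2(\Omega(t_0))}
    \leq C \sum_{k,\ell}\Bigl(
        \sup_{\tilde{x}}|g^{k\ell}(\tilde{x},t) - \delta_{k,\ell}|
        + \sup_{\tilde{x}}\Bigl|\tfrac{\pt g^{k\ell}}{\pt\tilde{x}^\ell}(\tilde{x},t)\Bigr|
    \Bigr) \|p(t_0)\|_{H^2(\Omega(t_0))},
\end{equation*}
which tends to $0$ as $t \to t_0$.

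There is no essential obstacle here: everything parallels the already-proved Lemma \ref{lem;contiq_k}, the only new feature being the inhomogeneous right-hand side $\Div_{\tilde{x}} \tilde{f}(t)$, whose time continuity at $t_0$ is a direct consequence of $\widetilde{f} \in C([0,T]; H^1(\widetilde{\Omega}))$ via the norm equivalence of Proposition \ref{prop;funcsp}. The subsequent passage from Lemma \ref{lem;conti tildep} to time continuity of $\widetilde{p}(t)$ on $\widetilde{\Omega}$ will then follow the pattern of the proof of Theorem \ref{thm;rem1}, using the fact that $\widetilde{\tilde{p}}(t) = \widetilde{p}(t)$ in $\widetilde{\Omega}$ and Proposition \ref{prop;funcsp} once more.
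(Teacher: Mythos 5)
Your proposal is correct and follows essentially the same route as the paper: apply the locally uniform a priori estimate \eqref{eq;aprioriq} from Proposition \ref{prop;uniform bound q_k} to the difference $\tilde{p}(t)-p(t_0)$ (whose trace vanishes on $\pt\Omega(t_0)$), decompose $\mathcal{L}(t)(\tilde p(t)-p(t_0))$ into $\Div_{\tilde x}(\tilde f(t)-f(t_0))$ plus $(\Delta_{\tilde x}-\mathcal{L}(t))p(t_0)$, and send each piece to zero via Proposition \ref{prop;funcsp} together with \eqref{eq;tildewidetilde} and via Proposition \ref{prop;gij} as in \eqref{eq;Lt-Delta}. The only cosmetic difference is that you bound the first piece by $\|\tilde f(t)-f(t_0)\|_{H^1(\Omega(t_0))}$ rather than directly by the $L^2$-norm of the divergence as the paper does, which is harmless.
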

\begin{proof}
    The argument is quite an analogy of proof of Lemma \ref{lem;contiq_k}.
    By the a priori estimate \eqref{eq;aprioriq} and by \eqref{eq;Lap p} at $t_0$ and \eqref{eq;Lt p},
    \begin{equation*}
        \begin{split}
        \| \tilde{p}(t) - p(t_0)\|_{H^2(\Omega(t_0))}
        &\leq C
        \bigl\| \mathcal{L}(t)\bigl( \tilde{p}(t)-p(t_0)\bigr)
        \bigr\|_{H^2(\Omega(t_0))}
        \\
        &\leq
        C\Bigl(
            \| \Div_{\tilde{x}} \tilde{f}(t) - \Div_{\tilde{x}} f(t_0)\|_{L^2(\Omega(t_0))}
            +
            \bigl\|
                \bigl(\mathcal{L}(t) -\Delta_{\tilde{x}}\bigr)p(t_0)
            \bigr\|_{L^2(\Omega(t_0))}
        \Bigr)
        \\
        & \to 0 \quad \text{as } t\to t_0,
        \end{split}
    \end{equation*}
    where the constant $C>0$ is independent of $t$. 
    This completes the proof. 
\end{proof}
From Lemma \ref{lem;conti tildep}, we immediately obtain 
$\widetilde{p} \in C\bigl([0,T];H^2(\widetilde{\Omega})\bigr)$.
%
\subsubsection{Time differentiability of the solution to \eqref{eq;Lt p} for $t$}
Firstly, we put $\displaystyle \dot{f}(t_0)=\lim\limits_{\ep\to 0}\frac{\tilde{f}(t_0+\ep)-f(t_0)}{\ep}$
in $H^2\bigl(\Omega(t_0)\bigr)$ for $t_0 \in [0,T]$.
Then we introduce the solution $\dot{p}(t_0)$ of the following Poisson equation 
with the 
homogeneous Dirichlet boundary condition:
\begin{equation}\label{eq;Lap dotp}
    \begin{cases}
        \Delta_{\tilde{x}} \dot{p}(t_0) = -\dot{\mathcal{L}}(t_0)p(t_0) + \Div_{\tilde{x}} \dot{f}(t_0)
        & \text{in } \Omega(t_0),\\
        \dot{p}(t_0)= 0 & \text{on } \pt\Omega(t_0).
    \end{cases}
\end{equation}
\begin{lemma}
    Under the assumption of Theorem \ref{thm;timedepend p}, for $t_0\in[0,T]$, it holds that
    \begin{equation} \label{eq;diff lim p}
        \lim_{\ep \to 0}\frac{\tilde{p}(t_0+\ep)-p(t_0)}{\ep} = \dot{p}(t_0) \quad \text{in } H^2\bigl(\Omega(t_0)\bigr). 
    \end{equation}
\end{lemma}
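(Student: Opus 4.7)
The plan is to mimic the strategy used for Lemma \ref{lem;diff q_k}, applying the uniform a priori estimate \eqref{eq;aprioriq} for $\mathcal{L}(t_0+\ep)$ to the quantity $\bigl(\tilde{p}(t_0+\ep)-p(t_0)\bigr)/\ep - \dot{p}(t_0)$, which vanishes on $\pt\Omega(t_0)$. First I would verify that $\dot{p}(t_0)$ is well defined in $H^2\bigl(\Omega(t_0)\bigr)\cap H^1_0\bigl(\Omega(t_0)\bigr)$, which follows from the standard $H^2$-regularity for the Poisson equation \eqref{eq;Lap dotp}, since $\dot{\mathcal{L}}(t_0)p(t_0)\in L^2\bigl(\Omega(t_0)\bigr)$ by Lemma \ref{lem;conti tildep} applied at $t_0$ (so $p(t_0)\in H^2$) and $\Div_{\tilde{x}}\dot{f}(t_0)\in L^2\bigl(\Omega(t_0)\bigr)$ by the assumption $\widetilde{f}\in C^1\bigl([0,T];H^1(\widetilde{\Omega})\bigr)$ together with Proposition \ref{prop;funcsp}.

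Next, using \eqref{eq;Lt p} at $t=t_0+\ep$, \eqref{eq;Lap p} at $t=t_0$, and \eqref{eq;Lap dotp}, I compute
\begin{equation*}
\begin{split}
\mathcal{L}(t_0+\ep)\!\left(\frac{\tilde{p}(t_0+\ep)-p(t_0)}{\ep}-\dot{p}(t_0)\right)
=\,&\Div_{\tilde{x}}\!\left(\frac{\tilde{f}(t_0+\ep)-f(t_0)}{\ep}-\dot{f}(t_0)\right)\\
&-\left(\frac{\mathcal{L}(t_0+\ep)-\Delta_{\tilde{x}}}{\ep}-\dot{\mathcal{L}}(t_0)\right)p(t_0)\\
&-\bigl(\mathcal{L}(t_0+\ep)-\Delta_{\tilde{x}}\bigr)\dot{p}(t_0).
\end{split}
\end{equation*}
Applying the uniform a priori estimate from Proposition \ref{prop;uniform bound q_k} (which was proved for $\mathcal{L}(t)$ and any $H^2$-function vanishing on $\pt\Omega(t_0)$), I bound the $H^2\bigl(\Omega(t_0)\bigr)$-norm of the difference quotient minus $\dot{p}(t_0)$ by the $L^2\bigl(\Omega(t_0)\bigr)$-norm of the right-hand side.

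The three terms are handled separately. The first converges to zero in $L^2\bigl(\Omega(t_0)\bigr)$ by the differentiability hypothesis together with Proposition \ref{prop;funcsp} (which transfers the convergence from $\widetilde{\Omega}$ to $\Omega(t_0)$ by means of the relation $\widetilde{\tilde{f}}=\widetilde{f}$ as in \eqref{eq;tildewidetilde}). The third term tends to zero exactly as in \eqref{eq;Lt-Delta}, since the coefficients of $\mathcal{L}(t_0+\ep)-\Delta_{\tilde{x}}$ converge to zero in $C^1\bigl(\overline{\Omega(t_0)}\bigr)$ by Proposition \ref{prop;gij} and $\dot{p}(t_0)\in H^2\bigl(\Omega(t_0)\bigr)$ is fixed. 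For the middle term, the key point is that by the very definition of $\dot{\mathcal{L}}(t_0)$ we have $\bigl(\mathcal{L}(t_0+\ep)-\Delta_{\tilde{x}}\bigr)/\ep\to\dot{\mathcal{L}}(t_0)$ in operator form acting on $H^2\bigl(\Omega(t_0)\bigr)$ into $L^2\bigl(\Omega(t_0)\bigr)$; more concretely, this reduces to showing $\bigl(g^{ij}(\cdot,t_0+\ep)-\delta_{i,j}\bigr)/\ep\to\pt_t g^{ij}(\cdot,t_0)$ in $C^1\bigl(\overline{\Omega(t_0)}\bigr)$, which follows from the $C^\infty$-smoothness of the diffeomorphism $\varphi$ and Taylor expansion.

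The main obstacle I expect is keeping track of the fact that the a priori estimate \eqref{eq;aprioriq} was established in Proposition \ref{prop;uniform bound q_k} with a constant uniform only in a small neighborhood of $t_0$; however this is harmless since we only need the estimate for $|\ep|<\delta$. Everything else is a direct transcription of the proof of Lemma \ref{lem;diff q_k}, with $-\dot{\mathcal{L}}(t_0)q_k(t_0)$ replaced by the inhomogeneity $-\dot{\mathcal{L}}(t_0)p(t_0)+\Div_{\tilde{x}}\dot{f}(t_0)$. The conclusion \eqref{eq;diff lim p} then follows, and time continuity of $\dot{p}$ (needed for $\widetilde{p}\in C^1$) is obtained by an argument parallel to Lemma \ref{lem;diff conti q_k} together with the transformation $\widetilde{\tilde{p}}=\widetilde{p}$ and Proposition \ref{prop;funcsp}.
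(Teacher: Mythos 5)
Your proposal is correct and follows the paper's proof essentially verbatim: both derive the same identity for $\mathcal{L}(t_0+\ep)$ applied to the difference quotient minus $\dot{p}(t_0)$, then invoke the uniform a priori estimate \eqref{eq;aprioriq} and let $\ep\to 0$. Your version merely spells out why each of the three terms vanishes (which the paper leaves implicit) and adds the preliminary check that $\dot{p}(t_0)$ is well defined; these are sound elaborations, not departures.
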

\begin{proof}
    Similar to the proof of Lemma \ref{lem;diff q_k}, in oder to apply the a priori estimate
    \eqref{eq;aprioriq}, from \eqref{eq;Lap p}, \eqref{eq;Lt p} and \eqref{eq;Lap dotp} we confirm that
    \begin{equation*}
        \begin{split}
            \mathcal{L}(t_0+\ep)
            \left(
                \frac{\tilde{p}(t_0+\ep)-p(t_0)}{\ep} -\dot{p}(t_0)
            \right)
            &=
            -
            \frac{\mathcal{L}(t_0+\ep)-\Delta_{\tilde{x}}}{\ep} p(t_0)
            + \dot{\mathcal{L}}(t_0) p(t_0)
            \\
            & \quad -
            \bigl(\mathcal{L}(t_0+\ep)-\Delta_{\tilde{x}}\bigr)\dot{p}(t_0)
            \\
            &\quad + \Div_{\tilde{x}} \frac{\tilde{f}(t_0 +\ep)-f(t_0)}{\ep}   
            -\Div_{\tilde{x}}\dot{f}(t_0).
        \end{split}
    \end{equation*}
    Hence, by \eqref{eq;aprioriq} we immediately obtain \eqref{eq;diff lim p}.
\end{proof}
From \eqref{eq;Lap dotp}, for $t\in[0,T]$, the transformed function $\tilde{\dot{p}}(t)$ satisfies the following problem:
\begin{equation}\label{eq;Lt dotp}
    \begin{cases}
        \mathcal{L}(t)\, \tilde{\dot{p}}(t) = -\tilde{\dot{\mathcal{L}}}(t) \tilde{p}(t) 
        + \Div_{\tilde{x}} \tilde{\dot{f}}(t).
        &\text{in } \Omega(t_0),
        \\
        \tilde{p}(t) = 0 & \text{on }\pt\Omega(t_0).
    \end{cases}
\end{equation}
Therefore, we have the following lemma.
\begin{lemma}\label{lem;conti local dotp}
    Under the assumption of Theorem \ref{thm;timedepend p}, for $t_0$ and $\ep>0$
    then there exists $\delta>0$ such that for every $t\in[0,T]$ if $|t-t_0|<\delta$
    then $\| \tilde{\dot{p}}(t) -\dot{p}(t_0) \|_{H^2(\Omega(t_0))}<\ep$. 
\end{lemma}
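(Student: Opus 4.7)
The plan is to mirror the strategy already carried out for the harmonic potentials in Lemma \ref{lem;diff conti q_k}, replacing $\tilde{q}_k(t)$ by $\tilde{p}(t)$ and adding the source terms coming from $\Div_{\tilde{x}} \tilde{f}(t)$. The two essential ingredients are the uniform a priori estimate \eqref{eq;aprioriq} for $\mathcal{L}(t)$ near $t_0$ (Proposition \ref{prop;uniform bound q_k}) and the identity $\tilde{\dot{\mathcal{L}}}(t_0)=\dot{\mathcal{L}}(t_0)$ on $H^2\bigl(\Omega(t_0)\bigr)$ established in \eqref{eq;tildotLt0}.

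First, I would combine \eqref{eq;Lap dotp} and \eqref{eq;Lt dotp} to write, on $\Omega(t_0)$,
\begin{equation*}
\begin{split}
    \mathcal{L}(t)\bigl(\tilde{\dot{p}}(t)-\dot{p}(t_0)\bigr)
    =\,&-\bigl(\tilde{\dot{\mathcal{L}}}(t)-\dot{\mathcal{L}}(t_0)\bigr)\tilde{p}(t)
    -\dot{\mathcal{L}}(t_0)\bigl(\tilde{p}(t)-p(t_0)\bigr)\\
    &-\bigl(\mathcal{L}(t)-\Delta_{\tilde{x}}\bigr)\dot{p}(t_0)
    +\Div_{\tilde{x}}\bigl(\tilde{\dot{f}}(t)-\dot{f}(t_0)\bigr),
\end{split}
\end{equation*}
with vanishing Dirichlet trace on $\pt\Omega(t_0)$. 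For $|t-t_0|$ small, the uniform a priori estimate from Proposition \ref{prop;uniform bound q_k} controls $\|\tilde{\dot{p}}(t)-\dot{p}(t_0)\|_{H^2(\Omega(t_0))}$ by the $L^2$-norm of the right-hand side, so it suffices to verify that each of the four terms tends to $0$ in $L^2\bigl(\Omega(t_0)\bigr)$ as $t\to t_0$.

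For the third term, the argument is identical to \eqref{eq;Lt-Delta} together with Proposition \ref{prop;gij}. For the fourth term, writing $\tilde{\dot{f}}(t)$ via the relation between coordinate transforms as in \eqref{eq;K1}, the hypothesis $\widetilde{f}\in C^1\bigl([0,T];H^1(\widetilde{\Omega})\bigr)$ and Proposition \ref{prop;funcsp} give $\|\tilde{\dot{f}}(t)-\dot{f}(t_0)\|_{H^1(\Omega(t_0))}\to 0$. For the second term, Lemma \ref{lem;conti tildep} provides $\|\tilde{p}(t)-p(t_0)\|_{H^2(\Omega(t_0))}\to 0$, and $\dot{\mathcal{L}}(t_0)$ is a bounded operator from $H^2\bigl(\Omega(t_0)\bigr)$ into $L^2\bigl(\Omega(t_0)\bigr)$ by the smoothness of $\mathcal{G}^{ij}$.

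The only step requiring genuine care is the first term, namely showing $\bigl\|\bigl(\tilde{\dot{\mathcal{L}}}(t)-\dot{\mathcal{L}}(t_0)\bigr)\tilde{p}(t)\bigr\|_{L^2(\Omega(t_0))}\to 0$; this is the analogue of the corresponding step in Lemma \ref{lem;diff conti q_k}. I would expand $\tilde{\dot{\mathcal{L}}}(t)$ according to \eqref{eq;defi tildotLtqk} and compare its coefficients with those of $\dot{\mathcal{L}}(t_0)$ displayed in \eqref{eq;tildotLt0}. Since each coefficient is built from $\vphi$, $\vphi^{-1}$, and $\tilde{\mathcal{G}}^{ij}$, the uniform convergences \eqref{eq;vphitoId}, \eqref{eq;ptvphito0} together with $\tilde{\mathcal{G}}^{ij}(\cdot,t)\to\mathcal{G}^{ij}(\cdot,t_0)$ in $C^1\bigl(\overline{\Omega(t_0)}\bigr)$ yield that the supremum norms of the coefficient differences vanish as $t\to t_0$. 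Combined with the uniform bound $\limsup_{t\to t_0}\|\tilde{p}(t)\|_{H^2(\Omega(t_0))}<\infty$ from Lemma \ref{lem;conti tildep}, this produces the required $L^2$-convergence and completes the proof. The main technical obstacle is thus the bookkeeping of these coefficient limits, which proceeds exactly as in the harmonic case and introduces no new difficulty.
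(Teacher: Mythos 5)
Your proposal is correct and matches the paper's argument essentially verbatim: same decomposition of $\mathcal{L}(t)\bigl(\tilde{\dot{p}}(t)-\dot{p}(t_0)\bigr)$ into the four terms, same appeal to the uniform a priori estimate \eqref{eq;aprioriq}, and the same sources for each convergence (Lemma \ref{lem;conti tildep}, the coefficient convergences behind Lemma \ref{lem;diff conti q_k}, the \eqref{eq;Lt-Delta} mechanism, and the \eqref{eq;K1}-type transformation for $\tilde{\dot{f}}$).
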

\begin{proof}
    The proof is an analogy of that of Lemma \ref{lem;diff conti q_k}.
    To apply \eqref{eq;aprioriq}, from \eqref{eq;Lap dotp} and \eqref{eq;Lt dotp}, we see that
    \begin{equation*}
        \begin{split}
            \mathcal{L}(t)\bigl(\tilde{\dot{p}}(t) - \dot{p}(t_0)\bigr)
            &=
            -\bigl(\tilde{\dot{\mathcal{L}}}(t) -\dot{\mathcal{L}}(t_0)\bigr) \tilde{p}(t)
            -\dot{\mathcal{L}}(t_0) \bigl(\tilde{p}(t) - p(t_0)\bigr)
            \\
            &\quad -
            \bigl(\mathcal{L}(t)-\Delta\bigr)\dot{p}(t_0)
            +\Div_{\tilde{x}}\bigl(\tilde{\dot{f}}(t)-\dot{f}(t_0)\bigr).
        \end{split}
    \end{equation*}
Hence, noting that $\sup\limits_{0\leq t\leq T}\|\tilde{p}(t)\|_{H^2(\Omega(t_0))}<\infty$ 
from Lemma \ref{lem;conti tildep}
and $\lim\limits_{t\to t_0}\| \tilde{\dot{f}}(t) - \dot{f}(t_0)\|_{H^1(\Omega(t_0))}=0$ 
by the argument as in \eqref{eq;K1}, the a priori estimate \eqref{eq;aprioriq}
yields Lemma \ref{lem;conti local dotp}. 
\end{proof}
By the analogy of the proof of Theorem \ref{thm;contidiff q}, 
we obtain the proof of Theorem \ref{thm;timedepend p} from these lemmata.
%
\section{Construction of time periodic solutions}
%
In this section, we shall construct a weak solution by the Galerkin method, 
along to Miyakawa Teramoto \cite{Miyakawa Teramoto}.
Firstly, we prepare the suitable $L^2$-complete orthonormal basis and its properties.
Then, we investigate the existence of weak solutions of (N-S$^\prime$)
for every $a\in L^2_{\sigma}\bigl(\Omega(0)\bigr)$, $f \in L^2(Q_T)$ and 
$\widetilde{b}\in C^1\bigl([0,T];H^1(\widetilde{\Omega})\bigr)$. 
Finally, we construct a time periodic solution of (N-S$^\prime$).

We note that we identify a vector filed $u$ on $\Omega(t)$ 
with $\widetilde{u}$ on $\widetilde{\Omega}$ under the relations \eqref{eq;identify} or
\eqref{eq;transform u}, throughout the section.

\subsection{Complete orthonormal basis in $L^2(\widetilde{\Omega})$
with respect to $\langle \cdot,\cdot\rangle_t$}
Let $\{ \widetilde{\Upsilon}_k \}_{k=1}^\infty \subset C_{0,\sigma}^\infty(\widetilde{\Omega})$
be linearly independent and total in $H^1_{0,\sigma}(\widetilde{\Omega})$, 
namely, every function in $H^1_{0,\sigma}(\widetilde{\Omega})$
can be approximated by some finite linear combination of $\widetilde{\Upsilon}_k$.
Moreover, we may assume $\{\widetilde{\Upsilon}_k\}_{k\in\N}$ 
is a complete orthonormal basis on $L_\sigma^2(\widetilde{\Omega})$. 

By the Schmidt orthonormalization of $\{\widetilde{\Upsilon}_k\}$ with respect to the inner product
$\langle \cdot,\cdot\rangle_t$ in $L^2(\widetilde{\Omega})$ for each $t\in [0,T]$, 
we introduce $\{ \widetilde{\psi}_k(\cdot,t)\}_{k=1}^\infty$ which is
linearly independent, total in $H^1_{0,\sigma}(\widetilde{\Omega})$ 
and $\bigl\langle \widetilde{\psi}_k(t), \widetilde{\psi}_\ell(t)\bigr\rangle_t=\delta_{k,\ell}$.
We note that $\widetilde{\psi}_k(y,t)$, $k\in \N$, can be realized as a linear combination
$\sum\limits_{\ell=1}^k \mu_{k,\ell}(t)\widetilde{\Upsilon}_\ell(y)$ with smooth coefficients
$\mu_{k,\ell} \in C^\infty\bigl([0,T]\bigr)$, $\ell=1,\dots,k$.
Moreover, we introduce a norm induced from the inner product $\langle\cdot,\cdot\rangle_t$
on $L^2(\widetilde{\Omega})$,
\begin{equation*}
    \| \widetilde{u} \|_{L^2(\widetilde{\Omega};t)}
    := 
    \bigl\langle \widetilde{u},\widetilde{u}\bigr\rangle_t^{\frac{1}{2}}
    \quad
    \text{for } \widetilde{u} \in L^2(\widetilde{\Omega}).
\end{equation*}
Then,  since $\|\widetilde{u}\|_{L^2(\widetilde{\Omega};t)}
=\| u \|_{L^2(\Omega(t))}$ from \eqref{eq;equivalence innerproduct L2}
by Proposition \ref{prop;funcsp}
we note that
$\|\cdot\|_{L^2(\widetilde{\Omega};t)}$ is equivalent to 
the usual norm $\|\cdot\|_{L^2(\widetilde{\Omega})}$ for all $t\in [0,T]$.

Furthermore, we recall the inner product 
\begin{equation}
    \bigl\langle \nabla_g \widetilde{u}, \nabla_g \widetilde{v} \bigr\rangle_t 
    =
    \int_{\widetilde{\Omega}} \sum_{i,j,k,\ell}
    g_{ij}(y,t) g^{k\ell}(y,t) \nabla_k \widetilde{u}^i \nabla_\ell \widetilde{v}^{j}J(t)\,dy 
    \quad \text{for } \widetilde{u}, \widetilde{v} \in H^1_{0}(\widetilde{\Omega}),
\end{equation}
for each $t \in [0,T]$. Then we put
\begin{equation*}
    \| \widetilde{u} \|_{H^1(\widetilde{\Omega};t)} :=  \bigl\langle 
        \nabla_g \widetilde{u},\nabla_g \widetilde{u}\bigr\rangle_t^{\frac{1}{2}}
        \quad\text{for } \widetilde{u}\in H^1_{0}(\widetilde{\Omega}).
\end{equation*}
Similarly as above, by Proposition \ref{prop;funcsp} and by the Poincar\'{e} inequality $\| u \|_{L^2(\Omega(t))} \leq C_p \|\nabla u \|_{L^2(\Omega(t))}$
 we note that
\begin{equation*}
    \begin{split}
    C\| \widetilde{u} \|_{H^1(\widetilde{\Omega})} \geq 
    \| \widetilde{u} \|_{H^1(\widetilde{\Omega};t)} &=\| \nabla u \|_{L^2(\Omega(t))}
    \geq \frac{1}{{C_p}+1} \| u \|_{H^1(\Omega(t))}
    \geq C^\prime \| \widetilde{u} \|_{H^1(\widetilde{\Omega})}, 
    \end{split}
\end{equation*}
where  we can take the constants $C,C^\prime>0$ are independent of $t \in [0,T]$, 
since $C_p \lesssim \mathrm{diam\,}\Omega(t)$ and since $\mathrm{diam\,}\Omega(t)$ is continuous on $[0,T]$ under our assumption on $\Omega(t)$.

Finally, we note that 
every test function $\Psi \in C_0^1\bigl( [0,T); H_{0,\sigma}^1(\widetilde{\Omega})\bigr)$
can be approximated in the sense of the norm $\sup\limits_{0\leq t \leq T}\|\cdot\|_{H^1(\widetilde{\Omega};t)}$ by the function of the following form:
\begin{equation}\label{eq;weak test function app}
    \sum_{\textrm{finite}} \lambda_k(t) \widetilde{\psi}_k(y,t).
\end{equation}
See, Miyakawa and Teramoto \cite{Miyakawa Teramoto}, see also, Masuda \cite[Lemma 2.2]{Masuda}.

\subsection{Construction of weak solutions by the Galerkin method}
%
Using $\{\widetilde{\psi}_k\}$ we construct approximate solution for $m=1,2,\dots$,
\begin{equation}\label{eq;approx sol}
    \widetilde{u}_m (y,t) = \sum_{k=1}^m h_{m,k}(t)\widetilde{\psi}_k(y,t),
\end{equation}
where, the coefficients $h_{m,k}$ are determined by the following system:
\begin{equation}\label{eq;Galerkin}
    \left\{\begin{split}
    \bigl\langle \pt_s\widetilde{u}_m(t), \widetilde{\psi}_k(t) \bigr\rangle_t 
    =\,&
    \bigl\langle L \widetilde{u}_m(t), \widetilde{\psi}_k(t) \bigr\rangle_t
    -
    \bigl\langle M \widetilde{u}_m(t), \widetilde{\psi}_k (t) \bigr\rangle_t
    \\
    &+
    \bigl\langle N[ \widetilde{b}(t), \widetilde{u}_m(t)] + N[\widetilde{u}_m(t),\widetilde{b}(t)]
    +
    N[\widetilde{u}_m(t), \widetilde{u}_m(t)],\widetilde{\psi}_k(t) \bigr\rangle_t
    \\
    &+
    \bigl\langle \widetilde{F}(t), \widetilde{\psi}_k(t) \bigr\rangle_t,
    \\
    \bigl\langle \widetilde{u}_m(0), \widetilde{\psi}_k(0) \bigr\rangle_0
    =\,& 
    \bigl\langle \widetilde{a}, \widetilde{\psi}_k(0) \bigr\rangle_0,
    \end{split}\right.
\end{equation} 
for $k=1,\dots,m$ and $t\geq 0$. 

Multiplying \eqref{eq;Galerkin} by $h_{m,k}(t)$, taking sum in $k$, 
by the Proposition \ref{prop;duds}, for $u_m(t)$ which is the transformation of 
$\widetilde{u}_m(t)$ into $\Omega(t)$, we have
\begin{equation}\label{eq;EDI}
    \frac{1}{2}\frac{d}{dt} \| {u}_m(t) \|^2_{L^2(\Omega(t))} +
    \| \nabla u_m(t) \|^2_{L^2(\Omega(t))} + 
    \bigl(u_m(t)\cdot \nabla b(t), u_m(t)\bigr)
    = \bigl\langle \widetilde{F}(t),\widetilde{u}_m(t)\bigr\rangle_t,
\end{equation}
for $m =1,2,\dots$. 
Furthermore, by the H\"{o}lder (interpolation) inequality, the Sobolev inequality 
the Young inequality and Proposition \ref{prop;funcsp}, we see that
\begin{equation*}
    \begin{split}
        \bigl|\bigl(
            u_m(t) \cdot \nabla b(t),u_m(t)
        \bigr)\bigr|
        &=
        \bigl|\bigl(
            u_m(t)\cdot \nabla u_m(t), b(t)
        \bigr)\bigr|
        \\
        &\leq
        \| \nabla u_m(t) \|_{L^2(\Omega(t))} \|u_m(t)\|_{L^3(\Omega(t))} \| b(t) \|_{L^6(\Omega(t))}
        \\
        &\leq
        \| \nabla u_m(t) \|_{L^2(\Omega(t))} \| u_m(t)\|_{L^2(\Omega(t))}^{\frac{1}{2}}
        \|u_m(t)\|_{L^6(\Omega(t))}^{\frac{1}{2}} 
        C\| \widetilde{b}(t) \|_{L^6(\widetilde{\Omega})}
        \\
        &\leq 
        \ep\| \nabla u_m(t) \|_{L^2(\Omega(t))}^2
        +
        \frac{C^2C_s}{4\ep} \|u_m(t)\|_{L^2(\Omega(t))} \| \nabla u_m(t) \|_{L^2(\Omega(t))}
        \|\widetilde{b}(t) \|_{H^1(\widetilde{\Omega})}^2
        \\
        &\leq 
        2\ep \| \nabla u_m(t) \|_{L^2(\Omega(t))}^2 
        + \frac{C^4C_s^2}{16\ep^2} \|u_m(t)\|_{L^2(\Omega(t))}^2 
        \|\widetilde{b}(t)\|_{H^1(\widetilde{\Omega})}^4,
    \end{split}
\end{equation*}
for any $\ep>0$ and for all $t\in [0,T]$, 
where $C$ is independent of $t$ and we recall the constant $C_s=3^{-1/2}2^{2/3}\pi^{-2/3}$ 
is the best constant of the Sobolev embedding $H^1_0(\Omega) \hookrightarrow L^6(\Omega)$.
Nothing that $\| M b(t) \|_{L^2(\widetilde{\Omega})} \leq C \|b(t) \|_{H^1(\widetilde{\Omega})}$, 
we see that
\begin{equation}\label{eq;Fum}
    \begin{split}
        \bigl|\bigl\langle \widetilde{F}(t), \widetilde{u}_m(t) \bigr\rangle_t\bigr|
        \leq\,&
        \bigl|\bigl( {f}(t), {u}_m(t) \bigr)\bigr|
        +
        \bigl|\bigl\langle \pt_s \widetilde{b}(t), \widetilde{u}_m (t) \bigr\rangle_t \bigr|
        +
        \bigl|\bigl\langle M\widetilde{b}(t), \widetilde{u}_m(t) \bigr\rangle_t \bigr|
        \\ &+
        \bigl|\bigl( \nabla {b}(t), \nabla {u}_m(t) \bigr)\bigr|
        +
        \bigl|\bigl( b(t)\cdot \nabla u_m(t), b(t)\bigr)\bigr| 
        \\
        \leq \, & 
        \| f(t) \|_{L^2(\Omega(t))} \| u_m(t) \|_{L^2(\Omega(t))}
        \\
        &
        + C\|\pt_s \widetilde{b}(t) \|_{L^2(\widetilde{\Omega})} \|u_m(t) \|_{L^2(\Omega(t))}
        + \| M\widetilde{b}(t) \|_{L^2(\widetilde{\Omega})} \|u_m(t)\|_{L^2(\Omega(t))}
        \\
        &+\|\nabla b(t) \|_{L^2(\Omega(t))}\|\nabla u_m(t) \|_{L^2(\Omega(t))}
        + \| b(t) \|^2_{L^4(\Omega(t))} \| \nabla u_m(t) \|_{L^2(\Omega(t))}
        \\
        \leq \,&
        \| u_m(t) \|_{L^2(\Omega(t))}^2 + 2\ep \| \nabla u_m(t) \|^2_{L^2(\Omega)}
        \\&+ 
        C_\ep\Bigl(\|f(t)\|_{L^2(\Omega(t))}^2 + \| \pt_s \widetilde{b}(t) \|^2_{L^2(\widetilde{\Omega})}
        + \|\widetilde{b}(t) \|^2_{H^1(\widetilde{\Omega})}
        +\| \widetilde{b}(t) \|_{H^1(\widetilde{\Omega})}^4
        \Bigr),
    \end{split}
\end{equation}
for any $\ep >0$ and for all $t\in [0,T]$, where the constant $C_\ep>0$ is independent of $t$.

Combining the above estimates with sufficiently small $\ep>0$, 
integrating \eqref{eq;EDI} in $t$ we obtain that
\begin{multline}\label{eq;EI}
    \| u_m(t) \|_{L^2(\Omega(t))}^2 + 2(1-4\ep) \int_0^t \| \nabla u_m(\tau) \|^2_{L^2(\Omega(\tau))}\,d\tau
    \\\leq \| a \|_{L^2(\Omega(0))}^2 
    +C\int_0^t \| u_m(\tau)\|^2_{L^2(\Omega(\tau))} 
    \bigl(\| \widetilde{b}(\tau) \|_{H^1(\Omega(\tau))}^4+1\bigr) \,d\tau
    +\int_0^t K(\tau)\,d\tau
\end{multline}
where
\begin{equation}\label{eq;KFF}
    K(t):= C\Bigl(\|f(t)\|_{L^2(\Omega(t))}^2 + \| \pt_s \widetilde{b}(t) \|^2_{L^2(\widetilde{\Omega})}
    + \|\widetilde{b}(t) \|^2_{H^1(\widetilde{\Omega})}
         + 
        \| \widetilde{b}(t) \|_{H^1(\Omega(t))}^4
        \Bigr).
\end{equation}
Hence, by the Gronwall inequality, the coefficients $h_{m,k}(t)$ exist on the whole interval $[0,T]$.
Moreover, we see that $\{\widetilde{u}_m(t)\}_{m\in\N} $ is a bounded sequence in $L^\infty\bigl(0,T;L_\sigma^2(\widetilde{\Omega})\bigr)$
and also in $L^2\bigl(0,T; H^1_{0,\sigma}(\widetilde{\Omega})\bigr)$.

Hereafter, we may assume, for some $A>0$,
\begin{equation}\label{eq;AEI}
    \sup_{0\leq t \leq T}\| \widetilde{u}_m(t)\|^2_{L^2(\widetilde{\Omega};t)} + 
    \int_0^T \| \widetilde{u}_m(\tau) \|_{H^1(\widetilde{\Omega};\tau)}\,d\tau
    \leq A \quad \text{for } m\in\N.
\end{equation}

\begin{lemma}\label{lem;equiconti}
    Let $\ell \in \N$ be fixed. The family  $\{h_{m,\ell}\}_{m=1}^\infty$ forms a uniformly bounded 
    and equicontinuous family of continuous functions on $[0,T]$. 
\end{lemma}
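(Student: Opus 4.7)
The plan is to exploit the orthonormality of $\{\widetilde{\psi}_k(t)\}$ with respect to $\langle\cdot,\cdot\rangle_t$ to write $h_{m,\ell}(t)=\langle \widetilde{u}_m(t),\widetilde{\psi}_\ell(t)\rangle_t$, then derive an evolution equation for this scalar quantity and bound it. From \eqref{eq;approx sol} and the orthonormality $\langle\widetilde{\psi}_k(t),\widetilde{\psi}_\ell(t)\rangle_t=\delta_{k,\ell}$, we have $h_{m,\ell}(t)=\langle\widetilde{u}_m(t),\widetilde{\psi}_\ell(t)\rangle_t$, so Cauchy--Schwarz together with the uniform energy bound \eqref{eq;AEI} immediately gives $|h_{m,\ell}(t)|\leq \|\widetilde{u}_m(t)\|_{L^2(\widetilde{\Omega};t)}\leq A^{1/2}$ for all $t\in[0,T]$ and $m\in\N$. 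This handles the uniform boundedness.

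For equicontinuity, I would differentiate $h_{m,\ell}(t)$ in $t$ by applying Proposition \ref{prop;duds} with $\widetilde{u}=\widetilde{u}_m$ and $\widetilde{v}=\widetilde{\psi}_\ell$, and substitute the Galerkin relation \eqref{eq;Galerkin} for $\langle\pt_s\widetilde{u}_m,\widetilde{\psi}_\ell\rangle_t$. After cancellation of the $\langle M\widetilde{u}_m,\widetilde{\psi}_\ell\rangle_t$ terms, this yields
\begin{equation*}
\tfrac{d}{dt}h_{m,\ell}(t)=\langle L\widetilde{u}_m,\widetilde{\psi}_\ell\rangle_t-\bigl\langle N[\widetilde{b},\widetilde{u}_m]+N[\widetilde{u}_m,\widetilde{b}]+N[\widetilde{u}_m,\widetilde{u}_m],\widetilde{\psi}_\ell\bigr\rangle_t+\langle\widetilde{F},\widetilde{\psi}_\ell\rangle_t+\langle\widetilde{u}_m,\pt_s\widetilde{\psi}_\ell+M\widetilde{\psi}_\ell\rangle_t.
\end{equation*}
Each term on the right-hand side is to be estimated by transferring derivatives off $\widetilde{u}_m$ onto the smooth fixed test function $\widetilde{\psi}_\ell$ via integration by parts, using that $\widetilde{\psi}_\ell(\cdot,t)=\sum_{j\leq\ell}\mu_{\ell,j}(t)\widetilde{\Upsilon}_j$ with $\widetilde{\Upsilon}_j\in C_{0,\sigma}^\infty(\widetilde{\Omega})$ and $\mu_{\ell,j}\in C^\infty([0,T])$, so that $\|\widetilde{\psi}_\ell\|_{W^{2,\infty}}$ and $\|\pt_s\widetilde{\psi}_\ell\|_{W^{1,\infty}}$ are uniformly bounded in $t\in[0,T]$ by a constant $C_\ell$ depending only on $\ell$.

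Concretely, the viscous term becomes $\langle\nabla_g\widetilde{u}_m,\nabla_g\widetilde{\psi}_\ell\rangle_t$ up to lower order and is bounded by $C_\ell\|\widetilde{u}_m(t)\|_{H^1(\widetilde{\Omega};t)}$; the trilinear term $\langle N[\widetilde{u}_m,\widetilde{u}_m],\widetilde{\psi}_\ell\rangle_t$ equals $-\langle N[\widetilde{u}_m,\widetilde{\psi}_\ell],\widetilde{u}_m\rangle_t$ after integration by parts (since $\Div_y\widetilde{u}_m=0$ and $\widetilde{\psi}_\ell$ vanishes on $\pt\widetilde{\Omega}$), hence is dominated by $C_\ell\|\widetilde{u}_m(t)\|_{L^2(\widetilde{\Omega};t)}^2\leq C_\ell A$; the mixed terms $\langle N[\widetilde{b},\widetilde{u}_m],\widetilde{\psi}_\ell\rangle_t$ and $\langle N[\widetilde{u}_m,\widetilde{b}],\widetilde{\psi}_\ell\rangle_t$ are bounded by $C_\ell(\|\widetilde{u}_m\|_{H^1}\|\widetilde{b}\|_{L^\infty}+\|\widetilde{u}_m\|_{L^2}\|\widetilde{b}\|_{H^1})$; the forcing and remainder pieces are bounded by $C_\ell(K(t)^{1/2}+\|\widetilde{u}_m\|_{L^2})$ with $K(t)$ as in \eqref{eq;KFF}. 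Integrating from $s$ to $t$ and applying Cauchy--Schwarz in $\tau$, the terms linear in $\|\widetilde{u}_m\|_{H^1}$ or in $K^{1/2}$ contribute $C_\ell A^{1/2}|t-s|^{1/2}$ (using \eqref{eq;AEI} and $K\in L^1(0,T)$ via the assumption $\widetilde{b}\in C^1([0,T];H^1(\widetilde{\Omega}))$ and $f\in L^2(Q_T)$), while the remaining terms are bounded by $C_\ell|t-s|$. This yields
\begin{equation*}
|h_{m,\ell}(t)-h_{m,\ell}(s)|\leq C_\ell\bigl(|t-s|^{1/2}+|t-s|\bigr)\quad\text{for all }s,t\in[0,T],\ m\in\N,
\end{equation*}
with $C_\ell$ independent of $m$, which is precisely the required equicontinuity.

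The main subtlety I anticipate is the treatment of the viscous term $\langle L\widetilde{u}_m,\widetilde{\psi}_\ell\rangle_t$: because $\widetilde{u}_m$ is only controlled in $H^1$, one must integrate by parts carefully in the Riemannian inner product $\langle\nabla_g\cdot,\nabla_g\cdot\rangle_t$, accounting for the non-trivial Christoffel symbols, to express this as a pairing between $\nabla_g\widetilde{u}_m\in L^2$ and $\nabla_g\widetilde{\psi}_\ell\in L^\infty$; this is what produces the $|t-s|^{1/2}$ modulus of continuity rather than a stronger one, and it is the only point where the time-integrability from \eqref{eq;AEI} is genuinely used.
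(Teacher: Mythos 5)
Your overall strategy coincides with the paper's: both proofs write $h_{m,\ell}(t)=\langle \widetilde{u}_m(t),\widetilde{\psi}_\ell(t)\rangle_t$, obtain uniform boundedness from \eqref{eq;AEI}, and for equicontinuity combine Proposition~\ref{prop;duds} with the Galerkin relation \eqref{eq;Galerkin} (the $M\widetilde{u}_m$ terms indeed cancel) to get $\frac{d}{dt}h_{m,\ell}$ as a sum of six pairings, then integrate from $s$ to $t$ and estimate termwise. The one genuinely different ingredient you introduce is the integration by parts $\langle N[\widetilde{u}_m,\widetilde{u}_m],\widetilde{\psi}_\ell\rangle_t=-\langle N[\widetilde{u}_m,\widetilde{\psi}_\ell],\widetilde{u}_m\rangle_t$ (valid since $\langle N[\widetilde{u},\widetilde{v}],\widetilde{w}\rangle_t=(u\cdot\nabla v,w)$, $\Div\,u_m=0$, and $\psi_\ell|_{\pt\Omega(t)}=0$), which bounds the cubic term by $C_\ell\|\widetilde{u}_m\|_{L^2}^2\leq C_\ell A$ and hence by $O(|t-s|)$; the paper instead uses H\"older--interpolation and Sobolev and only gets $O(|t-s|^{1/4})$ for that term ($\mathcal{I}_4$). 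Your route gives a cleaner (and slightly sharper) modulus for the nonlinear term, so it is a legitimate improvement in detail while following the same plan.

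There is, however, a slip in your written bound for the mixed terms. You state they are bounded by $C_\ell\bigl(\|\widetilde{u}_m\|_{H^1}\|\widetilde{b}\|_{L^\infty}+\|\widetilde{u}_m\|_{L^2}\|\widetilde{b}\|_{H^1}\bigr)$, but $\widetilde{b}(t)\in H^1(\widetilde{\Omega})$ does not control $\|\widetilde{b}(t)\|_{L^\infty(\widetilde{\Omega})}$ in three dimensions, so the factor $\|\widetilde{b}\|_{L^\infty}$ is not available. This is internally inconsistent with your own stated strategy of moving derivatives onto $\widetilde{\psi}_\ell$: if you integrate by parts you get $(b\cdot\nabla u_m,\psi_\ell)=-(b\cdot\nabla\psi_\ell,u_m)$ and $(u_m\cdot\nabla b,\psi_\ell)=-(u_m\cdot\nabla\psi_\ell,b)$, each dominated by $\|\nabla\psi_\ell\|_{L^\infty}\|b\|_{L^2}\|u_m\|_{L^2}\leq C_\ell$, which requires only $L^2$ control of $b$ and yields an $O(|t-s|)$ contribution. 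Alternatively, the paper's route $\|\nabla u_m\|_{L^2}\|\psi_\ell\|_{L^6}\|b\|_{L^3}$ works with $H^1\hookrightarrow L^3$. Either fix resolves the gap; as written, the displayed estimate is not justified by the hypotheses.
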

\begin{proof}
    We note that $h_{m,\ell}(t)= \bigl\langle \widetilde{u}_m(t), \widetilde{\psi}_\ell(t)\bigr\rangle_t
    =\bigl(u_m(t),\psi_\ell(t)\bigr)$. 
    The uniform boundedness follows from \eqref{eq;AEI}. 
    Then, see that
    \begin{equation*}
        \begin{split}
            |h_{m,\ell}(t) - h_{m,\ell}(s) |
            = \,&
            \left|\int_s^t \frac{d}{d\tau} \bigl\langle \widetilde{u}_m(\tau),\widetilde{\psi}_\ell(\tau)
            \bigr\rangle_{\tau}\,d\tau \right|
            \\
            \leq\,&
            \int_s^t \Bigl(
            \bigl|\bigl\langle \nabla_g \widetilde{u}_m(\tau), \nabla_g \widetilde{\psi}_\ell(\tau)
            \bigl\rangle_{\tau}\bigr|
            +
            \bigl|\bigl\langle N[\widetilde{b}(\tau),\widetilde{u}_m(\tau)], \widetilde{\psi}_\ell(\tau)
            \bigr\rangle_{\tau}\bigr|
            \\&+
            \bigl|
                \bigl\langle
                N[\widetilde{u}_m(\tau),\widetilde{b}(\tau)], \widetilde{\psi}_\ell(\tau)
                \bigr\rangle_{\tau}
            \bigr|
            + \bigl|
                \bigl\langle
                 N[\widetilde{u}_m(\tau), \widetilde{u}_m(\tau)], \widetilde{\psi}_\ell(\tau)
                \bigr\rangle_{\tau}
            \bigr|
            \\&+
            \bigl|\bigl\langle \widetilde{F}(\tau), \widetilde{\psi}_\ell(\tau)\bigr\rangle_t\bigr|
            +\bigl|\bigl\langle \widetilde{u}_m(\tau), \pt_s \widetilde{\psi}_\ell(\tau) +M\widetilde{\psi}_\ell(\tau)
            \bigr\rangle_{\tau}\bigr|
            \Bigr)\,d\tau
            \\
            \leq \,&
            \int_s^t \Bigl(
                \bigl|\bigl( \nabla u_m(\tau), \nabla \psi_\ell(\tau)\bigr)\bigr|
                + 
                \bigl|
                    \bigl( b(\tau)\cdot \nabla u_m(\tau),\psi_\ell(\tau)\bigr)
                \bigr|
            \\
            &+ \bigl|\bigl(u_m(\tau)\cdot\nabla b(\tau),\psi_\ell(\tau)\bigr)\bigr|
            + \bigl|\bigl( u_m(\tau)\cdot \nabla u_m(\tau), \psi_\ell(\tau) \bigr)\bigr|
            \\
            &+
            \bigl|\bigl\langle \widetilde{F}(\tau), \widetilde{\psi}_\ell(\tau)\bigr\rangle_t\bigr|
            +\bigl|\bigl\langle \widetilde{u}_m(\tau), \pt_s \widetilde{\psi}_\ell(\tau) +M\widetilde{\psi}_\ell(\tau)
            \bigr\rangle_{\tau}\bigr|
            \Bigr)\,d\tau
            \\=: \,&
            \mathcal{I}_1+ \mathcal{I}_2+\mathcal{I}_3 + \mathcal{I}_4 + \mathcal{I}_5 + \mathcal{I}_6.
        \end{split}
    \end{equation*}
    So, we shall estimate $\mathcal{I}_1,\dots,\mathcal{I}_6$.

    It holds that by Proposition \ref{prop;funcsp}, \eqref{eq;AEI} and the Schwarz inequality,
    \begin{equation*}
        \begin{split}
        |\mathcal{I}_1| 
        &\leq \int_s^t \| \nabla u_m(\tau) \|_{L^2(\Omega(\tau))} \|\nabla \psi(\tau) \|_{L^2(\Omega(\tau))}\,d\tau
        \\
        &\leq
        C\sup_{0\leq \tau \leq T} \|\widetilde{\psi}_\ell(\tau)\|_{H^1(\widetilde{\Omega})} \int_s^t \| \nabla u_m(\tau)\|_{L^2(\Omega(\tau))}\,d\tau
       \\&\leq 
        C\sup_{0\leq \tau \leq T} \|\widetilde{\psi}_\ell(\tau)\|_{H^1(\widetilde{\Omega})} 
        A^{\frac{1}{2}} |t-s|^{\frac{1}{2}},
        \end{split}
    \end{equation*}
    where the constant $C>0$ is independent of $m\in \N$ and of $t,s,\tau \in [0,T]$.
    
    Next, we estimate $\mathcal{I}_2$ and $\mathcal{I}_3$. 
    Especially, by Proposition \ref{prop;funcsp} we see that
    \begin{equation*}
        \begin{split}
            |\mathcal{I}_2|,|\mathcal{I}_3|
            &\leq 
            \int_s^t \| u_m(\tau) \|_{L^6(\Omega(\tau))} \|\nabla \psi_\ell(\tau) \|_{L^2(\Omega(\tau))}
            \| b(\tau) \|_{L^3(\Omega(\tau))} \,d\tau
            \\
            &\leq 
            \int_s^t C_s \| \nabla u_m(\tau) \|_{L^2(\Omega(\tau))} 
            \|\nabla \psi_\ell(\tau) \|_{L^2(\Omega(\tau))}
            \| b(\tau) \|_{L^3(\Omega(\tau))} \,d\tau
            \\
            &\leq C \sup_{0\leq \tau \leq T} \| \widetilde{\psi}_\ell(\tau) \|_{H^1(\widetilde{\Omega})}
            \sup_{0\leq \tau \leq T} \| b(\tau) \|_{L^3(\Omega(\tau))}
            A^{\frac{1}{2}} |t-s|^{\frac{1}{2}},
        \end{split}
    \end{equation*}
    where $C>0$ is independent of $m\in \N$ and of $t,s,\tau \in [0,T]$.

    So we estimate $I_4$. By the H\"{o}lder (interpolation) inequality and the Sobolev inequality we have
    \begin{equation*}
        \begin{split}
            |\mathcal{I}_4| 
            &\leq 
            \int_s^t \|\nabla u_m(\tau) \|_{L^2(\Omega(\tau))} 
            \| u_m(\tau)\|_{L^3(\Omega(\tau))}
            \| \psi_\ell(\tau) \|_{L^6(\Omega(\tau))}\,d\tau
            \\
            &\leq
            \int_s^t 
            \| \nabla u_m(\tau) \|_{L^2(\Omega(\tau))}
            \| u_m(\tau) \|^{\frac{1}{2}}_{L^2(\Omega(\tau))}
            \| u_m (\tau) \|^{\frac{1}{2}}_{L^6(\Omega(\tau))} 
            C_s\| \nabla \psi_\ell (\tau) \|_{L^2(\Omega(\tau))} \,d\tau
            \\
            &\leq
            C_s^{\frac{3}{2}} 
            \int_s^t \| \nabla u_m(\tau) \|_{L^2(\Omega(\tau))}^{\frac{3}{2}} \| u_m (\tau) \|_{L^2(\Omega(\tau))}^{\frac{1}{2}}
            \| \nabla \psi_\ell (\tau) \|_{L^2(\Omega(\tau))}\,d\tau
            \\
            &\leq
            C \sup_{0\leq \tau \leq T} \| \widetilde{\psi}_\ell(\tau) \|_{H^1(\widetilde{\Omega})}
            A |t-s|^{\frac{1}{4}},
        \end{split}
    \end{equation*}
    where $C>0$ is independent of $m\in \N$ and of $t,s,\tau \in [0,T]$.

    By a similar argument as in \eqref{eq;Fum}, we have
    \begin{equation*}
        \begin{split}
            |\mathcal{I}_5|
            \leq \,&
            C\sup_{0\leq \tau \leq T} \| \widetilde{\psi}_\ell(\tau)\|_{H^1(\widetilde{\Omega})}
            \int_s^t \Bigl(
                \| {f}(\tau) \|_{L^2(\Omega(\tau))}
                + 
                \| \pt_s \widetilde{b}(\tau) \|_{L^2(\widetilde{\Omega})}
                +
                \| M\widetilde{b}(\tau) \|_{L^2(\widetilde{\Omega};\tau)}
                \\
                &
                + \| \widetilde{b}(\tau) \|_{H^1(\widetilde{\Omega})}
                + \| \widetilde{b} (\tau) \|^2_{H^1(\widetilde{\Omega})} 
            \Bigr)\,d\tau
            \\
            &\leq 
            C\sup_{0\leq \tau \leq T} \| \widetilde{\psi}_\ell(\tau)\|_{H^1(\widetilde{\Omega})}
            \biggl(
            \|{f}\|_{L^2(Q_T)}
            |t-s|^{\frac{1}{2}}
            \\
            &+
            \Bigl(
            \sup_{0\leq \tau \leq T} \|\pt_s \widetilde{b}(\tau) \|_{L^2(\widetilde{\Omega})}
            + 
            \sup_{0\leq \tau \leq T} \| \widetilde{b}(\tau)\|_{H^1(\widetilde{\Omega})}
            +
            \sup_{0\leq \tau \leq T} \| \widetilde{b}(\tau)\|_{H^1(\widetilde{\Omega})}^2
            \Bigr)|t-s|
            \biggr),
        \end{split}
    \end{equation*} 
    where $C>0$ is independent of $m\in \N$ and of $t,s,\tau \in [0,T]$.

    Finally, we estimate $\mathcal{I}_6$. 
    We have that
    \begin{equation*}
        \begin{split}
            |\mathcal{I}_6 | 
            &\leq 
            A^{\frac{1}{2}} 
            \sup_{0\leq \tau \leq T} \bigl(\|\pt_s \widetilde{\psi}_\ell(\tau)\|_{L^2(\widetilde{\Omega};\tau)}
            +\| M \widetilde{\psi}_\ell(\tau)\|_{L^2(\widetilde{\Omega};\tau)}\bigr)
            |t-s|
            \\
            &\leq
            CA^{\frac{1}{2}} 
            \sup_{0\leq \tau \leq T} \bigl(\|\pt_s \widetilde{\psi}_\ell(\tau)\|_{L^2(\widetilde{\Omega})}
            +\|\widetilde{\psi}_\ell(\tau)\|_{H^1(\widetilde{\Omega})}\bigr) |t-s|,
        \end{split}
    \end{equation*}
    where $C>0$ is independent of $m\in \N$ and of $t,s,\tau \in [0,T]$.
    
    Combining the above estimates, we conclude that $\{h_{m,\ell}\}_{m=1}^\infty$ is the family of equicontinuous on $[0,T]$.
\end{proof}

Due to Lemma \ref{lem;equiconti}, by the Ascoli-Arzel\'{a} theorem and the usual diagonal argument,
there exists a subsequence $\{ \widetilde{u}_{m_k}\}_{k\in \N} \subset \{\widetilde{u}_m\}$ and
a function $\widetilde{u} \in L^\infty\bigl(0,T;L_\sigma^2(\widetilde{\Omega})\bigr) \cap
L^2\bigl(0,T;H^1_{0,\sigma}(\widetilde{\Omega})\bigr)$ such that
\begin{itemize}
    \item[(i)] $\bigl\langle \widetilde{u}_{m_k}(t)-\widetilde{u}(t), \widetilde{\Psi} \bigr\rangle_t \to 0$ uniformly in $t\in [0,T]$ as $k\to \infty$
    for all $\widetilde{\Psi}\in L_\sigma^2(\widetilde{\Omega})$.  
    \item[(ii)] $\displaystyle \int_0^T \bigl\langle \nabla_g \bigl(\widetilde{u}_{m_k}(\tau) - \widetilde{u}(\tau)\bigr),
    \nabla_g \widetilde{\Psi}(\tau) \bigr\rangle_{\tau}\,d\tau \to 0$ as $k\to \infty$ 
    for all $\widetilde{\Psi}\in L^2\bigl(0,T;H^1_{0,\sigma}(\widetilde{\Omega})\bigr)$.
\end{itemize}

Indeed, we shall prove (i). 
Firstly, we that that by the Ascoli-Arzel\'{a} theorem, we may assume $h_{m_k,\ell} (t)$ converges 
to some continuous function $h_{\ell}(t)$ uniformly in $t\in[0,T]$ as $k\to \infty$ for each $\ell\in \N$.
Moreover, we may assume $ \widetilde{u}_{m_k}(t) \rightharpoonup \widetilde{u}(t)$ in $L^2_{\sigma}(\widetilde{\Omega})$
with respect ot $\langle \cdot,\cdot\rangle_t$
for each $t\in [0,T]$.
Here, we note that $\displaystyle\widetilde{u}(y,t)=\sum_{\ell=1}^\infty h_{\ell}(t)\widetilde{\psi}_\ell(y,t)$ and 
$\sup\limits_{0\leq t\leq T}\|\widetilde{u}(t)\|_{L^2(\widetilde{\Omega};t)}^2 \leq A$.

For $\widetilde{\Psi} \in L_\sigma^2(\widetilde{\Omega})$ and for $\ep>0$, there exists 
$\vartheta_{1},\dots, \vartheta_N \in C^\infty\bigl([0,T]\bigr)$ such that 
\begin{equation*}
\left\| \widetilde{\Psi} - \sum\limits_{n=1}^N \vartheta_n(t) \widetilde{\psi}_n(t)\right\|_{L^2(\widetilde{\Omega};t)} 
\leq C
\left\| \widetilde{\Psi} - \sum\limits_{n=1}^N \vartheta_n(t) \widetilde{\psi}_n(t)\right\|_{L^2(\widetilde{\Omega})}<\ep
\end{equation*}
for all $t\in[0,T]$,
since 
$\displaystyle \widetilde{\Upsilon}_j(y) \equiv \sum\limits_{\ell=1}^j \widetilde{\mu}_{j,\ell}(t) \widetilde{\psi}_\ell(y,t)$ for all $t \in [0,T]$
with smooth coefficients $\widetilde{\mu}_{j,\ell}\in C^\infty\bigl([0,T]\bigr)$
and since
$\{\widetilde{\Upsilon}_j\}_{j\in \N}$ is dense in $L^2_\sigma(\widetilde{\Omega})$.
Hence, we have
\begin{equation*}
    \begin{split}
        \bigl|\bigl\langle \widetilde{u}_{m_k}(t)-\widetilde{u}(t),\widetilde{\Psi}\bigr\rangle_t\bigr|
        \leq\,&
        \left|\left\langle \widetilde{u}_{m_k}(t)-\widetilde{u}(t),\widetilde{\Psi} 
        - \sum_{n=1}^N \vartheta_n(t)\widetilde{\psi}_n(t)\right\rangle_t\right|
        \\&+
        \left|\left\langle \widetilde{u}_{m_k}(t)-\widetilde{u}(t),
         \sum_{n=1}^N \vartheta_n(t)\widetilde{\psi}_n(t)\right\rangle_t\right|
        \\
        \leq\,&
        2A^{\frac{1}{2}} \ep + \max_{n=1,\dots,N}\sup_{0\leq t \leq T}|\theta_n(t)|
        \sum_{n=1}^N \bigl|\bigl\langle \widetilde{u}_{m_k}(t) 
        - \widetilde{u}(t), \widetilde{\psi}_n(t)\bigr\rangle_t\bigr| 
        \\ 
        \leq \, &
        2A^{\frac{1}{2}}\ep + \max_{n=1,\dots,N}\sup_{0\leq t \leq T}|\theta_n(t)|
        \sum_{n=1}^N \bigl|h_{m_k,n}(t)-h_{n}(t)\bigr| 
    \end{split}
\end{equation*}
This proves (i). 
Moreover, taking subsequence if necessary, (ii) also holds.
\begin{lemma}\label{lem;L^2L^2}
    It holds that
    \begin{equation*}
        \widetilde{u}_{m_k} \to \widetilde{u} \quad \text{in } 
        L^2\bigl(0,T; L^2_\sigma(\widetilde{\Omega})\bigr)
        \quad \text{as } k\to \infty.
    \end{equation*}
\end{lemma}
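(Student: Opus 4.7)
The goal is to upgrade the weak convergences in items (i) and (ii) to strong convergence in $L^2\bigl(0,T; L^2_\sigma(\widetilde{\Omega})\bigr)$. My plan is to apply an Aubin--Lions type compactness argument.

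By the uniform-in-$t$ equivalence of $\|\cdot\|_{L^2(\widetilde{\Omega};t)}$ and $\|\cdot\|_{H^1(\widetilde{\Omega};t)}$ with the standard norms, established just before Lemma \ref{lem;equiconti}, the bound \eqref{eq;AEI} yields a uniform bound on $\{\widetilde{u}_{m_k}\}$ in $L^\infty\bigl(0,T;L^2_\sigma(\widetilde{\Omega})\bigr)\cap L^2\bigl(0,T;H^1_{0,\sigma}(\widetilde{\Omega})\bigr)$.

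The crucial step is to establish a uniform bound on the time derivative of $\widetilde{u}_m$ in a dual space. For a test function $\widetilde{\Psi}\in H^1_{0,\sigma}(\widetilde{\Omega})$, projecting $\widetilde{\Psi}$ onto the span of $\{\widetilde{\psi}_k(t)\}_{k=1}^m$ at each $t$ and taking a linear combination of the equations \eqref{eq;Galerkin} with the resulting coefficients, one obtains an identity for $\bigl\langle \pt_s\widetilde{u}_m(t),\widetilde{\Psi}\bigr\rangle_t$ whose right-hand side I would estimate term-by-term by the techniques already employed in the proof of Lemma \ref{lem;equiconti}. The most delicate contribution is the convection term, for which the Gagliardo--Nirenberg--Sobolev inequality gives
\begin{equation*}
\bigl|\bigl\langle N[\widetilde{u}_m(t),\widetilde{u}_m(t)],\widetilde{\Psi}\bigr\rangle_t\bigr|
\leq C\|\widetilde{u}_m(t)\|_{L^2(\widetilde{\Omega})}^{1/2}\|\widetilde{u}_m(t)\|_{H^1(\widetilde{\Omega})}^{3/2}\|\widetilde{\Psi}\|_{H^1(\widetilde{\Omega})}.
\end{equation*}
The remaining contributions $L\widetilde{u}_m$, $M\widetilde{u}_m$, $N[\widetilde{b},\widetilde{u}_m]$, $N[\widetilde{u}_m,\widetilde{b}]$ and $\widetilde{F}$ admit bounds linear in $\|\widetilde{\Psi}\|_{H^1(\widetilde{\Omega})}$ using the regularity of $\widetilde{b}$. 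Since $t\mapsto \|\widetilde{u}_m(t)\|_{L^2}^{2/3}\|\widetilde{u}_m(t)\|_{H^1}^{2}$ is integrable over $[0,T]$ by \eqref{eq;AEI}, integration in $t$ yields the uniform bound
\begin{equation*}
\|\pt_s\widetilde{u}_m\|_{L^{4/3}\bigl(0,T;(H^1_{0,\sigma}(\widetilde{\Omega}))^*\bigr)} \leq C.
\end{equation*}

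Next, the Aubin--Lions lemma applied to the chain
\begin{equation*}
H^1_{0,\sigma}(\widetilde{\Omega}) \,\hookrightarrow\, L^2_\sigma(\widetilde{\Omega}) \,\hookrightarrow\, \bigl(H^1_{0,\sigma}(\widetilde{\Omega})\bigr)^{*},
\end{equation*}
in which the first embedding is compact by Rellich's theorem, guarantees that $\{\widetilde{u}_{m_k}\}$ is relatively compact in $L^2\bigl(0,T;L^2_\sigma(\widetilde{\Omega})\bigr)$. Extracting a further subsequence, one obtains strong convergence in this space to some limit which must coincide with $\widetilde{u}$ by the pointwise-in-$t$ weak-$L^2$ convergence in (i). Uniqueness of the limit then forces the whole subsequence $\widetilde{u}_{m_k}$ to converge strongly to $\widetilde{u}$, as claimed.

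The main obstacle I anticipate is the careful bookkeeping of the time-dependent inner product $\langle\cdot,\cdot\rangle_t$ and the $M$-term from Proposition \ref{prop;duds} in the derivation of the dual bound on $\pt_s\widetilde{u}_m$: one must verify that projecting an arbitrary $\widetilde{\Psi}\in H^1_{0,\sigma}(\widetilde{\Omega})$ onto the span of the first $m$ basis vectors $\widetilde{\psi}_k(t)$ is stable in the $H^1$ norm uniformly in $t\in[0,T]$ and in $m$, which is plausible given the smoothness in $t$ of the coefficients $\mu_{k,\ell}(t)$ in the Schmidt orthonormalization together with the uniform equivalence of the inner products. Once this dual bound is secured, the remainder of the argument is the standard Aubin--Lions machinery.
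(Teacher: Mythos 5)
Your proposed route is genuinely different from the paper's: you reach for the Aubin--Lions lemma, whereas the paper uses the modified Friedrichs inequality of Miyakawa and Teramoto (their (2.17)), which is tailored to the Galerkin basis. That inequality says, for every $\ep>0$, there is $N_\ep$ independent of $t$ with
\begin{equation*}
    \|\widetilde f\|_{L^2(\widetilde\Omega;t)}^2 \le \ep\,\|\widetilde f\|_{H^1(\widetilde\Omega;t)}^2 + \sum_{j=1}^{N_\ep}\bigl|\langle \widetilde f,\widetilde\psi_j(t)\rangle_t\bigr|^2
    \qquad\text{for } \widetilde f\in H^1_{0,\sigma}(\widetilde\Omega),
\end{equation*}
which, applied to $\widetilde f = \widetilde u_{m_k}(\tau)-\widetilde u(\tau)$ and integrated in $\tau$, reduces the claim to the uniform convergence of the finitely many coefficients $h_{m_k,j}\to h_j$ already obtained via Ascoli--Arzel\`a. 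No dual bound on $\pt_s\widetilde u_m$ is needed.

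The gap in your proposal is exactly the point you flag at the end and then wave away as ``plausible'': the $H^1$-stability, uniformly in $m$ and $t$, of the $\langle\cdot,\cdot\rangle_t$-orthogonal projection onto $\mathrm{span}\{\widetilde\psi_1(t),\dots,\widetilde\psi_m(t)\}$. This is \emph{not} a consequence of the smoothness in $t$ of the coefficients $\mu_{k,\ell}$; the issue is uniformity in $m$, which has nothing to do with $t$-regularity. For a general linearly independent, total sequence in $H^1_{0,\sigma}$, the $L^2$-orthogonal projection onto the first $m$ elements is not bounded in $H^1$ uniformly in $m$. The standard way to ensure this is to take Stokes eigenfunctions as the basis (where the $L^2$-projection commutes with the $H^1$-orthogonal projection), but the paper's $\{\widetilde\Upsilon_k\}$ is not assumed to be such a basis, and in the time-dependent setting one would additionally have to control the interaction with the $t$-dependent inner product. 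Without this stability, the derivation of the bound $\|\pt_s\widetilde u_m\|_{L^{4/3}(0,T;(H^1_{0,\sigma})^*)}\le C$ from the projected Galerkin equations does not go through, and the Aubin--Lions hypothesis is not verified. The Friedrichs-inequality argument of the paper avoids this projection issue entirely, which is precisely why it is used here.
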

\begin{proof}
     By the (modified) Friedrichs inequality in Miyakawa and Teramoto \cite[(2.17)]{Miyakawa Teramoto},
    for every $\ep>0$ there exists $N_\ep \in \N$ independent of $t\in [0,T]$ such that, 
    for each $t\in[0,T]$ it holds that
    \begin{equation}\label{eq;Friedrichs}
        \| \widetilde{f} \|_{L^2(\widetilde{\Omega};t)}^2 \leq
        \ep \|  \widetilde{f} \|_{H^1(\widetilde{\Omega};t)}^2 
        + \sum_{j=1}^{N_\ep} |\langle\widetilde{f}, \widetilde{\psi}_j(t)\rangle_t |^2
            \quad \text{for } \widetilde{f} \in H^1_{0,\sigma}(\widetilde{\Omega}).
    \end{equation}
    Hence, we see that
    \begin{equation*}
        \begin{split}
            \int_0^T \| \widetilde{u}_{m_k}(\tau) - \widetilde{u}(\tau)\|^2_{L^2(\widetilde{\Omega};\tau)}\,d\tau
            \leq \,&
            \ep \int_0^T \| \widetilde{u}_{m_k}(\tau) - \widetilde{u}(\tau) \|^2_{H^1(\widetilde{\Omega};t)} \,d\tau
            \\ &+
            \sum_{j=1}^{N_\ep}
            \int_0^T\bigl|\bigl\langle \widetilde{u}_{m_k}(\tau) - \widetilde{u}(\tau),
            \widetilde{\psi}_j(\tau)\bigr\rangle_\tau\bigr|^2\,d\tau
            \\
            \leq \,&
            4A\ep + \sum_{j=1}^{N_\ep} \int_0^T |h_{m_k,j}(\tau)-h_{j}(\tau)|^2\,d\tau.
        \end{split}
    \end{equation*}
    Since $\|\cdot\|_{L^2(\widetilde{\Omega})} \leq C\| \cdot \|_{L^2(\widetilde{\Omega};t)} $ with $C>0$ is independent of $t \in [0,T]$ by Proposition \ref{prop;funcsp},
    the proof is completed.
\end{proof}
Next, we consider that $\widetilde{u}(t)$ is a weak solution of (N-S). Take a test function 
$\widetilde{\Psi} \in C_0^1\bigl([0,T);H^1_{0,\sigma}(\widetilde{\Omega})\bigr)$
with the form of \eqref{eq;weak test function app}. By \eqref{eq;Galerkin} we see that
\begin{multline}\label{eq;weakformofapprox}
    \int_0^T -\bigl\langle \widetilde{u}_{m_k}(\tau),  \pt_s\widetilde{\Psi}(\tau)\bigr\rangle_\tau\,d\tau
    +\bigl\langle \nabla_g \widetilde{u}_{m_k}(\tau), \nabla_g \widetilde{\Psi}(\tau)\bigr\rangle_\tau
    +\bigl\langle \widetilde{u}_{m_k}(\tau), M\widetilde{\Psi}(\tau) \bigr\rangle_\tau \\
    +\bigl\langle N[\widetilde{b}(\tau),\widetilde{u}_{m_k}(\tau)],\widetilde{\Psi}(\tau)\bigr\rangle_\tau
    +\bigl\langle N[\widetilde{u}_{m_k}(\tau),\widetilde{b}(\tau)], \widetilde{\Psi} (\tau)\bigr\rangle_\tau
    +\bigl\langle N[\widetilde{u}_{m_k}(\tau),\widetilde{u}_{m_k}(\tau)], \widetilde{\Psi} (\tau)\bigr\rangle_\tau
    \\
    =\bigl\langle \widetilde{u}_{m_k}(0), \widetilde{\Psi}(0)\bigr\rangle_0 
    +\int_0^T \bigl\langle \widetilde{F}(\tau), \widetilde{\Psi}(\tau)\bigr\rangle_\tau\,d\tau
\end{multline}
Furthermore, in order to take a limit as $k\to\infty$, we introduce the following proposition.
\begin{proposition}\label{prop;conv nonlinear}
    It holds that
    \begin{equation*}
        \int_0^T \bigl\langle N[\widetilde{u}_{m_k}(\tau),\widetilde{u}_{m_k}(\tau)],\widetilde{\Psi}(\tau)\bigr\rangle_\tau\,d\tau
        \to
        \int_0^T \bigl\langle N[\widetilde{u}(\tau),\widetilde{u}(\tau)],\widetilde{\Psi}(\tau)\bigr\rangle_\tau\,d\tau,
    \end{equation*}
    as $k\to \infty$ for all $\widetilde{\Psi}\in C^1_0\bigl([0,T); H^1_{0,\sigma}(\widetilde{\Omega})\bigr)$.
\end{proposition}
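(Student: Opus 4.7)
The plan is to pull the expressions back to the physical domain $\Omega(\tau)$ by means of \eqref{eq;equivalence innerproduct L2}, exploit the divergence-free property together with the boundary condition of $\widetilde{\Psi}$ to integrate by parts so that derivatives fall on $\Psi$ rather than on the sequence, and then invoke Lemma \ref{lem;L^2L^2} together with interpolation to conclude.

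First I would rewrite, using the correspondence \eqref{eq;identify} and \eqref{eq;equivalence innerproduct L2},
\begin{equation*}
\bigl\langle N[\widetilde{u}_{m_k}(\tau),\widetilde{u}_{m_k}(\tau)],\widetilde{\Psi}(\tau)\bigr\rangle_\tau
=\int_{\Omega(\tau)}\bigl(u_{m_k}(\tau)\cdot\nabla u_{m_k}(\tau)\bigr)\cdot \Psi(\tau)\,dx,
\end{equation*}
and similarly for the limiting quantity. I would then split
\begin{equation*}
(u_{m_k}\cdot\nabla u_{m_k})\cdot\Psi - (u\cdot\nabla u)\cdot\Psi
=\bigl((u_{m_k}-u)\cdot\nabla u_{m_k}\bigr)\cdot\Psi
+ \bigl(u\cdot\nabla(u_{m_k}-u)\bigr)\cdot\Psi,
\end{equation*}
and, using $\Div(u_{m_k}-u)=\Div\,u=0$ in $\Omega(\tau)$ (which follows from Proposition \ref{prop;IW div}) together with $\Psi(\tau)|_{\pt\Omega(\tau)}=0$, integrate by parts to obtain
\begin{align*}
\int_{\Omega(\tau)}\bigl((u_{m_k}-u)\cdot\nabla u_{m_k}\bigr)\cdot\Psi\,dx
&= -\int_{\Omega(\tau)}\bigl((u_{m_k}-u)\cdot\nabla \Psi\bigr)\cdot u_{m_k}\,dx,\\
\int_{\Omega(\tau)}\bigl(u\cdot\nabla(u_{m_k}-u)\bigr)\cdot\Psi\,dx
&= -\int_{\Omega(\tau)}\bigl(u\cdot\nabla \Psi\bigr)\cdot(u_{m_k}-u)\,dx.
\end{align*}

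Next, by the H\"older inequality with exponents $(3,2,6)$ together with the Sobolev embedding $H^1_0(\Omega(\tau))\hookrightarrow L^6(\Omega(\tau))$, each of the two integrands above is controlled pointwise in $\tau$ by
\begin{equation*}
C\,\|u_{m_k}(\tau)-u(\tau)\|_{L^3(\Omega(\tau))}\,\|\nabla\Psi(\tau)\|_{L^2(\Omega(\tau))}
\bigl(\|\nabla u_{m_k}(\tau)\|_{L^2(\Omega(\tau))}+\|\nabla u(\tau)\|_{L^2(\Omega(\tau))}\bigr).
\end{equation*}
Integrating in $\tau$ and applying Cauchy--Schwarz, and noting via Proposition \ref{prop;funcsp} that $\|\nabla\Psi\|_{L^\infty(0,T;L^2(\Omega(\tau)))}$ is dominated uniformly in $\tau$ by $\sup_{[0,T]}\|\widetilde{\Psi}\|_{H^1(\widetilde{\Omega})}<\infty$, the problem reduces to showing that $u_{m_k}\to u$ in $L^2(0,T;L^3)$, since \eqref{eq;AEI} already gives uniform bounds of $u_{m_k}$, $u$ in $L^2(0,T;H^1_{0,\sigma})$.

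For this last convergence I would apply the interpolation $\|f\|_{L^3}^2\leq \|f\|_{L^2}\,\|f\|_{L^6}$ followed by the Sobolev embedding:
\begin{equation*}
\int_0^T\|u_{m_k}(\tau)-u(\tau)\|_{L^3(\Omega(\tau))}^2\,d\tau
\leq \|u_{m_k}-u\|_{L^2(0,T;L^2)}\cdot\|u_{m_k}-u\|_{L^2(0,T;L^6)},
\end{equation*}
whose first factor tends to zero by Lemma \ref{lem;L^2L^2}, while the second is bounded by the uniform $L^2(0,T;H^1_{0,\sigma})$ bound on the sequence. The only real obstacle is bookkeeping: checking that the equivalences of norms on $\Omega(\tau)$ and $\widetilde{\Omega}$ provided by Proposition \ref{prop;funcsp} are used with constants uniform in $\tau\in[0,T]$, and that the integration by parts on the moving domain $\Omega(\tau)$ is indeed justified once the div-free condition has been transferred from $\widetilde{\Omega}$ via Proposition \ref{prop;IW div}; both points are standard.
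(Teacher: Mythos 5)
Your proposal is correct and follows essentially the same route as the paper's own proof: the same antisymmetrization/integration-by-parts so that derivatives land on $\Psi$, then Hölder and Sobolev embedding on the moving domains (with constants uniform in $\tau$ by Proposition \ref{prop;funcsp}), and finally Lemma \ref{lem;L^2L^2} combined with the uniform $L^2(0,T;H^1_{0,\sigma})$ bound \eqref{eq;AEI}. The only cosmetic difference is that the paper uses the interpolation $\|f\|_{L^3}\le\|f\|_{L^2}^{1/2}\|f\|_{L^6}^{1/2}$ directly inside the time integral with Hölder exponents $(4,4,2)$, whereas you first reduce to $u_{m_k}\to u$ in $L^2(0,T;L^3)$ and then interpolate; both give the same bound.
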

\begin{proof} By the H\"{o}lder (interpolation) inequality, the Sobolev embedding, we have 
    \begin{equation*}
        \begin{split}
            \int_0^T& \bigl|\bigl\langle N[\widetilde{u}_{m_k}(\tau),\widetilde{u}_{m_k}(\tau)],\widetilde{\Psi}(\tau)\bigr\rangle_\tau
        -
         \bigl\langle N[\widetilde{u}(\tau),\widetilde{u}(\tau)],\widetilde{\Psi}(\tau)\bigr\rangle_\tau\bigr|\,d\tau
        \\
        &\leq
            \int_0^T \Bigl\{ 
                \left|\Bigl(\bigl(u_{m_k}(\tau)-u(\tau)\bigr)\cdot \nabla \Psi(\tau),u_{m_k}(\tau)\Bigr)\right|
                +
                \left|\Bigl(u(\tau)\cdot \nabla \Psi(\tau),\bigl(u_{m_k}(\tau)-u(\tau)\bigr)\Bigr)\right|
            \Bigr\}\,d\tau
            \\
        &\leq
            C_s^{\frac{1}{2}} \sup_{0\leq \tau < T} \| \nabla \Psi(\tau) \|_{L^2(\Omega(\tau))}
            \\
            &\quad \times \Biggl\{
            \int_0^T \| u_{m_k}(\tau) - u(\tau) \|_{L^2(\Omega(\tau))}^{\frac{1}{2}}
            \| \nabla u_{m_k}(\tau) -\nabla u(\tau)\|_{L^2(\Omega(\tau))}^{\frac{1}{2}}  
            \| u_{m_k}(\tau)\|_{L^6(\Omega(\tau))}\,d\tau 
            \\
            &\qquad +
            \int_0^T \| u_{m_k}(\tau) - u(\tau) \|_{L^2(\Omega(\tau))}^{\frac{1}{2}}
            \| \nabla u_{m_k}(\tau) -\nabla u(\tau)\|_{L^2(\Omega(\tau))}^{\frac{1}{2}}  
            \| u(\tau)\|_{L^6(\Omega(\tau))}\,d\tau \Biggr\}
            \\
            &=:
            C_s^{\frac{1}{2}} \sup_{0\leq \tau < T} \| \widetilde{\Psi}(\tau) \|_{L^2(\widetilde{\Omega};t)}
            (\mathcal{J}_1+ \mathcal{J}_2).
        \end{split}
    \end{equation*}
    By the H\"{o}lder inequality and the a priori estimate and Lemma \ref{lem;L^2L^2}, we see that
    \begin{equation*}
        \mathcal{J}_1 \leq \left[ \int_0^T \| \widetilde{u}_{m_k}(\tau) - \widetilde{u}(\tau)\|^2_{L^2(\widetilde{\Omega};t)}\,d\tau\right]^{\frac{1}{4}}
        CA^{\frac{3}{4}} \to 0,
    \end{equation*}
    as $k\to \infty$. Similarly, we obtain that $\mathcal{J}_2 \to 0$ as $k\to \infty$. This completes the proof. 
\end{proof}
Finally, by the virtue of assertion (i), (ii) and Proposition \ref{prop;conv nonlinear}, taking the limit of \eqref{eq;weakformofapprox} as $k\to\infty$, 
we easily see that $\widetilde{u}$ is 
a desired weak solution.
\subsection{Construction of time periodic solutions}
To construct a time periodic solutions, we select a suitable extended function of 
$\widetilde{\beta}\in C^1\bigl(\R; H^{\frac{1}{2}}(\pt \widetilde{\Omega})\bigr)$
with the general flux condition $\int_{\pt \widetilde{\Omega}} \widetilde{\beta}(t)\cdot \nu\, dS=0$
for all $t\in\R$.
By the Bogovski\u{\i} formula, we can take 
$\widetilde{b}\in C^1\bigl(\R;H^1(\widetilde{\Omega})\bigr)$
with $\Div\, \widetilde{b}(t)=0$ in $\widetilde{\Omega}$ for all $t\in \R$.
For more detail, see Borchers and Sohr \cite{Borchers Sohr}.

Put for $i=1,2,3$,
\begin{equation*}
    b^i(x,t) := \sum_{\ell} \frac{\pt \phi^{-1}_i}{\pt y^\ell} \widetilde{b}\bigl(\phi(x,t),t\bigr)
    \quad \text{for } x \in \Omega(t),
\end{equation*}
for $t\in \R$.
Then we observe that $\Div\, b(t)=0$ in $\Omega(t)$ and $b(t)|_{\pt \Omega(t)}=\beta(t)$ for $t \in \R$.
By Proposition \ref{prop;KY} we can decompose $b(t)$ in $\Omega(t)$ such as
\begin{equation}\label{eq;dec KY Omegat}
    h(t) + \rot\, w(t) = b(t) \quad \text{in } \Omega(t).
\end{equation}
Here, we recall $\eta_1(t),\dots,\eta_K(t)$ the orthogonal basis of $V_{\mathrm{har}}\bigl(\Omega(t)\bigr)$,
$\alpha_{jk}(t)$, $j,k=1,\dots,K$ are the coefficients in \eqref{eq;cons Vhar} and $q_1(t),\dots,q_K(t)$
are solutions of \eqref{eq;Lap q_k}.
Then, the harmonic vector field $h(t)$ in \eqref{eq;dec KY Omegat} can be expressed as
\begin{equation*}
    \begin{split}
        h(t)&= \sum_{k=1}^K \bigl(b(t),\eta_k(t)\bigr)\eta_k(t)
        =\sum_{k=1}^K \left(b(t), \sum_{\ell=1}^K \alpha_{k\ell}(t)\nabla q_\ell(t)\right)\eta_k(t)
        \\
        &=\sum_{k,\ell =1}^K \alpha_{k\ell}(t) \left(\int_{\Gamma_\ell (t)} \beta(t)\cdot \nu(t) \,dS\right) \eta_k(t).
    \end{split}
\end{equation*}

Next, since $\widetilde{w}\in C^1\bigl([0,T];H^2(\widetilde{\Omega})\bigr)$ by Theorem \ref{thm;conti w}, 
introducing a cut-off function $\widetilde{\theta}$ 
in  Lemma \ref{lem;cut rot w} for $\ep>0$ which will be determined later,
we put
\begin{equation*}
    \widetilde{h}(t) + \Rot(t)[\widetilde{\theta},\widetilde{w}(t)] =: \widetilde{b}_\ep(t)
    \quad \text{in } \widetilde{\Omega}.
\end{equation*}
Then we see that $\Div\, \widetilde{b}_\ep(t) =0$ in $\widetilde{\Omega}$ 
and $\widetilde{b}_\ep(t)|_{\pt \widetilde{\Omega}}=\widetilde{\beta}(t)$ for $t \in [0,T]$
by taking the divergence of  both sides of $h(t) + \rot\bigl(\theta w(t)\bigr)=b_\ep(t)$ on $\Omega(t)$.
Furthermore, since $\widetilde{h} \in C^1\bigl([0,T];H^1(\widetilde{\Omega})\bigr)$ and
$\widetilde{w} \in C^1\bigl([0,T]; Z_\sigma^2(\widetilde{\Omega})\cap H^2(\widetilde{\Omega})\bigr)$,
we see that $\widetilde{b}_\ep \in C^1\bigl([0,T];H^1(\widetilde{\Omega})\bigr)$. 
Hereafter, we adopt $\widetilde{b}_\ep(t)$  as a extension of $\widetilde{\beta}(t)$.

Let assume 
\begin{equation}\label{eq;smallness h}
    \sup_{0\leq t \leq T}\|h(t) \|_{L^3(\Omega(t))} = 
    \sup_{0\leq t \leq T}\left\| 
        \sum_{k,\ell=1}^K \alpha_{k\ell}(t) \left(\int_{\Gamma_\ell(t)} \beta(t)\cdot \nu(t)\,dS\right)\eta_k(t)
    \right\|_{L^3(\Omega(t))} < \frac{1}{C_s},
\end{equation}
where we recall $C_s=3^{-\frac{1}{2}} 2^{\frac{2}{3}} \pi^{-\frac{2}{3}}$ is the best constant
of the Sobolev embedding $H_0^1(\Omega) \hookrightarrow L^6(\Omega)$. 
\begin{proposition}\label{prop;convection term}
    Under \eqref{eq;smallness h} it holds that
    \begin{equation*}
        \bigl|\bigl\langle N[\widetilde{u},\widetilde{u}],\widetilde{b}_\ep(t)\bigr\rangle_t\bigr|
        \leq \Bigl(C_s \sup_{0\leq t \leq T}\| h(t) \|_{L^2(\Omega(t))} + C\ep \Bigr)\| \nabla u \|^2_{L^2(\Omega(t))}
        \quad \text{for } \widetilde{u} \in H^1_{0,\sigma}(\widetilde{\Omega}),
    \end{equation*}
    where
    the constant $C>0$ is independent of $\widetilde{u}$ and $t\in [0,T]$.
\end{proposition}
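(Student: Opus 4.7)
The plan is to decompose $\widetilde{b}_\ep(t)=\widetilde{h}(t)+\Rot(t)[\widetilde{\theta},\widetilde{w}(t)]$ and treat the harmonic and rotation contributions separately. The harmonic piece admits a direct H\"older--Sobolev estimate, while the rotation piece is exactly what Lemma \ref{lem;cut rot w} (Leray's inequality adapted to $\Rot(t)[\widetilde{\theta},\widetilde{w}(t)]$) was designed to handle.

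For the harmonic term, I would transfer the pairing back to $\Omega(t)$ by \eqref{eq;equivalence innerproduct L2}, obtaining
\begin{equation*}
\bigl\langle N[\widetilde{u},\widetilde{u}],\widetilde{h}(t)\bigr\rangle_t=\bigl(u\cdot\nabla u,\,h(t)\bigr)_{\Omega(t)},
\end{equation*}
where $u$ on $\Omega(t)$ corresponds to $\widetilde{u}$ via \eqref{eq;identify}. Since $u\in H^1_{0,\sigma}\bigl(\Omega(t)\bigr)$, H\"older's inequality with exponents $(6,2,3)$ together with the sharp Sobolev embedding $\|u\|_{L^6(\Omega(t))}\le C_s\|\nabla u\|_{L^2(\Omega(t))}$ (whose best constant $C_s$ is universal, hence domain-free) yields
\begin{equation*}
\bigl|\bigl(u\cdot\nabla u,h(t)\bigr)\bigr|\le \|u\|_{L^6(\Omega(t))}\|\nabla u\|_{L^2(\Omega(t))}\|h(t)\|_{L^3(\Omega(t))}\le C_s\|h(t)\|_{L^3(\Omega(t))}\|\nabla u\|_{L^2(\Omega(t))}^2,
\end{equation*}
which, after taking $\sup_{t\in[0,T]}$ of $\|h(t)\|_{L^3(\Omega(t))}$, is precisely the first term in the announced bound. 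The smallness assumption \eqref{eq;smallness h} enters later, when this term is absorbed into the dissipation on the left of the energy identity.

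For the rotation contribution I would invoke Lemma \ref{lem;cut rot w} directly with the prescribed $\ep$, obtaining
\begin{equation*}
\bigl|\bigl\langle N[\widetilde{u},\widetilde{u}],\Rot(t)[\widetilde{\theta},\widetilde{w}(t)]\bigr\rangle_t\bigr|<\ep\|\nabla_y\widetilde{u}\|_{L^2(\widetilde{\Omega})}^2\quad\text{for all } t\in[0,T],
\end{equation*}
where the cut-off $\widetilde{\theta}$ is chosen uniformly in $t$ as in Lemma \ref{lem;cut rot w}. Next, by Proposition \ref{prop;funcsp} (and its vector-valued extension noted in the ensuing Remark), the norms $\|\nabla_y\widetilde{u}\|_{L^2(\widetilde{\Omega})}$ and $\|\nabla u\|_{L^2(\Omega(t))}$ are uniformly equivalent on $[0,T]$, which converts the bound into $C\ep\,\|\nabla u\|_{L^2(\Omega(t))}^2$ with a constant $C>0$ independent of $\widetilde{u}$ and of $t$. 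Summing the two contributions gives the proposition.

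I do not foresee a genuine obstacle: both ingredients have already been assembled in Section 3, and the proof essentially records how they combine. The only point that requires attention is keeping the constant $C_s$ free of $t$-dependence, which is guaranteed by the universality of the best Sobolev constant on $H^1_0\hookrightarrow L^6$ in three dimensions and by the uniform bound $\sup_{t\in[0,T]}\|h(t)\|_{L^3(\Omega(t))}<\infty$ provided by Theorem \ref{thm;contideffi har} and Proposition \ref{prop;funcsp}.
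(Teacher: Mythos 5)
Your proof is correct and follows essentially the same route as the paper: split $\widetilde{b}_\ep(t)=\widetilde{h}(t)+\Rot(t)[\widetilde{\theta},\widetilde{w}(t)]$, bound the harmonic term by H\"older with exponents $(6,2,3)$ plus the sharp Sobolev embedding to get $C_s\|h(t)\|_{L^3(\Omega(t))}\|\nabla u\|_{L^2(\Omega(t))}^2$, invoke Lemma \ref{lem;cut rot w} for the rotation term, and pass back to $\|\nabla u\|_{L^2(\Omega(t))}$ via the uniform norm equivalence from Proposition \ref{prop;funcsp}. You also correctly observe that the hypothesis \eqref{eq;smallness h} is not used inside this proposition but only downstream when absorbing the term into the dissipation, which matches the paper's treatment.
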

\begin{proof}
    By the H\"{o}lder inequality, the Sobolev embedding and Lemma \ref{lem;cut rot w}, we have
    \begin{equation*}
        \begin{split}
            \bigl|\bigl\langle N[\widetilde{u},\widetilde{u}],\widetilde{b}_\ep(t)\bigr\rangle_t\bigr|
            &=
            \bigl| 
                \bigl( u\cdot\nabla u, b_\ep(t)\bigr)
            \bigr|
            \\
            &\leq
            \bigl|\bigl(u\cdot\nabla u,h(t)\bigr)\bigr| 
            + \bigl|\bigl\langle N[\widetilde{u},\widetilde{u}],\Rot(t)[\widetilde{\theta},\widetilde{w}(t)]\bigr\rangle_t\bigr|
            \\
            &\leq C_s \sup_{0\leq t \leq T} \| h(t)\|_{L^3(\Omega(t))} \|\nabla u \|^2_{L^2(\Omega(t))}
            +
            \ep \| \nabla_y \widetilde{u}\|_{L^2(\widetilde{\Omega})}^2
            \\
            &\leq C_s \sup_{0\leq t \leq T} \| h(t)\|_{L^3(\Omega(t))} \|\nabla u \|^2_{L^2(\Omega(t))}
            +
            C\ep \| \nabla u\|_{L^2(\Omega(t))}^2,
        \end{split}
    \end{equation*}
    since $\| \nabla_y \widetilde{u} \|_{L^2(\widetilde{\Omega})} \leq C\| \widetilde{u} \|_{H^1(\widetilde{\Omega};t)}=C\|\nabla u(t)\|_{L^2(\Omega(t))}$.
\end{proof}

Let us construct a time periodic (approximate) solutions. Of course, we assume $\phi(\cdot,t)=\phi(\cdot,t+T)$,
$f(t)=f(t+T)$ and $\widetilde{\beta}(t)=\widetilde{\beta}(t+T)$ for $t \in \R$ with the period $T>0$.
Let $\widetilde{u}_m(y,t)$ be a approximate solution to \eqref{eq;Galerkin} of the form \eqref{eq;approx sol} for 
an arbitrary initial data $\displaystyle u_m(0)=\sum_{k=1}^m a_k \widetilde{\psi}_k(y,0)$.

For each $m\in\N$, by \eqref{eq;EDI} and Proposition \ref{prop;convection term}, we have
\begin{equation*}
    \frac{1}{2}\frac{d}{dt} \| u_m(t) \|_{L^2(\Omega(t))}^2 
    +\bigl(1 -C_s \sup_{0\leq t \leq T}\| h(t) \|_{L^2(\Omega(t))} - C\ep -C^\prime\ep\bigr) \|\nabla u_m(t)\|^2_{L^2(\Omega(t))}
    \leq K(t),
\end{equation*}
where $C^\prime>0$ is independent of $t$ and $u_m$, 
and where $K(t)$ is defined in \eqref{eq;KFF} via a similar calculation of \eqref{eq;Fum}. 
By the Poincar\'{e} inequality $\| u  \|_{L^2(\Omega(t))} \leq C_p \| \nabla u \|_{L^2(\Omega(t))}$
for $u \in H^1_0\bigl(\Omega(t)\bigr)$
with the constant $C_p$ independent of $t\in [0,T]$,
taking sufficiently small $\ep>0$ we can take the constant $\gamma>0$ independent of $t$ and $u_m$ 
so that 
\begin{equation*}
    \frac{d}{dt} \| u_m(t) \|^2_{L^2(\Omega(t))} + \gamma \| u_m(t)\|^2_{L^2(\Omega(t))} \leq 2K(t).
\end{equation*}
Therefore, we obtain that
\begin{equation*}
    \| \widetilde{u}_m(T) \|^2_{L^2(\widetilde{\Omega};T)}=\|\widetilde{u}_m(T) \|_{L^2(\widetilde{\Omega};0)}
    \leq e^{-\gamma T} \| u_m(0) \|^2_{L^2(\widetilde{\Omega};0)} 
    + \int_0^T e^{-\gamma(T-\tau)} 2K(\tau)\,d\tau.
\end{equation*}
Here, we choose $R>0$, so that
\begin{equation*}
    R^2(1-e^{-\gamma T}) \geq \int_0^T e^{-\gamma (T-\tau)}2K(\tau)\,d\tau.
\end{equation*}
Hence, it holds that
\begin{equation*}
    \| \widetilde{u}_m(T) \|_{L^2(\widetilde{\Omega};0)} \leq R, 
    \quad \text{if } \| \widetilde{u}_m(0) \|_{L^2(\widetilde{\Omega};0)}\leq R.
\end{equation*}
On the other hand, the map $\widetilde{u}_m(0) \mapsto \widetilde{u}_m(T)$
is continuous. 
Then the Brouwer fixed point theorem ensures the existence of an approximate solution
$\widetilde{u}_m$ such that $\widetilde{u}_m(0)=\widetilde{u}_m(T)$ and
$\| \widetilde{u}_m(0) \|_{L^2(\widetilde{\Omega};0)}= \| \widetilde{u}_m(T) \|_{L^2(\widetilde{\Omega};0)}\leq R$.
Since $R>0$ is independent of $m\in\N$, so we can see that the sequence $\{\widetilde{u}_m\}_{m\in\N}$ 
is bounded in $L^2\bigl(0,T;L_\sigma^2(\widetilde{\Omega})\bigr) \cap 
L^2\bigl(0,T;H^1_{0,\sigma}(\widetilde{\Omega})\bigr)$.
Taking a subsequence if necessary, there is $\widetilde{a}\in L^2_\sigma(\widetilde{\Omega})$ such that
$\widetilde{u}_m(0) \rightharpoonup \widetilde{a}$ weakly in $L^2_\sigma(\widetilde{\Omega})$ as $m\to \infty$
with respect to the inner product $\langle \cdot,\cdot \rangle_0$.
By the same manner as in the previous subsection, we obtain the weak solution $\widetilde{u}$ with
$\widetilde{u}(0)=\widetilde{u}(T)=\widetilde{a}$. This completes the proof.

\medskip
\textbf{Acknowledgements}
The authors would like to thank Professor Takayuki Kobayashi for his fruiteful comments.
The work of the second author is partially supported by JSPS Grant-in-Aid 
for Scientific Research(C) 22K03385.
The work of the third author is partially supported by JSPS 
Grant-in-Aid for Early-Career Scientists 18K13439, and Grant-in-Aid for Scientific Research(C) 22K03370.

\medskip
\noindent\textbf{Data Availability Statement}
Data sharing is not applicable to this article as no
datasets were generated or analyzed during the current study.

\medskip
\noindent\textbf{Declarations}
\\
\textbf{Conflict of interest} The authors declare that they have no conflict of interest.

\end{document}